\documentclass[12pt]{amsart}
\usepackage[english]{babel}
\usepackage[utf8x]{inputenc}
\usepackage[T1]{fontenc}
\usepackage[]{float}
\usepackage[margin=2cm]{geometry}
\usepackage{empheq}
\usepackage{amsmath}
\usepackage{amsthm}
\usepackage{amssymb}
\usepackage{graphicx}
\usepackage[colorinlistoftodos]{todonotes}
\usepackage[colorlinks=true, allcolors=blue]{hyperref}

\newtheorem{theorem}{Theorem}[section]
\newtheorem{proposition}[theorem]{Proposition}
\newtheorem{corollary}{Corollary}[theorem]
\newtheorem{lemma}[theorem]{Lemma}
\newtheorem{remark}{Remark} [section]

\DeclareMathSymbol{,}{\mathpunct}{operators}{"2C}
\DeclareMathSymbol{.}{\mathpunct}{operators}{"2E}
\numberwithin{equation}{section}

\newcommand{\be}{\begin{equation}}
\newcommand{\ee}{\end{equation}}
\newcommand{\ep}{\varepsilon}
\newcommand{\p}{\partial}

\newcommand{\R}{\mathbb{R}}
\newcommand{\Z}{\mathbb{Z}}
\newcommand{\C}{\mathbb{C}}

\newcommand{\CH}{\mathcal{H}}
\newcommand{\CB}{\mathcal{B}}

\newcommand{\CD}{\mathcal{D}}
\newcommand{\CP}{\mathcal{P}}

\newcommand{\N}{\mathbb{N}}

\newcommand{\CE}{\mathcal{E}}
\newcommand{\CU}{\mathcal{U}}

\newcommand{\CG}{\mathcal{G}}
\newcommand{\CS}{\mathcal{S}}

\newcommand{\CL}{\mathcal{L}}

\newcommand{\CF}{\mathcal{F}}

\newcommand{\BBT}{\mathbb{T}}
\newcommand{\BBF}{\mathbb{F}}
\newcommand{\BBB}{\mathbb{B}}
\newcommand{\CK}{\mathcal{K}}
\newcommand{\CT}{\mathcal{T}}
\newcommand{\wt}{\widetilde}

\newcommand{\BA}{\mathbf{A}}

\newcommand{\BBL}{\mathbb{L}}
\newcommand{\BL}{\mathbf{L}} 
\newcommand{\BH}{\mathbf{H}} 
\newcommand{\BQ}{\mathbf{Q}} 
\newcommand{\CA}{\mathcal{A}} 
\newcommand{\CO}{\mathcal{O}}
\newcommand{\CM}{\mathcal{M}}
\newcommand{\BD}{\mathbf{D}}

\newcommand{\Vc}{\vec{\mathbf{c}}}

\begin{document}

\allowdisplaybreaks

\title[Invariant manifolds]{Invariant Manifolds for Capillary Waves and a Class of  quasilinear PDE\MakeLowercase{s}}

\author[J. Shatah]{Jalal Shatah}
\address[Jalal Shatah]{Courant Institute of Mathematical Sciences, New York University, New York, NY 10012 USA.}
\email{shatah@cims.nyu.edu}
\thanks{JS was supported in part by the Simons Collaboration Grant on Wave Turbulence}

\author[C. Zeng]{Chongchun Zeng}
\address[Chongchun Zeng]{School of Mathematics, Georgia Institute of Technology, Atlanta, GA 30332}
\email{zengch@math.gatech.edu}
\thanks{CZ was supported in part by the National  Science Foundation grant  DMS-2350115.}

\begin{abstract}

This paper studies the local stable and unstable manifolds of equilibria for quasilinear and fully nonlinear PDEs. These manifolds are fundamental objects in the analysis of local dynamics. While their existence is well understood for ODEs, semilinear PDEs, and certain parabolic-type quasilinear PDEs, invariant manifold theorems are often unavailable for quasilinear PDEs whose nonlinearities involve a loss of regularity and whose linear parts do not provide sufficient smoothing.

Our main results establish the existence, uniqueness, and smoothness  of local stable and unstable manifolds for nonlinear PDEs that satisfy suitable energy estimates. With the main focus on irrotational water waves with surface tension, this framework applies to a broad class of  PDEs, including nonlinear Schrödinger equations, nonlinear wave equations, and the MMT model, as well as to certain gradient-type PDEs.
\end{abstract}

\maketitle


\section{Introduction} \label{S:Intro}


Motivated by the problem of invariant manifolds in water wave systems, we develop a general framework based on energy estimate to construct local stable and unstable manifolds of equilibria for a broad class of nonlinear PDEs. Within this framework, we study irrotational water waves with surface tension in detail.

Consider an evolution equation in $\R^n$ with an equilibrium 
\be \label{E:DE-0}
u_t = F(u), \quad \text{ where } \ F(0)=0.
\ee
To study the local dynamics near the equilibrium, one often starts with the linearization $A= \BD F(0)$. 
Fundamental structures in the linear dynamics $e^{tA}$ include invariant subspaces, which often come from the spectral decomposition of $A$. 
A natural question is whether the invariant subspaces of $A$ deform into locally invariant manifolds in the nonlinear dynamics \eqref{E:DE-0}.  
More specifically, suppose there exist subspaces $X_\pm \subset \R^n$ invariant under $A$ such that $\R^n = X_+\oplus X_-$. 
Do there exist manifolds $W^\pm$ locally invariant under \eqref{E:DE-0} such that $\{0\} = W^+ \cap W^-$ and the tangent spaces $T_0 W^\pm =X_\pm$? Such results have been proved under the exponential dichotomy condition
\be \label{E:ED-0} 
\exists \lambda _+ > \lambda_- \ \text{ such that }\;  \sup_{t\le 0} e^{-\lambda_+ t} \big|e^{tA}|_{X_+} \big|, \ \sup_{t\ge 0} e^{-\lambda_- t} \big|e^{tA}|_{X_-} \big| < \infty. 
\ee
The classical invariant manifold theory, which is essentially weakly nonlinear, has been successfully extended to semilinear PDEs, some functional differential equations, and quasilinear PDEs with smoothing properties such as parabolic equations. 

Invariant manifolds are both core structures that organize the local dynamics and crucial tools in a variety of problems. 
For example, 
\begin{itemize} 
\item The existence of unstable manifolds $W^+$ with $\lambda_+ >0$ directly leads to results stronger than nonlinear instability, a fundamental issue in many systems arising in physics, engineering, {\it etc.} In particular, the existence of a finite-dimensional local unstable manifold yields solutions that grow in all relevant norms. See, e.~g.~\cite{LZ13}.  
\item Codim-1 stable (or center-stable) manifolds $W^-$ of saddles 
are often the borderline between regions in the phase space with different asymptotic behaviors. 
\item Local invariant manifolds are the starting points in the study of special structures such as homoclinic/heteroclinic orbits and temporal chaos. See, e.~g.~\cite{LMSW96, Ze00, SZ03}.  
\item Local invariant manifolds and foliations are basic tools in many local bifurcation analyses. See, e.~g.~\cite{CH82}.
\item In some focusing dispersive PDEs, the ground states and their stable manifolds are the non-scattering structures with the lowest energy. See, e.~g.~\cite{DM09, DM10}. 
\item Local invariant manifolds are fundamental tools in the construction of some special solutions such as traveling water waves, breather-type solutions of nonlinear Klein--Gordon equations, {\it etc.} See, e.~g.~\cite{Ki88, IK92, GGSZ25}.
\end{itemize} 

The theory of invariant manifolds has a long and rich history dating back to Poincar\'e, and there is no way to give an exhaustive list of references even for invariant manifolds near equilibria. The early works include the graph transform method developed by Hadamard and the integral equation method often referred to as the Lyapunov--Perron method. The general theory for finite-dimensional systems can be found in, e.~g.~\cite{Pl64, Ke67, Ca81}. For infinite dimensions, including semilinear PDEs and functional differential equations, we refer the reader to, e.~g.~\cite{Hal61, He81, CL88, BJ89, CFNT89, CLL91, IV92}. When the linear part of a quasilinear PDE is smoothing, invariant manifolds have also been obtained with the help of the maximal regularity property; see, e.~g.~\cite{Mi88, Ki88, DL88, PS16, SX21}. For the incompressible Euler equation on fixed domains, local stable and unstable manifolds were constructed in \cite{LZ13} using the Lagrangian formulation. 

However, for quasilinear or more nonlinear PDEs without sufficient smoothing, such as most Hamiltonian PDEs (nonlinear Schr\"odinger equations, nonlinear waves, water waves, {\it etc.}), the invariant manifold theory has remained largely missing. In both classical approaches---the integral equation method and the graph transform method---the issue is primarily the loss of regularity in the seemingly small nonlinear terms. Consider the Taylor expansion near the equilibrium $u=0$ of a PDE written symbolically as \eqref{E:DE-0} in a certain function space $X$, decomposed according to the linear invariant splitting $X = X_+ \oplus X_-$, 
\[
u_t^+ = A_+ u^+ + f_+ (u), \quad u_t^- = A_-u_- + f_- (u). 
\]
In the Lyapunov--Perron method, the unstable manifold $W^+$ is found by solving for the fixed points of the following integral equation, which coincide with orbits on $W^+$, 
\[
u_+(t) = e^{tA_+} u_{+} (0) + \int_0^t e^{(t-\tau)A_+} f_+(u(\tau))\, d\tau, \quad u_-(t) = \int_{-\infty}^t e^{(t-\tau)A_-} f_-(u(\tau))\, d\tau, \quad t\le 0. 
\]
For a quasilinear PDE where $e^{tA_\pm}$ is not smoothing and $f_\pm(u)$ includes spatial derivatives, this regularity loss cannot be recovered, and the iterations cannot be repeated infinitely many times. In fact, smooth dependence of solutions of quasilinear PDEs on initial data is usually achieved only at some level weaker than that of a priori estimates (see Appendix \ref{SS:NLPDE-LWP}). This issue also appears in the other traditional approach of graph transforms.

One observes that such loss of regularity in quasilinear PDEs also arises in establishing local well-posedness and in proving smooth dependence of solutions on the initial data. One strategy to overcome this issue is to first convert \eqref{E:DE-0} into a quasilinear equation 
\[
v_t = \CA (u) v + f(v), 
\]
where $f(v)$ is more like a semilinear nonlinearity. Suppose $\CA(u)$ satisfies certain energy estimates; then for each function $u(t)$, the linear operator $\CA(u)$ generates a solution map $U (u(\cdot), t, t_0)$. This helps one to construct an iteration scheme with closed a priori estimates. The contraction estimate is often carried out in a less regular space due to the regularity loss from the term $(\CA(u_1) -\CA(u_2))v$.

In this paper, we first consider a broad class of nonlinear PDEs symbolically in the form of \eqref{E:DE-0} in a certain real Hilbert space $X$, which is often the natural basic energy space of the system. 
Aiming at (but not limited to) Hamiltonian PDEs, the following are roughly our main assumptions.
\begin{itemize} 
\item There exist closed subspaces $X_\pm \subset X$ invariant under $e^{tA}$ such that $X= X_+\oplus X_-$ with associated projections $\Pi_\pm$. 
\item Denote $X^n = Dom(A^n)$ and $X_\pm^n = X_\pm \cap X^n$, $n \ge 0$, where $A= \BD F(0)$. For some $n\ge 2$, $\CA(u) = \BD F(u) \in \BBL(X^r, X^{r-1})$ is $C^1$ in $u\in X^{n'}$ for $1\le r \le n'$ and $n'= n, n-1$.  
\item There exist equivalent metrics $\CL_\pm(u)$ on $X_\pm$, $C^1$ in $u\in X^{n-1}$, such that 
\be \label{E:dissipativity-0}
\langle \CL_- (u) \CA(u) v, v\rangle \le \lambda_- |v|^2, \; \forall v \in X_-^1, \quad \langle \CL_+ (u) \CA(u) v, v\rangle \ge \lambda_+ |v|^2, \; \forall v \in X_+^1. 
\ee
\end{itemize}
For some PDEs, $X^n$ are simply Sobolev spaces. 
Obviously, in order to construct local invariant manifolds, the above assumptions are required only for $|u|_{X^{n-1}} \ll 1$.

Our main general results establish the existence, uniqueness, and smoothness of the local stable and unstable manifolds, obtained by incorporating this energy-estimate-based technique from the analysis of quasilinear PDEs into the Lyapunov–Perron approach. 

\vspace {0.08in} \noindent {\bf Main general results.} 
{\it Assume $\lambda_+ > \max\{0, \lambda_-\}$. Then there exist $\delta>0$ and $q^+ : X_+^{n} (\delta) \to X_-^n$, where $X_+^n (\delta)$ is the $\delta$-ball in $X_+^n$, such that 
\begin{enumerate} 
\item $q^+(0)=0$, $q^+ \in Lip(X_+^n (\delta), X_-^{n-1})$. Moreover,  
$q^+ \in C^{m,1} (X_+^n (\delta), X_-^{n-m-1})$ if $F$ satisfies certain $C^{m,1}$ smoothness assumptions for $m \ge 1$. 
\item $W^+ \triangleq graph(q^+)$ satisfies $T_0 W^+ = X_+^n$ and is locally invariant under \eqref{E:DE-0}. For $\lambda_0= \big( \max\{0, \lambda_-\}, \lambda_+ \big)/2$, there exists $M^*>0$ such that any $u_0 \in W^+$ has a solution $u(t)$, $t \le 0$, unique in the category of $|u(t)|_{X^n} \le M^* \delta e^{\lambda_0 t}$, and it also satisfies    
\be \label{E:expD-1}
\sup_{t\le 0} e^{-\lambda t} |u(t)|_{X^n} < \infty, \; \forall \lambda \in (\lambda_-, \lambda_+). 
\ee
\item Suppose a solution $u(t) \in X^n$, $t\le 0$, to \eqref{E:DE-0} satisfies \eqref{E:expD-1}   
for some $\lambda \in (\max\{0, \lambda_-\}, \lambda_+)$, then $u(t) \in W^+$ for all $t \ll -1$. 
\end{enumerate}
If $\lambda_- < \min\{0, \lambda_+\}$, then there exists $q^- : X_-^{n} (\delta) \to X_+^n$ along with its graph $W^-$ satisfying parallel properties for $t \ge 0$. 
}

\vspace {0.08in}
The majority of the above assumptions and results are consistent with those in the standard local unstable and stable manifold theory for ODEs and semilinear PDEs, with the exceptions of \eqref{E:dissipativity-0} and the loss of regularity in $q^+\in C^{m,1} (X_+^n (\delta), X_-^{n-m-1})$. 
More specifically, the first assumption is the existence of an invariant splitting of $X$ under the linearized flow $e^{tA}$ at $u=0$. The second essentially requires that the right-hand side of the PDE \eqref{E:DE-0} be a smooth vector field in $X^n$ with a certain loss of regularity. The last assumption is the dissipativity of the linearization $\CA(u)= \BD F(u)$ at each $u \in X^{n-1}$ forward in $X_-$ and backward in $X_+$, with respect to a state-dependent energy form $\CL_\pm(u)$ which is $C^1$ in $u \in X^{n-1}$.
This is equivalent to the availability of a linearized energy estimate of \eqref{E:DE-0} in $X_\pm$. If $\lambda_+ > \lambda_-$, it yields an exponential dichotomy of $e^{t \Pi_\pm \CA(u)}$ on $X_\pm$ {\it at each $u \in X^n$}. The smoothness of $\CA(u)$ and $\CL_\pm(u)$ allows one to carry out these estimates in higher-order spaces $X^r$, $r \le n$, as often seen in the local well-posedness theory of quasilinear PDEs. 
In the main results, statements (2--3) mean that $W^\pm$ consists of solutions that decay exponentially as $t\to \mp \infty$. The loss of regularity in the smoothness of $W^\pm$ is comparable to the loss in the smooth dependence of solutions to quasilinear PDEs (see Appendix \ref{SSS:smoothness-NLPDE}).

%
%

The precise assumptions are given in (D.1--4) in Subsection \ref{SS:NLPDE-LInMa} and (B.5) in Appendix \ref{SS:NLPDE-LWP}, the latter of which is only for the smoothness. The precise results are stated in Theorems \ref{T:NLPDE-UM} and \ref{T:UM-smoothness} and Remarks \ref{R:SM} and \ref{R:smoothness-SM} in Subsection \ref{SS:NLPDE-LInMa}. The assumptions in Appendix \ref{S:pre-LWP} and Section \ref{S:LInMa} look technical and tedious, but they are designed to be as directly applicable to concrete nonlinear PDEs as possible. 

In Section \ref{S:examples}, we demonstrate how various nonlinear PDEs fit into this framework. Since parabolic PDEs have relatively strong smoothing effect, this paper mainly aims at Hamiltonian PDEs (Subsection \ref{SS:examples-Ham}) such as nonlinear Schr\"odinger equations, nonlinear waves, KdV type equations, the MMT model, {\it etc.}, which often have natural estimate structures. Theorem \ref{T:Ham-UM} along with Remark \ref{R:Ham-SM} are specifically formulated for the local stable and unstable manifolds of Hamiltonian PDEs. To illustrate how the main results also apply to other types of nonlinear PDEs, we discuss more examples in Subsection \ref{SS:examples-other}, including mean curvature flows, where the local stable and unstable manifolds are obtained without using the maximal regularity property. 


As the primary motivation for this paper, stable and unstable manifolds of irrotational water waves with surface tension will be established in Theorem \ref{T:CWW-LInMa-main} in Section \ref{S:CWW}. This result requires substantial technical preparation due to the presence of the nonlocal Dirichlet–Neumann operator. The free interface problem between two incompressible, inviscid, irrotational fluids with surface tension will be outlined in Subsection \ref{SS:2F}, as the analysis is roughly parallel to that of the water waves.

To prepare for the construction of the local stable and unstable manifolds, a non-autonomous linear estimate with detailed dependence on various parameters is given in Appendix \ref{SS:Linear}. As a byproduct, in Appendix \ref{SS:NLPDE-LWP} we also provide a proof of the local well-posedness of a class of nonlinear evolutionary PDEs and the smooth dependence on the initial values by a similar energy-estimate-based approach. The derivation of both the local well-posedness and the local invariant manifolds of nonlinear PDEs are first achieved for quasilinearized PDEs (by a local diffeomorphism) in Appendix \ref{SS:QLPDE-LWP} and Subsection \ref{SS:QLPDE-LInMa}, respectively.

\vspace {0.08in}\noindent {\bf Notations.} 
$X^r (u, R)$, $R>0$, denotes the open ball centered at $u \in X^r$ with radius $R$, while $X^r (R)$ is the ball centered at $0$. For an unbounded linear operator $A$, its domain is denoted by $Dom(A)$. 
We often use $\BD$ to denote the Fr\'echet differentiation of nonlinear mapping between function spaces and $\nabla$ or $D$ to denote the directional derivatives or gradient vector with respect to certain spatial variables, while $'$ for time derivatives.   

\section{Local stable and unstable manifolds} \label{S:LInMa}

We shall first obtain the stable/unstable manifolds of an equilibrium of a model quasilinear PDE 
\eqref{E:QLPDE-1} in Subsection \ref{SS:QLPDE-LInMa}, followed by a class of nonlinear PDEs in Subsection \ref{SS:NLPDE-LInMa}.  Some concrete nonlinear PDEs will be discussed in Section \ref{S:examples}.

\subsection{Stable and unstable manifolds of a model quasilinear PDE} \label{SS:QLPDE-LInMa}

Since the construction of local stable and unstable manifolds are similar, we shall mainly focus on that of unstable manifolds. See Remark \ref{R:SM-0} for local stable manifolds. 



Let $X_+$ and $X_-$ be real Hilbert spaces with scales of dense subspaces $X_{\pm}^r \subset X_{\pm}$, $r \ge 0$.
Let $X^r= X_+^r \oplus X_-^r$ 
and consider solutions $v=(v_+, v_-)  \in X^r$ for $t \le 0$ to the system 
\be \label{E:QLPDE-1} \begin{cases} 
\p_t v_+ = \BA_+(v) v_+ + f_+ (v), \\
\p_t v_- = \BA_-(v) v_- + f_-(v), 
\end{cases} \ee
with the asymptotic  condition 
\be \label{E:QLPDE-1-BC}
v_\pm (t) \to 0 \; \text{ as } \; t\to -\infty \; \text{ at certain exponential rate.}
\ee
The above  asymptotic exponential rate will be specified later. 
We assume that there exist
\[
k \ge 1, \;\; R_0, C_{f}>0, \;\; \lambda_\pm \in \R,  \;\; C_0\ge 1, \;\;  \BL_\pm \in C^1 \big(X^{k-1} (R_0)
, \BBL(X_{\pm}, (X_{\pm})^*)\big), 
\]
 where $X^r (R) = X_+^r (R) \oplus X_-^r (R)$ for $R>0$, such that the following are satisfied.
\begin{enumerate} 
\item [(C.1)] For any $v \in X^{k-1} (R_0)$ and any $w \in X_\pm$
\[
\BL_\pm (v) = \BL_\pm (v)^*, \;\;      
C_{0}^{-1} |w|_{X_\pm} \le  |w |_{\BL_\pm (v)} \le C_{0} |w |_{X_\pm}, \;\; |\BD\BL_\pm |_{C^0 (X^{k-1} (R_0), \BBL (X^{k-1} \otimes X_\pm, X_\pm^*))} \le C_0,
\]
where $|w|_{\BL_\pm (v)} = \sqrt{\langle \BL_\pm (v) w, w \rangle}$. 

\item [(C.2)] For any $v \in X^{k-1} (R_0)$, the domain $Dom(\BA_{\pm}(v)) = X_{\pm}^1\subset X_{\pm}^0$ and $\lambda \pm \BA_{\pm} (v): X_\pm^1 \to X_\pm$  is surjective for some $\lambda > \mp \lambda_\pm$. Moreover, $\BA_\pm \in C^{1} \big(X^{k-1} (R_0), \BBL(X_\pm^{r}, X_\pm^{r-1})\big)$, $1\le r\le k$, also satisfies    
\[
|\BA_\pm|_{ C^{1}(X^{k-1} (R_0), \BBL(X_\pm^{r}, X_\pm^{r-1}))}, \, |\bar \BA_\pm|_{ C^{1}(X^{k-1} (R_0), \BBL(X_\pm^{r}, X_\pm^{r-1}))}, \, |\bar \BA_\pm^{-1}|_{ C^{1}(X^{k-1} (R_0), \BBL(X_\pm^{r-1}, X_\pm^{r}))} \le C_0,  
\]
where $\bar \BA_\pm (v) = \BA_\pm (v) - (\lambda_\pm \mp 1)$, 
and for any $v \in X^{k-1} (R_0)$ and $w \in X_\pm^1$, 
\be \label{E:dissipativity-3}
\langle \BL_\pm (v) w , \mp \BA_{\pm} (v)w \rangle \le \mp \lambda_\pm \langle \BL_\pm (v) w, w \rangle. 
\ee

\item [(C.3)] Assume $f_\pm \in C^0(X^{k} (R_0), X_\pm^k)$ satisfies 
\[
|f_\pm (v) |_{X_\pm^k} \le C_{f} |v|_{X^k}, \quad |f_\pm(v_1)- f_\pm(v_2)|_{X_\pm^{k-1}} \le C_{f} |v_2 -v_1|_{X^{k-1}}, \quad \forall v, v_1, v_2 \in X^{k} (R_0). 
\]

\item [(C.4)] Assume 
\[
\Sigma_+ \triangleq \Big\{ 0 \le \lambda \in (\lambda_-, \lambda_+) \mid  L_1 (\lambda) \triangleq \frac {C_0^{2(k+1)} C_f}{\lambda- \lambda_-} + \frac {C_0^{2(k+1)} C_f}{\lambda_+- \lambda}  < 1 \Big\} \ne \emptyset. 
\]
\end{enumerate}

\begin{remark} \label{R:LInMa}
The readers are referred to Appendix \ref{SS:Linear} and \ref{SS:QLPDE-LWP} for some remarks on the above assumptions. In particular, by the Lumer-Phillips Theorem, the surjectivity of $\lambda \pm  \BA_\pm (v)$ and the dissipativity \eqref{E:dissipativity-3} in (C.2) imply that, for any $v \in X^{k-1} (R_0)$, $\lambda \pm \BA_\pm (v)$ is an isomorphism for any $\lambda > \mp \lambda_\pm$ and $\BA_\pm (v)$ is the generator of a $C^0$ semigroup $e^{s \BA_\pm (v)}$, $\mp s\ge 0$, on $X_{\pm}$ with the exponential growth rate bounded by $\lambda_\pm$. In the special case where $\BA_+$ (or $\BA_-$) is independent of $v$, namely in the semilinear case, 
it is sufficient to assume that the semigroup $e^{s\BA_+}$ is well-posed with exponential rate $\lambda_+$ for $s\le 0$ (or $e^{s\BA_-}$ with exponential rate $\lambda_-$ for $s\ge 0$). In this case $\BL_+$ (or $\BL_-$) is not needed.  

Assumption (C.4) includes the necessary conditions $\lambda_+> \max \{0, \lambda_-\}$ and that their difference is much greater than the Lipschitz constant $C_{f}$.
This indicates that the linear dynamics is truly unstable in $X_+$, which is possibly a strongly unstable subspace of the unstable subspace of the equilibrium. 

It is not assumed that $\BA_+ (v)$ generates a semigroup $e^{s\CA_+(v)}$, $s \ge 0$, so the argument here can potentially be applied to ill-posed problems such as elliptic PDEs on channels treated as ill-posed evolution systems.  
\end{remark}

The main result of this subsection is the existence and uniqueness of the following unstable manifold under the above conditions.

\begin{theorem} \label{T:QLPDE-UM} 
Assume (C-1)--(C.4). For any $\lambda \in \Sigma_+$, $M_0 > \frac {2 C_0^{2(k+1)}}{1-L_1(\lambda)}$, and $l \in \big(L_1(\lambda), 1\big)$, there exists $\ep\in (0, \frac {R_0}{M_0})$ determined by $k, R_0, \lambda_\pm, C_f$, and $C_0$ (see \eqref{E:temp-18} and Remark \ref{R:size-1}) such that  there exists $h_+ :X_+^k (\ep)\to X_-^k$ satisfying the following properties. 
\begin{enumerate} 
\item For any $v_{0+} \in X_+^k (\ep)$, there exists a solution to \eqref{E:QLPDE-1}
\[
v(t) \in C^0 \big((-\infty, 0], X^k(R_0)\big) \cap C^1 \big((-\infty, 0], X^{k-1}(R_0)\big),  
\]
unique in the category
\be \label{E:UM-0.5}
v_+ (0) = v_{0+}, \quad |v(t)|_{X^k} \le M_0 \ep e^{\lambda t}, \; \forall t \le 0.
\ee
Moreover $v(t)$ satisfies 
\be \label{E:UM-1}
v_-(0) = h_+ (v_{0+}), \quad |v(t)|_{X^k} \le \frac {2 C_0^{2(k+1)}}{1-L_1(\lambda)} |v_{0+}|_{X_+^k}e^{\lambda t}.
\ee
\item The above solution $v(t)$ defined by $v_{0+}$ also satisfies that, for any $t<0$ such that $v_+(t) \in X_+^k(\ep)$, it holds $h_+(v_+ (t)) = v_-(t)$.
\item Suppose the above solution $v(t)$ defined by $v_{0+}$ can be extended to $C^0 ((-\infty, T_0), X^k)$ for some $T_0>0$. Let $T = \sup\{ t >0 \mid v_+(\tau ) \in X_+^k (\ep), \, \forall \tau \in [0, t) \} \in (0, T_0]$, then $h_+(v_+ (t)) = v_-(t)$ for all $t \in [0, T)$. 
\item $h_+$ satisfies $h_+(0)=0$ and, for any $v_{0+}, \wt v_{0+} \in X_+^k(\ep)$, 
\be \label{E:Lip-1}
|h_+(v_{0+})|_{X_-^k} \le \frac {2l C_0^{2(k+1)}}{1-L_1(\lambda)} |v_{0+}|_{X_+^k}, \quad |h_+ (v_{0+}) - h_+ (\wt v_{0+})|_{X_-^{k-1}} \le \frac {l C_0^{2k}}{1-l} |v_{0+} - \wt v_{0+}|_{X_+^{k-1}}, 
\ee
and their corresponding solutions $v(t)$ and $\wt v(t)$ satisfy, for $t\le 0$, 
\[
|v(t) - \wt v(t)|_{X^{k-1}} \le \frac {C_0^{2k}}{1-l}  e^{\lambda t}|v_{0+} - \wt v_{0+}|_{X_+^{k-1}}. 
\]
\item Suppose $0< \wt \lambda \in \Sigma_+$ and $v(t)$, $t\le 0$, is a solution to \eqref{E:QLPDE-1} such that $\sup_{t\le 0} e^{- \wt \lambda t}|v(t)|_{X^k}  <\infty$, then there exists $t_0\le 0$ such that $v_-(t) = h_+(v_+(t))$ for all $t\le t_0$.  
\end{enumerate}  \end{theorem}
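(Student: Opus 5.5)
We will run a Lyapunov--Perron scheme. The difference from the classical version is that the linear propagators are the non-autonomous evolution operators generated by the frozen-coefficient operators $\BA_\pm(u(t))$ along a given path $u$, and the estimates come from the energy forms $\BL_\pm$. The non-autonomous linear estimates of Appendix \ref{SS:Linear} will be taken as given. These say that, for a path $u\in C^0((-\infty,0],X^{k-1}(R_0))$ with suitable Lipschitz bounds in time, the problem $\p_t w_+=\BA_+(u)w_+$ is solvable backward in $X_+^r$, $r\le k$, with growth $C_0^{2(r+1)}e^{\lambda_+(t-s)}$ for $t\le s$ (up to an error that is small when $u$ is small). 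Similarly, $\p_t w_-=\BA_-(u)w_-$ is solvable forward with bound $C_0^{2(r+1)}e^{\lambda_-(t-s)}$ for $t\ge s$. Here the factor $C_0^{2(r+1)}$ comes from comparing $|\cdot|_{X^r}$ with $\BL_\pm$-norms of $\bar\BA_\pm^r w$, and the $C^1$ dependence of $\BL_\pm$ on $u$ with $|\p_t u|_{X^{k-1}}$ small contributes an extra $e^{C|u'|t}$ factor, absorbed by taking $\ep$ small.

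\textbf{Construction.} Fix $\lambda\in\Sigma_+$ and $v_{0+}\in X_+^k(\ep)$. Let $\Gamma$ be the set of $u\in C^0((-\infty,0],X^k)$ with $|u(t)|_{X^k}\le M_0\ep e^{\lambda t}$ and $|u'|_{X^{k-1}}\lesssim M_0\ep e^{\lambda t}$. Define $\CT(u)=v$ by
\[
v_+(t)=U_+(u;t,0)v_{0+}+\int_0^t U_+(u;t,\tau)f_+(u(\tau))\,d\tau,\qquad v_-(t)=\int_{-\infty}^t U_-(u;t,\tau)f_-(u(\tau))\,d\tau .
\]
In the $X^k$ norm, (C.3) and the linear bounds give
\[
e^{-\lambda t}|v(t)|_{X^k}\le C_0^{2(k+1)}|v_{0+}|+L_1(\lambda)\,M_0\ep ,
\]
since $\int_t^0 e^{\lambda_+(t-\tau)}e^{\lambda\tau}d\tau\le e^{\lambda t}/(\lambda_+-\lambda)$ and the analogous bound holds for the $-$ part. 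Thus $\CT$ maps $\Gamma$ to itself when $M_0>2C_0^{2(k+1)}/(1-L_1)$. Time regularity follows from the equation, and the $v$-dependent loss term $\langle \BD\BL\,u', \cdot\rangle$ is controlled by the same bound.

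\textbf{Contraction at lower regularity.} This is the main obstacle, because $(\BA(u_1)-\BA(u_2))$ loses a derivative. We therefore estimate $\CT u_1-\CT u_2$ in the weighted $X^{k-1}$ norm $\sup e^{-\lambda t}|\cdot|_{X^{k-1}}$. The difference $w=v_1-v_2$ satisfies
\[
w_\pm'=\BA_\pm(u_1)w_\pm+(\BA_\pm(u_1)-\BA_\pm(u_2))v_{2\pm}+f_\pm(u_1)-f_\pm(u_2).
\]
The middle term is bounded in $X^{k-1}$ by $C_0|u_1-u_2|_{X^{k-1}}|v_2|_{X^k}$ using (C.2). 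Since $|v_2|_{X^k}\le M_0\ep$, this contributes $O(\ep)$ on top of $L_1$. The resulting Lipschitz constant is $\le l<1$ after shrinking $\ep$, which fixes $\ep$ (this is \eqref{E:temp-18}). The set $\Gamma$ is closed in the weaker metric by weak-$*$ lower semicontinuity of the $X^k$ bound. Hence a unique fixed point exists, and fixed points are exactly the solutions in the category \eqref{E:UM-0.5}. This proves (1). Applying the self-map bound to the fixed point with $M_0\ep$ replaced by $\sup e^{-\lambda t}|v|$ gives \eqref{E:UM-1}. Setting $h_+(v_{0+})=v_-(0)$ and noting that the $X_-^k$ bound picks up the factor $L_1<l$ gives the first part of \eqref{E:Lip-1}. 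The same difference estimate, now with different $v_{0+},\wt v_{0+}$, gives
\[
\sup e^{-\lambda t}|v-\wt v|_{X^{k-1}}\le C_0^{2k}|v_{0+}-\wt v_{0+}|+l\sup e^{-\lambda t}|v-\wt v|_{X^{k-1}},
\]
which yields the Lipschitz bounds in (4).

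\textbf{Invariance and characterization.} For (2), fix $t_1<0$ with $v_+(t_1)\in X_+^k(\ep)$. The time-shift $\tilde v(t)=v(t+t_1)$ solves \eqref{E:QLPDE-1}, and $|\tilde v(t)|_{X^k}\le 2C_0^{2(k+1)}(1-L_1)^{-1}\ep e^{\lambda t_1}e^{\lambda t}\le M_0\ep e^{\lambda t}$, using $e^{\lambda t_1}\le1$ ($\lambda\ge0$). Uniqueness in (1) for initial datum $v_+(t_1)$ then gives $h_+(v_+(t_1))=v_-(t_1)$. For (3), the same argument applied to $v(\cdot+t_1)$ with $t_1\in[0,T)$ requires the bound $|v(t)|_{X^k}\le M_0\ep e^{\lambda(t-t_1)}$ for $t\le t_1$. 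We would get this through a continuity argument in $t_1$: define $\tilde h$-consistency on $[0,t_1]$, and show by induction in small time steps that the set of $t_1$ where $v_-(t_1)=h_+(v_+(t_1))$ is open and closed in $[0,T)$. Each step re-applies \eqref{E:UM-1} at the new base point, so the bound is regenerated with constant $2C_0^{2(k+1)}/(1-L_1)<M_0$ times $|v_+(t_1)|<\ep$. For (5), given $\sup e^{-\wt\lambda t}|v|_{X^k}=K$, choose $t_0$ so that $Ke^{\wt\lambda t_0}$ is small. Then $v(\cdot+t_1)$ for $t_1\le t_0$ lies in the category \eqref{E:UM-0.5} for the rate $\wt\lambda$. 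Uniqueness for $\wt\lambda$, together with the fact that the $h_+$ constructed with $\wt\lambda$ coincides with the one for $\lambda$ (the faster-decaying solution also lies in the slower class after shrinking), gives the claim.
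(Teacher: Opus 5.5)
Your proposal follows the paper's proof essentially step for step: the Lyapunov--Perron map built from the evolution operators generated by $\BA_\pm(v(t))$, invariance of a set with an $e^{\lambda t}$-weighted $X^k$ bound plus a bound on $v'$, contraction in the weighted $X^{k-1}$ norm with the $O(\ep)$ correction coming from $(\BA_\pm(v_1)-\BA_\pm(v_2))\wt v_2$, and weak-$*$ compactness for the fixed point. The remaining steps also match: the time shift using $\lambda\ge 0$ for (2), continuation using the strict slack $M_0>2C_0^{2(k+1)}/(1-L_1(\lambda))$ for (3), and comparison of the maps built with different rates (as in Lemma \ref{L:UM-1}) for (5); the only differences are technical. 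Your $e^{\lambda t}$-weighted bound on $v'$ works as well as the paper's unweighted $|\p_t v_\pm|_{\pm,k-1,0}\le M_1\ep$; continuity in $X^k$ should be dropped from the definition of $\Gamma$ (weak-$*$ limits need not keep it) and recovered for the fixed point from the Duhamel formula, as the paper does with its space $Y$; and in (3) the property to propagate is that $v(\cdot+t_1)|_{(-\infty,0]}$ lies in the class \eqref{E:UM-0.5}, not merely $v_-(t_1)=h_+(v_+(t_1))$, because that is what lets you re-apply \eqref{E:UM-1} and backward uniqueness is not available.
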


The graph $W^+ \triangleq graph(h_+)$ is often referred to as the local unstable manifold of the equilibrium $v=0$ (or a strongly unstable manifold if $\lambda_->0$ too). 
One notices that the estimate in \eqref{E:UM-0.5} is rougher than that in \eqref{E:UM-1}, so statement (1) gives the uniqueness in a larger category. The above statements (2--3) indicate the local invariance of $W^+$ in $t$. 
Statement (5) implies that any solution exponentially decaying as $t \to -\infty$ is in $W^+$ eventually. Along with statement (1), they characterize $W^+$ as the set of nearby solutions which decay to $0$ at exponential rates in $\Sigma_+ \subset (\lambda_-, \lambda_+)$ and thus it is locally unique. 
Hence even though $W^+$ seems to depend on $\lambda$, $M_0$, $l$, and $\ep$ as stated in the theorem, $W^+$ is essentially independent of these parameters (see also Lemma \ref{L:UM-1}). 
The whole set-up and the theorem can be also be put on $X^k (R)$, $R\in (0, R_0]$, and essentially the same $W^+$ is obtained, see Remark \ref{R:varyingR}.  
If the nonlinearity $f_\pm$ are superlinear near $0$, i.~e.~
$C_f \to 0$ as $R_0 \to 0$ in assumption (C.3), then $L_1(\lambda) \to 0$ as $R_0 \to 0$. Therefore, by taking $R_0 \to 0$ and $l(\lambda) = 2 L_1(\lambda)$, the 
upper bounds in \eqref{E:Lip-1} also converge to $0$ as $\ep \to0$ (see also Remark \ref{R:size-1}). It means that $W^+$ is tangent to $X_+^{k}$ at $v=0$. 

\begin{remark} \label{R:SM-0} 
Assume \eqref{E:QLPDE-1} satisfies (C.1)--(C.3) and 
\be  \tag {C.4'} 
\Sigma_- \triangleq \Big\{ 0 \ge \lambda \in (\lambda_-, \lambda_+) \mid  L_1 (\lambda) \triangleq \frac {C_0^{2(k+1)} C_f}{\lambda- \lambda_-} + \frac {C_0^{2(k+1)} C_f}{\lambda_+- \lambda}  < 1 \Big\} \ne \emptyset,
\ee 
then Theorem \ref{T:QLPDE-UM}, as well as Lemma \ref{L:UM-1}, holds with the sign of $t$ reversed.  This implies the existence of a mapping $h_-: X_-^k(\ep) \to X_+^k$ and the local stable manifold $W^-=graph(h_-)$. 
\end{remark}

The theorem will be proved by a fixed point argument on a subset of  
\[
Y=  \{ v (\cdot) \in 
W^{1, \infty} ((-\infty, 0], X^{k-1}) \mid v (t) = (v_+(t), v_-(t)) \in X^{k} (R_0), \ \forall t \le 0\} 
\] 
along with the exponentially weighted norms
\be \label{E:weighted-1}
|v_\pm|_{\pm, r, \lambda} = \sup_{t\le 0} e^{-\lambda t} |v_\pm(t)|_{X_\pm^r}, \; \; |v|_{r, \lambda} = |v_+|_{+, r, \lambda} + |v_-|_{-, r, \lambda}, \quad 0\le r \le k. 
\ee

For $v \in Y$, let  
\[
A_\pm (t) = \BA_\pm (v(t)), \quad L_\pm (t) = \BL_\pm (v(t)), \quad Q_\pm(t) = \bar \BA_\pm (v(t)),
\]
where $\bar \BA_\pm (v)$ is defined in (C.2).
Much as in Appendix \ref{SS:QLPDE-LWP}, Proposition \ref{P:linear} implies that $A_\pm(t)$ generate strongly $C^0$ evolution operators 
\[
U_\pm (t, t_0) \in \BBL(X^r), \quad t, t_0 \le 0, \;\ \pm (t-t_0) \le 0, \;\ 0\le r \le k. 
\]
Similar to \eqref{E:U-bdd-1} and \eqref{E:U-bdd-2}, it satisfies 
\be \label{E:U-bdd-4} 
 |U_\pm (t, t_0) |_{\BBL(X^r)} \le C_0^{2(r+1)} e^{\lambda_\pm (t-t_0)  +  (r+1) C_0^{2(r+1)} |\int_{t_0}^t |v'(t)|_{X^{k-1}} dt| }. 
\ee 

For $v_{0+} \in X_+^{k-1} (R_1) \cap X_+^k (R_0)$ 
and $v(\cdot) \in Y$, let $U_\pm(t, t_0)$ be the evolution operators generated by $v$ and define the Lyapunov-Perron integral operator
\be \label{E:CT-2} 
\CT(v_{0+}, v) = \wt v(t) = (\wt v_+ (t), \wt v_- (t)), \quad t\le 0,  
\ee
where 
\begin{align*}
& \wt v_+ (t)= \CT_+ (v_0, v) = U_+ (t, 0) v_{0+} + \int_{0}^t U_+ (t, \tau) f_+ (v(\tau)) d\tau, \\
& \wt v_- (t)= \CT_- (v_0, v) = \int_{-\infty}^t U_- (t, \tau) f_- (v(\tau)) d\tau.
\end{align*}
Clearly, if convergent, $\wt v_\pm(t)$ satisfy  
\be \label{E:QLPDE-iter-2}
\p_t \wt v_\pm = \BA(v(t)) \wt v_\pm + f_\pm (v(t)), \quad \wt v_+ (0) = v_{0+}. 
\ee
In the following lemma, recall that $L_1 (\lambda)\in (0, 1)$ was defined by $\lambda_\pm, k, C_0, C_f$ and $\lambda \in \Sigma_+$ in assumption (C.4). 

\begin{lemma} \label{L:LP-contraction}
For any  $\lambda \in \Sigma_+$ and $l \in \big(L_1(\lambda), 1\big)$, let 
\[
\Gamma_{\lambda, M_0, \ep}= \Big\{ v \in Y \mid |v|_{k, \lambda} \le M_0 \ep, \ 
|\p_t v_\pm |_{\pm, {k-1}, 0} \le M_1\ep \Big\},
\]
where $M_0$, $M_1$, and $\ep \in (0, \frac {R_0}{M_0})$ satisfy 
\be \label{E:temp-17}
M_0 \ge \frac {2C_0^{2(k+1)}}{1-L_1(\lambda)}, \quad M_1 = (C_0+ C_f) M_0, \quad 2(k+1)C_0^{2(k+1)} M_1 \ep < \min \{ \lambda_+-\lambda, \lambda-\lambda_-\}, 
\ee
\be \label{E:temp-18} \begin{split}
& \frac {C_0^{2(k+1)} C_f}{\lambda- \lambda_- - 2(k+1)C_0^{2(k+1)} M_1\ep } + \frac {C_0^{2(k+1)} C_f}{\lambda_+- \lambda - 2(k+1)C_0^{2(k+1)} M_1\ep} \\
&\qquad \qquad \qquad \qquad \qquad \qquad \qquad \qquad \qquad \qquad \qquad \le \min\Big\{l,  \frac {1 + L_1(\lambda)}2\Big\}, \\
&\frac {C_0^{2k} (C_0 M_0 \ep +C_f)}{\lambda- \lambda_- -2 kC_0^{2k} M_1\ep } + \frac {C_0^{2k} (C_0 M_0 \ep + C_f)}{\lambda_+- \lambda - 2kC_0^{2k} M_1\ep} \le  l, 
\end{split} \ee
then 
\[
\CT \big( X_+^k (\ep) \times \Gamma_{\lambda, M_0, \ep} \big) \subset \Gamma_{\lambda, M_0, \ep}, \quad |\CT_- (v_{0+}, v)|_{-, k, \lambda} \le l|v|_{k, \lambda},   
\]
and for any $v_1, v_2 \in \Gamma_{\lambda, M_0, \ep}$ and $v_{0+, 1}, v_{0+, 2} \in X_+^k (\ep)$, it holds 
\[
\big|\CT(v_{0+, 1}, v_1) - \CT(v_{0+, 2}, v_2)\big|_{k-1, \lambda} \le C_0^{2k} |v_{0+, 1} - v_{0+, 2}|_{X^{k-1}} + l |v_2 - v_1|_{k-1, \lambda}, 
\]
\[
\big|\CT_- (v_{0+, 1}, v_1) - \CT_- (v_{0+, 2}, v_2)\big|_{-, k-1, \lambda} \le l |v_2 - v_1|_{k-1, \lambda}.  
\]
\end{lemma}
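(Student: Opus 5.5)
The proof splits into three stages: the a priori bounds in the top norm $X^k$ (invariance of $\Gamma_{\lambda, M_0, \ep}$ under $\CT$), the time-derivative bound, and the Lipschitz estimate in the weaker norm $X^{k-1}$.

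\textbf{Stage 1: bounds in $X^k$.} Fix $v\in\Gamma_{\lambda,M_0,\ep}$. Since $|\p_t v|_{k-1,0}\le 2M_1\ep$, the exponent in \eqref{E:U-bdd-4} with $r=k$ gives, for $\pm(t-t_0)\le 0$,
\[
|U_\pm(t,t_0)|_{\BBL(X^k)}\le C_0^{2(k+1)} e^{(\lambda_\pm \mp \mu)(t-t_0)}, \qquad \mu = 2(k+1)C_0^{2(k+1)}M_1\ep .
\]
For $t\le 0$, multiply $\wt v_+(t)$ by $e^{-\lambda t}$ and use $|f_\pm(v(\tau))|_{X^k}\le C_f M_0\ep e^{\lambda\tau}$ from (C.3). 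This yields
\[
e^{-\lambda t}|\wt v_+(t)|_{X^k} \le C_0^{2(k+1)}|v_{0+}|_{X^k} + \frac{C_0^{2(k+1)}C_f}{\lambda_+-\lambda-\mu}\,|v|_{k,\lambda}.
\]
The integral $\int_t^0 e^{(\lambda_+-\mu)(t-\tau)+\lambda(\tau-t)}\,d\tau$ converges uniformly because $\lambda_+-\mu>\lambda$, which is guaranteed by \eqref{E:temp-17}. The same computation for $\wt v_-$ uses $\lambda-\lambda_--\mu>0$ and the integral over $(-\infty,t]$. By the first line of \eqref{E:temp-18}, we get $|\CT_-|_{-,k,\lambda}\le l|v|_{k,\lambda}$ and
\[
|\CT(v_{0+},v)|_{k,\lambda}\le C_0^{2(k+1)}\ep+\tfrac{1+L_1}{2}M_0\ep\le M_0\ep,
\]
where the last inequality uses $M_0\ge 2C_0^{2(k+1)}/(1-L_1)$.

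\textbf{Stage 2: the time-derivative bound.} Use \eqref{E:QLPDE-iter-2}. Then $|\p_t\wt v_\pm|_{X^{k-1}}\le C_0|\wt v|_{X^k}+C_f|v|_{X^k}\le (C_0+C_f)M_0\ep=M_1$, using (C.2) with $r=k$ and $e^{\lambda t}\le1$. Note that $\lambda\ge 0$ is needed here, and this is where $\lambda\in\Sigma_+$ enters. Membership in $Y$ follows since $M_0\ep<R_0$. Justifying that the Lyapunov--Perron integrals are genuine (strong) solutions in $X^k\cap W^{1,\infty}X^{k-1}$ is routine once one invokes Proposition \ref{P:linear}.

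\textbf{Stage 3: Lipschitz estimate in $X^{k-1}$.} This is the main obstacle, because $U$ depends on $v$ and the difference loses a derivative. Set $w=\wt v_1-\wt v_2$. It solves
\[
\p_t w_\pm = \BA_\pm(v_1)w_\pm + \big(\BA_\pm(v_1)-\BA_\pm(v_2)\big)\wt v_{2\pm} + f_\pm(v_1)-f_\pm(v_2),
\]
with $w_+(0)=v_{0+,1}-v_{0+,2}$ and $w_-\to0$ at $-\infty$. I would write $w$ by Duhamel's formula with $U_{\pm,1}$, estimated in $\BBL(X^{k-1})$ via \eqref{E:U-bdd-4} with $r=k-1$. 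This gives the constant $C_0^{2k}$ and the shifted rate $2kC_0^{2k}M_1\ep$. The forcing is controlled by
\[
|(\BA(v_1)-\BA(v_2))\wt v_2|_{X^{k-1}}\le C_0|v_1-v_2|_{X^{k-1}}|\wt v_2|_{X^k}\le C_0M_0\ep\, e^{\lambda\tau}|v_1-v_2|_{X^{k-1}},
\]
using the $C^1$ bound on $\BA$ in $X^{k-1}$ from (C.2) with $r=k$ and Stage 1 for $\wt v_2$. The term $f_\pm(v_1)-f_\pm(v_2)$ contributes $C_f|v_1-v_2|_{X^{k-1}}$. Dropping the extra factor $e^{\lambda\tau}\le 1$ in the first bound and integrating with weights exactly as in Stage 1 produces the second line of \eqref{E:temp-18}. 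This gives $l|v_1-v_2|_{k-1,\lambda}$ for the $-$ component and adds $C_0^{2k}|v_{0+,1}-v_{0+,2}|$ for the $+$ component.

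The delicate point is that this difference equation must be justified in $X^{k-1}$ with the $X^k$ regularity of $\wt v_2$ in hand. That is why $\wt v_2$ rather than $\wt v_1$ multiplies the operator difference, and why the contraction holds only in the weaker norm.
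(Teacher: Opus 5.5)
Your proposal is correct and follows the paper's proof essentially step for step. It has the same three ingredients: the weighted $X^k$ bounds on the Lyapunov--Perron integrals via \eqref{E:U-bdd-4} with the rate shift $2(k+1)C_0^{2(k+1)}M_1\ep$; the time-derivative bound from \eqref{E:QLPDE-iter-2}, using $\lambda\ge 0$; and the $X^{k-1}$ Duhamel estimate for the difference, built with $U_{1\pm}$ and the forcing $(\BA_\pm(v_1)-\BA_\pm(v_2))\wt v_{2\pm}+f_\pm(v_1)-f_\pm(v_2)$. There is one trivial slip: in Stage 2 the bound should read $(C_0+C_f)M_0\ep=M_1\ep$, not $M_1$.
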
 

Clearly a solution $v(\cdot)$ to \eqref{E:QLPDE-1} belongs to $\Gamma_{\lambda, M_0, \ep}$ if and only if $\CT( v_+(0), v)=v$. The existence of $\ep>0$ satisfying \eqref{E:temp-18} is ensured by  the assumption $\lambda\in \Sigma_+$ which bounds the left sides of \eqref{E:temp-18}  by  $L_1(\lambda)$ when $\ep=0$. 

\begin{remark} \label{R:size-1}
Suppose we fix the linear operators $\BA_\pm(v)$ in \eqref{E:QLPDE-1} and consider nonlinearity $f_\pm (v)$ with small $C_f \ll 1$ on $X^k (R_0)$, which is the case when $f_\pm$ is superlinear near $v=0$ and $R_0\ll 1$. In this case we may take $l = 2 L_1(\lambda) \ll 1$, then there exists $C>0$ determined by $C_0$ and $|\lambda - \lambda_\pm|$ such that $\ep = L_1(\lambda)/(CM_0)$ satisfies  condition \eqref{E:temp-18} in the above lemma. 
\end{remark}

\begin{proof} 
For $v_{0+} \in  X_+^k (\ep)$ and $v(\cdot) \in Y$, using the weight norms defined in \eqref{E:weighted-1},  we obtain from \eqref{E:CT-2}, \eqref{E:U-bdd-4}, and assumption (C.3), for any $t\le 0$,  
\begin{align*} 
|\wt v_+ (t) & |_{X_+^{k}} \le  |U_+ (t, 0) v_{0+}|_{X_+^{k}} + \int_t^0 |U_+ (t, \tau)|_{\BBL(X_+^{k})} |f_+ (v(\tau))|_{X_+^{k}} d\tau \\
\le & C_0^{2(k+1)}  \Big( e^{(\lambda_+ - (k+1) C_0^{2(k+1)} |v'|_{k-1, 0})t  }  |v_{0+}|_{X_+^{k}} \\
&\qquad  \qquad + C_{f}  \int_t^0  e^{(\lambda_+ - (k+1) C_0^{2(k+1)} |v'|_{k-1, 0})  (t-\tau)} |v(\tau)|_{X^k} d\tau \Big) \\
\le & C_0^{2(k+1)} e^{(\lambda_+ - (k+1) C_0^{2(k+1)} |v'|_{{k-1}, 0})t }  |v_{0+}|_{X_+^{k}} + \frac {C_0^{2(k+1)} C_{f}}{\lambda_+ - \lambda - (k+1) C_0^{2(k+1)} |v'|_{{k-1}, 0}} e^{\lambda t} |v|_{k, \lambda}. 
\end{align*}
For $v \in \Gamma_{\lambda, M_0, \ep}$, it implies 
\[
|\wt v_+|_{+, k, \lambda} \le C_0^{2(k+1)}  |v_{0+}|_{X_+^{k}} + \frac {C_0^{2(k+1)} C_{f}}{\lambda_+ - \lambda - 2(k+1) C_0^{2(k+1)}|v'|_{{k-1}, 0}}  |v|_{k, \lambda}. 
\] 
Similarly, 
for $v \in \Gamma_{\lambda, M_0,  \ep}$,  
\[
|\wt v_-|_{-, k, \lambda} \le \frac {C_0^{2(k+1)} C_{f}}{\lambda - \lambda_- - 2(k+1) C_0^{2(k+1)} |v'|_{{k-1}, 0}}  |v|_{k, \lambda}. 
\] 
Therefore 
\be \label{E:temp-19} \begin{split}
|\CT(v_{0+}, v)|_{k, \lambda} \le C_0^{2(k+1)}  |v_{0+}|_{X_+^{k}} + \Big( & \frac {C_0^{2(k+1)} C_f}{\lambda- \lambda_- - 2(k+1)C_0^{2(k+1)} |v'|_{{k-1}, 0} } \\
&+ \frac {C_0^{2(k+1)} C_f}{\lambda_+- \lambda - 2(k+1)C_0^{2(k+1)} |v'|_{{k-1}, 0}} \Big) |v|_{k, \lambda}. 
\end{split} \ee
Moreover, according to \eqref{E:QLPDE-iter-2},  
\be \label{E:temp-20} \begin{split}
|\wt v_\pm' (t) |_{X_\pm^{k-1}} = & |\BA_\pm (v(t)) \wt v_\pm (t) + f_\pm (v(t))|_{X_\pm^{k-1}}
\le C_0 |\wt v_\pm (t)|_{X_\pm^{k}} + C_{f} |v (t)|_{X^{k-1}}.
\end{split} \ee
Summarizing the above estimates, according to the choice of the constants $M_0$, $M_1$, and $\ep$, we obtain that $\CT \big( X_+^k (\ep) \times \Gamma_{\lambda, M_0, \ep} \big) \subset \Gamma_{\lambda, M_0, \ep}$ and the estimate on $\CT_-$. 

To obtain the contraction estimates, for $v_1, v_2 \in Y$, let $U_{1\pm}(t, t_0)$ and $U_{2\pm}(t, t_0)$, $\mp (t-t_0) \ge 0$ be the evolution operators generated by $v_1, v_2$, respectively, 
and   
\[
\wt w =( \wt w_+(\cdot) , \wt w_-(\cdot)) = \wt v_2 - \wt v_1 \triangleq \CT(v_{0+}, v_2) - \CT(v_{0+}, v_1). 
\]
Using \eqref{E:QLPDE-iter-2}, much as in the proof of Lemma \ref{L:Contraction-QLPDE-1} in Appendix \ref{SS:QLPDE-LWP}, we have 
\[
\wt w_\pm (t) = \int_{T_\pm}^t U_{1\pm} (t, \tau) \Big( \big(\BA_\pm (v_2(\tau)) - \BA_\pm (v_1(\tau)) \big)  \wt v_{2\pm}(\tau)  + f_\pm (v_2(\tau) ) - f_\pm (v_1(\tau) ) \Big) d\tau,
\]
where $T_-=-\infty$ and $T_+=0$. Hence   
\begin{align*} 
|\wt w_\pm(t) &|_{X_\pm^{k-1}} \le  
\Big| \int_{T_\pm}^t |U_{1\pm} (t, \tau)|_{\BBL(X_\pm^{k-1})} \Big( |\BD \BA_\pm |_{C^0(X^{k-1} (R_0), \BBL(X^{k-1} \otimes X_\pm^{k}, X_\pm^{k-1})} |\wt v_{2\pm} (\tau)|_{X_\pm^{k}}  +  C_{f} \Big)\\
&\qquad \qquad \times  |v_2 (\tau) - v_1(\tau)|_{X^{k-1}}d\tau \Big| \\
\le & C_0^{2k} \Big| \int_{T_\pm}^t  e^{(\lambda_\pm \mp k C_0^{2k}  |v'|_{{k-1}, 0}) (t-\tau)}   \Big( C_0  |\wt v_{2\pm}|_{\pm, {k}, 0} + C_{f} \Big) |v_2 (\tau) - v_1(\tau)|_{X^{k-1}}  d\tau \Big|\\
\le & \frac {C_0^{2k} (C_0 |\wt v_{2\pm}|_{\pm, {k}, 0} + C_{f})}{|\lambda- \lambda_\pm| - k C_0^{2k}  |v'|_{{k-1}, 0}} e^{\lambda t  } |v_2  - v_1|_{{k-1}, \lambda}. 
\end{align*}
The choices $M_0, M_1$, and $\ep$ along with \eqref{E:U-bdd-4} imply the contraction estimates. 
\end{proof}

\begin{corollary} \label{C:LP-contraction} 
For any $v_{0+} \in X_+^{k}(\ep)$, there exists a unique $v= (v_+(\cdot), v_-(\cdot)) \in \Gamma_{\lambda, M_0,  \ep}$ such that $\CT (v_{0+}, v) =v$. Moreover $v\in C^0 ((-\infty, 0], X^k (M_0 \ep)) \cap C^1 \big((-\infty, 0], X^{k-1}(R_0)\big)$. 
\end{corollary}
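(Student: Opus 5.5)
The plan is to apply a contraction mapping argument in the weaker norm $|\cdot|_{k-1,\lambda}$ to the map $v\mapsto \CT(v_{0+}, v)$ on $\Gamma_{\lambda, M_0,\ep}$, using Lemma \ref{L:LP-contraction}. Fix $v_{0+}\in X_+^k(\ep)$. The lemma gives that $\CT(v_{0+},\cdot)$ maps $\Gamma_{\lambda,M_0,\ep}$ into itself, and that it is a contraction with constant $l<1$ in the metric $|v_1-v_2|_{k-1,\lambda}$ (take $v_{0+,1}=v_{0+,2}$). Uniqueness then follows at once: two fixed points $v_1,v_2$ satisfy $|v_1-v_2|_{k-1,\lambda}\le l|v_1-v_2|_{k-1,\lambda}$, hence coincide.

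For existence, start the Picard iteration at $v^0=0\in\Gamma_{\lambda,M_0,\ep}$ and set $v^{j+1}=\CT(v_{0+},v^j)$. This sequence stays in $\Gamma_{\lambda,M_0,\ep}$ and is Cauchy in $|\cdot|_{k-1,\lambda}$, so it converges in that norm to some $v$. In particular $v^j(t)\to v(t)$ in $X^{k-1}$ uniformly on $(-\infty,0]$.

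The main obstacle is that $\Gamma_{\lambda,M_0,\ep}$ is defined through the stronger norms, so we must show the limit remains in it. We handle this by weak compactness in the Hilbert spaces. For each $t$, the bound $|v^j(t)|_{X^k}\le M_0\ep e^{\lambda t}$ together with strong convergence in $X^{k-1}$ gives $v^j(t)\rightharpoonup v(t)$ weakly in $X^k$. Lower semicontinuity of the norm then yields $|v(t)|_{X^k}\le M_0\ep e^{\lambda t}$. Here we use that $X^k$ is a Hilbert space continuously and densely embedded in $X^{k-1}$, so the weak limit is identified. Similarly, the uniform Lipschitz bound $|v^j(t)-v^j(s)|_{X^{k-1}}\le M_1\ep|t-s|$ passes to the limit, giving $v\in W^{1,\infty}((-\infty,0],X^{k-1})$ with $|\p_t v_\pm|_{\pm,k-1,0}\le M_1\ep$. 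Hence $v\in\Gamma_{\lambda,M_0,\ep}$. Since $\CT(v_{0+},\cdot)$ is Lipschitz in $|\cdot|_{k-1,\lambda}$ on $\Gamma_{\lambda,M_0,\ep}$, passing to the limit in $v^{j+1}=\CT(v_{0+},v^j)$ gives $\CT(v_{0+},v)=v$.

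It remains to prove the regularity. Because $v=\CT(v_{0+},v)$, each component $v_\pm$ is given by the Duhamel formula with the evolution operators $U_\pm$ generated by $v$ itself. By Proposition \ref{P:linear} (as in Appendix \ref{SS:QLPDE-LWP}), these are strongly continuous on $X^k$. The forcing $f_\pm(v(\cdot))$ is bounded in $X^k_\pm$, decays like $e^{\lambda\tau}$, and is continuous in $X^{k-1}$. From the $X^k$ boundedness together with the $X^{k-1}$ continuity, one obtains continuity of $v$ into $X^k$; if needed, use weak continuity in $X^k$ plus continuity of the $\BL$-energy norm, as in the local well-posedness appendix. This shows $v\in C^0((-\infty,0],X^k)$, and $|v(t)|_{X^k}\le M_0\ep e^{\lambda t}$ places it in $X^k(M_0\ep)$, noting $M_0\ep<R_0$. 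Finally, \eqref{E:QLPDE-iter-2} gives $\p_t v=\BA(v)v+f(v)$. The right-hand side is continuous in $X^{k-1}$, since $\BA_\pm\in C^1(X^{k-1}(R_0),\BBL(X^k_\pm,X^{k-1}_\pm))$, $v\in C^0(X^k)$, and $f_\pm$ is Lipschitz into $X^{k-1}$. Therefore $v\in C^1((-\infty,0],X^{k-1}(R_0))$.
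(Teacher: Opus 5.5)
Your proposal is correct and follows essentially the argument the paper intends (it omits the proof and defers to that of Theorem \ref{T:QLPDE-LWP}): contraction of $\CT(v_{0+},\cdot)$ in the weaker norm $|\cdot|_{k-1,\lambda}$, recovery of the $X^k$ and Lipschitz bounds for the limit by weak compactness so that it stays in $\Gamma_{\lambda, M_0, \ep}$, and continuity from the Duhamel formula. Your pointwise-in-time weak limits with lower semicontinuity are a harmless substitute for the paper's weak-$*$ compactness in $L^\infty_t X^k$. There are two small points to tighten. First, apply lower semicontinuity to $v_+$ and $v_-$ separately, at possibly different times, to recover the full constraint $|v|_{k,\lambda}\le M_0\ep$, which is a sum of two suprema. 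Second, strong continuity of $v$ in $X^k$ does not follow from $X^k$-boundedness plus $X^{k-1}$-continuity alone, which only give weak continuity in $X^k$; it comes from the Duhamel formula, via strong continuity of $U_\pm$ on $X^k$ and dominated convergence against the $X^k$-bounded, exponentially weighted forcing.
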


The corollary follows from the same proof as in the proof Theorem \ref{T:QLPDE-LWP} and we omit it. 

Under the assumptions and notations of Lemma \ref{L:LP-contraction},  $\CT (v_{0+}, v(\cdot)) =v(\cdot)$ implies $v_+(0) = v_{0+}$. The $X_-$ component of $v(0)$ allows us to define  
\be \label{E:UM-2}
h_+: X_+^{k}(\ep) \to X_-^{k} (R_0) \; \text{ as } \; h_+ (v_{0+}) = v_-(0).
\ee 

We are ready to complete the proof of Theorem \ref{T:QLPDE-UM}. 

\begin{proof}[Proof of Theorem \ref{T:QLPDE-UM}.] 
The estimate on $|v|_{k, \lambda}$ in \eqref{E:UM-1} is a direct consequence of \eqref{E:temp-19} and the fact that $v$ is the unique fixed point of $\CT(v_{0+}, \cdot)$ in $\Gamma_{\lambda, M_0,  \ep}$. The rest in Theorem \ref{T:QLPDE-UM}(1) are included in Corollary \ref{C:LP-contraction} and the definition of $\Gamma_{\lambda, M_0,  \ep}$. 

Suppose $v_+(t) \in X_+^k(\ep)$ for some $t<0$ . Since $\lambda \ge 0$,  $v(\cdot +t) \in \Gamma_{\lambda, M_0,  \ep}$ is the fixed point of $\CT( v_+(t), \cdot)$, we have $h_+(v_+ (t)) = v_-(t)$ which proves Theorem \ref{T:QLPDE-UM}(2). 

To prove Theorem \ref{T:QLPDE-UM}(3), one observes that  $M_0> \frac {2 C_0^{2(k+1)}}{1-L_1(\lambda)}$ is assumed. By the continuity  of $v(t) \in X^k$ along with \eqref{E:UM-1} we obtain that \eqref{E:UM-0.5} is satisfied by $v(\cdot + \delta)$ for some $\delta \in (0, T)$. So the uniqueness in Theorem \ref{T:QLPDE-UM}(1) implies $v(t) \in W^+$ for all $t \in (0, \delta)$. Theorem \ref{T:QLPDE-UM}(3) follows from a continuation argument. 

In Theorem \ref{T:QLPDE-UM}(4), $h_+(0) =0$ is clear since $v=0 \in \Gamma_{\lambda, M_0,  \ep}$ is the unique fixed point of $\CT(0, \cdot)$. 
The  estimates on $h_+$ and $|v(t) - \wt v(t)|_{X^{k-1}}$ 
follow directly from \eqref{E:UM-1} and the estimates of $\CT$ given in Lemma \ref{L:LP-contraction}.  

Theorem \ref{T:QLPDE-UM}(5) will be a part of Lemma \ref{L:UM-1} below. 
\end{proof}

Even though the above procedure defining $h_+$  involves parameters $\lambda$, $l$, $M_0$, and $\ep$, the following lemma shows that $h_+$ is essentially independent of these parameters. . 

\begin{lemma} \label{L:UM-1} 
The mapping $h_+$ satisfies the following properties. 
\begin{enumerate} 
\item Suppose $\lambda$ along with $\wt l \in \big(L_1(\lambda), 1\big)$, $\wt M_0$, and $\wt \ep$ also satisfy the conditions in Lemma \ref{L:LP-contraction}. Let $\wt h_+$ be the mapping defined by \eqref{E:UM-2} and Corollary \ref{C:LP-contraction} accordingly and $\ep^* = \min \{\ep, \wt \ep\}$, then $h_+ = \wt h_+$ on $X_+^k(\ep^*)$. 
\item Suppose $\wt \lambda \in \Sigma_+$, $\wt l \in \big(L_1(\wt \lambda), 1\big)$, $\wt M_0$, and $\wt \ep$ also satisfy the conditions in Lemma \ref{L:LP-contraction}. Let $\wt h_+$ be the mapping defined by \eqref{E:UM-2} and Corollary \ref{C:LP-contraction} accordingly. There exists $\ep_0 >0$ such that $h_+ = \wt h_+$ on $X_+^k(\ep_0)$.
\item Suppose $0< \wt \lambda \in \Sigma_+$ and $v(t)$, $t\le 0$, is a solution to \eqref{E:QLPDE-1} such that $|v|_{k, \wt \lambda} <\infty$, then there exists $t_0\le 0$ such that $v_-(t) = h_+(v_+(t))$ for all $t\le t_0$.  
\end{enumerate}\end{lemma}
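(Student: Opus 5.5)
The plan is to derive all three statements from the uniqueness of fixed points in Corollary \ref{C:LP-contraction}, together with time translation and the observation that the classes $\Gamma_{\lambda, M_0, \ep}$ for different parameters are nested or can be compared.

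\textbf{Statement (1).} Fix $v_{0+} \in X_+^k(\ep^*)$ and let $v$, $\wt v$ be the fixed points of $\CT(v_{0+},\cdot)$ in $\Gamma_{\lambda,M_0,\ep}$ and $\Gamma_{\lambda,\wt M_0,\wt \ep}$, respectively. By \eqref{E:UM-1}, both satisfy
\[
|v|_{k,\lambda},\ |\wt v|_{k,\lambda} \le \frac{2C_0^{2(k+1)}}{1-L_1(\lambda)}|v_{0+}|_{X_+^k} \le \frac{2C_0^{2(k+1)}}{1-L_1(\lambda)}\,\ep^*.
\]
By \eqref{E:temp-20}, their time derivatives are bounded by $(C_0+C_f)$ times the same quantity. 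Set $M_0^* = \frac{2C_0^{2(k+1)}}{1-L_1(\lambda)}$, $M_1^* = (C_0+C_f)M_0^*$ and $l^* = \max\{l,\wt l\}$. Then both $v$ and $\wt v$ lie in $\Gamma_{\lambda, M_0^*, \ep^*}$. Since $M_0^*\ep^* \le \min\{M_0\ep, \wt M_0\wt\ep\}$, conditions \eqref{E:temp-17}--\eqref{E:temp-18} still hold for these parameters (the left sides of \eqref{E:temp-18} are monotone in $M_1\ep$ and $M_0\ep$). The contraction of Lemma \ref{L:LP-contraction} in $|\cdot|_{k-1,\lambda}$ therefore gives $v=\wt v$, hence $h_+ = \wt h_+$. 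An alternative route is to note directly that $v \in \Gamma_{\lambda,\wt M_0,\wt\ep}$ whenever $M_0^*\ep^*\le \wt M_0\wt \ep$, and then invoke uniqueness there.

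\textbf{Statement (2).} Assume without loss of generality that $\wt\lambda < \lambda$. Any $v$ with $|v|_{k,\lambda}$ finite and $t\le 0$ satisfies $|v|_{k,\wt\lambda}\le |v|_{k,\lambda}$, since $e^{-\wt\lambda t}\le e^{-\lambda t}$ for $t \le 0$. Hence the fixed point $v$ for $(\lambda, M_0,\ep)$ with $|v_{0+}|$ small lies in $\Gamma_{\wt\lambda, M_0^*, \ep_0}$ once $\ep_0$ is small enough that $M_0^*\ep_0\le \wt M_0\wt\ep$. Uniqueness in the $\wt\lambda$ class then gives $\wt h_+(v_{0+}) = v_-(0) = h_+(v_{0+})$.

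\textbf{Statement (3).} Here is the real content. Set $K = |v|_{k,\wt\lambda}$. Then $|v(t)|_{X^k}\le Ke^{\wt\lambda t}$, which is small for $t\le t_1 \ll -1$. Since $\wt\lambda>0$ and $v$ solves \eqref{E:QLPDE-1}, the equation together with (C.2)--(C.3) yields $|v'(t)|_{X^{k-1}} \le (C_0+C_f)Ke^{\wt\lambda t}$. For $s\le t_1$, consider the shifted solution $v^s(t)=v(t+s)$, $t\le 0$. It satisfies
\[
|v^s|_{k,\wt\lambda}\le Ke^{\wt\lambda s}, \qquad |\p_t v^s|_{k-1,0}\le (C_0+C_f)Ke^{\wt\lambda s}.
\]
Choose $t_0\le t_1$ so that $Ke^{\wt\lambda t_0}\le \wt M_0 \wt\ep$, with $(C_0+C_f)Ke^{\wt\lambda t_0}\le \wt M_1\wt\ep$ and $|v_+(s)|_{X_+^k} < \wt\ep$ for $s \le t_0$. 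Then $v^s \in \Gamma_{\wt\lambda,\wt M_0,\wt\ep}$ is a fixed point of $\CT(v_+(s),\cdot)$, because a solution lying in $\Gamma$ equals its Lyapunov--Perron image. Here the $X_-$ integral from $-\infty$ converges and reproduces $v_-$, since $v_-$ decays faster than $U_-$ grows; this uses $\wt\lambda>\lambda_-$ and the bound \eqref{E:U-bdd-4}. This identification is the step I expect to need care. By uniqueness, $\wt h_+(v_+(s)) = v_-(s)$, and statement (2) converts $\wt h_+$ into $h_+$ after shrinking $t_0$ further so that $|v_+(s)|<\ep_0$.
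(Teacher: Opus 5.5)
Your proposal is correct and follows essentially the paper's argument: uniqueness of the fixed point of $\CT(v_{0+},\cdot)$ in nested classes $\Gamma$ gives (1) and (2), and time translation combined with (2) gives (3). In (1) you push both fixed points into a common smaller class via \eqref{E:UM-1}, whereas the paper embeds $\Gamma_{\lambda, M_0,\ep^*}$ into both larger classes; your check that a solution decaying at rate $\wt\lambda>\lambda_-$ reproduces its Lyapunov--Perron image fills in a step the paper only asserts as clear, and applying it at every $s\le t_0$ is an equivalent substitute for the paper's appeal to Theorem \ref{T:QLPDE-UM}(2).
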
 

\begin{proof} 
To prove statement (1), without loss of generality, suppose $M_0 \le \wt M_0$. Clearly $(\lambda, \min\{l, \wt l\}, M_0, \ep^*)$ satisfy \eqref{E:temp-17} and \eqref{E:temp-18} and $\Gamma_{\lambda, M_0,  \ep^*} \subset \Gamma_{\lambda, M_0,  \ep} \cap \Gamma_{\lambda, \wt M_0, \wt \ep}$. For any $v_{0+} \in X_+^k(\ep^*)$, Lemma \ref{L:LP-contraction} implies that $\CT (v_{0+}, \cdot)$ has a unique fixed point $v (\cdot) \in \Gamma_{\lambda, M_0,  \ep^*}$, which is also its unique fixed point in both $\Gamma_{\lambda, M_0,  \ep}$ and  $\Gamma_{\lambda, \wt M_0, \wt \ep}$. Therefore we obtain $h_+(v_{0+}) = \wt h_+ (v_{0+}) = v_-(0)$. 

To prove statement (2), without loss of generality, suppose $0 \le \wt \lambda \le \lambda$. 
Let 
\[
\ep_0 = \min \{ \ep, \ \wt M_0 (1-L_1(\lambda)) C_0^{-2(k+1)}\wt \ep/ 2 \}. 
\]
For any $v_{0+} \in X_+^k(\ep_0)$, Lemma \ref{L:LP-contraction} implies that $\CT (v_{0+}, \cdot)$ has fixed points $v (\cdot) \in \Gamma_{\lambda, M_0,  \ep}$ and $\wt v(\cdot) \in \Gamma_{\wt \lambda, \wt M_0, \wt \ep}$. From  \eqref{E:UM-1}, the choice of $\ep_0$, and $\wt  \lambda \le \lambda$, it holds $v(\cdot) \in \Gamma_{\wt \lambda, \wt M_0, \wt \ep}$. The uniqueness of the fixed point implies $v = \wt v$ and thus $h_+(v_{0+}) = \wt h_+ (v_{0+})$. 

Finally we prove statement (3). Let 
\[
\wt M_0 = 2 C_0^{2(k+1)}/(1-L_1(\wt \lambda)), \quad \wt l = (1+ L_1(\wt \lambda))/2, 
\]
and $\wt \ep>0$ be sufficiently small such that  \eqref{E:temp-18} is satisfied by $\wt \lambda$, $\wt l$, $\wt M_0$, and $\wt \ep$, which thus defines a mapping $\wt h_+: X_+^k (\wt \ep) \to X_-^k$. According to the above statement (2), there exists $\ep_0>0$ such that $\wt h_+ = h_+$ on $X_+^k(\ep_0)$. Since $\wt \lambda>0$, there exists $t_0 <0$ such that $v (t_0 +\cdot) \in \Gamma_{\wt \lambda, \wt M_0, \wt \ep}$ and $|v(t)|_{X^k} < \ep_0$ for all $t \le t_0$. Therefore $v (t_0 +\cdot)$ is the unique fixed point of $\CT(v_+(t_0), \cdot)$ in $\Gamma_{\wt \lambda, \wt M_0, \wt \ep}$ and thus $ v_-(t)= \wt h_+(v_+(t))=h_+(v_+(t))$ for all $t \le t_0$ by Theorem \ref{T:QLPDE-UM}(2). 
\end{proof}

\begin{remark} \label{R:varyingR}
For $R \in (0, R_0]$, let $C_0(R) \le C_0$ and $C_f(R) \le C_f$ be upper bounds  satisfying assumption (C.1)--(C.3) on $X_+^{k-1}(R)$ and $X_+^k(R)$, respectively. The set $\Sigma_+(R)$ in (C.4) can be defined accordingly with $L_1 (\lambda, R) \le L_1(\lambda)$ depending on both $\lambda$ and $R$. For any $\lambda \in \Sigma_+ \subset \Sigma_+(R)$, $\wt l \in (L_1(\lambda, R), 1)$, and $\wt M_0> \frac {2C_0(R)^{2(k+1)}}{1-L_1(\lambda, R)}$, Theorem \ref{T:QLPDE-UM} implies that there exist $\wt \ep>0$ mapping $\wt h_+ (R, \cdot): X_+^k(\wt \ep) \to X_-^k$ satisfying the same properties. The same proof as in Lemma \ref{L:UM-1}(1) yields that $\wt h_+$ and $h_+$ (obtained from $R_0$, the same $\lambda$, $l = \max\{\wt l, \frac 12(1+L_1(\lambda)) \}$, and $M_0= \max\{\wt M_0, \frac {3C_0^{2(k+1)}}{1-L_1(\lambda)} \}$) coincide on the intersection of their domains.  Hence the mapping $h_+$ is independent of which $R \in (0, R_0]$ we start with. 
\end{remark}

\begin{remark} \label{R:lambda=0}
A corollary of Lemma \ref{L:UM-1}(2) is that, even if $0\in \Sigma_+$ and $h_+$ is constructed using $\lambda=0$, since there exists $0< \wt \lambda \in \Sigma_+$, small initial values $v_{0+} + h_+(v_{0+})$ still have exponentially decaying backward solutions. 
\end{remark}


\subsection{Stable and unstable manifolds of a class of nonlinear PDEs}  \label{SS:NLPDE-LInMa}

Consider 
\be \label{E:NLPDE-1} 
u_t= F(u), \qquad F(0)=0.
\ee
We assume  that there exist 
\[
2\le n \in \N, 
\quad \omega_\pm \in \R,  \;  \text{ and an open neighborhood } \; \CO \subset X^{n-1}  \; \text{ of } \; 0, 
\]
such that the following hold for $\CA =\BD F$, where $\CO_n = \CO \cap X^n$ 
is equipped with the $|\cdot|_{X^n}$ topology.  

\begin{enumerate} 

\item [(D.1)] There exist subspaces $X_{j\pm} \subset X$, $j=1,2,3$, such that, for any $0\le r\le n$, $X_{j\pm}^r = X_{j\pm} \cap X^r$ are closed subspaces of $X^r$, and  
\be \label{E:decom-1}
X^r= X_+^r \oplus X_-^r, \; \text{ where } \; X_\pm = \oplus_{j=1}^3 X_{j+}^r. 
\ee
Let $\Pi_{j\alpha} \in \BBL(X^r, X_{j\alpha}^r)$ and $\Pi_\alpha \in \BBL(X^r, X_\alpha^r)$, $j=1,2, 3$, $\alpha =+, -$ and $0\le r\le n$, be the associated bounded projections and 
\[
\CA_{j\alpha, j'\alpha'} (u) = \Pi_{j\alpha} \CA(u)|_{X_{j'\alpha'}}, \;\; \CA_{j\alpha} = \CA_{j\alpha, j\alpha}, \;\; \CA_{\alpha, \alpha'} (u) = \Pi_{\alpha} \CA(u)|_{X_{\alpha'}}, \;\; \CA_\alpha (u) = \CA_{\alpha, \alpha} (u). 
\]
In the decomposition \eqref{E:decom-1}, we assume $\CA(0)$ takes the following  upper triangular form 
\be \label{E:LinearD-1}
\CA(0)= \begin{pmatrix} \CA_{1+} (0) & \CA_{1+, 2+} (0) & \CA_{1+, 3+}(0) & 0 & 0 &0 \\ 0 & \CA_{2+} (0) & \CA_{2+, 3+} (0) & 0 & 0 & 0 \\ 0 & 0 & \CA_{3+}(0) & 0 & 0 &0 \\ 0 & 0 & 0 & \CA_{1-} (0) & \CA_{1-, 2-}(0) & \CA_{1-, 3-} (0) \\ 0 & 0 & 0 & 0 & \CA_{2-}(0) & \CA_{2-, 3-} (0) \\ 0 & 0 & 0 & 0 & 0 & \CA_{3-} (0)  \end{pmatrix}. 
\ee


\item [(D.2)] For $j=1,2,3$ and $1\le r \le n$,  assume 
\[
(\omega_\pm \mp 1 - \CA_{j\pm}(0))^{-1} \in 
\BBL( X_{j\pm}^{r-1}, X_{j\pm}^{r}).
\]

\item [(D.3)] The components of $\CA(u)$ 
satisfy, for 
$0\le r\le n-1$,  
\begin{align*} 
\CA_{j\alpha, j'\alpha'}
\in C^1 (\CO , \BBL(X_{j'\alpha'}^r, X_{j\alpha}^r)) \cap C^1 (\CO_n, \BBL(X_{j'\alpha'}^{n}, X_{j\alpha}^{n})), \quad j\alpha \ne j'\alpha', \\
 \CA_{j\alpha} - \CA_{j\alpha} (0) \in C^1 (\CO , \BBL(X_{j\alpha}^r)) \cap C^1 (\CO_n, \BBL( X_{j\alpha}^{n})), \quad j\ne 2, \\
\CA_{j\alpha} (0) \in \BBL(X_{j\alpha}^r, X_{j\alpha}^{r-1}) \cap \BBL( X_{j\alpha}^{n}, X_{j\alpha}^{n-1}), \quad j \ne 2, \; r\ge 1
\end{align*}
\be \label{E:regularity-1}
\CA_{2\pm} \in C^1 (\CO , \BBL(X_{2\pm}^r, X_{2\pm}^{r-1})) \cap C^1 (\CO_n, \BBL(X_{2\pm}^{n}, X_{2\pm}^{n-1})), \quad 1\le r.
\ee

\item [(D.4)] There exist $\CL_{j\pm} \in \BBL(X_{j\pm}, X_{j\pm}^*)$, $j=1,3$, and $\CL_{2\pm} \in C^1 (\CO, \BBL(X_{2\pm}, (X_{2\pm})^*))$ such that $\CL_{i\pm} (u) = \CL_{i\pm} (u)^* >0$, $i=1,2,3$, and for any $u \in \CO$ 
\be \label{E:coercivity-4}
\sup_{u \in \CO, w\in X_{i\pm} \setminus \{0\}} \Big\{ \frac { \langle \CL_{i\pm} (u) w, w \rangle}{|w|_{X_\pm}^2}, \, \frac {|w|_{X_\pm}^2}{ \langle \CL_{i\pm} (u) w, w \rangle} \Big\} < \infty. 
\ee
Moreover, 
\be \label{E:dissipativity-4}
\langle \CL_{2\pm} (u) w, \mp \CA_{2\pm} (u)w \rangle \le \mp \omega_\pm \langle \CL_{2\pm} (u) w, w \rangle, \quad \forall w \in Dom(\CA_{2\pm} (u)) = X_{2\pm}^1,
\ee
\be \label{E:dissipativity-5}
\langle \CL_{j\pm} w, \mp \CA_{j\pm} (0)w \rangle \le \mp \omega_\pm \langle \CL_{j\pm} w, w \rangle, \quad \forall w \in Dom(\CA_{j\pm}) = X_{j\pm}^1, \; j=1,3.
\ee

  
\end{enumerate}

\begin{remark} \label{R:(D.2)} 
Assumption (D.3) implies  
\be \label{E:(D.3)}
F(u)- \sum_{\alpha =\pm} \Pi_{2\alpha} F ( \Pi_{2\alpha} u) - \sum_{\substack{j=1,3, \, \alpha =\pm}} \CA_{j\alpha}(0) \Pi_{j\alpha} u \in C^2 (\CO, X^{n-1}) \cap C^2 (\CO_n, X^{n}). 
\ee
\end{remark}

In the following we will use the notations  
\[
X^r (R) = X_+^r (R) \oplus X_-^r(R), \quad X_\pm^r(R)= X_{1\pm}^r (R) \oplus X_{2\pm}^r (R) \oplus X_{3\pm}^r (R).
\]
The main result of this section (also of the paper) is the following local unstable manifold theorem. See Remark \ref{R:SM} for comments on local stable manifolds. 

\begin{theorem} \label{T:NLPDE-UM}
Assume (D.1)--(D.4). In addition, assume $\omega_+ > \lambda_0 > \max\{\omega_-, 0\}$, then there exist $\delta, C, M^*>0$ and $q_+: X_+^n (\delta) \to X_-^n$ 
such that the following hold. 
\begin{enumerate} 
\item For any $u_{0+} \in X_+^n (\delta)$, there exists a solution to \eqref{E:NLPDE-1}
\[
u(t) \in C^0 \big((-\infty, 0], \CO_n \big) \cap C^1 \big((-\infty, 0], \CO\big),  
\]
unique in the category 
\be \label{E:temp-20.5}
u_+ (0) = u_{0+}, \quad |u(t)|_{X^n} \le 2 M^* \delta e^{\lambda_0 t}, \; \forall t \le 0. 
\ee
Moreover $u(t)$ satisfies 
\be \label{E:UM-3}
u_-(0) = q_+ (u_{0+}), \quad |u(t)|_{X^n} \le M^* |u_{0+}|_{X_+^n}e^{\lambda_0 t}, \; \forall t \le 0,
\ee
\be \label{E:UM-3.1} 
\sup_{t\le 0}  |u(t)|_{X^n} e^{-\wt \lambda t} < \infty, \quad \forall \wt \lambda \in \big( \max\{\omega_-, 0\}, \omega_+\big). 
\ee
\item The above $u(t)$ defined by $u_{0+}$ also satisfies that, for any $t<0$ satisfying $u_+(t) \in X_+^n(\delta)$, it holds $q_+(u_+ (t)) = u_-(t)$.
\item Suppose the above solution $u(t)$ defined by $u_{0+}$ can be extended to $C^0 ((-\infty, T_0), X^n)$ for some $T_0>0$. Let $T = \sup\{ t >0 \mid u_+(\tau ) \in X_+^n (\delta), \, \forall \tau \in [0, t) \} \in (0, T_0]$, then $q_+(u_+ (t)) = u_-(t)$ for all $t \in [0, T)$. 
\item $q_+ \in C^{1,1} (X_+^n (\delta), X_-^{n-2})$ satisfies, for any $\delta' \in (0, \delta]$ and $u_{0+}, \wt u_{0+} \in X_+^n(\delta')$, 
\be \label{E:Lip-3} 
|q_+(u_{0+})|_{X_-^n} \le C \delta' |u_{0+}|_{X_+^n}, \quad |q_+ (u_{0+}) - q_+ (\wt u_{0+})|_{X_-^{n-1}} \le C \delta' |u_{0+} - \wt u_{0+}|_{X_+^{n-1}},
\ee
and their corresponding solutions $u(t)$ and $\wt u(t)$, $t\le 0$, satisfy 
\[
|u(t) - \wt u(t)|_{X^{n-1}} \le C e^{\lambda_0 t} |u_{0+} - \wt u_{0+}|_{X_+^{n-1}}. 
\] 
\item Suppose $\lambda \in \big( \max\{\omega_-, 0\}, \omega_+\big)$ and $u(t)$, $t\le 0$, is a solution to \eqref{E:NLPDE-1} such that $\sup_{t\le 0} e^{- \lambda t}|u(t)|_{X^n}  <\infty$, then $\exists t_0\le 0$ such that $u_-(t) = q_+(u_+(t))$, $\forall  t\le t_0$.  
\end{enumerate} 
\end{theorem}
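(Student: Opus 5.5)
The plan is to reduce Theorem \ref{T:NLPDE-UM} to Theorem \ref{T:QLPDE-UM} through a local quasilinearization, in the same way the paper says local well-posedness in Appendix \ref{SS:NLPDE-LWP} is reduced to the quasilinear case. The new unknown will be a ``derivative-level'' variable $v$, defined by a local diffeomorphism of a neighborhood of $0$ in $X^n$ onto a neighborhood of $0$ in $X^{n-1}$. Set $\bar\CA_{j\pm} = \CA_{j\pm}(0) - (\omega_\pm \mp 1)$, invertible by (D.2). The map is
\[
\Phi(u) = v = \sum_{j,\alpha} \bar\CA_{j\alpha}\, \Pi_{j\alpha} u,
\]
with the $X_{2\pm}$ components possibly replaced by $\Pi_{2\pm}F(\Pi_{2\pm}u) - (\omega_\pm\mp1)\Pi_{2\pm}u$, i.e. $\bar \BA_{2\pm}(u)$ applied to $u$ in a nonlinear way. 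By (D.2)--(D.3) and the implicit function theorem, $\Phi$ is a $C^1$ local diffeomorphism from $\CO_n$ to $X^{n-1}$, and also from $\CO$ to $X^{n-2}$, with $\BD\Phi(0)$ block upper triangular. Differentiating \eqref{E:NLPDE-1} in $t$ gives $v_t = \BD\Phi(u) F(u)$. Using Remark \ref{R:(D.2)} and the splitting \eqref{E:(D.3)}, this takes the form \eqref{E:QLPDE-1} with $k = n-1$:
\[
v_t = \BA_\pm(v) v_\pm + f_\pm(v).
\]
Here $\BA_{\pm}(v)$ is block triangular, with diagonal blocks $\CA_{1\pm}(0)$, $\CA_{2\pm}(u)$ (conjugated by $\bar \CA_{2\pm}$), and $\CA_{3\pm}(0)$. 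The remainder $f_\pm$ collects the terms that are $C^1$ on $X^{k}$ without loss, and it vanishes to second order at $0$ by \eqref{E:(D.3)}.

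Next I would verify (C.1)--(C.4) on a small ball $X^{k}(R_0)$. The triangular structure \eqref{E:LinearD-1} is handled by rescaling. Replace $X_{1\pm}, X_{2\pm}, X_{3\pm}$ by $\mu^{-1}X_{1\pm}$, $X_{2\pm}$, and $\mu X_{3\pm}$, with $\mu \ll 1$ (and the analogous ordering on the $-$ side). This makes the off-diagonal couplings small, so they can be absorbed into $f_\pm$ or into the dissipativity estimate. One then takes $\BL_\pm(v) = \mathrm{diag}\,(\CL_{1\pm}, \CL_{2\pm}(u), \CL_{3\pm})$, with the weights transported to the $v$-level by $\bar\CA_{j\pm}$. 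Dissipativity \eqref{E:dissipativity-3} follows from \eqref{E:dissipativity-4}--\eqref{E:dissipativity-5} with rates $\lambda_\pm = \omega_\pm \mp \eta$ for a small loss $\eta$. The commutator of $\bar\CA_{2\pm}$ with $\CA_{2\pm}(u)$ is controlled by the $C^1$ dependence in \eqref{E:regularity-1} and is $O(R_0)$. Since $f_\pm$ is superlinear, $C_f \to 0$ as $R_0 \to 0$. Hence (C.4) holds for any $\lambda = \lambda_0 \in (\max\{\omega_-,0\}, \omega_+)$ once $R_0, \eta$ are small. Theorem \ref{T:QLPDE-UM} then gives $h_+$.

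I would define $q_+$ by the following steps. Given $u_{0+}$, solve $\Phi(u)_+ = v_{0+}$ jointly with $v_{0-} = h_+(v_{0+})$ using a contraction; this is solvable because $\BD\Phi(0)$ is block diagonal with respect to $\pm$. Then $u = \Phi^{-1}(v)$ gives $q_+(u_{0+}) = u_-(0)$. Statements (1)--(3) and (5) transfer directly from Theorem \ref{T:QLPDE-UM}(1)--(3),(5) and Lemma \ref{L:UM-1}. Two facts are needed for the transfer. First, $|u|_{X^n} \sim |v|_{X^{n-1}}$. Second, solutions of \eqref{E:NLPDE-1} in $C^0(\CO_n)\cap C^1(\CO)$ correspond to solutions of the $v$-system in the class of Theorem \ref{T:QLPDE-UM}. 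The range \eqref{E:UM-3.1} uses Lemma \ref{L:UM-1}(2) together with the fact that the $\Sigma_+$ of the reduced problem exhausts $(\max\{\omega_-,0\},\omega_+)$ as $R_0\to0$. The estimates \eqref{E:Lip-3} in $X^{n-1}$ follow from \eqref{E:Lip-1} at level $k-1 = n-2$ in $v$, which is level $n-1$ in $u$. The factor $C\delta'$ comes from Remark \ref{R:size-1}, since $L_1 \lesssim C_f \lesssim R_0$.

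The main obstacle is the $C^{1,1}$ claim in (4), together with making the quasilinearization rigorous when $\CA_{2\pm}$ is genuinely $u$-dependent. For smoothness I would use the standard Lyapunov--Perron approach. Formally differentiate the fixed-point equation in $v_{0+}$, and obtain a linear fixed-point problem for $\BD v$ in the weighted space at level $k-2$. The contraction comes from the same estimates as in Lemma \ref{L:LP-contraction}, now losing one more derivative because of $\BD\BA_\pm$. One then shows this candidate is the derivative by estimating second-order Taylor remainders in a space that loses one further derivative. This is the source of the regularity loss $X_-^{n-2}$, and it mirrors the smooth-dependence argument of Appendix \ref{SSS:smoothness-NLPDE}. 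Lipschitz continuity of $\BD v$ then follows from the $C^1$ bounds on $\BA_\pm, \BL_\pm$. The delicate point is the gap condition: the weight $e^{\lambda t}$ applied to the derivative must still lie in the reduced $\Sigma_+$ after the additional loss $2kC_0^{2k}M_1\ep$ in the exponents.
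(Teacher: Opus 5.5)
\textbf{Gap: the principal part on $X_{2\pm}$ and its dissipativity.} Your architecture is the paper's: a local diffeomorphism to a derivative-level unknown, block rescaling to shrink the upper-triangular couplings, a block-diagonal $\BL_\pm$, Theorem~\ref{T:QLPDE-UM}, and a graph transfer back to $u$ (Lemma~\ref{L:LipMap-1}). The step that fails is the one carrying the whole quasilinear difficulty. With $v_{2\pm}=\bar\CA_{2\pm}\Pi_{2\pm}u$ you only get $\p_t v_{2\pm}=\bar\CA_{2\pm}\Pi_{2\pm}F(u)$, which is a relabelling of \eqref{E:NLPDE-1}. For fully nonlinear $F$, its leading part is at best $\bar\CA_{2\pm}\big(\int_0^1\CA_{2\pm}(s\Pi_{2\pm}u)\,ds\big)\bar\CA_{2\pm}^{-1}$. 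That is a conjugate of an average, not the linearization at a point.

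Your claim that the commutator with $\bar\CA_{2\pm}$ is $O(R_0)$ by \eqref{E:regularity-1} holds only in $\BBL(X^1,X)$. Assumption (D.3) gives just $\CA_{2\pm}(u)-\CA_{2\pm}(0)=O(R_0)$ in $\BBL(X^r,X^{r-1})$. So $[\CA_{2\pm}(0),\CA_{2\pm}(u)]\bar\CA_{2\pm}^{-1}$ and the averaging error are of the same order as the generator. For a quasilinear Schr\"odinger block, for example, the commutator is a first-order term with imaginary coefficient. Such terms cannot be absorbed into \eqref{E:dissipativity-3}, which tolerates only perturbations bounded on $X$. Assumptions (D.1)--(D.4) contain no commutator estimate, and (D.4) gives a dissipative metric only for the exact $\CA_{2\pm}(w)$ with $w\in\CO$. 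Transporting $\CL_{2\pm}$ by $\bar\CA_{2\pm}$ would also measure $v$ through $|\bar\CA_{2\pm}^{-1}v|_X$, which puts you back at the level of $u$.

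\textbf{The missing idea.} Make $v$ essentially $u_t$ in \emph{every} block: $v_{j\pm}=\sigma^{2-j}\Pi_{j\pm}\big(F(u)-(\omega_\pm\mp1)u\big)$. Then $\p_t v=\BD\CB(u)F(u)$ with $\BD\CB(u)=\sigma^{2-j}\Pi_{j\pm}(\CA(u)-(\omega_\pm\mp1))$. Moreover $F(u)=\sum\sigma^{j'-2}v_{j'\pm}+((\omega_+-1)\Pi_++(\omega_-+1)\Pi_-)u$, which is $v$ plus a term one level smoother. Hence the principal operator is $\BA_\pm(v)=\sum\sigma^{j'-j}\CA_{j\pm,j'\pm}(\CB^{-1}(v))\Pi_{j'\pm}$: the true linearization at $u$, conjugated only by scalars on each block. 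No commutator appears, and $\mathrm{diag}(\CL_{1\pm},\CL_{2\pm}(\CB^{-1}(v)),\CL_{3\pm})$ works on $X_\pm$ unchanged (Lemma~\ref{L:C1C2}). The identity $\BD f_\pm(0)=0$ then comes from invariance of $X_\pm$ (Lemma~\ref{L:Df-1}). Your parenthetical nonlinear choice for $v_{2\pm}$ does place the exact $\CA_{2\pm}(\Pi_{2\pm}u)$ in front of $v_{2\pm}$. It could be completed with $\CL_{2\pm}(\Pi_{2\pm}u)$ used untransported, but then the conjugation and commutator argument you actually give is neither present nor needed.

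\textbf{The $C^{1,1}$ claim in (4).} The paper does not differentiate the Lyapunov--Perron map at the $v$-level. It linearizes \eqref{E:NLPDE-1} along the solutions on $W^+$ and uses $\CA_\pm^U(u(t))$ as generator (Lemma~\ref{L:smoothness-UM-1}). The bounded remainder $\wt\CA_\pm=O(|u|_{X^{n-1}})$ is treated perturbatively (Lemma~\ref{L:smoothness-UM-2}). So $\BD q_+$ has no loss and the Taylor remainder lies in $X^{n-2}$ for every $n\ge 2$. Your count (derivative at level $k-2$, remainder one level lower) gives at best $X_-^{n-3}$ and is undefined for small $n$. It would also require pushing derivatives through $\Phi^{-1}$ and through a graph transfer in which $h_+$ is only Lipschitz in the weaker norm.
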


The graph $W^+ \triangleq graph(q_+)$ is often referred to as the local unstable manifold of the
equilibrium $u=0$ (or a strongly unstable manifold if $\omega_->0$, too). 
It is locally unique due to \eqref{E:UM-3.1} and statement (5), see also Lemma \ref{L:UM-1}.
The remarks given below Theorem \ref{T:QLPDE-UM} 
are still valid. In particular, the above statement (4) implies that $W^+$ is tangent to $X_+^n$ at $u=0$. 

\begin{remark} \label{R:SM}
If we assume $\omega_- < \min\{\omega_+, 0\}$ instead, then there exists $q_-: X_-^n (\delta) \to X_+^n$ such that the conclusions of Theorem \ref{T:NLPDE-UM} still hold, except for $t\ge 0$ and $
\lambda_0 \in (\omega_-,  \min\{\omega_+, 0\}\big)$.  
The graph $W^- \triangleq graph(q_-)$ gives the local stable manifold of $0$. In addition, more analysis on the smoothness of the local stable/unstable manifolds are given in Theorem \ref{T:UM-smoothness} in Subsection \ref{SSS:smoothness-InMa}. In particular, the second Lipschitz constant in \eqref{E:Lip-3} can be slightly improved from $O(|u_{0+}|_{X_+^n})$ to $O(|u_{0+}|_{X_+^{n-1}})$ by the estimate on $\BD_{u_{0+}} q^+$ in Theorem \ref{T:UM-smoothness}. 
\end{remark}

Let us first make some comments on the assumptions, which are essentially the exponential dichotomy and regularity of $\CA(u)$. 
In the decomposition $X= X_+\oplus X_-$, $\CA(0)$ can be reduced to a $2\times 2$ blockwise diagonal matrix. However, in a class of Hamiltonian systems, the Hessian of the energies, which are often the natural candidates of $\CL_{j\pm} (u)$, may 
have some negative and degenerate directions. The form \eqref{E:LinearD-1} is motivated by the structural decomposition theorem for  linearized Hamiltonian PDEs in \cite{LZ22}. See Subsection \ref{SS:examples-Ham}. 
Very often in the construction of local unstable manifolds, the phase space can be decomposed such that 
\be \label{E:decom-2}
\dim X_{1\pm}, \dim X_{3-}<\infty, \quad X_{2+}=X_{3+}=\{0\}, \quad \omega_+> \max\{0, \omega_-\},
\ee
while for local stable manifold, a different decomposition would satisfy similar properties.  
Under assumptions (D.1)--(D.4), in the subspaces $X_{2\pm}$, equation \eqref{E:NLPDE-1} has the worst interaction between the nonlinearity and the regularity issues, often in the form of highest order derivatives appearing nonlinearly. As seen in Section \ref{S:examples}, the positive quadratic forms $\CL_{2\pm} (u)$ in (D.4) are often the Hessian of the energies restricted to $X_{2\pm}$ which help handle the regularity issues. The systems are essentially semilinear in the directions of $X_{j\pm}$, $j=1,3$, due to the regularity assumptions in (D.3). The dissipativity assumption (D.4) allows us to apply Proposition \ref{P:linear} to obtain directly the linear flows in the proof. In some systems, there are no obvious natural choices of $\CL_{j\pm}$, $j=1,3$, as the Hessian of the energies may not have clear signs in $X_{j\pm}$. Instead, as in the case of \eqref{E:decom-2}, it is often easier to verify directly 
\begin{enumerate} 
\item [(D.5')] $\CA_{j\pm}(0)$, $j=1,3$, generate  strongly $C^0$ semigroups $e^{s\CA_{j\pm} (0)}$, $\mp s \ge 0$, on $X_{j\pm}$, 
such that 
\be \label{E:ED-1}
\sup_{\mp s\ge 0} e^{-\omega_\pm s}|e^{s\CA_{j\pm} (0)}|_{\BBL(X_{j\pm})} < \infty. 
\ee
\end{enumerate}
Due to (D.2), the above semigroup estimate can be extended to $X_{j\pm}^r$ by applying $\CA_{j\pm}(0)$ to $e^{s\CA_{j\pm} (0)}$ repeatedly. 
On the one hand, replacing \eqref{E:coercivity-4} for $j \ne 2$ and \eqref{E:dissipativity-5} by (D.5') is sufficient, indeed. In this case, instead of applying Proposition \ref{P:linear} in the whole $X_\pm$ as in the proof below, but only in $X_{2\pm}$, and then one may solve the complete linear evolutions by working in  $X_{j\pm}$, $j=1,3$, directly using (D5') and the upper triangular structure \eqref{E:LinearD-1}. On the other hand, 
with the slight additional assumption that $\CA_{j\pm}(0)$, $j=1,3$, generate groups $e^{s \CA_{j\pm}(0)}$, $ s\in \R$,
we give the following general lemma which  
provides positive definite quadratic forms $\CL_{j\pm}$ desired in \eqref{E:dissipativity-5}. 

\begin{lemma} \label{L:Lumer-P-1}
If $A: X^1= D(A) \to X$ generates a group $e^{tA}$ satisfying 
\[
|e^{sA}| \le M e^{\lambda s}, \quad  s\ge 0,
\]
then, for any $\omega > \lambda$, let $L: X \to X^*$ be defined as 
\be \label{E:quadraticF}
\langle Lu, v\rangle  = \int_0^\infty e^{-2\omega s} (e^{sA}u,  e^{sA}v) ds, 
\ee
then $|L|_{ \BBL(X, X^*)} \le \frac {M^2}{2(\omega -\lambda)}
$ and 
$L^*=L$.
Moreover $L^{-1} \in \BBL(X^*, X)$ and, for any $u \in X$, 
\[
\langle Lu, u\rangle = \int_0^\infty e^{-2\omega s} |e^{sA} u|^2 ds \ge \int_0^1 e^{-2\omega s} |e^{sA} u|^2 ds \ge  \int_0^1 e^{-2\omega s}  ds (\sup_{s\in [0, 1]} |e^{-sA}|)^{-2} |u|^2, 
\]
and for any $u \in D(A)$, 
\[
\langle LAu, u\rangle = \int_0^\infty e^{-2\omega s} ( A e^{sA} u,  e^{sA}u) ds = \frac 12 \int_0^\infty e^{-2\omega s} \frac d{ds} |e^{sA} u|^2 ds = - \frac 12 |u|^2 + \omega \langle Lu, u\rangle. 
\]
\end{lemma}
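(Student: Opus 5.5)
The proof is a direct computation with the integral \eqref{E:quadraticF}. Throughout, $(\cdot,\cdot)$ is the inner product of $X$, and we identify $X^*$ with $X$ via Riesz where convenient.

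\emph{Step 1: well-definedness and boundedness.} For $u,v\in X$ and $s\ge 0$ we have $|(e^{sA}u,e^{sA}v)|\le M^2e^{2\lambda s}|u||v|$. Since $\omega>\lambda$, the integrand $e^{-2\omega s}(e^{sA}u,e^{sA}v)$ is integrable on $[0,\infty)$. It is continuous in $s$ by strong continuity, and
\[
|\langle Lu,v\rangle|\le M^2|u||v|\int_0^\infty e^{-2(\omega-\lambda)s}ds=\frac{M^2}{2(\omega-\lambda)}|u||v|.
\]
The bilinear form is bounded and symmetric, so $L\in\BBL(X,X^*)$ with the stated norm bound and $L^*=L$.

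\emph{Step 2: coercivity.} Restrict the integral to $[0,1]$. Since $e^{sA}$ is a group, $u=e^{-sA}e^{sA}u$. Hence $|u|\le \sup_{s\in[0,1]}|e^{-sA}|\,|e^{sA}u|$, and this supremum is finite by the uniform boundedness principle applied to the strongly continuous family $e^{-sA}$ on the compact interval $[0,1]$. This gives the displayed lower bound $\langle Lu,u\rangle\ge c|u|^2$ with $c>0$. Lax--Milgram, or simply the fact that a bounded symmetric coercive form on a Hilbert space induces an isomorphism $X\to X^*$, then yields $L^{-1}\in\BBL(X^*,X)$.

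\emph{Step 3: the identity for $\langle LAu,u\rangle$.} Take $u\in D(A)$. Then $e^{sA}u\in D(A)$ and $Ae^{sA}u=e^{sA}Au=\frac{d}{ds}e^{sA}u$ continuously in $s$. Therefore $(Ae^{sA}u,e^{sA}u)=\frac12\frac d{ds}|e^{sA}u|^2$, and the first two equalities follow. Integrate by parts on $[0,T]$:
\[
\frac12\int_0^T e^{-2\omega s}\frac d{ds}|e^{sA}u|^2ds=\frac12e^{-2\omega T}|e^{TA}u|^2-\frac12|u|^2+\omega\int_0^Te^{-2\omega s}|e^{sA}u|^2ds .
\]
Let $T\to\infty$. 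The boundary term is at most $\frac12M^2e^{-2(\omega-\lambda)T}|u|^2\to0$, and the integrals converge absolutely by Step 1. This gives $-\frac12|u|^2+\omega\langle Lu,u\rangle$.

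The only mildly delicate points are the justification of the integration by parts, which needs differentiability of $s\mapsto|e^{sA}u|^2$ for $u\in D(A)$, and the vanishing of the boundary term at infinity. Both follow from standard semigroup facts and the bound $|e^{sA}|\le Me^{\lambda s}$, so no real obstacle is expected.
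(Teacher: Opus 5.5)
Your proposal is correct and takes the same route as the paper, whose proof is just the computation displayed in the statement itself: the exponential bound gives $|L|_{\BBL(X, X^*)}$, the group identity $u = e^{-sA}e^{sA}u$ on $[0,1]$ gives coercivity and hence invertibility, and integration by parts gives $\langle LAu, u\rangle$. You also supply the routine justifications the paper leaves implicit, namely uniform boundedness of $e^{-sA}$ on $[0,1]$, Lax--Milgram, and the vanishing of the boundary term $\frac12 e^{-2\omega T}|e^{TA}u|^2$.
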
 

As the off-diagonal blocks $\CA_{j\alpha, j'\alpha'} (u)$ of $\CA(u)$ are assume to be bounded and depend on $u$ smoothly in (D.3), in principle they would neither contribute much to the linear growth/decay nor pose challenges in the regularity analysis. Often (D.3) can be verified easily in concrete systems, e.~g.~when properties such as \eqref{E:decom-2} is satisfied. 

Again  we do not assume the well-posedness of the semigroup $e^{s \CA(0)}$ for $s\ge 0$, so this framework could be applied to ill-posed PDEs like elliptic equations on cylindrical domains. 

Finally one notices that, unlike in Appendix \ref{SS:NLPDE-LWP}, a condition like (B.4) is not assumed. This is due to $F(0)=0$. In fact, \eqref{E:tameE-1} would not be needed in Appendix \ref{SS:NLPDE-LWP} if $F(u_*)=0$ there. See Remark \ref{R:LWP-R-2}. 

To prove the above theorem, we shall transform the nonlinear PDE \eqref{E:NLPDE-1} into a quasilinear PDE in the form of \eqref{E:QLPDE-1} satisfying assumptions (C.1)--(C.4) given in Subsection \ref{SS:QLPDE-LInMa}. Fix $\sigma >0$ to be determined later. For $u\in \CO \subset X^{n-1}$ close to $0$, let $v= \CB(u)$ be defined by 
\be \label{E:CB-2}
v_{j\pm} = \Pi_{j\pm} v=\CB_{j\pm} (u) \triangleq  \sigma ^{2-j} \Pi_{j\pm} \big( F(u) - (\omega_\pm \mp 1) u \big).  
\ee
Clearly $\CB(0)=0$ and 
\[
\CB \in  C^{2} (\CO, X^{n-2}) \cap C^{2} (\CO_n, X^{n-1}), \quad 
\BD\CB_{j\pm}(u) = \sigma ^{2-j} \Pi_{j\pm} (\CA(u) - (\omega_\pm \mp 1)). 
\]
Hence $\BD\CB(0)$ takes a $6 \times 6$ blockwise upper triangular form much as in \eqref{E:LinearD-1} with isomorphic diagonal entries 
\[
\sigma ^{2-j} (\CA_{j\pm} (0) - (\omega_\pm \mp 1)) \in \BBL(X_{j\pm}^r, X_{j\pm}^{r-1}), \quad 1\le r\le n, 
\]
which implies that $\BD\CB(0)$ is also isomorphic. From the Implicit Function Theorem, $\CB$ 
is a local diffeomorphism near $0 \in X^{n'}$, $n'=n-1, n$. Namely, there exists $\delta_0 \in (0,1]$ such that 
$\CB \in C^2 (X^{n'} (\delta_0), X^{n'-1})$ and $\CB^{-1} \in C^2 (X^{n'-1} (\delta_0), X^{n'})$ are both diffeomorphisms  to their ranges. 

From \eqref{E:NLPDE-1}, the evolution of $v=\CB(u)$ satisfies 
\[
\p_t v_{j\pm} =  
\sigma ^{2-j} \Pi_{j\pm} \big( \CA(u) - (\omega_\pm \mp 1) \big) F(u).   
\]
According to the definition of $\CB$, 
\[
F(u) = \sum_\pm \sum_{j'=1}^3 \sigma^{j'-2} v_{j'\pm} + \big( (\omega_+-1) \Pi_++  (\omega_- +1) \Pi_-\big) u. 
\]
Hence we have 
\[ \begin{split} 
\p_t v_{j\pm} =& \sigma ^{2-j} \Pi_{j\pm} \big( \CA(u) - (\omega_\pm \mp 1) \big) \Big( \sum_{\alpha'=\pm} \sum_{j'=1}^3 \sigma^{j'-2} v_{j'\alpha'} + \big( (\omega_+-1) \Pi_++  (\omega_- +1) \Pi_-\big) u \Big) \\
\triangleq & \sum_{j'=1}^3 \sigma^{j'-j} \CA_{j\pm, j'\pm} (u) v_{j'\pm} +  \wt F_{j\pm}(u),
\end{split}\]
where 
\begin{align*}
\wt F_{j\pm} (u) = & \sigma ^{2-j} \Pi_{j\pm} \big( \CA(u)  \big( (\omega_+-1) \Pi_++  (\omega_- +1) \Pi_-\big) u - (\omega_\pm \mp 1) F(u) \big) \\
& + \sum_{j'=1}^3 \sigma^{j'-j} \CA_{j\pm, j'\mp} (u) \CB_{j'\mp} (u).
\end{align*}
It can be rewritten as 
\be \label{E:QLPDE-2}
\p_t v_\pm = \BA_\pm (v) v_\pm + f_\pm (v), 
\ee
where 
\be \label{E:QLPDE-2-A-f}
\BA_\pm (v) w = \sum_{j, j'=1}^3 \sigma^{j'-j} \CA_{j\pm, j'\pm} (\CB^{-1} (v)) \Pi_{j'\pm} w, \;\; w\in X_\pm;  \quad f_\pm (v) = \sum_{j=1}^3 \wt F_{j\pm} (\CB^{-1} (v)). 
\ee

For equation \eqref{E:QLPDE-2}, 
let 
\be \label{E:BL-1}
\BL_\pm (v) = diag ( \CL_{1\pm}, \CL_{2\pm}(\CB^{-1}(v)), \CL_{3\pm} ) \in \BBL(X_\pm, (X_\pm)^*), \quad v\in X^{n-2} (\delta_0), 
\ee
where $\CL_{j\pm}$ was given in assumption (D.4). 

\begin{lemma} \label{L:C1C2}
Let 
$\BA_\pm(v)$ and  
$\BL_\pm(v)$ be defined as in \eqref{E:QLPDE-2-A-f} and \eqref{E:BL-1}, then for any $\lambda_\pm$ satisfying $ \pm (\omega_\pm - \lambda_\pm) >0$, 
there exist  $R_0 \in (0, \delta_0), \sigma, C_0>0$ determined by 
$|\lambda_\pm -\omega_\pm|$ and the norms of $\CA_\pm (u)$ and $\CL_{2\pm}(u)$ involved in (D.1)--(D.4),  
such that $\BA_\pm (v)$ and $\BL_\pm(v)$ satisfy assumptions (C.1) and (C.2) given in Subsection \ref{SS:QLPDE-LInMa} for $k=n-1$. 
\end{lemma}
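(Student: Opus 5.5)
We verify (C.1) and (C.2) blockwise for $k=n-1$, working in $v\in X^{n-2}(R_0)$ with $u=\CB^{-1}(v)\in X^{n-1}(\delta_0)\subset\CO$; this is the right space because $\CB^{-1}\in C^2(X^{n-2}(\delta_0),X^{n-1})$. For (C.1), $\BL_\pm(v)$ is block diagonal. The blocks $\CL_{1\pm},\CL_{3\pm}$ are constant, and $\CL_{2\pm}\circ\CB^{-1}$ is $C^1$ by (D.4) and the chain rule. Symmetry and uniform equivalence with $|\cdot|_{X_\pm}$ follow from \eqref{E:coercivity-4}. The bound on $\BD\BL_\pm$ then follows from the $C^1$ bound of $\CL_{2\pm}$ on $\CO$, combined with the bound of $\BD\CB^{-1}$ near $0$, after shrinking $R_0$.

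For (C.2), the regularity of $\BA_\pm$, $\bar\BA_\pm$ and $\bar\BA_\pm^{-1}$ in $C^1(X^{n-2}(R_0),\BBL(X_\pm^r,X_\pm^{r-1}))$, $1\le r\le n-1$, comes from (D.3) by composition with $\CB^{-1}$. The diagonal blocks $j\neq2$ are $\CA_{j\pm}(0)$ plus a bounded $C^1$ perturbation, and the $j=2$ block is handled by \eqref{E:regularity-1}. The off-diagonal blocks are bounded on each $X^r$. For invertibility of $\bar\BA_\pm(v)$ we write it as the diagonal part plus off-diagonal terms carrying the factors $\sigma^{j'-j}$:
\begin{itemize}
\item at $v=0$, the upper-triangular structure \eqref{E:LinearD-1} means only $j'>j$ blocks appear, so they are $O(\sigma)$;
\item for $v\ne0$, the lower blocks $j'<j$ are $O(\sigma^{-2}|u|_{X^{n-1}})$, small once $R_0\ll\sigma^2$.
\end{itemize}
Hence, using (D.2), $\bar\BA_\pm(v)$ is a small perturbation of a block diagonal isomorphism $X^r_\pm\to X^{r-1}_\pm$ once $\sigma$ and then $R_0$ are small. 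Note that $\bar\BA_\pm(v)=\BA_\pm(v)-(\lambda_\pm\mp1)$ is shifted by $\lambda_\pm\mp1$ rather than $\omega_\pm\mp1$. The difference $\lambda_\pm-\omega_\pm$ is a bounded shift, and its resolvent bound follows from the dissipativity below via Lumer--Phillips (Remark \ref{R:LInMa}), since $1+\omega_\pm-\lambda_\pm$ lies in the resolvent set with margin. For $\CA_{2\pm}(u)$ with $u\ne 0$, \eqref{E:dissipativity-4} already gives the required resolvent bound.

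The main step is the dissipativity \eqref{E:dissipativity-3} with $\lambda_\pm$ in place of $\omega_\pm$, and the gap $\pm(\omega_\pm-\lambda_\pm)>0$ is what absorbs the error terms. We expand
\[
\langle\BL_\pm w,\mp\BA_\pm w\rangle=\sum_j\langle\CL_{j\pm}w_j,\mp\CA_{j\pm}w_j\rangle+\sum_{j\neq j'}\sigma^{j'-j}\langle\CL_{j\pm}w_j,\mp\CA_{j\pm,j'\pm}w_{j'}\rangle .
\]
The $j=2$ term is bounded by \eqref{E:dissipativity-4}. For $j=1,3$ we write $\CA_{j\pm}(u)=\CA_{j\pm}(0)+O(|u|)$, with the error bounded on $X_{j\pm}$ by (D.3); then \eqref{E:dissipativity-5} gives the bound up to an error $CR_0|w|^2$. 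For the cross terms:
\begin{itemize}
\item those with $j'>j$ are bounded by $C\sigma|w|^2$;
\item those with $j'<j$ vanish at $u=0$ by \eqref{E:LinearD-1}, so they are $O(\sigma^{-2}R_0)|w|^2$, using $C^1$ dependence in $\BBL(X^0)$ from (D.3) with $r=0$.
\end{itemize}
Choosing $\sigma$ small relative to $|\omega_\pm-\lambda_\pm|$ and the norms of $\CL$, and then $R_0$ small relative to $\sigma^2$, the total error is at most $|\omega_\pm-\lambda_\pm|\,C_0^{-2}|w|^2$, which is dominated using the coercivity of $\BL_\pm$. This yields $\le\mp\lambda_\pm\langle\BL_\pm w,w\rangle$.

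The delicate point is that the cross terms involving $w_2\in X_{2\pm}^1$ must stay bounded in $X$ and not merely in $X^{-1}$. This holds because (D.3) assumes that off-diagonal blocks map $X^r\to X^r$ boundedly, including for $r=0$. Finally, $C_0$ is taken to be the maximum of all the resulting constants.
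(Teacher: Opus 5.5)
Your proposal is correct and follows the same route as the paper. You take $\sigma$ small so that the off-diagonal blocks of $\BA_\pm(0)$, which are upper-triangular and carry positive powers of $\sigma$, are negligible. You then take $R_0$ small, relative to $\sigma^2$, so that the $u$-dependent errors, including the lower blocks with factors $\sigma^{j'-j}$, $j'<j$, are absorbed by the gap $\pm(\omega_\pm-\lambda_\pm)$ in the dissipativity. The remaining bounds in (C.1)--(C.2) follow from (D.2)--(D.4) composed with $\CB^{-1}$. You give more detail than the paper's short proof, notably the ordering $R_0\ll\sigma^2$ and the passage from the shift $\omega_\pm\mp1$ in (D.2) to $\lambda_\pm\mp1$ in $\bar\BA_\pm$. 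One thing to fix: the constant used to absorb the errors must be the $\sigma$-independent coercivity constant of $\BL_\pm$, not the final $\sigma$-dependent $C_0$, or the choice of $\sigma$ becomes circular.
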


\begin{proof} 
Due to the upper triangular form \eqref{E:LinearD-1} of $\CA(0)$ and the definition \eqref{E:QLPDE-2-A-f} of $\BA_\pm(v)$, the exponents of $\sigma$ in the off-diagonal terms of $\BA_\pm (0)$ are all positive. They can be made arbitrarily small by taking $\sigma>0$ sufficiently small. Hence from the regularity assumptions (D.3), the dissipativity assumptions \eqref{E:dissipativity-4}--\eqref{E:dissipativity-5}, 
and the definitions of $\BA_\pm(v)$ and $\BL_\pm (v)$, there exist $R_0 \in (0, \delta_0)$ and $ \sigma >0$ such that $\BL_\pm(v)$ and $\BA_\pm (v)$ satisfy \eqref{E:dissipativity-3} for all $v \in X^{n-2} (R_0)$. 
Apparently the rest of the assumptions (C.1)--(C.2) are satisfied on $X^{n-2} (R_0)$ for some $C_0$ determined the norms of $\CA$ and $\CL_{j\pm}$ on $X^{n-1} (\delta_0)$. 
\end{proof}


Concerning assumptions (C.3)--(C.4) in Subsection \ref{SS:QLPDE-LInMa}, we analyze $f$ defined in  \eqref{E:QLPDE-2-A-f}. 

\begin{lemma} \label{L:Df-1}
It holds that $f_\pm \in C^1 (X^{n'} (R_0), X^{n'})$, $n'=n-1, n-2$, $f_\pm (0)=0$, and there exists $C>0$ depending on the norms of $\CA_\pm (u)$ and $\CL_{2\pm}(u)$ involved in (D.1)--(D.4) so that 
\[
|\BD f_\pm (v) |_{\BBL(X^{n'})} \le C (1+ \sigma^{-2}) |v|_{X^{n'}}, \quad \forall v \in X^{n'} (R_0). 
\]
\end{lemma}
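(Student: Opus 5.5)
The plan is to write $f_\pm(v)=\sum_j \wt F_{j\pm}(u)$ with $u=\CB^{-1}(v)$, and to show that each $\wt F_{j\pm}$ is a $C^1$ (indeed $C^2$) map $\CO\to X^{n-2}$ and $\CO_n\to X^{n-1}$ with $\wt F_{j\pm}(0)=0$ and $\BD\wt F_{j\pm}(0)=0$. Composing with $\CB^{-1}\in C^2(X^{n'}(\delta_0),X^{n'+1})$ then gives $f_\pm\in C^1(X^{n'}(R_0),X^{n'})$ for $n'=n-2,n-1$. The estimate $|\BD f_\pm(v)|\le C(1+\sigma^{-2})|v|$ follows from the mean value inequality applied to $\BD f_\pm$, since $\BD f_\pm(0)=0$ and $\BD f_\pm$ is Lipschitz on the ball. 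The factor $\sigma^{-2}$ is the largest negative power of $\sigma$ in $\wt F_{j\pm}$ (from $\sigma^{j'-j}$ with $j=3,j'=1$ and from $\sigma^{2-j}$), and $\CB^{-1}$ has norms bounded independently of $\sigma\le1$ up to the same powers.

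\textbf{Regularity.} The potentially bad term is $\Pi_{j\pm}\CA(u)\,\big((\omega_+-1)\Pi_++(\omega_-+1)\Pi_-\big)u-(\omega_\pm\mp1)\Pi_{j\pm}F(u)$, which contains $\CA_{2\pm}(u)$ acting on $u$ and loses one derivative. I would rewrite it as
\[
\Pi_{j\pm}\big(\CA(u)u-F(u)\big)\cdot(\omega_\pm\mp1) + \text{(cross terms with $\Pi_\mp$ via off-diagonal blocks)}.
\]
In the $2\pm$ block, $\CA_{2\pm}(u)\Pi_{2\pm}u-\Pi_{2\pm}F(\Pi_{2\pm}u)$ is the first-order Taylor remainder of $F$ along $u$. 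Its derivative is $(\BD\CA_{2\pm}(u)\cdot)\,u$, which by \eqref{E:regularity-1} (C$^1$ dependence of $\CA_{2\pm}$ in $\BBL(X^r,X^{r-1})$) maps $X^{n'}\to X^{n'-1}$ only. I need to combine this with \eqref{E:(D.3)} to see that everything else is $C^2$ with no loss.

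This is the step I expect to be the main obstacle. The naive count shows $f$ loses a derivative, so one must exploit that $\wt F$ is evaluated at $u=\CB^{-1}(v)\in X^{n'+1}$, which gains one derivative over $v$. Thus $\CA(u)u\in X^{n'}$ when $u\in X^{n'+1}$, and $\BD_v[\CA(u)u]=\big(\BD\CA(u)\,\BD\CB^{-1}(v)\cdot\big)u+\CA(u)\BD\CB^{-1}(v)\cdot$ is bounded $X^{n'}\to X^{n'}$, continuous in $v$. This uses (D.3) at levels $r=n'$ and $n'+1$ (including the $\CO_n$ part for $n'=n-1$). Similarly $F(u)$ itself equals $\sum\sigma^{j-2}v_{j\pm}$ plus a linear term in $u$, so it lies in $X^{n'}$ with $C^1$ dependence on $v$. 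The term $\CA_{j\pm,j'\mp}(u)\CB_{j'\mp}(u)$ is handled by the boundedness of off-diagonal blocks from (D.3) and $\CB_{j'\mp}(u)=\sigma^{2-j'}v_{j'\mp}$.

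\textbf{Vanishing of the linearization.} Compute $\BD f_\pm(0)$. At $u=0$ we have $F(0)=0$ and $\BD F(0)=\CA(0)$, so
\[
\BD\wt F_{j\pm}(0)w=\sigma^{2-j}\Pi_{j\pm}\big(\CA(0)((\omega_+-1)\Pi_++(\omega_-+1)\Pi_-)w-(\omega_\pm\mp1)\CA(0)w\big)+\sum_{j'}\sigma^{j'-j}\CA_{j\pm,j'\mp}(0)\BD\CB_{j'\mp}(0)w .
\]
The last sum vanishes because $\CA_{j\pm,j'\mp}(0)=0$ by \eqref{E:LinearD-1}. The first vanishes because $\CA(0)$ commutes with $\Pi_\pm$ by the block structure, so $\Pi_{j\pm}\CA(0)\Pi_\mp=0$ and $\Pi_{j\pm}\CA(0)(\omega_\pm\mp1)\Pi_\pm w$ cancels the $F$ term. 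Hence $\BD f_\pm(0)=0$, and the Lipschitz bound on $\BD f_\pm$ over $X^{n'}(R_0)$ (from the $C^1$ bounds on $\CA$ and the $C^2$ bounds on $\CB^{-1}$, with constants tracked in $\sigma$) yields the claimed estimate.

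If full $C^2$ of $f$ is unavailable, I would instead bound $\BD f(v)-\BD f(0)$ directly via the $C^1$ moduli of $\CA$ and $\BD\CB^{-1}$, which are Lipschitz by (D.3).
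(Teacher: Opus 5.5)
Your overall route is the paper's. You differentiate $f_\pm=\sum_j\wt F_{j\pm}\circ\CB^{-1}$, absorb the derivative loss in $\CA(u)u$ and $F(u)$ by using $u=\CB^{-1}(v)\in X^{n'+1}$, and get $\BD f_\pm(0)=0$ from the invariance of $X_\pm$ under $\CA(0)$ in \eqref{E:LinearD-1}. The step that fails as stated is the passage from $\BD f_\pm(0)=0$ to $|\BD f_\pm(v)|\le C|v|$ through ``$\wt F_{j\pm}$ is indeed $C^2$'' and ``$\BD f_\pm$ is Lipschitz on the ball''. Assumption (D.3) only makes $\CA=\BD F$ a $C^1$ map. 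Writing $P=(\omega_+-1)\Pi_++(\omega_-+1)\Pi_-$ and $\eta=\BD\CB^{-1}(v)w$, the derivative $\BD f_\pm(v)w$ contains $\sigma^{2-j}\Pi_{j\pm}\big(\BD\CA(u)\eta\big)Pu$. Since $u\mapsto\BD\CA(u)$ is merely continuous, $\BD f_\pm$ need not be Lipschitz. Continuity plus $\BD f_\pm(0)=0$ then only gives $|\BD f_\pm(v)|\to0$, not a bound linear in $|v|$. Your fallback can be made to work, but only as a one-point comparison with $v=0$. The $\BD\CA$-terms carry an explicit factor $u$ or $v_{j'\mp}$, so they are $O(|v|\,|w|)$ with no modulus of $\BD\CA$ needed. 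The remaining terms are controlled by the Lipschitz continuity of $\CA$ itself (not of $\BD\CA$) and of $\BD\CB^{-1}$.

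The paper avoids the issue by consolidating $\BD f_\pm(v)$ at every $v$. That consolidation is exactly the rewriting you already wrote down: $\Pi_{j\pm}\big(\CA(u)Pu-(\omega_\pm\mp1)F(u)\big)=(\omega_\pm\mp1)\Pi_{j\pm}\big(\CA(u)u-F(u)\big)\mp(\omega_+-\omega_--2)\Pi_{j\pm}\CA(u)\Pi_\mp u$. The $u$-derivative of $\CA(u)u-F(u)$ is $(\BD\CA(u)\,\cdot\,)u$. Meanwhile $\Pi_{j\pm}\CA(u)\Pi_\mp$, and the blocks $\CA_{j\pm,j'\mp}(u)$ in $\sum_{j'}\sigma^{j'-j}\CA_{j\pm,j'\mp}(u)v_{j'\mp}$, are off-diagonal, $C^1$, and vanish at $u=0$. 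So every term of $\BD f_\pm(v)w$ is bounded by $C\big(|u|_{X^{n'+1}}+|v|_{X^{n'}}\big)|w|_{X^{n'}}\le C|v|_{X^{n'}}|w|_{X^{n'}}$ directly from (D.3), and $\BD f_\pm(0)=0$ follows by setting $v=0$. Two minor slips: $\CB_{j'\mp}(u)=v_{j'\mp}$, since the factor $\sigma^{2-j'}$ is already part of $\CB$. Also, your $\sigma$-power count for $\CB^{-1}$ is not justified, which is harmless because $\sigma$ is fixed by Lemma \ref{L:C1C2} beforehand.
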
 

\begin{proof} 
The $C^1$ smoothness of $f_\pm$ and $f_\pm (0)=0$ follow directly from the regularity assumption (D.3) and the definition $\CB$. To show $\BD f_\pm (0)=0$, for any $v \in  X^{n'}$ and $w \in X^{n'}$, $n'=n-1, n-2$,  one may compute 
\begin{align*}
\Pi_{j\pm} \BD f_\pm (v) w = & \sigma ^{2-j} \Pi_{j\pm} \Big( \big(\CA(u)  \big( (\omega_+-1) \Pi_++  (\omega_- +1) \Pi_-\big) - (\omega_\pm \mp 1) \CA(u) \big) \BD\CB^{-1} (v) w \\
& + \BD\CA(u) (\BD\CB^{-1} (v)w) \big( (\omega_+-1) \Pi_++  (\omega_- +1) \Pi_-\big) u \Big) \\
& + \sum_{j'=1}^3 \sigma^{j'-j} \Big( \CA_{j\pm, j'\mp} (u) w_{j'\mp} + \BD \CA_{j\pm, j'\mp} (u) (\BD\CB^{-1} (v)w) v_{j'\mp} \Big), 
\end{align*}
where $u = \CB^{-1} (v)$. Consolidating the first part of the expression, we obtain 
\begin{align*}
\Pi_{j\pm} \BD f_\pm (v) w = & \sigma ^{2-j} \Pi_{j\pm} \Big( \mp(\omega_+- \omega_- -2) \CA(u) \Pi_\mp \BD\CB^{-1} (v) w \\
& + \BD \CA(u) (\BD\CB^{-1} (v)w) \big( (\omega_+-1) \Pi_++  (\omega_- +1) \Pi_-\big) u \Big) \\
& + \sum_{j'=1}^3 \sigma^{j'-j} \Big( \CA_{j\pm, j'\mp} (u) w_{j'\mp} + \BD \CA_{j\pm, j'\mp} (u) (\BD\CB^{-1} (v)w) v_{j'\mp} \Big). 
\end{align*}
Therefore 
\begin{align*}
\Pi_{j\pm} \BD f_\pm (0) w = & \mp (\omega_+- \omega_- -2) \sigma ^{2-j} \Pi_{j\pm} \CA(0) \Pi_\mp \BD\CB^{-1} (0) w  + \sum_{j'=1}^3 \sigma^{j'-j} \CA_{j\pm, j'\mp} (0) w_{j'\mp}. 
\end{align*}
Due to the invariance of $X_\pm$ under $\CA(0)$ assumed in \eqref{E:LinearD-1}, we obtain $\BD f_\pm (0)=0$. The estimate on $\BD f_\pm$ follows from the regularity assumptions in (D.3). 
\end{proof} 

Before proving Theorem \ref{T:NLPDE-UM} by applying Theorem \ref{T:QLPDE-UM}, we need the following technical lemma showing the image of a manifold as given in Theorem \ref{T:QLPDE-UM} under $\CB^{-1}$ is still a manifold. 

\begin{lemma} \label{L:LipMap-1} 
Suppose $\ep_0, a_0, a_1 >0$ and $h: \overline {X_+^{n-1} (\ep_0)} \to X_-^{n-1}$ such that and 
\be \label{E:Lip-2}
|h(v_+)|_{X_-^{n-1}} \le a_0 |v_+|_{X_+^{n-1}}, \quad |h(v_{1+}) - h(v_{2+})|_{X_-^{n-2}} \le a_1 |v_{1+}-v_{2+}|_{X_+^{n-2}}, 
\ee
for all $v_+, v_{1+}, v_{2+} \in \overline {X_+^{n-1} (\ep_0)}$. Let 
\[
\phi: \overline{X_+^{n-1} (\ep_0)} \to \overline{X_+^{n}} \; \text{ as } \; \phi(v_+) = \Pi_+ \CB^{-1} \big( v_+ + h(v_+)\big). 
\]
Then there exist $\delta_1 \in (0, \delta_0)$ (see \eqref{E:temp-25}), $\ep_1 \in (0, \ep_0)$, and $q : \overline{X_+^{n} (\delta_1)} \to X_-^{n}$, such that $\phi^{-1}: \overline{X_+^{n} (\delta_1)} \to \overline{X_+^{n-1} (\ep_0)}$ is well defined and  
\[
\overline{X_+^{n-1} (\ep_1)} \subset \phi^{-1} \big(\overline{X_+^{n} (\delta_1)}\big); \quad  \CB \big( u_+ + q (u_+) \big) = \phi^{-1}(u_+) + h(\phi^{-1}(u_+)), \; \forall u_+ \in \overline{X_+^{n} (\delta_1)}. 
\]
Moreover, 
\begin{align*}
& |\phi^{-1}(u_+)|_{X_+^{n-1}} \le 2 |\BD \CB(0)|_{\BBL(X_+^{n}, X_+^{n-1})} |u_+|_{X_+^n}, 
& |\phi(v_+)|_{X_+^{n}} \le 2 |\BD \CB(0)^{-1}|_{\BBL(X_+^{n-1}, X_+^{n})} |v_+|_{X_+^{n-1}}, 
\end{align*}
and there exists $C>0$ 
determined by $a_0, a_1$, and the norms of $\CA_\pm (u)$  involved in (D.1)--(D.4) such that 
for any $u_+, u_{1+}, u_{2+} \in \overline{X_+^n (\delta)}$, we have 
\[
\begin{split}
& |q(u_+)|_{X_-^{n}} \le \big( a_0 |\BD \CB^{-1} (0)|_{\BBL(X_-^{n-1}, X_-^{n})} |\BD \CB(0)|_{\BBL(X_+^{n}, X_+^{n-1})} + C\delta\big) |u_+|_{X_+^n},  \\
&|q(u_{1+}) -  q(u_{2+})|_{X_-^{n-1}} \le \big(a_1 |\BD \CB^{-1} (0)|_{\BBL(X_-^{n-2}, X_-^{n-1})} |\BD \CB(0)|_{\BBL(X_+^{n-1}, X_+^{n-2})} + C\delta\big) |u_{1+} -  u_{2+}|_{X_-^{n-1}}. 
\end{split} \]
\end{lemma}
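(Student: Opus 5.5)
The strategy is to treat $\phi$ as a small Lipschitz perturbation of the linear isomorphism $\Pi_+\BD\CB^{-1}(0)|_{X_+^{n-1}}$ and to invert it by a contraction argument. The role of the upper triangular structure is to decouple the two halves at the linear level. Since $X_\pm$ are invariant under $\CA(0)$ by \eqref{E:LinearD-1}, the operator $\BD\CB(0)$, and hence $\BD\CB^{-1}(0)$, is block diagonal with respect to $X_+\oplus X_-$. Consequently
\[
\BD\phi(0)=\Pi_+\BD\CB^{-1}(0)\big|_{X_+}+\Pi_+\BD\CB^{-1}(0)\,\BD h(0)=\BD\CB(0)|_{X_+}^{-1},
\]
where the second term drops because $\BD h$ takes values in $X_-$. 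Lipschitz bounds on $h$ suffice in place of $\BD h(0)$, and $h$ need not be differentiable.

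\textbf{Step 1: invert $\phi$.} Write
\[
\CB^{-1}(v)=\BD\CB(0)^{-1}v+N(v),
\]
where $N\in C^2(X^{n'-1}(\delta_0),X^{n'})$ for $n'=n-1,n$ and $|\BD N(v)|\le C|v|$. Given $u_+\in\overline{X_+^n(\delta_1)}$, the equation $\phi(v_+)=u_+$ is equivalent to the fixed point problem
\[
v_+=\BD\CB(0)|_{X_+}\big(u_+-\Pi_+N(v_+ + h(v_+))\big).
\]
\begin{itemize}
\item In $X_+^{n-1}$, the bound $|h(v_+)|_{X^{n-1}}\le a_0|v_+|$ shows that the right-hand side maps the ball of radius $2|\BD\CB(0)|\,|u_+|_{X^n}$ into itself once $\delta_1$ is small.
\item Contraction is checked in the weaker norm $X_+^{n-2}$. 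Here I use the Lipschitz bound on $h$ in $X^{n-2}$ from \eqref{E:Lip-2} together with $N\in C^1(X^{n-2},X^{n-1})$ and the smallness of the $X^{n-2}$ norms.
\item A closed ball of $X^{n-1}$ is complete under the $X^{n-2}$ metric, since $X^{n-1}$ is a Hilbert space with a weakly closed ball. This gives a unique fixed point $\phi^{-1}(u_+)$.
\end{itemize}
The bound $|\phi^{-1}(u_+)|_{X_+^{n-1}}\le 2|\BD\CB(0)|\,|u_+|$ follows, and symmetrically $|\phi(v_+)|_{X^n}\le 2|\BD\CB(0)^{-1}|\,|v_+|$. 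Choosing $\ep_1$ so that $2|\BD\CB(0)^{-1}|\ep_1\le\delta_1$ gives $\overline{X_+^{n-1}(\ep_1)}\subset\phi^{-1}(\overline{X_+^n(\delta_1)})$.

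\textbf{Step 2: define $q$ and identify the graph.} Set
\[
q(u_+)=\Pi_-\CB^{-1}\big(\phi^{-1}(u_+)+h(\phi^{-1}(u_+))\big).
\]
Write $v=\phi^{-1}(u_+)+h(\phi^{-1}(u_+))$. Then $\Pi_+\CB^{-1}(v)=u_+$ by the definition of $\phi$, so $\CB^{-1}(v)=u_++q(u_+)$, which is the stated identity. The bound on $|q(u_+)|_{X^n}$ comes from expanding
\[
\Pi_-\CB^{-1}(v)=\BD\CB^{-1}(0)|_{X_-}\,h(\phi^{-1}u_+)+\Pi_-N(v).
\]
The first term is bounded by $a_0|\BD\CB^{-1}(0)|\,|\BD\CB(0)|\,|u_+|$. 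The second is $O(|v|^2)$, which is at most $C\delta|u_+|$.

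\textbf{Step 3: Lipschitz bound on $q$ one level lower.} I run the same expansion with differences, in $X^{n-1}$ for $q$ and $X^{n-2}$ for $h$. This uses $\CB^{-1}\in C^2(X^{n-2}(\delta_0),X^{n-1})$, so that $N$ has $X^{n-2}\to X^{n-1}$ Lipschitz constant $O(\delta)$, together with a Lipschitz bound on $\phi^{-1}$ from $X_+^{n-1}$ to $X_+^{n-2}$ with constant about $|\BD\CB(0)|_{\BBL(X^{n-1},X^{n-2})}(1+C\delta)$.

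The main obstacle is the mismatch of regularity levels. $h$ is only Lipschitz in $X^{n-2}$ while being bounded in $X^{n-1}$, so $\phi$ is not obviously a homeomorphism between the $X^{n-1}$ and $X^n$ balls. I handle this as above: closedness and invariance of the ball are kept in the strong norm, and contraction is carried out in the weak norm. That $\phi^{-1}(u_+)$ lies in $X^{n-1}$ follows from the fixed-point equation itself. Some care is also needed because the constants pick up the $\BD\CB(0)$ norms at two different levels.
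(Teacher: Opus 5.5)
Your proposal is correct and follows essentially the paper's proof. Once block-diagonality of $\BD\CB(0)$ is used, your fixed-point map is exactly the paper's $\CS(u_+,\cdot)$. The rest matches as well: invariance of a ball in $X_+^{n-1}$, contraction in the weaker $X_+^{n-2}$ norm, weak compactness to put the limit back in $X^{n-1}$, and the same Taylor expansions of $\CB^{-1}$ for the bounds on $q$.

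One small point concerns the sharp leading constant $a_0|\BD\CB^{-1}(0)|\,|\BD\CB(0)|$ in the bound on $|q(u_+)|_{X_-^n}$. It needs $|\phi^{-1}(u_+)|_{X_+^{n-1}}\le(1+C\delta)|\BD\CB(0)|\,|u_+|_{X_+^n}$ rather than the factor $2$ you state. Your fixed-point equation gives this immediately, just as the paper's $(1-C_*\ep)^{-1}$ factor does.
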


The definition of the local diffeomorphism $\CB$ and the blockwise structure \eqref{E:LinearD-1} imply 
\be \label{E:temp-21}
\BD \CB(0) X_\pm^{r+1} = X_\pm^{r}, \quad \Pi_\pm \BD \CB(0) \Pi_\pm \BD \CB^{-1} (0)|_{X_\pm^r} = I_{X_\pm^r}, \quad \forall \, 0\le r \le n-1. 
\ee
Hence the operator norms $|\BD \CB(0)|_{\BBL(X_\pm^{n}, X_\pm^{n-1})}$ and $|\BD \CB^{-1} (0)|_{\BBL(X_\pm^{n-1}, X_\pm^{n})}$ are meaningful. 
If the spaces $X_\pm^r$ were independent of $r$, the graph of $h$ is a Lipschitz manifold.
Hence the lemma follows directly from a standard argument based on the Implicit Function Theorem. However, the Lipschitz assumption here is under a weaker norm. We have to carefully go through the proof based on the Contraction Mapping Theorem. 

\begin{proof} 
Let $M=\max \{1, a_0\}$ and, without loss of generality, we may assume that $\ep_0$ is so small that $\overline {X^{n-1} (M \ep_0)} \supset (id + h) \big(\overline {X_+^{n-1} (\ep_0)}\big)$ is contained in the domain of $\CB^{-1}$ and thus 
\[
\CS(u_+, v_+) = \Pi_+ \BD \CB(0) \big (u_+ - \Pi_+ \CB^{-1} \big( v_+ + h(v_+) \big) \big)+ v_+ , \quad v_+ \in \overline {X_+^{n-1} (\ep_0)}, \; u_+ \in X_+^n, 
\]
is well defined
and 
\be \label{E:temp-22}
\CS(u_+, v_+) = v_+ \; \text{ iff } \; u_+ = \phi(v_+)= \Pi_+ \CB^{-1} \big( v_+ + h(v_+) \big). 
\ee

Let 
\begin{align*}
C_*  = \max &  \big\{  ((1+a_0)^2/2) |\BD \CB(0)|_{\BBL(X_+^{n}, X_+^{n-1})} |\Pi_+ \BD^2 \CB^{-1} |_{C^0 (\overline {X^{n-1} (M \ep_0)}, \BBL(X^{n-1} \otimes X^{n-1}, X_+^{n}))}, \\ 
& (1+a_0) (1+a_1) |\BD \CB(0)|_{\BBL(X_+^{n-1}, X_+^{n-2})}  |\Pi_+ \BD^2 \CB^{-1} |_{C^0 (\overline {X^{n-1} (M \ep_0)}, \BBL(X^{n-1} \otimes X^{n-2}, X_+^{n-1}))} \big\}. 
\end{align*}
From \eqref{E:temp-21}, 
we have  
\begin{align*}
\CS(u_+, v_+) 
=&  \Pi_+ \BD \CB(0) \Big (u_+ - \Pi_+ \int_0^1 \BD \CB^{-1} \big( \tau(v_+ + h(v_+)) \big) \big( v_+ + h(v_+) \big)- \BD \CB^{-1} (0) v_+ d\tau \Big)\\
=& \Pi_+ \BD \CB(0) \Big (u_+ - \Pi_+ \int_0^1 \Big( \BD \CB^{-1} \big( \tau(v_+ + h(v_+)) \big) - \BD \CB^{-1} (0)\Big) \big( v_+ + h(v_+) \big) d\tau \Big). 
\end{align*}
Hence  from \eqref{E:Lip-2} we obtain
\be \label{E:temp-23} \begin{split}
\big|\CS(u_+, v_+) - \Pi_+ \BD&  \CB(0) u_+ \big|_{X_+^{n-1}} \le 
((1+a_0)^2/2) |\BD \CB(0)|_{\BBL(X_+^{n}, X_+^{n-1})} 
\\
& \times | \Pi_+ \BD^2 \CB^{-1} |_{C^0 (\overline {X^{n-1} (M \ep_0)}, \BBL(X^{n-1} \otimes X^{n-1}, X_+^n))} |v_+|_{X_+^{n-1}}^2 \le C_* |v_+|_{X_+^{n-1}}^2. 
\end{split} \ee
Similarly, for any $\ep \in (0, \ep_0]$ and $v_{1+}, v_{2+} \in \overline {X_+^{n-1} (\ep)}$, we have 
\begin{align*}
\CS(&u_+,  v_{1+}) - \CS(u_+, v_{2+}) =   \Pi_+ \BD \CB(0) \Pi_+ \int_0^1 \BD \CB^{-1} (0) \big(v_{1+} + h(v_{1+}) - v_{2+} - h(v_{2+}) \big) \\
&\; \; - \BD \CB^{-1} \Big( \tau(v_{1+} + h(v_{1+})) + (1-\tau) (v_{2+} - h(v_{2+})  \Big) \big(v_{1+} + h(v_{1+}) - v_{2+} - h(v_{2+}) \big) d\tau, 
\end{align*}
which implies 
\be \label{E:temp-24}
|\CS(u_+, v_{1+}) - \CS(u_+, v_{2+})|_{X_+^{n-2}} \le 
C_* \ep |v_{1+} - v_{2+}|_{X_+^{n-2}}. 
\ee
Let 
\be \label{E:temp-25}
\ep_2 = \min \{\ep_0, 1/(4C_*),  \delta_0 |\BD \CB(0)|_{\BBL(X_+^{n}, X_+^{n-1})} \}, \quad \delta_1= \ep_2/(2  |\BD \CB(0)|_{\BBL(X_+^{n}, X_+^{n-1})}) < \delta_0, 
\ee
and   
\[
\ep \le \ep_2, \quad  \delta = \ep/(2  |\BD \CB(0)|_{\BBL(X_+^{n}, X_+^{n-1})}) \le \delta_1, 
\]
then for any $u_+ \in \overline{X_+^{n} (\delta)}$, $\CS(u_+, \cdot)$ is a contraction mapping on $\overline {X_+^{n-1} (\ep)}$ with Lipschitz constant $1/4$ in the $|\cdot|_{X_\pm^{n-2}}$ norms. 
Therefore, much as in the proof of Theorem \ref{T:QLPDE-LWP}, each iteration sequence $\CS(u_+, \cdot)^{(k)} v_{0+}$ converges in $X_+^{n-2}$ to a limit $v_+ \in X_+^{n-2}$ which is independent of $v_{0+}$. Moreover each such iteration sequence also has a subsequence converging weakly in the $|\cdot|_{X_+^{n-1}}$ topology to a limit in $\overline {X_+^{n-1} (\ep)}$ and thus $\CS(u_+, \cdot)$ has the unique fixed point in $ \overline {X_+^{n-1} (\ep)}$, which is clearly $v_+= \phi^{-1} (u_+)$ according to \eqref{E:temp-22}. 
Let 
\[
\ep_1= \delta_1 / \big( (1+ a_0) |\Pi^+ \CB^{-1}|_{C^0(X^{n-1} (M_0 \ep_0), \BBL(X^{n-1}, X_+^n))} \big) \implies \overline{X_+^{n-1} (\ep_1)} \subset \phi^{-1} \big(\overline{X_+^{n} (\delta_1)}\big).
\]
Define  
\[
q (u_+) = \Pi_- \CB^{-1} \big( \phi^{-1}(u_+) + h(\phi^{-1}(u_+)) \big) \implies \CB \big( u_+ + q (u_+) \big) = \phi^{-1}(u_+) + h(\phi^{-1}(u_+)),
\]
where \eqref{E:temp-22} was used. 
From \eqref{E:temp-23} and \eqref{E:temp-24}, we obtain, for $u_+, u_{1+}, u_{2+} \in \overline{X_+^n (\delta)}$, 
\[
\begin{split}
& |\phi^{-1}(u_+)|_{X_+^{n-1}} \le (1- C_*\ep)^{-1} |\BD \CB(0)|_{\BBL(X_+^{n}, X_+^{n-1})} |u_+|_{X_+^n}, \\ 
&|\phi^{-1}(u_{1+}) - \phi^{-1}(u_{2+}) |_{X_+^{n-2}} \le 
(1- C_*\ep)^{-1} |\BD \CB(0)|_{\BBL(X_+^{n-1}, X_+^{n-2})} |u_{1+} - u_{2+}|_{X_+^{n-1}}. 
\end{split} \]
The above first inequality proves the desired estimate on $\phi^{-1}$. The estimate on $\phi$ follows from a similar argument using its defintion. 
Much as in the derivation of \eqref{E:temp-23} and \eqref{E:temp-24}, along with \eqref{E:temp-21}, they further imply
\begin{align*} 
|q(u_+)|_{X_-^n} \le & \big| \Pi_- \BD \CB^{-1} (0) h(\phi^{-1}(u_+)) |_{X_-^n} \\
& + |\Pi_- \BD^2 \CB^{-1}|_{C^0 (\overline {X^{n-1} (M \ep_0)}, \BBL(X^{n-1} \otimes X^{n-1}, X_-^n))} \big|  \phi^{-1}(u_+) + h(\phi^{-1}(u_+)) \big|_{X^{n-1}}^2 /2,
\end{align*}
\begin{align*} 
|q(u_{1+}) -  q(u_{2+})|_{X_-^{n-1}} \le & \big| \Pi_- \BD \CB^{-1} (0) \big(h(\phi^{-1}(u_{1+})) - h(\phi^{-1}(u_{2+})) \big) \big|_{X_-^{n-1}} \\
& + |\Pi_- \BD^2 \CB^{-1}|_{C^0 (\overline {X^{n-1} (M \ep_0)}, \BBL(X^{n-1} \otimes X^{n-2}, X_-^{n-1}))} \\
& \quad \times  \big|\phi^{-1}(u_{1+}) + h(\phi^{-1}(u_{1+})) + \phi^{-1}(u_{2+}) + h(\phi^{-1}(u_{2+}))\big|_{X^{n-1}} \\
& \quad \times  \big|  \phi^{-1}(u_{1+}) + h(\phi^{-1}(u_{1+})) - \phi^{-1}(u_{2+}) - h(\phi^{-1}(u_{2+})) \big|_{X^{n-2}}.  
\end{align*}
The desired estimates follow immediately. 
\end{proof}

We are ready to complete the proof of Theorem \ref{T:NLPDE-UM}.

\begin{proof}[Proof of Theorem \ref{T:NLPDE-UM}.]
Let 
\be \label{E:temp-26}
\lambda_\pm \in (\omega_-, \omega_+), \; \text{ s.~t. } \; \lambda_0 
\in (\lambda_-, \lambda_+). 
\ee
In the rest of the proof, we shall not repeat if a constant is determined by the norms of $\CA_\pm (u)$ and $\CL_{2\pm}(u)$ involved in (D.1)--(D.4), $|\lambda_0-\lambda_\pm|$, and $|\omega_\pm - \lambda_\pm|$. We will use `$C$' to denote a generic constant with such dependence. 

According to Lemma \ref{L:C1C2}, there exist $\sigma\in (0, 1)$, $R_1>0$, and $C_0\ge 1$ 
such that $v= \CB(u)$ transforms \eqref{E:NLPDE-1} into \eqref{E:QLPDE-1} and $\BA_\pm (v)$ and $\BL_\pm(v)$ defined in \eqref{E:QLPDE-2-A-f}--\eqref{E:BL-1} satisfy assumptions (C.1) and (C.2) given in Subsection \ref{SS:QLPDE-LInMa} for $k=n-1$ and $R_0=R_1$. 
Concerning assumption (C.3) for any $R\in (0, R_1]$, let 
\be \label{E:temp-27}
C_f (R) = \max \{ |\BD f_\pm|_{C^0( X^{n-1} (R), \BBL(X^{n-1}))}, \,  |\BD f_\pm|_{C^0( X^{n-2} (R), \BBL(X^{n-2}))} \} \le C R,
\ee
where Lemma \ref{L:Df-1} was used.
Since $f_\pm (0)=0$ 
there exists $R_*\in (0, R_1]$ such that, for any $R_0 \in (0, R_*]$, (C.1)--(C.4) are satisfied by $\BA_\pm (v)$, $\BL_\pm(v)$, and $f_\pm(v)$ with the above constants $k=n-1, C_0, R_0, C_f(R_0)$ and $\lambda_\pm$ with $\lambda_0  \in \Sigma_+$ with $L_1(\lambda_0, R_0) \le L_1(\lambda_0, R_*)<\frac 12$. Here  $L_1$ was defined in assumption (C.4) and we also added the parameter $R_0$ to emphasize its (non-decreasing) dependence on $R_0$.

Take $\lambda= \lambda_0$ and 
\[
R_0\in (0, R_*], \quad  \bar M = (2C_0^{2n})/(1-L_1(\lambda_0, R_*)), \quad M_0 = 2\bar M, \quad l_0 (R_0) = 2L_1(\lambda_0, R_0) \le CR_0.
\]
From Theorem \ref{T:QLPDE-UM} along with Remark \ref{R:size-1}, for there exist $\ep_0 (R_0) = \frac {R_0}{CM_0}$ and $h_+: \overline{X_+^{n-1} (\ep_0 (R_0))} \to X_-^{n-1}$ satisfying the properties given in Theorem \ref{T:QLPDE-UM} for \eqref{E:QLPDE-1}. When $R_0$ varies, 
a.) according to Remark \ref{R:varyingR}, $h_+$ is independent of $R_0$ and we simply view it as defined on $\overline{X_+^{n-1} (\ep_0 (R_*))}$; and b.) \eqref{E:temp-27} and Theorem \ref{T:QLPDE-UM}(4) implies the estimates 
\be \label{E:temp-28} 
a_0(\ep_0(R_0)), \, a_1 (\ep_0(R_0)) \le C R_0 \le C \ep(R_0),
\ee
where 
\[
a_0(\ep) = \sup_{v_+\in X_+^{n-1} (\ep)} \frac {|h_+(v_+)|_{X_-^{n-1}}}{|v_+|_{X_+^{n-1}}}, \quad a_1(\ep)= \sup_{v_+, \wt v_+\in X_+^{n-1} (\ep)} \frac {|h_+(v_+) - h_+(\wt v_+)|_{X_-^{n-2}}}{|v_+ - \wt v_+|_{X_-^{n-2}}}. 
\]

Let $\delta_1 \in (0, \delta_0]$, $\ep_1 \in (0, \ep_0(R_*)]$, $\phi_+: \overline{X_+^{n-1} (\ep_0 (R_*))} \to X_-^n$,  and $q_+: \overline{X_+^n (\delta_1)} \to X_-^n$ be determined by $h_+$ according to Lemma \ref{L:LipMap-1}. Let 
\[
M^*= 2 \bar M |\BD \CB(0)|_{\BBL(X_+^{n}, X_+^{n-1})}  |\BD \CB^{-1}|_{C^0(\overline{X^{n-1} (R_*)}, \BBL(X^{n-1}, X^n))}, 
\]
\[ 
\delta = \min\Big\{\delta_1, \, 
\frac {\ep_0(R_*)}{ 2 M^* |\Pi_+|_{\BBL(X^{n-1})} |\BD \CB|_{C^0(\overline{X^{n} (\delta_0)}, \BBL(X^{n}, X^{n-1}))}} \Big\}. 
\]
We shall complete the proof of Theorem \ref{T:NLPDE-UM} with the above $M^*$ and $q_+$ restricted on $X_+^n(\delta)$.   

For any $u_{0+} \in X_+^n (\delta_1)$, let $v(t)$, $t \le 0$, be the solution to \eqref{E:QLPDE-1} with initial value 
\[
v(0)= \CB (u_{0+} + q_+(u_{0+})) = \phi_+^{-1} (u_{0+}) + h_+(\phi_+^{-1} (u_{0+})) 
\]
given by Theorem \ref{T:QLPDE-UM}(1) and $u(t) = \CB^{-1} (v(t))$ be the corresponding solution to \eqref{E:NLPDE-1}. 
From \eqref{E:UM-1} and  Lemma \ref{L:LipMap-1}, it holds 
\begin{align*}
|u(t)|_{X^n} \le & |\BD \CB^{-1}|_{C^0(\overline{X^{n-1} (R_*)}, \BBL(X^{n-1}, X^n))} |v(t)|_{X^{n-1}} \\
\le & \bar M |\BD \CB^{-1}|_{C^0(\overline{X^{n-1} (R_*)}, \BBL(X^{n-1}, X^n))} |\phi_+^{-1} (u_{0+})|_{X_+^{n-1}} e^{\lambda_0t}\\
\le & 2 \bar M  |\BD \CB(0)|_{\BBL(X_+^{n}, X_+^{n-1})}  |\BD \CB^{-1}|_{C^0(\overline{X^{n-1} (R_*)}, \BBL(X^{n-1}, X^n))} |u_{0+}|_{X_+^{n}}  e^{\lambda_0t} 
= M^* |u_{0+}|_{X_+^{n}} e^{\lambda_0t},
\end{align*}
which proves \eqref{E:UM-3} for all $u_{0+} \in X_+^n (\delta_1)$. 

Suppose $|u_{+0}|_{X_+^n} < \delta$ and  $\wt u(t)$, $t\le 0$, is a solution to \eqref{E:NLPDE-1} satisfying \eqref{E:temp-20.5}. Let $\wt v(t) = \CB(\wt u(t))$ which solves \eqref{E:QLPDE-1}. One may estimate using \eqref{E:temp-20.5} 
\begin{align*}
|\wt v(t)|_{X^{n-1}} \le & |\BD \CB|_{C^0(\overline{X^{n} (\delta_0)}, \BBL(X^{n}, X^{n-1}))} |\wt u(t)|_{X^n}  \\
\le & 2 M^* \delta  |\BD \CB|_{C^0(\overline{X^{n} (\delta_0)}, \BBL(X^{n}, X^{n-1}))} e^{\lambda_0t} \le \ep_0(R_*) e^{\lambda_0t} \le M_0 \ep_0(R_*) e^{\lambda_0t},
\end{align*}
where $|\Pi_+|_{\BBL(X^{n-1})} \ge 1$, true for any non-trivial projection, is also used. 
From the uniqueness statement in Theorem \ref{T:QLPDE-UM}(1), the definition of $q_+$, and again \eqref{E:temp-20.5}, we have 
\[
\CB(\wt u(0)) = \wt v(0) = \wt v_+(0) + h_+ (\wt v_+(0)) = \CB \big(\phi_+ (\wt v_+(0)) + q_+\big( \phi_+ (\wt v_+(0)) \big) \big),
\] 
which implies $\wt u_-(0) = q_+(\wt u_+(0)) = q_+(u_{0+})$ and thus $\wt v(0)=v(0)$. Since both $\wt v(t)$ and $v(t)$ satisfy \eqref{E:UM-0.5}, we obtain  $\wt v(t) = v(t)$ and thus $\wt u(t) = u(t)$ for all $t \le 0$.

To show \eqref{E:UM-3.1}, let $\wt q^+: X_+^n(\wt \delta) \to X_-^n$ be defined by $\wt \lambda$ by the same procedure. Let us tentatively assume Theorem \ref{T:NLPDE-UM}(5), which will be proved below. Since $u(t)$ satisfies \eqref{E:UM-3}, Theorem \ref{T:NLPDE-UM}(5) implies that $u_-(t_0) = \wt q^+(u_+(t_0))$ for $t_0 \ll -1$. Hence $u(\cdot +t_0)$ also satisfies \eqref{E:UM-3} with $\lambda_0$ replaced by $\wt \lambda$, which implies \eqref{E:UM-3.1} and completes 
the proof of Theorem \ref{T:NLPDE-UM}(1). 

Suppose $|u_{0+}|_{X_+^n} \le \delta$ and  $u(t)$, $t\le 0$, is the unique solution to \eqref{E:NLPDE-1} satisfying \eqref{E:temp-20.5}. 
For any $t_0\le 0$, from the exponential growth bound \eqref{E:UM-3}, $v(t)= \CB(u(t))$ satisfies 
\begin{align*}
|v_+(t_0)|_{X_+^{n-1}} = & |\Pi_+ \CB (u(t_0))|_{X_+^{n-1}} \le |\Pi_+|_{\BBL(X^{n-1})} |\BD \CB|_{C^0(\overline{X^{n} (\delta_0)}, \BBL(X^{n}, X^{n-1}))} |u(t_0)|_{X^n} \\
\le & |\Pi_+|_{\BBL(X^{n-1})} |\BD \CB|_{C^0(\overline{X^{n} (\delta_0)}, \BBL(X^{n}, X^{n-1}))} M^* |u_{0+}|_{X_+^n} e^{\lambda_0t_0} \le \ep_0(R_*)/2. 
\end{align*}
Theorem \ref{T:QLPDE-UM}(2) implies $v_-(t_0)= h_+(v_+(t_0))$ and thus $u_-(t_0) = q_+(u_+(t_0))$ if $|u_+(t_0)|_{X_+^n} < \delta_1$. This immediately yields Theorem \ref{T:NLPDE-UM}(2). Along with a continuation argument, we also obtain Theorem \ref{T:NLPDE-UM}(3) from Theorem \ref{T:QLPDE-UM}(3). 

Except for the $C^{1,1}$ smoothness of $q^+$, Theorem \ref{T:NLPDE-UM}(4) is a direct consequence of \eqref{E:temp-28}, Lemma \ref{L:LipMap-1}, and Theorem \ref{T:QLPDE-UM}(4). The smoothness of $q^+$ is given in Theorem \ref{T:UM-smoothness} in Subsection \ref{SSS:smoothness-InMa} (see also Remark \ref{R:smoothness-1}).  

Suppose both $\sigma, \sigma'>0$ satisfy the properties given by Lemma \ref{L:C1C2}, the resulted equation \eqref{E:QLPDE-1} from the transformation $\CB$ defined by $\sigma, \sigma'$ differ by a bounded linear transformation. By the uniqueness of $q^+(u_{0+})$ given in Theorem \ref{T:NLPDE-UM}(1) for each of $\sigma$ and $\sigma'$, their local unstable manifolds correspond to the same local unstable manifold of the original equation \eqref{E:NLPDE-1}.  
Observe that any fixed $\lambda_\pm$ satisfying \eqref{E:temp-26} are sufficient for the above arguments. Varying $\lambda_\pm$ satisfying \eqref{E:temp-26} does not change $q_+$ essentially (see Lemma \ref{L:UM-1}). To prove Theorem \ref{T:NLPDE-UM}(5), take any $\lambda \in (\max\{0, \omega_-\}, \omega_+)$, we may choose $\lambda_\pm$ and $R_0\ll 1$ such that \eqref{E:temp-26} is satisfied and $\lambda \in \Sigma_+$ defined by $\lambda_\pm$ and $R_0$. Since the exponential decay of $|u(t)|_{X^n}$ as $t \to -\infty$ implies the decay of $|\CB(u(t))|_{X^{n-1}}$ at the same exponential rate, Theorem \ref{T:NLPDE-UM}(5) follows from Theorem \ref{T:QLPDE-UM}(5) and Lemma \ref{L:LipMap-1} immediately. 
\end{proof}

\subsubsection{Smoothness of stable and unstable manifolds} \label{SSS:smoothness-InMa} 

To end this section, we study the smoothness of local invariant manifolds, $W^\pm = graph (q_\pm)$ over $X_\pm^n (\delta)$, obtained in Theorem \ref{T:NLPDE-UM} and Remark \ref{R:SM}. where again we shall focus on the unstable manifold $W^+$. In addition to (D.1--4), we also assume (B.5) as in Appendix \ref{SSS:smoothness-NLPDE}. In particular, according to Remark \ref{R:(D.2)}, (B.5) is essentially applied to $\Pi_{2\pm} (F \circ \Pi_{2\pm})$. The proof of the smoothness of $W^+$ is in a fashion similar to that of the solution map obtained in Theorem \ref{T:LWP-smoothness}. 

\begin{theorem} \label{T:UM-smoothness} 
Assume (D.1--4), (B.5), and $\omega_+ > \max\{0, \omega_-\}$. For any $\lambda_0 \in (\max \{0, \omega_-\}, \omega_+)$, there exist $C, \delta>0$ such that $q^+ \in C^{m, 1} \big(X_+^n (\delta), X_-^{n-m-1}\big)$ for any $1\le m \le m_0$. Moreover, for any $1\le m \le m_0$, $r_0 \le r \le n-1$, and $u_{0+} \in X_+^n (\delta)$, it holds 
\[
|\BD_{u_{0+}} q^+ (u_{0+}) |_{\BBL (X_+^r, X_-^{r})} \le C|u_{0+}|_{X_+^{n-1}}, \quad |\BD_{u_{0+}}^m q^+ (u_{0+}) |_{\BBL (\otimes_{j=1}^m X_+^r, X_-^{r-m+1})} \le C,
\] 
and for  $u_{01+}, u_{02+} \in X_+^n(\delta)$, 
\[
|\BD_{u_{0+}}^m q^+ (u_{02+}) - \BD_{u_{0+}}^m q^+ (u_{01+}) |_{\BBL (\otimes_{j=1}^m X_+^r, X_-^{r-m})} \le C  |u_{02+} - u_{01+}|_{X_+^{n-1}}.
\]
\end{theorem}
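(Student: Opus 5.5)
We will prove the smoothness by differentiating the Lyapunov--Perron fixed point equation in the initial value, in the same way the smoothness of the solution map is obtained in Theorem \ref{T:LWP-smoothness}. We work first in the quasilinearized variables $v=\CB(u)$ of \eqref{E:QLPDE-2} with $k=n-1$. There $h_+(v_{0+})=v_-(0)$, where $v=v(v_{0+})$ is the unique fixed point of $\CT(v_{0+},\cdot)$ in $\Gamma_{\lambda_0, M_0,\ep}$. Afterwards we transfer the result to $q^+$ through Lemma \ref{L:LipMap-1}. Since $\CB^{\pm1}$ are $C^2$, and under (B.5) $C^{m_0+1}$, with a loss of one derivative, that transfer costs one derivative. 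This explains the target space $X_-^{n-m-1}$ rather than $X_-^{n-m}$.

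\textbf{First derivative.} Formally $w=\BD_{v_{0+}}v\,\xi$ solves the linearized Lyapunov--Perron system
\[
w_+(t)=U_+(t,0)\xi+\int_0^t U_+(t,\tau)G_+(\tau)\,d\tau,\qquad w_-(t)=\int_{-\infty}^t U_-(t,\tau)G_-(\tau)\,d\tau,
\]
with $G_\pm=\big(\BD\BA_\pm(v)w\big)v_\pm+\BD f_\pm(v)w$ and $U_\pm$ generated by $v$.

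1. Solve this affine fixed point problem in the weighted space $|\cdot|_{r,\lambda_0}$ for $r\le n-2=k-1$. The term $\BD\BA_\pm(v)w\,v_\pm$ costs one derivative of $w$ but none of $v\in X^k$. Hence the same estimates as in Lemma \ref{L:LP-contraction} give a contraction with constant $\le l$. Two ingredients enter: the bound \eqref{E:U-bdd-4} on $U_\pm$, and the smallness of $|v|_{k,\lambda_0}$ and $C_f$.

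2. Show this $w$ is the Fréchet derivative in the weaker norm $X^{k-2}$. Write the difference quotient remainder $v(v_{0+}+\xi)-v(v_{0+})-w$. It satisfies the same linear system plus a quadratic error, which is estimated using the Lipschitz bound in Theorem \ref{T:QLPDE-UM}(4) and the $C^1$ regularity of $\BA_\pm$ and $f_\pm$. One derivative is lost because the difference $\big(\BA(v_1)-\BA(v_2)\big)$ is applied to $w$.

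3. The bound $|\BD h_+|\le C|v_{0+}|$ holds because the coefficients $G_\pm$ vanish linearly in $v$. Here we use $\BD f_\pm(0)=0$ from Lemma \ref{L:Df-1}, together with $|v|_{k,\lambda_0}\lesssim|v_{0+}|$.

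\textbf{Higher derivatives.} We proceed by induction on $m$. The $m$-th derivative solves the same linear system with a forcing built from lower derivatives and the higher derivatives $\BD^j\BA$, $\BD^j f$, which (B.5) supplies. Each order loses one regularity level. The estimate must be closed at level $X^{r-m+1}$, and Lipschitz continuity of $\BD^m$ at level $X^{r-m}$, which requires the weighted integrals to converge. To ensure this, for the $m$-th derivative we use rates $m\lambda_0$ versus $\lambda_-$ in the spirit of the gap condition. Since the paper only claims $C^{m,1}$ near $0$ with $\delta$ small, we instead choose $\lambda_\pm$ close to $\omega_\pm$ and use that products of derivatives decay at least like $e^{\lambda_0 t}$. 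The weights then stay at $\lambda_0$, and the convergence follows from $\lambda_0<\lambda_+$ and $\lambda_0>\lambda_-$ as before.

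\textbf{Main obstacle.} The difficulty is the derivative loss from the $\big(\BD\BA(v)w\big)v$ terms combined with the fact that $\BA$ is only $C^1$ in $v\in X^{k-1}$. Establishing genuine differentiability, not merely Lipschitz bounds, in the weaker norm requires care here. We expect to handle it exactly as in Appendix \ref{SSS:smoothness-NLPDE}: estimate the $m$-th remainder in $X^{r-m}$ while using the uniform bounds on lower derivatives in $X^{r-m+1}$. For the last assertion, composing with $\CB^{-1}$ and the implicit map $\phi^{-1}$ of Lemma \ref{L:LipMap-1} uses the chain rule with the same loss bookkeeping.
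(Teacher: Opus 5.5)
Your scheme matches the paper's in outline: differentiate the Lyapunov--Perron fixed point, solve the linear system in $\lambda_0$-weighted spaces, prove Fr\'echet differentiability with one level of loss, and induct on $m$ with the weight kept at $\lambda_0$. The difference is that you run it in the quasilinearized variables $v=\CB(u)$ and transfer back, while the paper works directly with $u$, and this choice breaks the first-derivative estimate. Differentiating $\p_t v_\pm=\BA_\pm(v)v_\pm+f_\pm(v)$ produces the term $(\BD\BA_\pm(v)w)v_\pm$. At the top level, (C.2) only gives $|(\BD\BA_\pm(v)w)v_\pm|_{X^{k-1}}\le C_0|w|_{X^{k-1}}|v_\pm|_{X^k}$, and for the $X_{2\pm}$ block nothing better is available. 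So, contrary to your remark, this term costs nothing in $w$ but needs the top norm of $v$. Your Step 3, which uses $|v|_{k,\lambda_0}$, therefore yields $|\BD h_+(v_{0+})|_{\BBL(X_+^{k-1},X_-^{k-1})}\lesssim|v_{0+}|_{X_+^k}$, i.e.\ $|\BD q^+(u_{0+})|_{\BBL(X_+^{n-1},X_-^{n-1})}\lesssim|u_{0+}|_{X_+^n}$. That is weaker than the asserted $C|u_{0+}|_{X_+^{n-1}}$, which is exactly the improvement singled out in Remark \ref{R:SM}. In the paper the derivatives solve $\p_t U^m=\CA(u)U^m+\CF_m$, which has no such term. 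With the splitting $\CA_\pm=\CA_\pm^U+\wt\CA_\pm$ of \eqref{E:CA-upperT}--\eqref{E:wtCA}, the quasilinear part $\CA_\pm^U(u(t))$ is absorbed into the evolution operators of Lemma \ref{L:smoothness-UM-1}. Only the bounded blocks, with $|\wt\CA_\pm(u)|_{\BBL(X^r,X_\pm^r)}\le C|u|_{X^{n-1}}$, are treated perturbatively. This gives $|(I-\CS(u_{0+}))^{-1}-I|\le C|u_{0+}|_{X_+^{n-1}}$ at every level $0\le r\le n-1$, and hence the stated bound on $\BD q^+(u_{0+})=U_-^1(0,u_{0+})$.

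The transfer back is also more than the chain rule, for two reasons.
\begin{itemize}
\item Lemma \ref{L:LipMap-1} produces $\phi^{-1}$ only as a Lipschitz map, namely the fixed point of a contraction in $X_+^{n-2}$. You would need a separate inverse-function argument in the scale $X^r$, with loss, to show $\phi^{-1}$ is $C^{m,1}$.
\item You would have to convert (B.5), which concerns $F$, into lower-level bounds on $\BD^j\BA_\pm$ and $\BD^j f_\pm$ through derivatives of $\CB^{-1}$. Neither (C.1)--(C.3) nor Lemma \ref{L:Df-1} gives anything with directions below $X^{k-1}$. This is already needed in your Step 2: the remainder $R$ is estimated in $X^{k-2}$ and satisfies a linear equation containing $(\BD\BA_\pm(v)R)v_\pm$ and $\BD f_\pm(v)R$.
\end{itemize}
The paper avoids all of this because Theorem \ref{T:NLPDE-UM} already parametrizes the solutions on $W^+$ by $u_{0+}$. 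It differentiates the solution map $\phi(t,u_{0+})$ of \eqref{E:temp-28.5} directly. It then verifies $U^m=\BD_{u_{0+}}^m\phi$ through the remainder $w=\tilde u-u-U^1\tilde u_{0+}$ in weighted Lyapunov--Perron form, as in Theorem \ref{T:LWP-smoothness}, and reads off $q^+=\phi_-(0,\cdot)$. Finally, the target $X_-^{n-m-1}$ is not caused by a transfer loss. $\BD^m q^+$ is bounded into $X_-^{r-m+1}$, and only its Lipschitz modulus lives in $X_-^{r-m}$; for $r=n-1$ this gives $C^{m,1}$ into $X_-^{n-m-1}$.
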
 

\begin{remark} \label{R:smoothness-SM}
The same results hold for the local stable manifold $W^- = graph (q^-)$ if $\omega_- <  \min\{0, \omega_+\}$ is assumed instead. 
\end{remark}


Our strategy to prove the theorem is similar to that in Appendix \ref{SSS:smoothness-NLPDE}. Namely, we first obtain the candidate symmetric multilinear operators for $\BD^m q^+$ and then prove that they are indeed the derivatives of $q^+$. 

For any $u_{0+} \in X_+^n (\delta)$ and $t \le 0$, let 
\be \label{E:temp-28.5}
u(t) = \phi(t, u_{0+}) = \phi_+ (t, u_{0+}) + \phi_- (t, u_{0+}), 
\quad  \phi_- (t, u_{0+}) = q^+ \big( \phi_+(t, u_{0+})\big), \quad t\le 0, 
\ee
denote the solution on $W^+$ to \eqref{E:NLPDE-1} with initial value $u_0 = u_{0+} + u_{0-}$, where $u_{0-} = q^+ (u_{0+})$, given by Theorem \ref{T:NLPDE-UM}. 
For any $m \in \N$, formally linearizing $\phi(t, u_{0+})$ with respect to $u_{0+}$ yields that the symmetric $m$-linear operators 
$U^m (t, u_{0+}) = \BD_{u_{0+}}^m \phi (t, u_{0+})$ satisfies the same equation as  \eqref{E:smoothness-1} 
\[
\p_t U^m = \CA(u) U^m   + \CF_{m} (t, u_{0+}), 
\]
where 
the symmetric $m$-linear operator $\CF_{m} (t, u_{0+}) (w, \ldots, w)$, $w\in X_+$, takes the same form \eqref{E:temp-10.1} and satisfies \eqref{E:recursion-1} with $u(t, u_0)$ and $u_0$ replaced by $\phi(t, u_{0+})$ and $u_{0+}$. So it involves $U^1, \ldots, U^{m-1}$.
Projecting $U^m$ into $X_\pm$, the above equation 
is equivalent to 
\be \label{E:smoothness-2} 
\p_t U_\pm^m = \CA_{\pm}(u) U_\pm^m  + \CA_{\pm, \mp} (u) U_\mp^m + \CF_{m\pm} (t, u_{0+}).
\ee
At $t=0$ and $t=-\infty$, we have  
\be \label{E:UpmBC} \begin{cases} 
U_+^1 (0, u_{0+}) = I, & \\
U_+^m (0, u_{0+}) =0, &  m>1,
\end{cases}
\quad \text{ and } \; U_-^m (-\infty, u_{0+})=0.
\ee
We single out the upper triangular and  leading order diagonal parts of $\CA_\pm(u)$ 
\be \label{E:CA-upperT}
\CA_\pm^U (u) = \sum_{1 \le j \le j' \le 3} \CA_{j\pm, j'\pm} (u) \Pi_{j'\pm}, \quad \CA_\pm^d(u) = diag \big( \CA_{1\pm} (0), \CA_{2\pm} (u), \CA_{3\pm} (0) \big),
\ee
and then \eqref{E:smoothness-2} can be rewritten as 
\be \label{E:smoothness-3} 
\p_t U_\pm^m = \CA_{\pm}^U (u) U_\pm^m  + \wt \CA_{\pm} (u) U^m + \CF_{m\pm} (t, u_{0+}), 
\ee
where 
\be \label{E:wtCA}
\wt \CA_{\pm} (u)  = \big(\CA_\pm (u) - \CA_\pm^U (u)\big) \Pi_\pm  + \CA_{\pm, \mp} (u) \Pi_\mp. 
\ee

\begin{lemma} \label{L:smoothness-UM-1} 
For any $\lambda_\pm$ satisfying $ \pm (\omega_\pm - \lambda_\pm) >0$, there exist $\delta, C>0$, such that for any $u_{0+} \in X_+^{n} (\delta)$, $\CA_\pm^U (u(t))$ determines unique evolution operators $U_\pm (t, t_0, u_{0+}) \in \BBL(X^r)$, for $t,t_0 \le 0$ and $\pm (t-t_0) \le 0$, which, for any $0 \le r \le n-1$, are strongly continuous in $t$ and $t_0$ on $X^r$ and satisfy 
\[ 
|U_\pm (t, t_0, u_{0+})|_{\BBL(X^r)} \le C e^{\lambda_\pm (t-t_0)}. 
\]
\end{lemma}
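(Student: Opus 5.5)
The plan is to reduce the statement to the non-autonomous linear estimate Proposition \ref{P:linear}, exactly as $\BA_\pm(v(t))$ was handled in Subsection \ref{SS:QLPDE-LInMa}. There are two ingredients: a $\sigma$-rescaling, which makes the strictly upper triangular part of $\CA_\pm^U$ small, and the exponential decay of $u(t)=\phi(t,u_{0+})$ on $W^+$ given by Theorem \ref{T:NLPDE-UM}(1). The decay makes the integral $\int_{-\infty}^0 |u'|\,dt$ finite and small, so the growth factor in \eqref{E:U-bdd-4} contributes only a bounded constant.

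\textbf{Step 1: rescaling.} Fix $\lambda_\pm$ with $\pm(\omega_\pm-\lambda_\pm)>0$ and choose intermediate rates $\lambda_\pm'$ strictly between $\lambda_\pm$ and $\omega_\pm$. Let $S_\sigma=\mathrm{diag}(\sigma^{-1},1,\sigma)$ on $X_{1\pm}\oplus X_{2\pm}\oplus X_{3\pm}$, the scaling used in \eqref{E:CB-2}. Then
\[
S_\sigma\CA_\pm^U(u)S_\sigma^{-1}=\sum_{1\le j\le j'\le 3}\sigma^{j'-j}\CA_{j\pm,j'\pm}(u)\Pi_{j'\pm}.
\]
Its diagonal blocks are $\CA_{1\pm}(u)$, $\CA_{2\pm}(u)$ and $\CA_{3\pm}(u)$, and its off-diagonal blocks are $O(\sigma)$ in $\BBL(X^r_{j'\pm},X^r_{j\pm})$ by (D.3).

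For $j=1,3$, (D.3) shows that $\CA_{j\pm}(u)-\CA_{j\pm}(0)$ is bounded on $X^r_{j\pm}$ with norm $\le C|u|_{X^{n-1}}\le CM^*\delta$.

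Take $\BL_\pm(u)=\mathrm{diag}(\CL_{1\pm},\CL_{2\pm}(u),\CL_{3\pm})$ as in \eqref{E:BL-1}. By \eqref{E:dissipativity-4}--\eqref{E:dissipativity-5}, the diagonal part is dissipative with rate $\omega_\pm$. The bounded perturbations (size $O(\sigma+\delta)$) shift this rate by at most $C(\sigma+\delta)$. Choosing $\sigma$ and then $\delta$ small, the rescaled operator satisfies the dissipativity \eqref{E:dissipativity-3} with rate $\lambda_\pm'$ along $u(t)$.

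The remaining hypotheses of (C.1)--(C.2) with $k=n-1$ are checked as in Lemma \ref{L:C1C2}:
\begin{itemize}
\item the surjectivity of $\lambda\pm\CA^U_\pm$ follows from (D.2) by a Neumann series, since $\CA^U_\pm$ is a small bounded perturbation of a block triangular operator with invertible diagonal;
\item $C^1$ dependence on $u\in X^{n-1}$ comes from (D.3);
\item equivalence of norms comes from \eqref{E:coercivity-4}.
\end{itemize}
This is where $u\in X^{n-1}$ small is needed, and it holds since $|u(t)|_{X^n}\le M^*\delta e^{\lambda_0 t}$.

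\textbf{Step 2: the time-derivative integral.} Proposition \ref{P:linear} then yields evolution operators $\wt U_\pm(t,t_0)$ on $X^r$, $0\le r\le n-1$, for the rescaled system. Their bound has the form \eqref{E:U-bdd-4}:
\[
C_0^{2(r+1)}\exp\Big(\lambda'_\pm(t-t_0)+(r+1)C_0^{2(r+1)}\Big|\int_{t_0}^{t}|\p_\tau u|_{X^{n-1}}\,d\tau\Big|\Big).
\]
Since $\p_t u=F(u)$ and $F$ is $C^1$ from $\CO_n$ to $X^{n-1}$ with $F(0)=0$, we have $|\p_tu|_{X^{n-1}}\le C|u|_{X^n}\le CM^*\delta e^{\lambda_0 t}$. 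Because $\lambda_0>0$, the integral is at most $CM^*\delta/\lambda_0$ uniformly in $t,t_0\le0$. This gives $|\wt U_\pm(t,t_0)|_{\BBL(X^r)}\le Ce^{\lambda'_\pm(t-t_0)}\le Ce^{\lambda_\pm(t-t_0)}$.

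\textbf{Step 3: undoing the rescaling.} Set $U_\pm=S_\sigma^{-1}\wt U_\pm S_\sigma$. This only costs a factor $\sigma^{-2}$ in $C$. Strong continuity in $t,t_0$ and uniqueness are inherited from Proposition \ref{P:linear}.

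\textbf{Main obstacle.} I expect the main difficulty to be checking the hypotheses of Proposition \ref{P:linear} at the top level $r=n-1$. There $\CA_{2\pm}(u)$ is unbounded $X^r_{2\pm}\to X^{r-1}_{2\pm}$, its coefficients depend on $u(t)$, and the commutator-type terms arising when the estimate is lifted to $X^r$ must be controlled using only the regularity \eqref{E:regularity-1} and the $X^{n-1}$-control of $u'$. The first thing I would try is to mimic the higher-order estimate of Appendix \ref{SS:Linear} via $\bar\BA_\pm^{-1}$ isomorphisms, as in (C.2).

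If this fails at $r=n-1$, I would instead build the operators by back-substitution: solve the $X_{3\pm}$ component first, then $X_{2\pm}$, then $X_{1\pm}$, using Duhamel's formula with the upper triangular forcing. Each step uses only diagonal evolutions and small bounded couplings, and the rate gaps $|\lambda_\pm-\lambda'_\pm|$ keep the Duhamel integrals convergent.
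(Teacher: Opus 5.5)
Your argument is correct, but it is organized differently from the paper's proof.

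\textbf{What is shared.} Both proofs use the same $\sigma$-weighting. The paper phrases it as the equivalent norm $\|w\|_{X^r_\pm,\sigma}^2=|\sigma w_1|^2+|w_2|^2+|\sigma^{-1}w_3|^2$. Both also use the same bound $\int_{-\infty}^0|u_t|_{X^{n-1}}\,dt\le C\delta/\lambda_0$, coming from \eqref{E:UM-3}.

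\textbf{Where the routes differ.} The difference is where the small part $\CA^U_\pm(u)-\CA^d_\pm(u)$ is absorbed. This part consists of the $O(\sigma)$ off-diagonal blocks and $\CA_{j\pm}(u)-\CA_{j\pm}(0)$ for $j=1,3$.
\begin{enumerate}
\item You absorb it into the dissipativity inequality. You then run Proposition \ref{P:linear} once on the whole rescaled upper-triangular operator, after re-verifying (C.1)--(C.2) as in Lemma \ref{L:C1C2}.
\item The paper applies the linear machinery only to the block-diagonal operator $\CA^d_\pm(u(t))=\mathrm{diag}(\CA_{1\pm}(0),\CA_{2\pm}(u(t)),\CA_{3\pm}(0))$. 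There (D.2)--(D.4) apply block by block and give rate $\omega_\pm$; the $j=1,3$ blocks are even autonomous. It then treats $\CA^U_\pm-\CA^d_\pm$ as a perturbation that, by \eqref{E:temp-29}, is small and bounded on every $X^r_\pm$, $0\le r\le n-1$, in the weighted norm. A standard Duhamel/Gronwall perturbation of evolution families then degrades $\omega_\pm$ to $\lambda_\pm$.
\end{enumerate}

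\textbf{What each route buys.} The paper's route never has to show that $\CA^U_\pm(u)-c$ is an isomorphism $X^r_\pm\to X^{r-1}_\pm$ with $C^1$ dependence on $u$. Yours obtains the estimate in a single application of Proposition \ref{P:linear}, at the price of that Neumann-series check.

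\textbf{Your ``main obstacle''.} It does not actually arise. With $Q(t)=\CA^U_\pm(u(t))-c$ as in (C.2), you have $[Q,A]\equiv 0$. Moreover $Q'$ is controlled by (D.3) together with $|u_t|_{X^{n-1}}$. So the top level $r=n-1$ requires nothing extra. Your fallback, back-substitution through $X_{3\pm}$, then $X_{2\pm}$, then $X_{1\pm}$, is essentially the alternative the paper mentions after (D.5$'$).

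\textbf{One slip in Step 1.} With $S_\sigma=\mathrm{diag}(\sigma^{-1},1,\sigma)$, the conjugate $S_\sigma\CA^U_\pm S_\sigma^{-1}$ has blocks $\sigma^{j-j'}\CA_{j\pm,j'\pm}$, which blow up for $j<j'$. The scaling in \eqref{E:CB-2} is $\mathrm{diag}(\sigma,1,\sigma^{-1})$. That is the one producing your displayed formula with factors $\sigma^{j'-j}$. The $\sigma^{-2}$ cost in Step 3 is unaffected.
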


\begin {proof} 
In the same spirit as the proof of Theorem \ref{T:NLPDE-UM}, we introduce equivalent norms on $X_\pm^r$  
\[
\| w\|_{X_\pm^r, \sigma}^2 = | \sigma w|_{X_{1\pm}^r}^2 + |w|_{X_{2\pm}^r}^2 + | \sigma^{-1} w|_{X_{3\pm}^r}^2, \quad w\in X_\pm^r, \quad \sigma \in (0, 1], 
\] 
and then one may estimate, for any $u \in \CO$, 
\begin{align*}
\big \| \big(\CA_\pm^U (u) - \CA_\pm^d(u) \big) w \big\|_{X_\pm^r, \sigma} = &  \Big\| \sum_{j=1,3} \big( \CA_{j\pm} (u) - \CA_{j\pm} (0) \big) \Pi_{j\pm} w + \sum_{1\le j < j' \le 3}  \CA_{j\pm, j'\pm} (u) \Pi_{j'\pm} w 
\Big\|_{X_\pm^r, \sigma}\\
\le & \sum_{j=1,3} | \CA_{j\pm} (u) - \CA_{j\pm} (0)|_{\BBL(X_{j\pm}^r)} \|\Pi_{j\pm} w\|_{X_{j\pm}^r, \sigma} \\
&+ \sum_{1\le j < j' \le 3} \sigma^{j'-j} |\CA_{j\pm, j'\pm} (u)|_{\BBL(X_{j\pm}^r, X_{j'\pm}^r)} \| \Pi_{j'\pm} w \|_{X_{j'\pm}^r, \sigma}.
\end{align*}
From assumption (D.3) we obtain 
\be \label{E:temp-29} 
\big \| \big(\CA_\pm^U (u) - \CA_\pm^d(u) \big) \big\|_{\BBL((X_\pm^r, \| \cdot \|_{X_\pm^r, \sigma}))} \le C (|u|_{X^{n-1}} + \sigma),
\ee
where $C$ is determined by the norms of $\CA$ involved in (D.3). 

From Theorem \ref{T:NLPDE-UM}, 
for any $u_{0+} \in X_+^n (\delta)$, \eqref{E:UM-3} implies that the solution $u(t)$ satisfies 
\[
|u_t (t)|_{X^{n-1}} = |F(u (t))|_{X^{n-1}} \le |\CA|_{C^0 (X^n (M^* \delta), \BBL(X^n, X^{n-1}))} |u(t)|_{X^n} \le C |u_{0+}|_{X_+^n} e^{\lambda_0 t}, \quad t \le 0.
\]
As in \eqref{E:U-bdd-4}, $\CA_\pm^d (u(t))$ generate evolution operators $U_\pm (t, t_0, u_{0+}) \in \BBL(X^r)$, for $t, t_0 \le 0$ and $\pm (t-t_0) \le 0$, which, for any $0 \le r \le n-1$, are strongly continuous in $t$ in $X^r$ and satisfy 
\[
|U_\pm^d (t, t_0)|_{\BBL(X_\pm^r)}  \le C e^{\omega_\pm (t-t_0) + C |\int_{t_0}^t |u_t (\tau)|_{X^{n-1}} d\tau |} \le C e^{\omega_\pm (t-t_0)}. 
\]
Due to \eqref{E:temp-29}, for sufficiently small $\delta, \sigma >0$, $\CA_\pm^U (u(t)) - \CA_\pm^d(u(t)) \in \BBL(X_\pm^r)$ is a small bounded operator in an equivalent norm, hence $\CA_\pm^U (u(t))$ also generate evolution operators  strongly $C^0$ in $t$ and $t_0$ on $X_\pm^r$ satisfying the desired estimates. 
\end{proof}

Using the above lemma, we shall solve \eqref{E:smoothness-3} and \eqref{E:UpmBC} in the space 
\begin{align*}
\Gamma_{r, \lambda}^m = \big\{ & U \in C^0 \big( (-\infty, 0], \BBL (\otimes_{j=1}^m X^r, X^{r-m+1}) \big) \mid \\
& \qquad \qquad |U|_{m, r, \lambda} \triangleq \sup_{t\le 0} e^{-\lambda t} |U(t)|_{ \BBL (\otimes_{j=1}^m X_+^r, X^{r-m+1})} < \infty \big\}. 
\end{align*} 

\begin{lemma} \label{L:smoothness-UM-2}
For any $\lambda_0 \in (\max \{0, \omega_-\}, \omega_+)$, there exist $C, \delta>0$ such that \eqref{E:smoothness-3} and \eqref{E:UpmBC} has a unique solution $U^m (\cdot, u_{0+}) \in \Gamma_{r, \lambda}^m$ for any $1\le m \le m_0$, $r_0 \le r \le n-1$, and $u_{0+} \in X_+^n (\delta)$. Moreover, it satisfies, for $t\le 0$,  
\[
|U^1 (t, u_{0+}) - U_+(t, 0) |_{\BBL (X_+^r, X^{r})} \le C e^{\lambda_0 t} |u_{0+}|_{X^{n-1}}, \quad |U^m (t, u_{0+}) |_{\BBL (\otimes_{j=1}^m X_+^r, X^{r-m+1})} \le C e^{\lambda_0 t},
\] 
and for  $u_{01+}, u_{02+} \in X_+^n(\delta)$, 
\[
|U^m (t, u_{02+}) - U^m (t, u_{01+}) |_{\BBL (\otimes_{j=1}^m X_+^r, X^{r-m})} \le C e^{\lambda_0 t} |u_{02+} - u_{01+}|_{X_+^{n-1}}.
\]
\end{lemma}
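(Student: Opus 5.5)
We argue by induction on $m$, each step being a Lyapunov--Perron fixed point problem for the linear inhomogeneous system \eqref{E:smoothness-3}--\eqref{E:UpmBC}. Given $U^1,\dots,U^{m-1}$, the forcing $\CF_m(t,u_{0+})$ is determined. Using the evolution operators $U_\pm(t,t_0,u_{0+})$ of Lemma \ref{L:smoothness-UM-1}, generated by $\CA^U_\pm(u(t))$, we define on $\Gamma^m_{r,\lambda_0}$ the map
\[
\CT^m(U)(t)=\Big(U_+(t,0)U_+^m(0)+\int_0^t U_+(t,\tau)\big(\wt\CA_+(u)U+\CF_{m+}\big)d\tau,\ \int_{-\infty}^t U_-(t,\tau)\big(\wt\CA_-(u)U+\CF_{m-}\big)d\tau\Big).
\]
Here $U_+^m(0)$ is $I$ if $m=1$ and $0$ otherwise. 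We choose $\lambda_\pm$ with $\omega_-<\lambda_-<\lambda_0<\lambda_+<\omega_+$. The exponential bounds of Lemma \ref{L:smoothness-UM-1} then give, for the integral terms, factors $C/(\lambda_+-\lambda_0)$ and $C/(\lambda_0-\lambda_-)$. This is exactly as in the proof of Lemma \ref{L:LP-contraction}, but now in a genuinely linear setting with no loss of regularity in the $U$ argument.

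The key structural point is that $\wt\CA_\pm(u)$ is small and bounded on $X^{r-m+1}$. Its pieces are the following.
\begin{itemize}
\item The lower triangular blocks $\CA_{j\pm,j'\pm}(u)$ with $j>j'$ are bounded by (D.3). At $u=0$ they vanish because of \eqref{E:LinearD-1}, so they are $O(|u|_{X^{n-1}})$.
\item The cross terms $\CA_{\pm,\mp}(u)$ are likewise bounded and $O(|u|_{X^{n-1}})$ by the invariance of $X_\pm$ under $\CA(0)$.
\item The remaining diagonal parts are of the same bounded, $O(|u|_{X^{n-1}})$ type.
\end{itemize}
Since $|u(t)|_{X^n}\le M^*\delta e^{\lambda_0 t}$ by \eqref{E:UM-3}, we get $|\wt\CA_\pm(u(t))|_{\BBL(X^{r-m+1})}\le C\delta$. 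Hence $\CT^m$ is a contraction on $\Gamma^m_{r,\lambda_0}$ for $\delta$ small, with constant $C\delta\,(\frac1{\lambda_+-\lambda_0}+\frac1{\lambda_0-\lambda_-})<1$. The fixed point is the unique solution.

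For $m=1$, $\CF_1=0$. Comparing with $U_+(t,0)$, the difference solves the same system with forcing $\wt\CA(u)U_+(\cdot,0)$, which is $O(\delta e^{\lambda_0t}|u_{0+}|_{X^{n-1}})$. This yields the first estimate. To measure $u$ in $X^{n-1}$ rather than $X^n$ there, we use $|u(t)|_{X^{n-1}}\le Ce^{\lambda_0t}|u_{0+}|_{X_+^{n-1}}$, which follows from Theorem \ref{T:NLPDE-UM}(4) with $\wt u_{0+}=0$.

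For $m\ge2$, we bound $\CF_m$ via its form \eqref{E:temp-10.1}. It is a sum of terms $\BD^j\CA(u)(U^{i_1},\dots,U^{i_j})U^{i_0}$ and the analogous terms from (B.5), with $\sum i=m$. Each factor decays like $e^{\lambda_0 t}$ and loses one derivative relative to the previous level, which is why the target space is $X^{r-m+1}$. Because $\lambda_0>0$, products of at least two factors decay at least like $e^{\lambda_0t}$. The range $r_0\le r\le n-1$ is precisely where (B.5) supplies these multilinear bounds.

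For the Lipschitz dependence on $u_{0+}$, we subtract the equations for $u_{01+}$ and $u_{02+}$. The differences $\CA^U(u_2)-\CA^U(u_1)$ and $\CF_m(\cdot,u_{02+})-\CF_m(\cdot,u_{01+})$ cost one further derivative, giving the target $X^{r-m}$. They are controlled by $|u_2(t)-u_1(t)|_{X^{n-1}}\le Ce^{\lambda_0t}|u_{02+}-u_{01+}|_{X_+^{n-1}}$ from Theorem \ref{T:NLPDE-UM}(4). The $\CA^U$ difference is handled by the Duhamel formula comparing the two evolution operators, exactly as in the contraction part of Lemma \ref{L:LP-contraction}.

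The main obstacle is the derivative bookkeeping for the difference of the evolution operators. The operator $\CA_{2\pm}(u_2)-\CA_{2\pm}(u_1)$ maps $X^{r-m+1}\to X^{r-m}$ only, so the difference estimate must be closed in the weaker space, using $U^m\in X^{r-m+1}$ from the previous step. The exponential weights must also still be summable there. I would handle this by first proving the uniform bounds for all $r$, then doing the difference estimate at level $r-m$ with the uniform $X^{r-m+1}$ bound inserted. If needed, I would sacrifice slightly on the rate, using $\lambda_0$ in the weights while absorbing $e^{\lambda_0 t}$ from the $u$-difference, which is possible since $\lambda_0>0$.
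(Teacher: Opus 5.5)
Your proposal is correct and follows essentially the paper's route: you rewrite \eqref{E:smoothness-3}--\eqref{E:UpmBC} as the Lyapunov--Perron system built from the evolution operators of Lemma \ref{L:smoothness-UM-1}, invert $I-\CS(u_{0+})$ using $|\wt\CA_\pm(u)|\le C|u|_{X^{n-1}}$, bound $\CF_m$ inductively as in \eqref{E:temp-10.22}, and obtain the Lipschitz estimate from the difference equation with the same homogeneous part at the cost of one derivative. Two minor slips do not affect the argument: $\wt\CA_\pm$ contains only the strictly lower-triangular and cross blocks, since $\CA^U_\pm$ already carries the full $u$-dependent diagonal; and the $m=1$ forcing is $O(e^{\lambda_0 t}|u_{0+}|_{X^{n-1}})$, without the extra factor $\delta$.
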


\begin{proof} 
Like the Lyapunov-Perron equation, \eqref{E:smoothness-3} and \eqref{E:UpmBC} are equivalent to 
\be \label{E:smoothness-LP-1} \begin{cases} 
U_+^m (t) = U_+(t, 0) U_+^m(0) + \int_0^t U_+(t, \tau) \big(  \wt \CA_{+} (u(\tau)) U^m (\tau) + \CF_{m+} (\tau) \big) d\tau,\\
U_-^m (t) = \int_{-\infty}^t U_-(t, \tau) \big(  \wt \CA_{-} (u(\tau)) U^m (\tau) + \CF_{m-} (\tau) \big) d\tau, 
\end{cases} \ee
where we skipped the dependence on $u_{0+}$ when there is no confusion. This system is in the form of a fixed point equation
\be \label{E:smoothness-LP-2}
U^m  = \CS (u_{0+}) U^m  + g_m (u_{0+}),
\ee
where, for $t \le 0$, 
\[
\big(\CS (u_{0+}) U^m\big) (t) =  \int_0^t U_+(t, \tau)  \wt \CA_{+} (u(\tau)) U^m (\tau) d\tau + \int_{-\infty}^t U_-(t, \tau) \wt \CA_{-} (u(\tau)) U^m (\tau) d\tau,  
\]
\[
g_m(u_{0+}) (t)
= U_+(t, 0) U_+^m(0) + \int_0^t U_+(t, \tau) \CF_{m+} (\tau, u_{0+})  d\tau + \int_{-\infty}^t U_-(t, \tau) \CF_{m-} (\tau, u_{0+})  d\tau,
\]
like $\CF_m$, the latter of which is computed using $U^1, \ldots, U^{m-1}$.
From assumptions (D.1) and (D.3) and the definition \eqref{E:wtCA},  
\[
|\wt \CA_\pm (u) |_{\BBL(X^r, X_\pm^r)} \le C |u|_{X^{n-1}}, \quad u \in \CO, \;\; 0 \le r \le n-1. 
\]

Let $\lambda_\pm = (\omega_\pm + \lambda_0)/2$ and $\delta>0$ be determined by Lemma \ref{L:smoothness-UM-1} and Theorem \ref{T:NLPDE-UM}. Using Lemma \ref{L:smoothness-UM-1}, \eqref{E:UM-3}, and the above bound on $\wt \CA_\pm$, 
we obtain, for $0 \le r \le n-1$,  
\begin{align*}
|\CS (u_{0+}) U^m|_{m, r, \lambda_0} \le & \sup_{t\le 0} C e^{-\lambda_0 t} \Big(\int_t^0 e^{\lambda_+(t-\tau)}  |u(\tau)|_{X^{n-1}} |U^m (\tau)|_{\BBL (\otimes_{j=1}^m X_+^r, X^{r-m+1})} d\tau \\
& \qquad \qquad \quad + \int_{-\infty}^t e^{\lambda_- (t-\tau)}  |u(\tau)|_{X^{n-1}} |U^m (\tau)|_{\BBL (\otimes_{j=1}^m X_+^r, X^{r-m+1})} d\tau\Big) \\
\le & C \Big( \frac 1{\lambda_+-\lambda_0} + \frac 1{\lambda_0 -\lambda_-}\Big)  |u_{0+}|_{X^{n-1}} |U^m|_{m, r, \lambda_0}. 
\end{align*} 
For sufficiently small $\delta>0$, $I-\CS(u_{0+}) \in \BBL(\Gamma_{r, \lambda_0}^m)$ has a bounded inverse and 
\[
|(I - \CS(u_{0+}))^{-1} - I |_{\BBL(\Gamma_{r, \lambda_0}^m)} \le C |u_{0+}|_{X_+^{n-1}}, \quad 0\le r\le n-1. 
\]
Therefore the solution to  \eqref{E:smoothness-LP-1} must be given by 
\[
U^m (\cdot, u_{0+}) = (I - \CS(u_{0+}))^{-1} g_m (u_{0+}) (U_+^m(0), U^1, \ldots, U^{m-1}).  
\]

For $m=1$, we have $U_+^1(0) =I$ and $g_1(u_{0+})= U_+(t, 0)$, hence the desired estimate on $U^1 (t, u_{0+})$ follows directly from Lemma \ref{L:smoothness-UM-1}. 

For $1 < m \le m_0$, $\CF_{m\pm}$  in the non-homogeneous term $g_m(u_{0+})$ satisfy the same inequality \eqref{E:temp-10.22} in the proof of Theorem \ref{T:LWP-smoothness}. Inductively we have 
\begin{align*}
| g_m(u_{0+})|_{m, r, \lambda_0} \le & \sup_{t\le 0} C e^{-\lambda_0 t} \Big(\int_t^0 e^{\lambda_+(t-\tau)} e^{2\lambda_0 \tau} d\tau + \int_{-\infty}^t e^{\lambda_- (t-\tau)}  e^{2\lambda_0 \tau} d\tau\Big) \le C,
\end{align*}
which yields the desired estimates on $U^m(\cdot, u_{0+})$. 

To obtain the Lipschitz estimates of $U^m (\cdot, u_{0+})$ with respect to $u_{0+}$, let $u_{0j+}\in X_+^n (\delta)$ and denote $u_j(t) = \phi(t, u_{0j})$ and $U_{j}^m (t) = U^m (t, u_{0j+})$, $j=1,2$. Equation \eqref{E:smoothness-3} implies 
\be \label{E:temp-30} \begin{split}
(U_{2\pm}^m - U_{1\pm}^m)_t = & \CA_\pm^U (u_1(t)) (U_{2\pm}^m - U_{1\pm}^m) + \wt \CA_\pm (u_1(t)) (U_{2}^m - U_{1}^m) \\
& + \big(\CA_\pm^U (u_2(t)) - \CA_\pm^U (u_1(t))\big) U_{2\pm}^m + \big(\wt \CA_\pm (u_2(t)) - \wt \CA_\pm (u_1(t))\big) U_{2}^m \\
& + \big( \CF_{m\pm} (t, u_{02+}) -\CF_{m\pm} (t, u_{01+}) \big),
\end{split} \ee
with boundary conditions 
\[
U_{2+}^m (0) - U_{1+}^m (0) =0, \quad U_{2-}^m (-\infty) - U_{1-}^m (-\infty) =0.
\] 
Like \eqref{E:smoothness-3} it can be converted into an integral equation in the form of \eqref{E:smoothness-LP-1}, equivalent to a non-homogeneous linear equation in the form of \eqref{E:smoothness-LP-2} with the same homogeneous linear part $I - \CS(u_{01+})$. In the non-homogeneous terms, for any $1\le r \le n-1$ and $t\le 0$, 
\begin{align*}
& \big|\CA_\pm^U (u_2(t)) - \CA_\pm^U (u_1(t))\big|_{\BBL(X_\pm^r, X_\pm^{r-1})} +  \big|\wt \CA_\pm (u_2(t)) - \wt \CA_\pm (u_1(t))\big|_{\BBL(X_\pm^r)}  \\
\le & C |u_2(t) - u_1(t)|_{X^{n-1}} \le C e^{\lambda_0 t} |u_{02+} - u_{01+} |_{X_+^{n-1}}. 
\end{align*}
The $\CF_m$ in the non-homogeneous part satisfies the same estimate \eqref{E:temp-10.24} as in Appendix \ref{SSS:smoothness-NLPDE} which implies, for $r_0 \le r \le n-1$, 
\begin{align*}
& |\CF_{m\pm}(t, u_{02+}) - \CF_{m\pm} (t, u_{01+}) |_{\BBL(\otimes_{j=1}^m X_+^r, X^{r-m})}\\
\le & C \Big( e^{\lambda_0 t} |u_{02+} - u_{01+} |_{X_+^{n-1}} + \sum_{j=1}^{m-1} |U_2^{j} (t) - U_1^{j} (t)|_{\BBL(\otimes_{j=1}^{j} X_+^r, X^{r-j})} \Big). 
\end{align*}
The desired Lipschitz estimates of $U^m(\cdot, u_{0+})$ follows inductively using Lemma \ref{L:smoothness-UM-1}. 
\end{proof}

Finally we are ready to prove the smoothness of the local unstable manifolds. 

\begin{proof}[Proof of Theorem \ref{T:UM-smoothness}.]
To complete the proof, we mainly need to show that $U^m(t, u_{0+}) = \BD_{u_{0+}}^m \phi (t, u_{0+})$ where we still adopt the notations \eqref{E:temp-28.5}.  
For any initial value 
\[
u_{0+} + \tilde u_{0+} \in X_+^{n}(\delta),  \quad 0\le |\tilde u_{0+}|_{X^{n}} \ll 1. 
\]
Let $\tilde u(t) = \phi (t, u_{0+} + \tilde u_{0+})$ denote the solution to \eqref{E:NLPDE-0} on $W^+$ and 
we often skip the initial $u_{0+}$ in $u$ (or $\phi)$ and $U^m$ if $\tilde u_0=0$. 
Consider 
\[
w(t, \tilde u_0) = \phi(t, u_{0+} + \tilde u_{0+}) - \phi(t, u_{0+}) - U^1 (t, u_{0+}) \tilde u_{0+} = \tilde u(t) - u(t) - U^1 (t) \tilde u_{0+}. 
\]
One may compute much as in the proof of Theorem \ref{T:LWP-smoothness}
\[
w_t  =  F(\tilde u(t)) - F(u(t)) - \CA(u(t)) U^1 (t) \tilde u_{0+} =  \CA(u(t)) w + F_1 (t),
\]
where 
\begin{align*}
F_1 (t) = & F(\tilde u(t)) - F(u(t)) - \CA(u(t)) \big(\tilde u(t) - u(t) \big)  \\
= & \int_0^1 \Big( \CA \big( u(t) + \tau (  \tilde u(t) - u(t))\big) - \CA(u(t))\Big) \big(\tilde u(t) - u(t) \big) d\tau. 
\end{align*}
From  assumption (D.3) and the Lipschitz dependence of $\phi(t, u_{0+}) \in X^{n-1}$ in $u_{0+} \in X_+^{n-1}$ (Theorem \ref{T:NLPDE-UM}(4)), we have
\[
|F_1 (t)|_{X^{n-2}} \le C e^{2\lambda_0 t} |\tilde u_{0+}|_{X^{n-1}}^2.  
\]
By rewriting the equation of $w_t$ into the form of the Lyapunov-Perron integral equations \eqref{E:smoothness-LP-1}/\eqref{E:smoothness-LP-2} with the same $\CS(u_{0+})$ and $g_1$ replaced by the integrals of $F_1(t)$, using $w_+ (0, \tilde u_0) =0$, the same arguments in the exponentially weight spaces as in the proof of Lemma \ref{L:smoothness-UM-2} imply  
\[
|w (t)|_{X^{n-2}} \le C e^{\lambda_0 t} |\tilde u_{0+}|_{X^{n-1}}^2.  
\]
Therefore $\phi(t, \cdot) : X^{n}(\delta) \to X^{n-2}$  is Fr\'echet differentiable and $\BD_{u_0} \phi(t, u_{0+}) = U^1 (t)$ for any $u_{0+}$. Along with the above estimates on $U^1$ it completes the proof of the theorem for the case of $m_0=1$. Since $\CF_m$ in \eqref{E:smoothness-3} satisfies the same properties \eqref{E:temp-10.1} and \eqref{E:recursion-1} as in Appendix \ref{SSS:smoothness-NLPDE}, the inductive proof of the higher order derivatives $\BD_{u_{0+}}^m \phi(t, u_{0+}) = U^m(t)$ follows from a similar procedure as in the proof of Theorem \ref{T:LWP-smoothness} using Lemma \ref{L:smoothness-UM-1} and the same arguments  in the exponentially weight spaces as in the proof of Lemma \ref{L:smoothness-UM-2}. 

Finally the theorem is obtained from the fact $q^+ ( u_{0+}) = \phi_- (0, u_{0+})$. 
\end{proof}

\section{Some concrete nonlinear PDEs} \label{S:examples}

In this section, we illustrate how some nonlinear evolutionary PDEs fit conveniently (not necessarily optimally) into the general frameworks laid out in Appendix \ref{S:pre-LWP} and Section \ref{S:LInMa}, while the  water waves with surface tension will be discussed in Section \ref{S:CWW}. 
These include many PDEs with a natural {\it energy}. 
After a general outline, 
Hamiltonian PDEs will be discussed in Subsection \ref{SS:examples-Ham} and some other examples  including the mean curvature flow in Subsection \ref{SS:examples-other}. 
In both subsections, mostly the local stable and unstable manifolds are obtained after some brief discussions on the local well-posedness and linear analysis. 

\vspace{0.08in} \noindent {\bf Energy generated flows.} The principally positive symmetric quadratic form $\BL(v)$ or $\CL(u)$ in Appendix \ref{S:pre-LWP} and Section \ref{S:LInMa} often originate from some intrinsic energy structures of the PDEs. We first outline roughly the key ingredients. Let 
\begin{itemize} 
\item $X$ be a real Hilbert space and $\CE(u)$ be a nonlinear functional smoothly defined on an open set of a dense subspace of $X$; and 
\item $\Omega: X^* \supset Dom(\Omega) \to X$ be a (possibly unbounded) densely defined linear operator
such that 
\be \label{E:Omega-1}
\Omega + \Omega^* \ge 0, \; \text{ namely } \; \langle \gamma, \Omega \gamma \rangle \ge 0, \; \forall \gamma \in Dom(\Omega). 
\ee
\end{itemize} 
Here if $\CE$ is differentiable at $u$, naturally $\BD \CE(u)$ is a linear functional.  
Consider 
\be \label{E:energyF-1} 
u_t = F(u) \triangleq - \Omega \BD \CE(u). 
\ee
Roughly the main  assumptions are 
\begin{enumerate} 
\item [(E.1)] there exist $\omega_1 \in \R$, $u_* \in Dom(\CE)$, and a dense subspace $X^1 \subset X$ such that, for $\CA_0 =\BD F(u_*) = - \Omega \BD^2 \CE(u_*)$, $\omega_1 - \CA_0: X \supset X^1 \to X$ is closed and bijective;
\item [(E.2)] with $X^r \triangleq (\omega_1 - \CA_0)^{-r} X$ equipped with graph norms, 
there exist $n_0 \ge 2$ and $\ep>0$ 
such that $u_*\in X^{n_0+1}$ and (B.1) in Appendix \ref{SS:NLPDE-LWP} is satisfied on $\CO= X^{n_0} (u_*, \ep)$;
\item [(E.3)] $\CL \triangleq \BD^2 \CE \in C^1 \big(X^{n_0} (u_*, \ep),  \BBL(X, X^*)\big)$ and it satisfies \eqref{E:coercivity-2} at $u=u_*$. 
\end{enumerate}


At first $\BD F(u_*)$ in (E.1) may be a formal differentiation which helps to identify $\omega_1$ and the spaces $X$ and $X_1$ to establish the subsequent framework. The rigorous justification is then required in (E.2) and (E.3). 
The key dissipativity condition \eqref{E:dissipativity-2.5} is satisfied automatically  
\be \label{E:dissipativity-6}
\big \langle \CL (u) w , \CA (u)w \big \rangle = \big \langle \BD^2 \CE (u) w , - \Omega \BD^2 \CE (u) w \rangle \le 0,
\quad \forall u \in X^{n_0} (u_*, \ep), \ w\in X^1. 
\ee
In practices, assumptions in Appendix \ref{S:pre-LWP} and Section \ref{S:LInMa} are often verified in a neighborhood of $u_*$ as shown for the PDEs below. 

Two typical categories of such energy flows are {\it Hamiltonian} flows where $\Omega^*= - \Omega$ and {\it gradient flows}, where $\Omega^*=\Omega>0$. We shall mainly focus on the Hamiltonian cases while also discuss some examples of gradient flows. 

\begin{remark} \label{R:energyF}
a.) In many problems, the forms of $\CE(u)$ and $\Omega$ are given by the underlying physical  background. Often one first computes the symmetric quadratic form $\BD^2 \CE(u)$ and decide the Hilbert space $X$ accordingly such that the coercivity \eqref{E:coercivity-2} is satisfied. The spaces $X^r$ are determined by $\CA$ subsequently. 
\\
b.) More lower order terms could also be included in \eqref{E:energyF-1}, then some $\omega_* \in \R$ would appear in \eqref{E:dissipativity-6} as in assumption (B.3) in Appendix \ref{SS:NLPDE-LWP}.\\
c.) It is also possible to consider $u$-dependent $\Omega = \Omega(u)$ 
under appropriate conditions on $\Omega(u)$ which are essentially 
the control of $\BD \Omega \BD \CE$ by $\Omega \BD^2 \CE$. See Subsection \ref{SS:examples-other}.  
\end{remark} 

{\it For example}, consider an energy functional whose energy density depends  pointwisely on the state function $u(x)$ and its gradient 
defined on $\R^d$ or the $d$-dim torus $\BBT^d$: 
\be \label{E:energy-1} 
\CE (u) = \int E\big(x, u(x), \nabla u(x) \big) dx,
\ee
where $E= E(u, p)$ is smooth in $u$ and $p$. 
The usual ellipticity is also assumed on $E$ 
\be \label{E:ellipticity-1} 
\exists \, M, a_0>0, \quad \sum_{j,k=1}^d E_{p_j p_k} (x, u, p) \xi_j \xi_k \ge a_0 |\xi|^2, \;\; \forall \xi \in \R^d, \; \forall |u|, |p| \le M. 
\ee
In this case, 
\be \label{E:energy-2} \begin{split} 
\langle \CL(u)\phi_1, \phi_2 \rangle = & \BD^2 \CE (u) (\phi_1, \phi_2) \\
= & \int \sum_{j, k=1}^d E_{p_j p_k} \phi_{1x_j} \phi_{2x_k} + \sum_{j=1}^d E_{u p_j} (\phi_1 \phi_{2x_j} + \phi_{1x_j} \phi_{2}) +     E_{uu} \phi_1\phi_2 dx\\
=& \int \Big( - \sum_{j, k=1}^d \p_{x_k}( E_{p_j p_k} \phi_{1x_j})  - \sum_{j=1}^d \p_{x_j} (E_{u p_j})  \phi_{1} +     E_{uu} \phi_1 \Big) \phi_2 dx, 
\end{split} \ee
where the derivatives of $E$ are evaluated at $(u, \nabla u)$ and the last line gives the form of $\CL(u)\phi$ through the $L^2$ duality.  The ellipticity \eqref{E:ellipticity-1} suggests taking $X= H^1$ or $\dot H^1$. In the above calculations, $u(x)$ may also be a vector valued function. 


\subsection{Hamiltonian PDEs where $\Omega^*=-\Omega$} \label{SS:examples-Ham} 

In this case, as the linearization $\CA=-\Omega \BD^2 \CE$ is not self-ajoint, the linear analysis is already a delicate issue. We start the discussion with some general results on the linearization of \eqref{E:energyF-1} proved in \cite{LZ22} based on the Pontryagin invariant subspace theorem. 

Let $Y$ be a real Hilbert space and suppose the linear operators $L \in \BBL(Y, Y^*)$ and $J: Y^* \supset Dom(J) \to Y$ satisfy
\be \label{E:LHam-1}
L^* = L,  \quad J^*=-J.
\ee
We further assume that $L$ is uniformly positive except in finite many directions. Namely, there exist closed subspaces $\wt Y_\pm \subset Y$ such that 
\be \label{E:LHam-2} \begin{split}
& Y = \wt Y_-\oplus \ker L \oplus \wt Y_+, \quad \dim \ker L <\infty, \quad m^-(L) \triangleq \dim \wt Y_-<\infty, \\ 
& \langle Lu, u\rangle <0, \; \forall u \in \wt Y_-\setminus \{0\}, \inf_{u\in \wt Y_+\setminus \{0\}} \langle Lu, u\rangle / |u|^2 >0. 
\end{split} \ee
Here $m^-(L)$ is referred to as the Morse index of $L$, which is independent of the choice of $\wt Y_-$. Another index $m_0^{\le 0} (L)$ is also useful which is defined as the number of non-positive directions of the induced quadratic form $\langle L \cdot, \cdot \rangle$ on the quotient space $\wt E_0 = E_0 /\ker L$ where $E_0$ is the generalized kernel of $L$ defined as $E_0 = \{ u \in Y \mid \exists l \in \N, \, (JL)^l u=0\}$.

\begin{proposition} \label{P:LHam}
Assume \eqref{E:LHam-1} and \eqref{E:LHam-2},  then the group $e^{tJL} \in \BBL(Y)$ is defined for all $t\in \R$  and there exist closed subspaces $Y_+, Y_1, Y_2, Y_3, Y_-\subset Y$,
such that the following hold. 
\begin{enumerate} 
\item $Y= Y_+ \oplus_{j=1}^3 Y_j \oplus Y_-$, $Y_\alpha \subset Dom((J L)^r)$ and $d_\alpha = \dim Y_\alpha <\infty$ for any $r \in \N$ and $\alpha \in \{+, 1, 3, -\}$.
\item In this decomposition $J L$ and $L$ take the following form
\[
J L \longleftrightarrow \begin{pmatrix} A_{+} & 0& 0& 0 &0 \\ 0 & A_{1} & A_{12} & A_{13} & 0 \\ 0 & 0 & A_{2} & A_{23} & 0 \\ 0 & 0 &0 & A_{3} & 0 \\ 0 & 0 &0 & 0 & A_- \end{pmatrix}, \quad L  \longleftrightarrow \begin{pmatrix} 0 & 0 & 0& 0 & L_{+-} \\ 0 & L_{1} & 0 & L_{13} & 0  \\ 0 & 0 & L_{2} & 0 & 0 \\ 0 & L_{31} & 0 & 0 & 0 \\ L_{-+} & 0 &0 &0 &0 \end{pmatrix}. 
\]
\item $\exists \delta >0$ such that $ \BBL(Y_2, Y_2^*) \ni L_2 \ge \delta$ and thus defines an equivalent norm on $Y_2$. 
\item All blocks of $J L$ are bounded operators except $A_{2}: Y_2 \supset Dom (A_{2}) \to Y_2$ is anti-self-adjoint with respect to the equivalent inner product on $Y_2$ defined by $L_{2}$, i.~e. $L_{2} A_{2} = - A_{2}^* L_{2}$. 
\item The spectra of the diagonal blocks of $A_\alpha$ satisfy
\[
\sigma(A_-) = - \sigma (A_+), \quad \Re \lambda >0, \; \forall \lambda \in \sigma(A_{+}); \quad \Re \lambda = 0, \; \forall \lambda \in \sigma(A_{j}), \; j=1, 2, 3.
\]
\item $m_0^{\le 0} (L) < m^-(L)$ and, if $m^- (L) - m_0^{\le 0} (L)$ is odd, then $m^- (L) - m_0^{\le 0} (L) \ge d_+=d_- >0$, i.~e.~$JL$ is unstable. 
\item Let $Z_1 = Y_+ \oplus Y_1 \oplus Y_3 \oplus Y_-$ and $Z_2 = Y_2$ and  $P_j \in \BBL(Y, Z_j)$, $j=1,2$,  be the projections  associated with the decomposition $Y=Y_1 \oplus Y_2$, then $P_1 J: Y^* \to Z_1$ is bounded. 
\end{enumerate}
\end{proposition}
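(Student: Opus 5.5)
This proposition is the structural decomposition theorem for linear Hamiltonian systems established in \cite{LZ22}. The plan is to recall the main steps of that argument rather than reprove it from scratch, specialized to the hypotheses \eqref{E:LHam-1}--\eqref{E:LHam-2}.

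\textbf{Group generation.} First we show that $JL$ generates a group. Split $Y = \wt Y_- \oplus \ker L \oplus \wt Y_+$. On $\wt Y_+$ the form $\langle L\cdot,\cdot\rangle$ is uniformly positive. Modify $L$ by a finite-rank symmetric operator $L'$ so that $L+L'$ is uniformly positive on all of $Y$. Then $J(L+L')$ is anti-self-adjoint in the equivalent inner product $\langle (L+L')\cdot,\cdot\rangle$, so by Stone's theorem it generates a unitary group. The difference $JL'$ is a bounded perturbation, since $L'$ has finite rank with range in the domain of $J$ after a suitable choice. Hence $e^{tJL}$ is a $C^0$ group for all $t\in\R$.

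\textbf{Invariant subspaces.} Next we build the decomposition with the Pontryagin invariant subspace theorem. The quadratic form $\langle L\cdot,\cdot\rangle$ defines an indefinite (Pontryagin-type) structure with finitely many negative directions, modulo the finite-dimensional $\ker L$. The operator $JL$ is $L$-skew, i.e.\ $\langle L JLu, v\rangle = -\langle Lu, JLv\rangle$. The theorem provides a maximal nonpositive invariant subspace of dimension $m^-(L)$, and from it we extract:
\begin{itemize}
\item the finite-dimensional eigenspaces of $JL$ for eigenvalues with $\Re\lambda \gtrless 0$, giving $Y_\pm$;
\item the nonpositive and degenerate parts of the generalized eigenspaces on the imaginary axis, giving $Y_1$ and $Y_3$.
\end{itemize}
Every eigenvalue off the imaginary axis has an $L$-isotropic generalized eigenspace. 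Pairing $\lambda$ with $-\bar\lambda$ then gives $\sigma(A_-) = -\sigma(A_+)$, and it gives the anti-diagonal block $L_{+-}$ of $L$. The same pairing of $Y_1$ with $Y_3$ gives the $L_{13}$ block. Take $Y_2$ to be the $L$-orthogonal complement of $Y_+\oplus Y_1\oplus Y_3\oplus Y_-$. Showing that $L$ restricted to $Y_2$ is uniformly positive (item (3)) is \emph{the main obstacle}: one must exhaust all nonpositive directions, counting them against $m^-(L)$. Once this holds, invariance of $Y_2$ under $JL$ and the anti-self-adjointness of $A_2$ with respect to $L_2$ follow from $L$-skewness. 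The upper-triangular form in item (2) comes from the fact that $Y_1$ consists of $L$-isotropic invariant directions, while $Y_3$ is only invariant modulo $Y_1\oplus Y_2$.

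\textbf{Remaining items.} Item (5) follows from the construction together with the pure imaginary spectrum of anti-self-adjoint operators. The index counting in item (6) compares $m^-(L)$ with the negative directions carried by $Y_\pm$, which contribute $d_+$, by $Y_1\oplus Y_3$ and the nonpositive part of the kernel (contributing $m_0^{\le 0}$), and by the neutral-spectrum blocks. The parity statement comes from the Krein signature: each purely imaginary conjugate pair contributes an even count. Item (7) holds because $P_1$ is a finite-rank projection given by $L$-pairings with vectors in $Dom((JL)^r)$, so $P_1 J$ extends boundedly to $Y^*$ by duality.
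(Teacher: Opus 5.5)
You defer to \cite{LZ22} for items (1)--(6), as the paper does: it simply identifies $Y_+,Y_-,Y_1,Y_2,Y_3$ with $X_5$, $X_6$, $\ker L\oplus X_1\oplus X_2$, $X_3$, $X_4$ of Theorem 2.1 there. Your finite-rank-perturbation proof of the group property is fine, since density of $Dom(J)$ lets you take $L'$ with range in $Dom(J)$.

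Part of the structure you recall is wrong, though.
\begin{itemize}
\item $Y_2$ is not $JL$-invariant. The block form gives $JL\,Y_2\subset Y_1\oplus Y_2$ with $A_{12}$ in general nonzero. $L$-skewness cannot give invariance: it would need $Z_1=Y_+\oplus Y_1\oplus Y_3\oplus Y_-$ to be invariant, which fails when $A_{23}\neq 0$, and it never detects the $\ker L$-component of $JLu$.
\item The $L$-isotropic piece is $Y_3$ (where $L_3=0$), not $Y_1$, which carries the block $L_1$.
\item $Y_2$ cannot be defined as the $L$-orthogonal complement of $Z_1$ when $\ker L\neq\{0\}$. The $L$-orthogonality of $Z_1$ and $Y_2$ together with $L_2\ge\delta$ forces $\ker L\subset Z_1$, so that set contains $\ker L$ and is not a complement of $Z_1$. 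You must choose a complement of $\ker L$ inside it, and this choice is not harmless.
\end{itemize}

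That choice is exactly where your argument for (7) has a gap. When $\ker L\neq\{0\}$, $P_1$ is \emph{not} given by $L$-pairings: every functional $u\mapsto\langle Lw,u\rangle$ vanishes on $\ker L$, whereas $P_1=I$ there. In general $P_1J$ is bounded iff the annihilator $Y_2^\perp\subset Y^*$, of dimension $\dim Z_1$, lies in $Dom(J)=Dom(J^*)$. Your duality argument only gives $LZ_1\subset Dom(J)$, and $\dim LZ_1=\dim Z_1-\dim\ker L$.

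For a badly chosen complement the remaining functionals need not lie in $Dom(J)$. Take $Y=\R\times H$, $L=\mathrm{diag}(0,I)$, $Z_1=\ker L=\R e_0$ with $e_0=(1,0)$, and $Y_2=\{(\phi(h),h)\}$ with $\phi\in H^*$. Then $P_1J\gamma=\langle(1,-\phi),J\gamma\rangle e_0$, which is bounded only if $(1,-\phi)\in Dom(J)$; this fails for suitable $\phi$ as soon as $J$ is unbounded.

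Your argument coincides with the paper's in the non-degenerate case, where $L$ is an isomorphism and $Y_2^\perp=LZ_1$. For $\ker L\neq\{0\}$ the paper uses Lemma A.4 of \cite{LZ22} to pick a complement $\wt Y$ of $\ker L$ with $\ker i_{\wt Y}^*\subset Dom(J)$. This makes the projection onto $\ker L$ composed with $J$ bounded, and Lemma A.3(3) then reduces to the non-degenerate problem on $\wt Y$.

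Alternatively, once (3)--(4) are granted (their proof in \cite{LZ22} already uses that careful choice), (7) follows from the boundedness of $A_{12}$:
\begin{itemize}
\item $L(Y_2)$ is closed, since $|Lu|_{Y^*}\ge\delta|u|$ on $Y_2$, and has codimension $\dim Z_1$ in $Y^*$.
\item By density of $Dom(J)$ there is a finite-dimensional complement $F\subset Dom(J)$ of $L(Y_2)$.
\item For $\gamma=Lu+f\in Dom(J)$ with $u\in Y_2$ and $f\in F$, one has $u\in Dom(JL)$ and $P_1J\gamma=A_{12}u+P_1Jf$, hence $|P_1J\gamma|\le C|\gamma|$.
\end{itemize}
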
   

Statements (1--6) in the proposition follow directly from Remark 2.3, Theorem 2.1, and 2.3 in \cite{LZ22}. Specifically, $Y_+$ corresponds to $X_5$, $Y_-$ to $X_6$, $Y_1$ to $\ker L \oplus X_1 \oplus X_2$, $Y_2$ to $X_3$, and $Y_3$ to $X_4$ in Theorem 2.1 in \cite{LZ22}. 
Statements (7) is somewhat hidden in the proofs in \cite{LZ22}. Firstly, if $\ker L = \{0\}$, then \eqref{E:LHam-2} implies that $L \in \BBL(Y, Y^*)$ is isomorphic (Lemma A.2(2) \cite{LZ22}). From the $L$-orthogonality between the complementary closed subspaces $Z_{1,2}$, we have  that $L_{Z_{j}} \triangleq i_{Z_{j}}^* L i_{Z_{j}} \in \BBL(Z_{j}, Z_{j}^*)$, $j=1,2$, are also isomorphic (Lemma A.2(2) and A.3(2) \cite{LZ22}), where $i_{Z_j}^* \in \BBL(Y^*, Z_j^*)$ is the dual operator of the embedding $i_{Z_j} \in \BBL(Z_j, Y)$. Using this non-degeneracy, one can prove $\ker i_{Z_j}^* = L Z_{3-j}$. Since $Z_1 \subset Dom(JL)$, it follows $\ker i_{Z_2}^* \subset Dom(J)$ and thus Lemma A.3(1) \cite{LZ22} implies $P_1 J : Y^* \to Z_1$ is bounded, which is equal to $J_{11} + J_{12}$ there. 
If $\ker L \ne \{0\}$, Lemma A.4 \cite{LZ22} implies that there exists a closed subspace $\wt Y \subset Y$ such that  $\ker i_{\wt Y}^* \subset Dom(J)$ and $Y= \ker L \oplus \wt Y$ associated to the projection $P_0: \BBL(X, \ker L)$. Again from Lemma A.3(1) \cite{LZ22}, $P_0 J: X \to \ker L$ is bounded. Moreover, Lemma A.3(3) implies that the problem can be reduced to $\wt Y$ where $L$ is non-degenerate. Together with the above argument for the non-degenerate case statement (7) follows  (also see the proof of Theorem 2.1 in \cite{LZ22}).  

The finite Morse index assumption \eqref{E:LHam-2} allows a convenient way to verify some properties of the linearizations of the Hamiltonian PDEs via Proposition \ref{P:LHam} and obtain the local well-posedness and invariant manifolds through Propositions \ref{P:Ham-LWP} and Theorem \ref{T:Ham-UM} below. However, \eqref{E:LHam-2} is not absolutely necessary and it may be easier to work with Theorems \ref{T:NLPDE-LWP} and \ref{T:NLPDE-UM} directly in some cases. 

\begin{remark} \label{R:SeparableHam}
In addition to \eqref{E:LHam-1}, if $J$ and $L$ also possess the so-called separable structure, then, instead of using the results in \cite{LZ22}, Theorem 2.3 in \cite{LZ22CPAM} could be applied, too. It provides stronger results of the linear dynamics. 
\end{remark}

\noindent $\bullet$ {\it Local well-posedness of Hamiltonian PDEs.} 
Formally differentiating $\CE$ to obtain $\CL(u) =\BD^2 \CE(u)$, we may apply Proposition \ref{P:LHam} to establish the framework and verify assumptions (B.1)--(B.3) in Appendix \ref{SS:NLPDE-LWP}. 

\begin{proposition} \label{P:Ham-LWP}
Assume $\Omega^*=-\Omega$, then the following hold for \eqref{E:energyF-1}.
\begin{enumerate}
\item Assume $u_* \in Dom(\CE) \subset X$ and $L \triangleq \BD^2 \CE (u_*) \in \BBL(X)$ satisfies \eqref{E:LHam-1} and \eqref{E:LHam-2} for $Y=X$, then there exists $\lambda_0 \ge 0$ such that $(\lambda - \CA_0)^{-1} \in \BBL(X, X^1)$ for all $\lambda \ge \lambda_0$ where $\CA_0= - \Omega L$ and  $X^1 = Dom(\Omega L) \subset X$. 
\item Let $X^r = Dom (\lambda_0 -\CA_0)^{r} \subset X$ equipped with the graph norm, $\omega^*> \lambda_0$, and in addition, assume there exist $n\ge 2$ and $\ep>0$ such that $u_* \in X^n$, $\CL\triangleq \BD^2 \CE \in C^1 (\wt \CO, \BBL(X, X^*))$, and (B.1) in Appendix \ref{SS:NLPDE-LWP} is satisfied on $\wt \CO= X^{n-1} (u_*, \ep)$, then there exist $\ep_0 \in (0, \ep]$ and $a, C_\CL, C_*>0$ such that (B.2) and (B.3) are also satisfied on $\CO= X^{n-1} (u_*, \ep_0)$. 
\end{enumerate} 
\end{proposition}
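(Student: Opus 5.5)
For part (1), I will use the decomposition of Proposition~\ref{P:LHam} applied to $Y=X$, $J=-\Omega$, $L=\BD^2\CE(u_*)$. In the splitting $X=Y_+\oplus Y_1\oplus Y_2\oplus Y_3\oplus Y_-$, the operator $\CA_0=-\Omega L = JL$ is block upper triangular. All blocks are bounded except $A_2$, which is anti-self-adjoint with respect to the equivalent inner product $\langle L_2\cdot,\cdot\rangle$ on $Y_2$. By Stone's theorem, $A_2$ generates a unitary group in that inner product. Hence $\lambda - A_2$ is an isomorphism from $Dom(A_2)$ onto $Y_2$ for every real $\lambda\neq 0$, with $|(\lambda-A_2)^{-1}|\le C/|\lambda|$.

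The remaining diagonal blocks $A_\pm, A_1, A_3$ act on finite-dimensional spaces. Their spectra lie in $\{\Re\lambda \le \max\Re\sigma(A_+)\}$, so $\lambda - A_\alpha$ is invertible for $\lambda$ larger than this bound. I then invert $\lambda-\CA_0$ by back substitution in the triangular form: first solve on $Y_3$ and $Y_\pm$, then on $Y_2$ (the right-hand side picks up the bounded term $A_{23}$ applied to the $Y_3$ component), then on $Y_1$.

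To identify $X^1=Dom(\Omega L)$ with $Dom(A_2)\oplus(\text{finite-dimensional parts})$, I use that the off-diagonal blocks are bounded and that $Y_\alpha\subset Dom((JL)^r)$. This gives $(\lambda-\CA_0)^{-1}\in\BBL(X,X^1)$ for all $\lambda\ge\lambda_0$ with $\lambda_0 = \max\Re\sigma(A_+)+1\ge0$.

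For part (2), I will verify (B.2) and (B.3) of Appendix~\ref{SS:NLPDE-LWP} on a small ball $X^{n-1}(u_*,\ep_0)$. The natural choice of energy form is $\CL(u)=\BD^2\CE(u)$. However, $\CL(u_*)=L$ is not positive definite; it has finitely many negative and degenerate directions. I therefore modify it to
\[
\CL'(u)=\CL(u)+a\,P_1^*P_1,
\]
where $P_1$ is the bounded projection onto the finite-dimensional part $Z_1$ from Proposition~\ref{P:LHam}(7). For $a$ large, $\CL'(u_*)$ is uniformly positive: it is positive on $Y_2$ because $L_2\ge\delta$, the $L$-orthogonality of $Z_1$ and $Z_2$ controls the cross terms, and on $Z_1$ positivity comes from $a$. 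Coercivity then persists on a small ball by continuity of $\CL\in C^1$.

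The dissipativity estimate is the Hamiltonian identity \eqref{E:dissipativity-6}, $\langle\CL(u)w,\CA(u)w\rangle = -\langle \CL w,\Omega\CL w\rangle = 0$, since $\Omega^*=-\Omega$. The correction term gives
\[
a\langle P_1 w, P_1\CA(u)w\rangle = -a\langle P_1 w, P_1\Omega\,\CL(u)w\rangle .
\]
This is bounded by $C|w|_X^2$ because $P_1\Omega = -P_1J$ is bounded from $X^*$ to $Z_1$ by Proposition~\ref{P:LHam}(7), and $\CL(u)$ is bounded from $X$ to $X^*$. Together these yield (B.3) with some constant $\omega_*$, and (B.2) follows from the resolvent bound of part (1) with constants $a, C_\CL, C_*$.

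The main obstacle is precisely this unbounded-looking cross term: showing that $P_1\CA(u)$ extends boundedly on $X$ uniformly for $u$ near $u_*$. This relies on Proposition~\ref{P:LHam}(7) together with the $C^1$ dependence of $\CL(u)$. A second point to check is the surjectivity requirement in (B.2) for $u\neq u_*$. For that I would use a perturbation argument: $\CA(u)-\CA_0=-\Omega(\CL(u)-L)$, and the relevant piece of it is relatively small with respect to $\CA_0$ on the scale $X^r$ by assumption (B.1).
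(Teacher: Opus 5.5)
Your part (1) is the paper's argument: the triangular block form of $\CA_0$ from Proposition~\ref{P:LHam}, skew-adjointness of $A_2$ in the $L_2$-inner product, and finite-dimensionality of the remaining blocks.

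For part (2) you take a different route from the paper. The paper keeps $\CL=\BD^2\CE$ unmodified. Then \eqref{E:dissipativity-2.5} is the exact Hamiltonian identity $\langle\CL(u)w,\CA(u)w\rangle=0\le\omega_*|w|_X^2$. The finitely many non-positive directions of $L$, which lie in $Y_\alpha\subset X^1$ for $\alpha\ne 2$, are absorbed into the lower-order term $-a|(\omega_*-\CA(u))^{-1}w|_X^2$ of the G\aa rding-type bound \eqref{E:coercivity-2}. The positive-definite modification is then made once, inside the proof of Theorem~\ref{T:NLPDE-LWP}, via $\BL(v)=\CL(u)+a(\bar A(u)^{-1})^{adj}\bar A(u)^{-1}$, and its contribution to the dissipativity is bounded automatically.

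You instead make the form genuinely positive with the finite-rank term $aP_1^*P_1$. You pay for this with a nonzero dissipativity defect, which you control through Proposition~\ref{P:LHam}(7). This is sound for feeding Theorem~\ref{T:NLPDE-LWP}, but it does not prove the statement as written, for two reasons:
\begin{enumerate}
\item It verifies (B.3) for $\CL'$, whereas the proposition asserts (B.3) for $\CL=\BD^2\CE$ itself.
\item Its dissipativity constant is of order $a\,|P_1|\,|P_1J|\,\sup_u|\CL(u)|$, not the prescribed $\omega_*>\lambda_0$. You would at least have to rescale $\CL'$ by a small factor.
\end{enumerate}
Your own key observation closes both points. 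With $v=(\omega_*-\CA(u))^{-1}w$,
\[
|P_1w|_X=|\omega_*P_1v-P_1J\CL(u)v|_X\le C\,|(\omega_*-\CA(u))^{-1}w|_X .
\]
Combined with the $L$-orthogonality of $Z_1$ and $Z_2$, the bound $L_2\ge\delta$, and continuity of $\CL$, this gives \eqref{E:coercivity-2} for $\BD^2\CE$ directly. The dissipativity is then just the Hamiltonian identity, valid for any $\omega_*>\lambda_0$.
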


Statement (1) of the proposition follows from the upper triangular form of $\CA_0$, codim-$Y_2 <\infty$, and in particular 
Proposition \ref{P:LHam}(4) applied to $J =-\Omega$ and $L = \BD^2 \CE(u_*)$. 
In statement (2) the dissipativity \eqref{E:dissipativity-2.5} is already justified in \eqref{E:dissipativity-6} with $\omega_*=0$. 
The coercivity \eqref{E:coercivity-2} at $u=u_*$ is satisfied due to assumption \eqref{E:LHam-2},  $Y_\alpha \subset X^1$ for $\alpha \ne 2$ (Proposition \ref{P:LHam}(1)), and  the invertibility of $\lambda -\CA_0$. The rest of  (B.2) and (B.3) hold in a small neighborhood of $u_*$ by the continuity of $\BD \CA$ and $\BD \CL$ assumed in (B.1). 

To obtain the local well-posedness from Theorem \ref{T:NLPDE-LWP}, as commented before, the assumption (B.4) in Appendix \ref{SS:NLPDE-LWP} is often verified directly or via Lemma \ref{L:CB-1} for concrete PDEs.

\vspace{0.08in} \noindent $\bullet$ {\it Local stable and unstable manifolds of Hamiltonian PDEs.} 
Suppose $\BD \CE(0)=0$ in equation  \eqref{E:energyF-1} and $u=0$ is spectrally unstable, i.~e.~$\sigma (\CA(0)) \not\subset i\R$, we construct its local unstable and stable manifolds. 

Assume $J=-\Omega$ and $L =\BD^2 \CE(0)$ satisfy \eqref{E:LHam-1} and \eqref{E:LHam-2} and thus Proposition \ref{P:LHam} applies. In particular Proposition \ref{P:LHam}(6) provides a sufficient condition on the spectral instability in the same spirit of \cite{GSS87, GSS90}. In the spectrally unstable case, the subspaces $Y_\pm$ with $\dim Y_\pm \in \N$ given in Proposition \ref{P:LHam} are clearly the unstable/stable subspaces of $\CA(0) = -\Omega \BD^2 \CE(0)$, while the center subspace is decomposed into $\oplus_{j=1}^3 Y_j$.
%
%
For any $r \in \N \cup \{0\}$, let 
\be \label{E:splitting-1} \begin{split} 
& X_+^r=X_{1+}^r = Y_+, \quad X_{1-}^r = Y_-\oplus Y_1, \quad X_{3-}^r = Y_3, \quad X_{2+}^r =X_{3+}^r =\{0\}, \\
&X_{2-}^r = (1-A_2)^{-r}Y_{2} \subset Y_2, \quad X_-^r = X_{1-} \oplus X_{2-}^r \oplus X_{3-}, \quad X^r = X_{+}^r \oplus X_-^r,
\end{split} \ee 
where $A_2$ 
is the block of $\CA(0)$ given in Proposition \ref{P:LHam}. 

\begin{theorem} \label{T:Ham-UM} 
Suppose $\BD \CE(0)=0$ in \eqref{E:energyF-1}. In addition, we assume   
\begin{enumerate} 
\item $J=\Omega$ and $L=\BD^2 \CE(0)$ satisfy \eqref{E:LHam-1} and \eqref{E:LHam-2} for $Y=X$;
\item $\CA(0)=-\Omega \BD^2 \CE(0)$ is spectrally unstable on $X$;
\item There exist $n\ge 2$   and a neighborhood of $\CO \subset X^{n-1}$ of $0$ such that, for any  $1\le r \le n-1$, 
\[
\CL \triangleq \BD^2 \CE \in C^1 (\CO,  \BBL(X, X^*)), \quad \BD F  \in C^1 (\CO, \BBL(X^r, X^{r-1})) \cap C^1 (\CO\cap X^n, \BBL(X^n, X^{n-1})),
\]
\end{enumerate} 
then for any $\omega_\pm$ given in \eqref{E:spectraPM}, there exist $\delta>0$ and $q_+: X_+ (\delta) \to X_-^{n}$ satisfying the properties in Theorems \ref{T:NLPDE-UM} and \ref{T:UM-smoothness}
\end{theorem}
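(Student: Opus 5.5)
The plan is to verify assumptions (D.1)--(D.4) of Theorem \ref{T:NLPDE-UM}, together with (B.5) for the smoothness part, for the splitting \eqref{E:splitting-1}. Then Theorems \ref{T:NLPDE-UM} and \ref{T:UM-smoothness} apply directly.

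\textbf{Linear structure.} Apply Proposition \ref{P:LHam} with $Y=X$, $J=-\Omega$ and $L=\BD^2\CE(0)$. This gives the invariant decomposition in which $\CA(0)=JL$ is block upper triangular. Under \eqref{E:splitting-1}, the $X_+$ block $A_+$ is invariant and decoupled from $X_-$, while on $X_-=(Y_-\oplus Y_1)\oplus Y_2\oplus Y_3$ the blocks $A_-, A_1, A_{12}, A_{13}, A_2, A_{23}, A_3$ form an upper triangular array. This is exactly the form \eqref{E:LinearD-1}, so (D.1) holds. The spaces $X^r_{j\pm}$ are closed in $X^r$ because all blocks other than $Y_2$ are finite dimensional and lie in $Dom((JL)^r)$. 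For (D.2), on the finite-dimensional blocks invertibility is clear once $\omega_\pm\mp1$ avoids the spectrum. On $Y_2$, the operator $A_2$ is anti-self-adjoint for the $L_2$ inner product, so $\lambda-A_2$ is an isomorphism $X_{2-}^{r}\to X_{2-}^{r-1}$ for every real $\lambda\ne0$.

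\textbf{Dissipativity and choice of $\omega_\pm$.} The constants $\omega_\pm$ of \eqref{E:spectraPM} are chosen with $0<\omega_+<\min\Re\sigma(A_+)$ and $\max\{\max\Re\sigma(A_-),0\}<\ \omega_-$, and in particular $\omega_->0$ is allowed, since the center directions only need $\mp$-dissipativity with rate $\omega_-\ge 0$. On the finite-dimensional blocks $X_{1+}=Y_+$, $X_{1-}=Y_-\oplus Y_1$ and $X_{3-}=Y_3$, Lemma \ref{L:Lumer-P-1} applied to the group $e^{tA}$ produces $\CL_{j\pm}$ satisfying \eqref{E:dissipativity-5}. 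On $Y_+$ this is done backward in time, using $\Re\sigma(A_+)>\omega_+$. On $Y_1$ and $Y_3$, where the spectrum lies on the imaginary axis, the polynomial growth is absorbed by any $\omega_->0$. On $X_{2-}=Y_2$ I take $\CL_{2-}(u)=\Pi_{2-}^*\BD^2\CE(u)\Pi_{2-}$ restricted to $Y_2$. By continuity of $\CL$ from assumption (3) and Proposition \ref{P:LHam}(3), it is uniformly coercive for $u\in\CO$ small, which gives \eqref{E:coercivity-4}.

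\textbf{Main obstacle.} The hardest step is \eqref{E:dissipativity-4} for $\CA_{2-}(u)=\Pi_{2-}\CA(u)|_{Y_2}$ with a $u$-dependent energy. The identity \eqref{E:dissipativity-6} gives $\langle\CL(u)w,\CA(u)w\rangle=0$ on all of $X$, but after projecting onto $Y_2$ there are cross terms:
\begin{align*}
\langle\CL_{2-}(u)w,\CA_{2-}(u)w\rangle = -\langle \BD^2\CE(u)w,\, \Pi_{1}'\Omega\BD^2\CE(u)w\rangle,
\end{align*}
where $\Pi_1'=I-\Pi_{2-}$ is the projection onto $Z_1$. Proposition \ref{P:LHam}(7) says $P_1J$ is bounded $Y^*\to Z_1$, so this term is bounded by $C|w|_X^2$. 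Since it vanishes at $u=0$ by the $L$-orthogonality of $Z_1$ and $Z_2$, it is in fact $O(|u|_{X^{n-1}})|w|^2$. Shrinking $\CO$ then makes it at most $\omega_-\langle\CL_{2-}w,w\rangle$ for the chosen $\omega_->0$. I expect this to require checking that $\BD^2\CE(u)$ maps $Y_2$ into $Dom(J)$ modulo $LZ_1$ uniformly in $u$, which is the subtle regularity point.

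\textbf{Remaining verifications.} (D.3) follows from assumption (3). On the finite-dimensional blocks $\Pi_{j\alpha}$ is smoothing, so those components are bounded on every $X^r$. The only unbounded block is $\CA_{2-}$, for which \eqref{E:regularity-1} is exactly the stated $C^1$ regularity of $\BD F$. Assumption (B.5) is inherited in the same way for the smoothness statement. Theorems \ref{T:NLPDE-UM} and \ref{T:UM-smoothness} then yield $q_+$. Stable manifolds follow symmetrically via Remark \ref{R:SM}, exchanging the roles of $Y_\pm$.
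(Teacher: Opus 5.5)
Your proposal is correct and follows the paper's proof essentially step for step: (D.1)--(D.2) come from Proposition~\ref{P:LHam}, Lemma~\ref{L:Lumer-P-1} is applied on the finite-dimensional blocks $X_{1\pm}, X_{3-}$, $\CL_{2-}(u)=\BD^2\CE(u)|_{X_{2-}}$, and \eqref{E:dissipativity-4} follows because the cross term $\langle \BD^2\CE(u)w,(I-\Pi_{2-})\Omega\BD^2\CE(u)w\rangle$ vanishes at $u=0$ and is controlled by the boundedness of $(I-\Pi_{2-})\Omega$ from Proposition~\ref{P:LHam}(7). Three minor points. First, that cross term enters with a $+$ sign; this is harmless, since only its size matters. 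Second, the regularity issue you flag is already settled: assumption (3) gives $\Omega\BD^2\CE(u)w\in X$ for $w\in X^1$, and Proposition~\ref{P:LHam}(7) bounds the cross term on all of $X_{2-}$. Third, in (D.3) the $r=0$ bound for $\CA_{j\alpha,2-}(u)=-\Pi_{j\alpha}\Omega\BD^2\CE(u)|_{X_{2-}}$ with $j\alpha\ne 2-$ also comes from Proposition~\ref{P:LHam}(7), not from any smoothing property of $\Pi_{j\alpha}$; this is how the paper argues it.
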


The graph $W^+$ of $q^+$ gives the unique local unstable manifold of $u=0$. 

\begin{remark} \label{R:Ham-SM}
If (B.5) is also satisfied, then the solution map of \eqref{E:energyF-1} is $C^{m_0, 1}$ in the initial values in the sense of Theorem \ref{T:UM-smoothness}. Moreover, since $X_+$ is a closed subspace of $X^r$ for any $r$, $W^+$ is a $C^{m_0, 1}$ manifold in $X^{n-1-m_0}$ for any $m_0\le n-1$. 

By rearranging the definition of subspaces in \eqref{E:splitting-1} as 
\begin{align*}
& X_-^r=X_{1-}^r = Y_-, \quad X_{1+}^r = Y_+ \oplus Y_1, \quad X_{3+}^r = Y_3, \quad X_{2-}^r =X_{3-}^r =\{0\}, \\
&X_{2+}^r = (1-A_2)^{-r}Y_{2} \subset, \quad X_+^r = X_{1+} \oplus X_{2+}^r \oplus X_{3+}, \quad X^r = X_{+}^r \oplus X_-^r,
\end{align*}
then the same theorems yield the unique local stable manifold $W^-$ of $u=0$ according to 
Remarks \ref{R:SM} and \ref{R:smoothness-SM}. 
\end{remark}

\begin{proof}[Proof of Theorem \ref{T:Ham-UM}]
The above theorem is proved by verifying assumptions (D.1)--(D.4) in Subsection \ref{SS:NLPDE-LInMa} and then applying Theorems \ref{T:NLPDE-UM} and \ref{T:UM-smoothness}. Based on the spectral instability assumption, we let 
\be \label{E:spectraPM}
\wt \omega_+ = \min \sigma (A_+) = - \max \sigma(A_-)>0, \quad 0< \omega_- < \omega_+ < \wt \omega_+. 
\ee
Clearly (D.1) and (D.2) follow from Proposition \ref{P:LHam} and the definition of the subspaces. 

Assumption (D.3) is basically a consequence of the above assumption (3) and $\dim Y_\alpha < \infty$ and 
Proposition \ref{P:LHam}. More precisely, let $\pi_{j\alpha}: X \to X_{j\alpha}$ be the projections associated with the decomposition, then assumption (3) yields $\CA_{j\alpha, j'\alpha'}  = \pi_{j\alpha} \Omega \BD^2 \CE i_{ X_{j'\alpha'}^r} \in C^m(\CO, \BBL(X_{j'\alpha'}^{r+1}, X_{j\alpha}^r))$ where $i_{ X_{j'\alpha'}^r}: X_{j'\alpha'}^r \to X^r$ is the embedding. Clearly $\CA_{2-}$ satisfies (D3). For $j'\alpha' \ne 2-$, $X_{j'\alpha'} = X_{j'\alpha'}^r$ for any $r \in \N$. Hence $\BBL(X_{j'\alpha'}^{r+1}, X_{j\alpha}^r)$ is isomorphic to $\BBL(X_{j'\alpha'}^{r}, X_{j\alpha}^r)$ and (D3) holds for such $\CA_{j\alpha, j'\alpha'}$. For $j\alpha \ne 2-$, for the same reason $\BBL(X_{j'\alpha'}^{r}, X_{j\alpha}^{r-1})$ is isomorphic to $\BBL(X_{j'\alpha'}^{r}, X_{j\alpha}^r)$ for $r \ge 1$ and thus $\CA_{j\alpha, j'\alpha'} \in C^m(\CO, \BBL(X_{j'\alpha'}^{r}, X_{j\alpha}^{r}))$ and thus satisfies (D3). For $r=0$ and $j\alpha \ne 2-$, $\pi_{j \alpha} \Omega \in \BBL(X^*, X_{j\alpha})$ in Proposition \ref{P:LHam}(7) implies $\CA_{j\alpha, j'\alpha'} = \pi_{j\alpha} \Omega \BD^2 \CE i_{ X_{j'\alpha'}^r} \in C^m(\CO, \BBL(X_{j'\alpha'}, X_{j\alpha}))$.  

From the boundedness of the blocks of $\Omega \BD^2 \CE(u_*)$ except $A_2$ in Proposition \ref{P:LHam}, their spectral properties and Lemma \ref{L:Lumer-P-1} imply that there exist 
\[
\CL_{j\alpha} \in \BBL(X_{j\alpha}, X_{j\alpha}^*),  \quad j\alpha \in \{1+, 1-, 3-\},
\]
such that \eqref{E:coercivity-4} and \eqref{E:dissipativity-5} are satisfied (or verify (D5') easily). Finally let 
\[
\langle \CL_{2-} (u)w_1, w_2\rangle = \langle \BD^2 \CE(u) w_1, w_2 \rangle, \quad \forall u \in \CO \triangleq X^{n-1}(\ep), \; w_1, w_2 \in X_{2-}.
\]
From Proposition \ref{P:LHam}(3) and and the continuous dependence of $\BD^2 \CE(u)$ on $u \in \CO \triangleq X^{n-1} (\ep)$, $\CL_{2-}$ is uniformly positive on $X_{2-}$ if $\ep\ll1$. Moreover, from the skew-symmetry of $\Omega$, 
\[
\langle \CL_{2-} (u)w, \CA_{2-} (u) w\rangle = - \langle \BD^2 \CE(u) w, \pi_{2-} \Omega  \BD^2 \CE(u) w \rangle = \langle \BD^2 \CE(u) w, (I-\pi_{2-}) \Omega  \BD^2 \CE(u) w \rangle,
\]
which vanishes if $u=u_*$ due to Proposition \ref{P:LHam}(2). Thus \eqref{E:dissipativity-4} on $X^{n-1} (\ep)$ follows follows from the continuity of $\BD^2 \CE(u)$ and the boundedness of $(I-\pi_{2-}) \Omega$ by Proposition \ref{P:LHam}(7). 
\end{proof}

In the following, we consider several nonlinear Hamiltonian PDEs. Their local well-posedness and stable/unstable manifolds follow from Theorems \ref{T:NLPDE-LWP}, \ref{T:LWP-smoothness}, \ref{T:Ham-UM}, \ref{T:NLPDE-UM}, \ref{T:UM-smoothness}, and Remarks \ref{R:SM} and \ref{R:smoothness-SM}. The statements in Theorems \ref{T:CWW-LWP-1} and \ref{T:CWW-LInMa-main} for  water waves with surface tension also apply to these equations. 


\vspace{.08in} \noindent $\bullet$ {\bf Nonlinear Schr\"odinger (NLS) equations}. Consider \eqref{E:energyF-1} with $\CE(u)$ given in \eqref{E:energy-1} and 
\be \label{E:temp-45}
u: \BBT^d \to \R^2 \sim \C, \quad E = \frac 12 \wt E\big(|u|^2, |\nabla u|^2\big), \quad 
\Omega = \begin{pmatrix} 0 & -1 \\ 1 & 0  \end{pmatrix} \sim   i,
\ee
where the smooth energy density $\wt E(\rho, s)$ takes a gauge invariant form. 
In this vector valued case, most of the calculations are similar to the scalar case
\[
u_t = F(u) = - \Omega \BD \CE(u) = i \Big( \nabla \cdot  \big(\wt E_{s} (|u|^2, |\nabla u|^2) \nabla u \big) - \wt E_\rho  (|u|^2, |\nabla u|^2) u\Big), 
\]
\be \label{E:energy-3} \begin{split} 
\langle \CL(u)\phi_1, \phi_2 \rangle =  \BD^2 \CE (u) (\phi_1, \phi_2) = & \int_{\BBT^d} \wt E_s (\nabla \phi_1 \cdot \nabla \phi_2) + 2 \wt E_{ss} (\nabla u \cdot \nabla \phi_1)  (\nabla u \cdot \nabla \phi_2) \\
&+ 2 \wt E_{\rho s} \big( (u\cdot \phi_1)  (\nabla u \cdot \nabla \phi_2) + (u\cdot \phi_2)  (\nabla u \cdot \nabla \phi_1)\big) \\
&+ \wt E_\rho (\phi_1 \cdot \phi_2) + 2 \wt E_{\rho\rho} (u\cdot \phi_1)  (u\cdot \phi_2)  dx,
\end{split} \ee
\begin{align*}
\CA(u) \phi = \BD F(u) \phi = & i  \nabla \cdot \big(\wt E_s \nabla \phi + 2 \wt E_{ss} (\nabla u \cdot \nabla \phi) \nabla u + 2 \wt E_{s\rho} (u\cdot \phi)\nabla u \big) \\
& - i \big( 2\wt E_{s\rho} (\nabla u \cdot \nabla \phi) u + 2\wt E_{\rho\rho} (u\cdot \phi) u + \wt E_\rho \phi\big).
\end{align*}
The forms of  $\CL$ and $\CA$ suggest taking $X^n = H^{1+2n} (\BBT^d)$.
A sufficient  ellipticity condition  is 
\[
\exists \, M, M_1, a_0>0, \;\; \wt E_s (\rho, s)   + \min\{ 0, \, 2 \wt E_{ss} (\rho, s) s\}  \ge a_0, \quad \forall  \rho \le M^2, \ s \le M_1^2.
\]
which ensures   \eqref{E:LHam-2}. 
For any $M_2>0$, 
\be \label{E:temp-44}
n_0> d/4 +1/2, \quad \CO = \{ u \in H^{1+2n_0} \mid  |u|_{L^\infty} <M, \, |\nabla u|_{L^\infty} < M_1, \, |D^2 u|_{L^\infty} < M_2\}, 
\ee
it is straight forward to verify the regularity of $\CA$ and $\CL$ and thus assumptions (B.1)--(B.3) and (B.5) hold for $n=n_0+1$ due to Proposition \ref{P:Ham-LWP} and Remark \ref{R:smoothness-2}. 
The inequality \eqref{E:tameE-1} on $\BD^2 F$ can be derived using \eqref{E:tameE-2} and thus Lemma \ref{L:CB-1} applies. Hence the local well-posedness of the NLS 
follows from Theorems \ref{T:NLPDE-LWP} and \ref{T:LWP-smoothness}. 

If $x \in \R^d$ instead, one may consider $v(t, x) = e^{- i \omega t} u(t, x)$. It satisfies an NLS whose energy has an additional $\frac 12 \omega |v|_{L^2}^2$ with a uniformly positive Hessian near any given $v_*$ for reasonably large $\omega>0$. Hence the same argument applies to yield the local well-posedness. 

$\bullet$ {\it Invariant manifolds of standing waves.} While the argument also works for $u : \BBT^d \to \C$, we consider $u: \R^d \to \C$ in the following. For a simple example, consider the energy 
\be \label{E:energy-4}
\CE(u) = \int_{\R^d} \frac 12 (1 + a |u|^2 ) |\nabla u|^2 - \frac 1p |u|^p dx, \quad a>0, \; 2 < p < \frac {2d}{d-2} \; \text{ if } \; d\ge 3, 
\ee
where the ellipticity condition is satisfied. The term $(1 + a |u|^2 )$ may be viewed as a nonlinear metric on the codomain $\C \sim \R^2$ of $u(t, x)$. A standing wave is a relative equilibrium in the form of $u (t, x) = e^{i\omega t} \phi(\omega, x)$ where $\phi$ is an equilibrium of the NLS in the rotating frame 
\be \label{E:NLS}
iv _t + \nabla \cdot \big( (1+ a |v|^2) \nabla v\big) - a |\nabla v|^2 v - \omega v + |v|^{p-2} v =0,
\ee
with the energy given by $\CE(v) + \frac 12 \omega |v|_{L^2}^2$. By a variational approach \cite{CJ04}, standing waves exist for $\omega >0$, which are positive, radially symmetric, smooth, and exponentially localized. Along with the phase invariance, they form a 2-dim cylinder in the radial function space $H_{rad}^{1+ 2n} (\R^d)$ parametrized by $\omega$ and the phase $e^{i\theta}$. By the criterion given in \cite{GSS87, GSS90} {\it etc.}, such standing waves are spectrally unstable if $\p_\omega \big(|\phi(\omega, \cdot)|_{L^2}^2\big) <0$ with 1-dim unstable subspace, and stable if the sign is opposite.  In the semilinear NLS, the standing waves and their stable/unstable are the non-scattering solutions with the lowest energy. 
For $0< a\ll 1$ the standing waves $\phi(\omega)$ are perturbations to those of the seminlinear focusing NLS with power nonlinearity, where the standing waves are known to be spectrally unstable in the mass supercritical case of $p > 2+ 4/d$. So spectrally instability persists if $0< a\ll1$. 
Hence Theorem \ref{T:Ham-UM} and Remark \ref{R:Ham-SM} imply the existence of 1-dim smooth  local unstable and stable manifolds. The solutions on the unstable manifold escape small $L^2$ neighborhoods of the standing wave cylinder in logarithmic time while the solutions on the stable manifold converge exponentially to the standing wave in any $H^s$, $s\ge 0$, norm as $t \to +\infty$.

\vspace{.08in} \noindent $\bullet$ {\bf KdV type equations.} For $d=1$, $\Omega =  - \p_x$, and $\CE$ given in \eqref{E:energy-1}, the flow \eqref{E:energyF-1} becomes 
\be \label{E:KdV} 
u_t = F(u) =- \p_x \big( \p_x ( E_{p} (x, u, u_x) ) - E_u (x, u, u_x) \big), 
\ee
with 
\[
\CA(u) w= - \p_x \big( \p_x ( E_{pp} (x, u, u_x) w_x ) + \p_x ( E_{up} (x, u, u_x)) w - E_{uu} (x, u, u_x) w \big). 
\]
The ellipticity \eqref{E:ellipticity-1} turns out to be $E_{pp} >0$. If $x \in \BBT$, we take $X^n = H^{1+3n} (\BBT)$. One may verify (B.1)--(B.4) using Proposition \ref{P:Ham-LWP} and Lemma \ref{L:CB-1} and obtain the local well-posedness. 

If $x \in \R$, we may put $u(t, x)$ in a moving frame. Namely consider $v(t, x) = u(t, x-ct)$, $c>0$, which adds a momentum $\frac 12 c|v|_{L^2}^2$ in the energy. The Hessian of the energy is positive definite at any $v_*$ for  reasonably large $c>0$. Therefore the same argument applies.

$\bullet$ {\it Invariant manifolds of traveling waves.} 
For simplicity, consider 
\[
u_t = - \p_x \big( \p_x \big( (1+ au^2) u_x\big) - auu_x^2 + |u|^{p-1} u \big),   \text{ with } \; \CE(u) = \int_\R \frac 12 (1+au^2) u_x^2 - \frac 1{p+1} |u|^{p+1} dx. 
\]
A traveling wave with wave speed $c$ is a relative equilibrium in the form of $u (t, x) = \phi(c, x -ct)$ where $\phi \in H^1 (\R)$ is a solution to the ODE 
\be \label{E:KdV-TW}
\p_x \big( (1+ a\phi^2) \phi_x\big) - a\phi\phi_x^2 - c\phi + |\phi|^p=0
\ee
homoclinic to $0$. It is also an equilibrium of the KdV equation in the moving frame 
\be \label{E:KdV-1}
v _t =- \p_x \big( \p_x \big( (1+ av^2) v_x\big) - a v v_x^2 - c v + |v|^p \big) =0,  \; \text{ with } \; \wt \CE(v) = \CE(v) + c |v|_{L^2}^2/2.
\ee
Equation \eqref{E:KdV-TW} has a conserved quantity
\[
H(\phi, \phi_x) = \frac 12 (1+ a\phi^2)\phi_x^2 - \frac c2 \phi^2 + \frac 1{p+1} |\phi|^{p+1},
\]
and thus $\phi (c, x)$ exists for $c>0$ and is given by the zero level curve of $H$, where usually the positive branch limiting to $0$ is chosen. The spectral instability criterion of the traveling wave is again given by $\p_c \big(|\phi(c, \cdot)|_{L^2}^2\big) <0$ (see, e.~g.~\cite{LZ22}). Particularly it is unstable if $p>5$ in the more classical case when $a=0$. The instability persists if $0< a \ll 1$. Hence Theorem \ref{T:Ham-UM} and Remark \ref{R:Ham-SM} imply the existence of 1-dim smooth  local unstable and stable manifolds.

\vspace{.08in} \noindent $\bullet$ {\bf Nonlinear wave type equations} in $X^n = H^{1+n} \times H^n$ with the same $\Omega$ as in \eqref{E:temp-45} and the corresponding energy functional in the form of \eqref{E:energy-1}  
\[
\CE(u_1, u_2) = \int_{\R^d} E (x, u_1, u_2, \nabla u_1) dx, 
\]
where $E= E(u, v, p)$ is smooth on $\R^{2+d}$ and $u_2$ is the Legendre transform of $u_{1t}$. One may compute the Hamiltonian PDE \eqref{E:energyF-1} and its linearization as 
\[
\vec{u}_t = {F} (\vec{u}) =  \begin{pmatrix} E_v (x, u_1, u_2, \nabla u_1) \\ \nabla \cdot \big(E_p(x, u_1, u_2, \nabla u_1) \big) - E_u (x, u_1, u_2, \nabla u_1) \end{pmatrix},
\]
\[
\CA(u_1, u_2) \begin{pmatrix} \phi_1 \\ \phi_2 \end{pmatrix} = \begin{pmatrix} E_{pv} \nabla \phi_1 + E_{uv} \phi_1 + E_{vv} \phi_2 \\ \nabla \cdot \big(E_{pp} \nabla \phi_1 + E_{pv} \phi_2 \big) + (\nabla \cdot E_{pu}) \phi_1  - E_{uv} \phi_2 -E_{uu} \phi_1 \end{pmatrix},  
\]
\begin{align*}
\CL (u_1, u_2) \big((\phi_1, \phi_2), (\psi_1, \psi_2) \big) = \int & D_{(v, p)}^2 E \big( (\phi_2, \nabla \phi_1), (\psi_2, \nabla \psi_1) \big)  + E_{uu} \phi_1\psi_1 \\
& + E_{up} (\phi_1 \nabla \psi_1 + \psi_1 \nabla \phi_1) +  E_{uv} (\phi_1 \psi_2 + \phi_2\psi_1)  dx.     
\end{align*}
The coercivity of the energy density $E(u, v, p)$ is given by the uniform positive definiteness of the $(d+1) \times (d+1)$ Hessian $D_{(v, p)}^2 E$ at each $(u, v, p)$ in a ball in $\R^{2+d}$. If $x \in \BBT^d$, this condition implies \eqref{E:LHam-2} and it is standard to verify (B.1)--(B.5) using Proposition \ref{P:Ham-LWP}, Remark \ref{R:smoothness-2}, and Lemma \ref{L:CB-1} to obtain the local well-posedness. 

If $x \in \R^d$, instead of going through \eqref{E:LHam-2}, it is easier to verify assumption (B.3) directly. In fact, $(\omega - \CA(\vec{u}))^{-1}$ provide the $L^2 \times H^{-1}$ control. Along with the $\dot H^1 \times L^2$ bound from the leading part of $\CL(\vec{u})$, they dominate the lower order part of $\CL(\vec{u})$ and thus (B.3) is verified and the local well-posedness follows from Theorem \ref{T:NLPDE-LWP} (see also Remark \ref{R:energyF}b). 

\begin{remark} 
For NLS or nonlinear wave equations {\it etc.} with $x \in \R^d$, one could also work in $X^n= \dot H^{1+2n}$ or $X^n = (\dot H^{1+n} \cap \dot H^1) \times H^n$. 
\end{remark}

$\bullet$ {\it Invariant manifolds of stationary waves.} 
Again for simplicity, consider 
\[
\CE(u) = \int_{\R^d} \frac 12 (1 + a u_1^2 ) \big(|\nabla u_1|^2 +  u_2^2 \big) + \frac 12 u_1^2 - \frac 1p |u_1|^p dx, \quad a>0, \; 2 < p < \frac {2d}{d-2} \; \text{ if } \; d\ge 3. 
\]
For a stationary wave $u_2=0$ and $u_1$ has to be critical point of $\CE(u_1, 0)$. By the same variation argument \cite{CJ04}, a positive, radially symmetric, smooth, and exponentially localized stationary wave $U_1$ exists. The Hessian of $\BD^2 \CE(U_1, 0)$ has one negative direction and a 1-dim kernel. Therefore the linearized wave equation has a 1-dim unstable and stable subspace and satisfies the exponential trichotomy (see, e.~g.~\cite{LZ22CPAM}). Theorem \ref{T:Ham-UM} and Remark \ref{R:Ham-SM} enures the existence of 1-dim smooth  local unstable and stable manifolds.

\vspace{.08in} \noindent $\bullet$ {\bf MMT equations}. This is a Hamiltonian model whose energy density depends on $u: \BBT^d \to \R^2 \sim \C$ nonlocally 
\[
\CE(u) = \int_{\BBT^d} \frac 12 \big| |\nabla|^\alpha u \big|^2 + \frac \sigma4 \big| |\nabla|^\beta u \big|^4, \quad \lambda =\pm 1, \; \alpha>0, \; \beta \le \alpha.  
\]
With the symplectic form $\Omega$ given in \eqref{E:temp-45}, the evolution is governed by 
\be \label{E:MMT-1}
u_t = F(u) = - \Omega \BD \CE(u) = - i \big(|\nabla|^{2\alpha} u +  \sigma |\nabla|^\beta \big( \big| |\nabla|^\beta u \big|^2 |\nabla|^\beta u \big) \big). 
\ee
Following the same procedure, one may compute 
\begin{align*}
\langle \CL(u)\phi_1, \phi_2 \rangle =  \BD^2 \CE (u) (\phi_1, \phi_2) = \int_{\BBT^d}  & |\nabla|^\alpha \phi_1 \cdot |\nabla|^\alpha \phi_2 +  \sigma \big| |\nabla|^\beta u \big|^2 |\nabla|^\beta \phi_1 \cdot |\nabla|^\beta \phi_2  \\
&+ 2  \sigma\big(|\nabla|^\beta u \cdot |\nabla|^\beta \phi_1\big) \big(|\nabla|^\beta u \cdot |\nabla|^\beta \phi_2\big)  dx,
\end{align*}
\[
\CA(u) \phi = - i \BD^2 \CE(u) \phi = - i \big(|\nabla|^{2\alpha} \phi +  \sigma |\nabla|^\beta \big( \big| |\nabla|^\beta u \big|^2 |\nabla|^\beta \phi + 2 \big(|\nabla|^\beta u \cdot |\nabla|^\beta \phi\big) |\nabla|^\beta u  \big) \big). 
\]
We take  
\[
X^n = H^{(1+2n) \alpha} (\BBT^d), \quad n_0 > d/(4\alpha) + \beta/\alpha - 1/2.
\]
To ensure the ellipticity, let   
\[
\CO = 
\begin{cases} 
X^{n_0} (R)  & \text{ if } \; \alpha > \beta \ \text{ or } \ \sigma =1,  \\
\big\{ u \in X^{n_0} \mid \big| |\nabla|^\beta u \big|_{L^\infty}^2 <\delta \big\}  & \text{ if } \; \alpha = \beta \ \text{ and } \  \sigma =-1, 
\end{cases}\]
where $R>0$ and $\delta <1/3$. The verification of (B.1)--(B.5) through Proposition \ref{P:Ham-LWP} and Remark \ref{R:smoothness-2} with $n= n_0+1$ is similar due to the dominance of $\big| |\nabla|^\alpha u \big|_{L^2}^2$ 
and the local well-posedness follows.

$\bullet$ {\it Invariant manifolds of plane wave} 
$u_0 (t, x) = a e^{i(\omega t + \xi_0 \cdot x})$ where 
\be \label{E:MMT-PW}
\omega + |\xi_0|^{2\alpha} + \sigma |a|^2 |\xi_0|^{4\beta} =0, \quad \xi_0 \in \Z^d. 
\ee
In the rotating frame $u(t, x) = e^{i \omega t} v(t, x)$, it corresponds to an equilibrium $v_0 (x) = a e^{i \xi_0 \cdot x}$ of 
\[
v_t = - i \big(|\nabla|^{2\alpha} v + \omega v  +  \sigma |\nabla|^\beta \big( \big| |\nabla|^\beta v \big|^2 |\nabla|^\beta v \big) \big).
\]
If $v_0(x)$ is spectrally unstable, Theorem \ref{T:Ham-UM} and Remark \ref{R:Ham-SM} yield its finite dimensional smooth local stable and unstable manifolds. 

To see that some plane wave are indeed spectrally unstable, we first observe that in the linearized equation $v_t = \CA(v_0) v -i \omega v$, the $\xi$-th mode interacts only with the $(2\xi_0 -\xi)$-th mode. Due to the phase invariance we may assume $a>0$. One may compute that $b_+(t) e^{i\xi \cdot x} + b_-(t) e^{i(2\xi_0 - \xi) \cdot x}$ is a linearized solution iff 
\[
\begin{pmatrix} b_+ \\ b_- \end{pmatrix}_t = -i \begin{pmatrix} c_+ b_+ + c \bar b_- \\ c \bar b_+ + c_- b_- \end{pmatrix}, \; \text{ where } \ c= \sigma a^2 |\xi_0|^{2\beta} |\xi|^{\beta}|2\xi_0 -\xi|^{\beta},
\]
\[
c_+ = \omega + |\xi|^{2\alpha} + 2 \sigma a^2 |\xi_0|^{2\beta} |\xi|^{2\beta}, \quad c_- =  \omega + |2\xi_0 - \xi|^{2\alpha} + 2 \sigma a^2 |\xi_0|^{2\beta} |2\xi_0 -\xi|^{2\beta}. 
\]
Eigenvalues of this (real) 4-dim linear system are the roots of 
\[
\lambda^4 + (c_+^2 + c_-^2 - 2c^2) \lambda^2 + (c_+c_- - c^2)^2=0. 
\]
A sufficient condition for the instability is $c_+^2 + c_-^2 - 2c^2 <0$, which according to \eqref{E:MMT-PW} is equivalent to 
\begin{align*}
2 a^4 |\xi_0|^{4\beta} |\xi|^{2\beta}|2\xi_0 -\xi|^{2\beta} > & \big( |\xi|^{2\alpha} - |\xi_0|^{2\alpha} + 2 \sigma a^2 |\xi_0|^{2\beta} (|\xi|^{2\beta} -  |\xi_0|^{2\beta}) \big)^2 \\
& + \big( |2\xi_0 -\xi|^{2\alpha} - |\xi_0|^{2\alpha} + 2 \sigma a^2 |\xi_0|^{2\beta} (|2\xi_0 -\xi|^{2\beta} -  |\xi_0|^{2\beta}) \big)^2. 
\end{align*}
For example, this can be ensured if  $a \gg 1$ and 
\[
|\xi|^{2\beta}|2\xi_0 -\xi|^{2\beta} > 2 (|\xi|^{2\beta} -  |\xi_0|^{2\beta})^2 + 2(|2\xi_0 -\xi|^{2\beta} -  |\xi_0|^{2\beta})^2,
\]
which can be verified easily if $|\xi - \xi_0| \ll |\xi_0|$.

\subsection{Other PDE systems} \label{SS:examples-other} 

In this subsection, we consider some non-Hamiltonian nonlinear PDEs which easily fit into the framework of Appendix \ref{S:pre-LWP} and Section \ref{S:LInMa}.

\vspace{.08in} 
\noindent $\bullet$ {\it A nonlinear dissipative PDE} with $\CE$ as in \eqref{E:energy-1} and \eqref{E:ellipticity-1} for $u \in \R^m$ defined on\footnote{The domain can also be a Riemannian manifold, see the next example on the mean curvature flow.} $\R^d$ or $\BBT^d$
\be \label{E:heat-1} \begin{split} 
u_t = & F(u) 
= - \Omega (x, u, D u) \BD \CE(u) + f(x, u, D u), 
\end{split} \ee
where $\Omega(x, u, p)$ and $f(x, u, p)$ are $m\times m$ matrixe and $m$-dim vector valued, respectively,  smoothly depending on $x$, $u$, and the matrix $p$ and 
\be \label{E:temp-47}
\Omega (x, u, p) = \Omega_S(x, u, p)+ \Omega_A (x, u, p),
\quad \Omega_S^T = \Omega_S\ge 1, \;\; \Omega_A^T = - \Omega_A.
\ee  
T simply the notation, for any function $u(x)$, in the rest of this example we shall write 
\[
\Omega[u] (x)= \Omega(x, u(x), Du(x)).  
\]
Equation \eqref{E:heat-1} is 
a generalization of \eqref{E:energyF-1} where $\Omega$ depends on $u$ and $D u$. 
Denoting the $j$-th column of $p$ by $p_j$ (corresponding to $\p_{x_j} u$), the linearization is given by   
\be \label{E:NHeatE-2} \begin{split}
\CA(u) w = & - \Omega \BD^2\CE w
+ \Big(\Omega_u  w + \sum_{j=1}^d \Omega_{p_j}   w_{x_l}\Big) \Big( \sum_{j=1}^d \p_{x_j} E_{p_j}  - E_u \Big)  + f_u  w + \sum_{j=1}^d f_{p_j} w_{x_j}, 
\end{split} \ee
where $E$ and $\Omega$ are evaluated at $(x, u, D u)$. Let 
\[
\CL(u)= \BD^2\CE (u), \quad \CL_0(u)  = a_0 - \sum_{j, k=1}^d  \p_{x_j} \big( E_{p_jp_k} (x, u, D u ) \p_{x_k} \big), 
\]
where $a_0>0$ is given in \eqref{E:ellipticity-1}. 
Take $X^n = H^{1+2n}$ and for $M_1>0$, 
\be \label{E:temp-48}
n_0> (d+2)/4, \quad \CO = \{ u \in H^{1+2n_0} \mid  |u|_{L^\infty}, \, |\nabla u|_{L^\infty} < M, \ |D^2 u|_{L^\infty} < M_1\}.
\ee
Due to \eqref{E:ellipticity-1}, the symmetric $\CL_0(u) \in \BBL(X, X^*)$ is uniformly positive on $X= H^1$ and defines an equivalent metric.
So there exist $a_1, a_2>0$ such that for any $u \in \CO$ and $w \in H^1$,  
\be \label{E:temp-49}
\langle\CL_0(u) w, w \rangle \ge a_0 |w|_{H^1}^2, \;\;  a_1|w|_{H^1}^2 \ge \langle\CL(u) w, w \rangle \ge (a_0/2) |w|_{H^1}^2 - a_2 |w|_{H^{-1}}^2.
\ee
From \eqref{E:temp-47} and the definition of $\CL(u)$ and $\CL_0(u)$, there exists $C>0$ such that for any $\lambda >0$, $u \in \CO$, and $w\in X$ 
\begin{align*} 
& C |w|_{H^1} |(\lambda - \CA(u))w|_{H^1} \ge \langle\CL_0(u) w, (\lambda - \CA(u)) w \rangle \\
= & \lambda\langle\CL_0(u) w, w \rangle + \big\langle\CL_0(u) w, \Omega[u] \CL_0(u) w + \big(\Omega[u]  (\CL(u) - \CL_0(u)) -  ((\Omega[u]\CL(u) + \CA(u) \big) w\big\rangle \\
\ge & \lambda a_0  |w|_{H^1}^2 + |\CL_0(u)w|_{L^2}^2 -C |\CL_0(u)w|_{L^2} |w|_{H^1}. 
\end{align*}
Therefore $(\lambda - \CA(u))^{-1} \in \BBL(H^1, H^3)$ for large $\lambda >0$. By considering $\CL_0(u)^r(\lambda-\CA(u)) $ whose commutators are lower order terms, we verify assumption (B.2) in Appendix \ref{SS:NLPDE-LWP}. Along with \eqref{E:temp-49} it also implies \eqref{E:coercivity-2}. 
Similarly 
\be \label{E:temp-49.3} \begin{split}
\langle\CL(u) w, \CA(u) w \rangle = &  \big\langle\CL(u) w, - \Omega[u] \CL(u) w + \big(  \Omega [u] \CL(u) +  \CA(u) \big) w\big\rangle \\
\le & -|\CL(u)w|_{L^2}^2 + C |\CL(u)w|_{L^2} |w|_{H^1} \le -|\CL_0(u)w|_{L^2}^2 + C |w|_{H^2} |w|_{H^1}
\end{split} \ee
and thus \eqref{E:dissipativity-2.5} follows. 
Subsequently, the verification of assumptions (B.1)--(B.5) for $n=n_0+1$ is straight forward 
using Lemma \ref{L:CB-1} and Remark \ref{R:smoothness-2}. Hence the local well-posedness of \eqref{E:heat-1} follows. 


To simply the spectral analysis in obtaining 
local invariant manifolds, we further assume 
\be \label{E:temp-49.5}
u: \BBT^d \to \R^m, \quad \Omega_A\equiv 0, \quad f\equiv 0. 
\ee
Let $u_0 \in \CO_n \triangleq \CO \cap X^n$ be an equilibrium of \eqref{E:heat-1}. 
The above  assumptions imply that $\CA(u_0) = - \Omega[u_0] \CL(u_0)$ is a self-adjoint operator on $L^2(\BBT^d, \Omega[u_0]^{-1} dx)$, bounded from above, and have compact resolvents. So $\sigma(\CA(u_0))$ consists of only real semi-simple eigenvalues  arranged into a sequence  $\lambda_1 > \lambda_2 > \ldots \to -\infty$ whose eigenspaces $\ker (\lambda_j -\CA(u_0))$
satisfy 
\[
\dim \ker (\lambda_j -\CA(u_0)) < \infty, 
\quad  \ker (\lambda_j -\CA(u_0)) \subset X^n= H^{1+2n}, 
\]
\be \label{E:temp-50} 
-\lambda_{j} \langle \Omega[u_0]^{-1} w, \wt w \rangle = \langle \CL(u_0) w, \wt w \rangle, \; \forall w\in  \ker (\lambda_j -\CA(u_0)), \; \wt w\in  \ker (\lambda_{j'} -\CA(u_0)),
\ee
and are complete on $L^2$. In $X=H^1$, $\sigma(\CA(u_0))$ and the eigenspaces remain the same. Let 
\[
Y_+ (\omega) =  \oplus_{\lambda \in \sigma(\CA(u_0)) \cap [\omega, +\infty)} \ker (\lambda -\CA(u_0)), \;\; Y_- (\omega) =  \overline {\oplus_{\lambda \in \sigma(\CA(u_0)) \cap (-\infty, \omega)} \ker (\lambda -\CA(u_0))} \cap H^1, 
\] 
which are closed subspaces of $H^1$, invariant under $\CA(u_0)$, and satisfy $\dim Y_+ (\omega) < \infty$ and $H^1 = Y_+(\omega) \oplus Y_-(\omega)$. 
In particular, 
\eqref{E:temp-50} includes the $\CL(u_0)$-orthogonality and $\CL(u_0)\CA(u_0)$-orthogonality between the different eigenspaces and $\CL(u_0)|_{Y_-(0)} >0$. 
In fact, a straight forward argument based on the compactness and \eqref{E:temp-49} yields 
\be \label{E:temp-51} 
\exists\delta>0, \; \text{ s. t. } \; \langle \CL(u_0)w, w \rangle >\delta|w|_{H^1}, \; \forall w\in  Y_-(0). 
\ee
Hence $\CL(u_0)$ induces a metric on $Y_-(0)$ equivalent $|\cdot|_{H^1}$. 
For any $\omega <0$ 
\be \label{E:temp-52} 
\langle \CL(u_0)w, \CA(u_0) w \rangle \le \lambda_*\langle \CL(u_0)w,  w \rangle, \; \forall w\in  Y_-(\omega), \quad \lambda_* = \max \sigma(\CA(u_0)) \cap (-\infty, \omega).
\ee

If $\lambda_- < \lambda_+$ and $\sigma(\CA(u_0)) \cap [\lambda_-, \lambda_+) = \emptyset$, clearly $ H^1 = Y_-(\lambda_-) \oplus Y_+(\lambda_+)$.  Let 
\[
X_{1+} = Y_+(\lambda_+), \quad X_{2+}=X_{3+} = X_{3-} = \{0\}, \quad X_{1-} = Y_-(\lambda_-) \cap Y_+(0),  
\]
\[
X_{2-}^r= Y_-(\lambda_-) \cap Y_-(0) \cap H^{1+2r}, \quad \langle \CL_{2-} (u) w, w\rangle = \langle \CL(u) w, w\rangle, \quad \forall w \in H^1. 
\]
It is easy to see 
\[
X^r=H^{1+2r} = X_{1+} \oplus X_{1-} \oplus X_{2-}^r,   \quad \dim X_{1+} , \dim X_{1-} < \infty,
\]
where the associated projections $\Pi_{1\pm}$ and $\Pi_{2-}$ can be written using the $\CL(u_0)$-orthogonality due to \eqref{E:temp-50}.  
For any $\omega_\pm \in (\lambda_-, \lambda_+)$ with $\omega_+> \omega_-$, (D.1--D.2) in Subsection \ref{SS:NLPDE-LInMa} are clearly satisfied. Assumption (D.3) holds due to \eqref{E:temp-50} and the smoothness of $E(x, u, p)$ and $\Omega[u_0]^{-1} w$ for any $w \in X_{1+} \oplus X_{2-}$. For $\ep \ll 1$, $\CL_{2-}(u)\ge \delta/2$ on $X_{2-}$ for any $u \in X^{n-1}(u_0, \ep)$ by \eqref{E:temp-51} and the continuity of $\CL(u) \in \BBL(X, X^*)$ in $u \in X^{n-1}$. Finally from \eqref{E:NHeatE-2},  \eqref{E:temp-49.5}, \eqref{E:temp-49.3}, \eqref{E:ellipticity-1}, and \eqref{E:temp-52} we have, for any $u \in X^{n-1} (u_0, \ep)$ and $w \in X_{2-}^1$ 
\begin{align*}
\big|\langle \CL_{2-}(u)w, \CA_{2-}(u) w \rangle - \langle \CL(u_0)w, \CA (u_0) w \rangle \big|  \le& C |u-u_0|_{X^{n-1}} |w|_{H^2}^2 \\
\le & - C |u-u_0|_{X^{n-1}} \langle \CL(u_0)w, \CA (u_0) w \rangle,
\end{align*}
which along with \eqref{E:temp-52} implies, for $\lambda_1 = \max \sigma (\CA(u_0)) \cap (-\infty, 0)<0$, 
\begin{align*}
\langle \CL_{2-}(u)w, \CA_{2-}(u) w \rangle \le & ( 1- C |u-u_0|_{X^{n-1}}) \langle \CL(u_0)w, \CA (u_0) w \rangle \\
\le &( 1- C |u-u_0|_{X^{n-1}})  \min\{\lambda_1, \lambda_-\} \langle \CL(u_0)w, w \rangle \\
\le & ( 1- C |u-u_0|_{X^{n-1}})  \min\{\lambda_1, \lambda_-\} \langle \CL_{2-}(u)w, w \rangle. 
\end{align*}
So (D.4) is verified for $\omega_-$, $\CL_{2-}(u)$, $\CA_{2-}(u)$ and $u \in X^{n-1}(u_0, \ep)$. The rest of (D.4) follows directly from the finite dimensionality and spectral properties of $\CA(u_0)$ on $X_{1\pm}$. 
Therefore the existence of the unstable  manifold $W^+$ (and stable manifold $W^-$, respectively) of $u_0$ follows from Theorems \ref{T:NLPDE-UM} and \ref{T:UM-smoothness} (and Remarks \ref{R:SM} and \ref{R:smoothness-SM}) by taking $\lambda_+ > 0$ (or $\lambda_-< 0$, respectively). Here $W^+$ (or $W^-$) may be a strong unstable submanifold (or strong stable submanifold) if $\omega_->0$ (or $\lambda_+<0$, respectively).  

\begin{remark} 
It is also possible to consider dissipative flows with other $\Omega$ satisfying \eqref{E:Omega-1}, for example, $\Omega = -\Delta$ 
which leads to a quasilinear Cahn-Hilliard type equation. 
\end{remark}

\noindent $\bullet$ {\bf Mean curvature flow (MCF).} 
This geometric flow is one of the classical quasilinear gradient flow type PDEs. Heuristically, inside a  $(d+1)$-dim ambient Riemannian manifold $(\CM^{d+1}, g)$, for $n < d+1$, the set $\Gamma$ of all $n$-dim submanifolds $M \subset \CM$ can be viewed as an infinite dimensional manifold. The tangent space $T_M \Gamma$ is equivalent to the space of normal vector fields along $M$, which we equip with the $L^2$ metric (induced by the metric $g$ of $\CM$). Thus $\Gamma$ becomes an infinite dimensional Riemannian manifold itself. The MCF is the gradient flow of the area function from $ \Gamma$ to $\R$ under the $L^2$ metric on $T\Gamma$. 

We first review briefly how to formulate the MCF as a PDE in the form of \eqref{E:energyF-1} (see also, e.~g.~\cite{Eck04, Ger06, Mant11, CM19}). 
For simplicity, we consider the MCF of compact hypersurfaces 
near an orientable reference hypersurface $\CM_0^d \subset \CM$. Let $N(x)$, $x \in \CM_0$, be a unit normal vector field along $\CM_0$, then near any $(x, 0)$, $\Phi(x, s)=\exp_x(s N(x))$ is a local diffeomorphism from $\CM_0 \times \R$ to $\CM$. The Gauss Lemma implies that $\p_s \Phi(x, s) \perp D \Phi(x, s) \tau$ for any $\tau \in T_x \CM_0$. Hence without loss of generality, locally we may make the following assumptions on $\CM$ and its metric $g \in \BBL( T\CM, T^*\CM)$, 
\be \label{E:metric-1}
\CM= \CM_0 \times \R, \quad g(x, s) = \bar g (x, s) + ds^2, \;\; \bar g(x, s) \in \BBL(T\CM_0, T^* \CM_0), 
\ee
where naturally  $\bar g$ is symmetric and positive. Let $\CU \subset \CM_0$ be open with compact $\overline{\CU}$ and smooth $\p \CU$ (or $\p \CU=\emptyset$). Any hypersurface $S \subset \CM$ near $\CU$ satisfying $\p S = \p \CU$ can be represented as the graph $s= \phi(x)$ of a small function $\phi (x)$ in a Sobolev space depending on the regularity of $S$ 
\[
X^n = H_0^{2n+1} (\CU) = \big \{ \phi:   \CU \to \R \, \big| \, \phi|_{\p \CU}=0, \ |\phi|_{X^{n}}^2 \triangleq \Sigma_{j=0}^{2n+1} |\CD_0^j \phi |_{L^2 ( \CU)}^2 < \infty\big\},
\]
where $\CD_0$ is the covariant differentiation on $T \CM_0$ induced by its Riemannian metric $g_0 \triangleq \bar g(\cdot, 0)$. Hence $\phi \in X^n$ can be used as a local coordinate of $\Gamma$ near $\CU$. 

For any $\phi \in X^n$, denote  $P_\phi (x) = (x, \phi(x))$ and $\CS_\phi = graph (\phi) = P_\phi(\CU)$. In the coordinates $x \to P_\phi(x)$ of $S_\phi$, through standard calculations, its metric, the measure, and the  upward unit normal vector 
at $P_\phi (x) $ are given by 
\[
g_\phi (P_\phi (x)) ) = \bar g ( P_\phi(x)) + d \phi (x) \otimes d \phi(x) \in \BBL(T\CM_0, T^* \CM_0), 
\]
\[
dS_\phi = \sqrt { \det(g_0^{-1} (g_\phi \circ P_\phi))} dS_0, 
\quad N_\phi (x) = \frac {- \bar g ( P_\phi(x))^{-1} d\phi + \p_s}{\sqrt {1+ \langle d\phi,  \bar g ( P_\phi(x))^{-1} d\phi \rangle }}, 
\]
where $dS_0$ is the Riemannian measure on $\CM_0$. In particular $g_0^{-1} (g_\phi \circ P_\phi) \in \BBL(T\CM_0)$ and thus its determinant is well-defined. Using an orthonormal basis of $T_x \CM_0$ under the metric $\bar g (P_\phi(x))$, where one of them is parallel to $\bar g(P_\phi(x))^{-1} d\phi$, one may calculate
\[
\det(\bar g^{-1} g_\phi)|_{P_\phi(x)} = 1+ \langle d\phi,  \bar g ( P_\phi(x))^{-1} d\phi \rangle. 
\] 
Therefore we can single out  the dependence on $d \phi$ and rewrite 
\[
dS_\phi = (a \circ P_\phi) \sqrt { 1+ \langle d\phi,  (\bar g^{-1} \circ P_\phi) d\phi \rangle} dS_0, \quad  a(x, s) =  \sqrt { \det(g_0^{-1} \bar g)} \big|_{(x, s)}.
\] 

At any $\phi \in X^n$, an infinitesimal variation $\wt \phi$ to $\phi$ is associated to the normal variational vector field along $\CS_\phi$  
\[
\psi(x) N_\phi(x), \; x\in \CM_0, \ \text{ where } \ \psi = (\wt \phi \p_s, N_\phi)_g = \wt \phi /  \sqrt {1+ \langle d\phi,  (\bar g^{-1} \circ P_\phi) d\phi \rangle }.   
\]
Hence, given variations $\wt \phi_1$ and $\wt \phi_2$ , the $L^2$ inner product of their corresponding normal vector fields defines a symmetric $\Omega(\phi)^{-1} \in \BBL(X, X^*)$ as 
\[
\langle \Omega(\phi)^{-1} \wt \phi_1, \wt \phi_2\rangle = \int_{S_\phi} \psi_1 \psi_2 dS_\phi = \int_{\CU} \frac {(a \circ P_\phi) \wt \phi_1 \wt \phi_2}{\sqrt{1+ \langle d\phi,  (\bar g^{-1}  \circ P_\phi) d\phi \rangle}} dS_0. 
\]
A solution $\phi(t)$ of the MCF in this graph formulation satisfies  
\be \label{E:MCF-1} 
\phi_t = - \Omega(\phi) \BD \CE(\phi)
\ee
where $\CE(\phi)$ is the surface area of $\CS_\phi$ 
\[
\CE (\phi) = \int_{\CM_0} (a \circ P_\phi) \sqrt { 1+ \langle d\phi,  (\bar g^{-1} \circ P_\phi) d\phi \rangle} dS_0. 
\]
The above equation \eqref{E:MCF-1} is equivalent to 
\begin{align*}
-\langle \Omega(\phi)^{-1} \phi_t,& \wt \phi \rangle = \langle \BD \CE(\phi), \wt \phi \rangle 
=  \int_{\CM_0} \Big\langle d\wt \phi, \frac {(a \bar g^{-1}) \circ P_\phi } {\sqrt { 1+ \langle d\phi,  (\bar g \circ P_\phi)^{-1} d\phi \rangle}} d\phi\Big \rangle \\
& + \frac {  \langle d\phi,  ((a\p_s (\bar g^{-1})) \circ P_\phi) d\phi \rangle }{2 \sqrt { 1+ \langle d\phi,  (\bar g \circ P_\phi)^{-1} d\phi \rangle}}  \wt \phi + ((\p_s a) \circ P_\phi) \sqrt { 1+ \langle d\phi,  (\bar g^{-1}  \circ P_\phi)d\phi \rangle} \wt \phi dS_0, 
\end{align*}
for any $\wt \phi$. Therefore  
\be \label{E:MCF-2} \begin{split} 
\phi_t = F(\phi) \triangleq  & - \frac { \sqrt { 1+ \langle d\phi,  (\bar g^{-1} \circ P_\phi) d\phi \rangle}} {a \circ P_\phi}  d^* \Big( \frac {(a \bar g^{-1}) \circ P_\phi } {\sqrt { 1+ \langle d\phi,  (\bar g^{-1} \circ P_\phi) d\phi \rangle}} d\phi  \Big) \\
& - \frac 12  \langle d\phi,  ((\p_s (\bar g^{-1})) \circ P_\phi) d\phi \rangle  -  \big( 1+ \langle d\phi,  (\bar g^{-1}  \circ P_\phi)d\phi \rangle \big) \Big(\frac {\p_s a}a \Big)\circ P_\phi,  
\end{split} \ee
where the covariant divergence $d^*$ on $(\CM_0, g_0)$ is the dual operator of the differentiation "$-d$" under the zero boundary condition on $\p \CU$, 
mapping tangent vector fields to scalar functions on $\CM_0$. For any vector field $W: \CM_0 \to T\CM_0$, in a local coordinates $x= (x_1, \ldots, x_d)$, 
\[
d^* W = \big( \nabla \cdot \big( \sqrt {\det g_0} W \big) \big) / \sqrt{\det g_0}. 
\]
When $\CM=\R^{d+1}$ is flat, $\bar g = I_{d\times d}$ and $a \equiv 1$. Thus the last two terms in \eqref{E:MCF-2} drop out and it takes the well-known standard form (see, e.~g.~\cite{GT01}). 

The local well-posedness of MCF has been well-established, see, e.~g.~\cite{Eck04, Ger06, Mant11, CM19}. Some local invariant manifolds of MCF were also constructed in, e.~g.~\cite{EW87, SX21}. The maximal regularity property was used. 
Like \eqref{E:heat-1}, the MCF  \eqref{E:MCF-1}/\eqref{E:MCF-2} is also a generalization of \eqref{E:energyF-1} with $\Omega$ depending on $\phi$ and $d\phi$, which only create lower order terms. Even though the   unknown functions of \eqref{E:MCF-1} are defined on a Riemannian manifold $\CU \subset \CM_0$, an argument similar to that of \eqref{E:heat-1} also applies and we obtain the local-wellposedness and the local stable and unstable manifolds near critical surfaces through the energy estimate based on Theorems \ref{T:NLPDE-LWP}, \ref{T:LWP-smoothness}, \ref{T:NLPDE-UM}, 
and \ref{T:UM-smoothness} and there remarks.

\section{Irrotational waver waves with surface tension} \label{S:CWW}

Consider an irrotational incompressible inviscid fluid in a $d$-dim moving 
domain $\Omega_t$ with a free surface $\CS_t$. Let 
\be \label{E:fluidD-0}
\CU = \Pi_{j=1}^{d_1} \big( \R/(2 \pi l_j) \big) \times \R^{d_2} , \quad d_1, d_2 \in \{0\} \cup \N, \; d_1 + d_2 = d-1 \ge 1, 
\ee
denote the horizontal directions of the fluid domain which can  be $2\pi l_j$-periodic, in the $j$-th direction, $j=1, \ldots, l_{d_1}$,  and extend to infinity in other directions. We focus on the case where the fluid domain is in the form of  
\be \label{E:fluidD-2}
\Omega_h = \{ -h_0 < x_d < h(x') \mid x' \in \CU\}, \quad h_0 \in (0, \infty],
\ee
with the free surface given by a graph 
\be \label{E:surface-1}
\CS_h = \{ x_d = h(x'), \, x'= (x_1, \ldots x_{d-1}) \in \CU\}, \quad \inf_\CU h> - h_0,
\ee
where $h_0>0$, if finite, 
is the typical depth of the fluid. 
The surface separates the fluid and vacuum. 
As times evolves, $h= h(t,\cdot)$ and thus $\Omega_t \triangleq \Omega_{h(t, \cdot)}$ are parts of the unknowns\footnote{The notations $\Omega_h$ and $\Omega_t = \Omega_{h(t, \cdot)}$ are mostly interchangeable throughout this section, while the former generally emphasizes domains below the graph of $h$. }. 
We shall obtain the local stable and unstable manifolds of spectrally unstable steady irrotational water waves with surface tension based on Theorem \ref{T:Ham-UM} and Remark \ref{R:Ham-SM}. 
The problem of free interfaces separating two fluids will be discussed briefly in Subsection \ref{SS:2F}.  

The velocity field $u(t, x)$, $x \in \Omega_h$, of the fluid and the free surface are governed by the free boundary problem of the incompressible Euler equation 
\be \label{Euler}
u_t + (u \cdot \nabla) u + \nabla p =0, \quad \nabla \cdot u=0, \quad \text{ in } \ \Omega_h, 
\ee
along with the kinematic and dynamic boundary conditions. 
In the {\it irrotational} case\footnote{In fact, if $d_1 > 0$, in each periodic horizontal direction, the conserved horizontal momentum has to be assumed to be zero, i.~e.~the vanishing of the average horizontal velocity, in order to write $u = \nabla \phi$ and subsequently \eqref{E:Zakharov-1}. If the average horizontal velocity is not zero, 
it can be separated as a background velocity and we obtain \eqref{E:Zakharov-tf}. See also Remark \ref{R:background-V}.}, the systems can be reduced to the surface through the harmonic velocity potential $\phi(t, x)$ where 
\[
u(t, x) = \nabla_x \phi(t, x), \quad x\in \Omega_h, 
\]
satisfying  
\be \label{E:Harmonic}
\Delta_x \phi =0 \; \text{ in } \; \Omega_h, \quad \nabla_x \phi \in L^2 (\Omega_h), \; \text{ and, if } \ h_0 < \infty, \quad \p_{x_d} \phi|_{ x_d = -h _0} =0.
\ee 
In terms of the surface profile $h$ and the trace of the harmonic potential function $\phi$
along $\CS_h$ 
\be \label{E:HarTrace-1}
\Phi(t, x') = \phi(t,  x', h (t, x')), \quad x' \in \CU,  
\ee
the water wave problem takes the form of the well-known Zakharov system for $x' \in \CU$ 
\begin{subequations}  \label{E:Zakharov-1} 
\begin{align} 
& h_t  =\CG( h ) \Phi, \label{E:Z-1} \\
&\Phi_t = -\frac 12 |\nabla_{x'}\Phi|^2 + \frac {(\CG(h)\Phi+ \nabla_{x'} h \cdot \nabla_{x'} \Phi)^2}{2 (1+ |\nabla_{x'} h|^2)}  - g h + \sigma \nabla_{x'} \cdot \Big( \frac {\nabla_{x'}  h }{\sqrt{1+ |\nabla_{x'}  h |^2}} \Big), \label{E:Z-2} 
\end{align}
\end{subequations}
where 
\be \label{E:gravity}
g > 0 \; \text{ if } \; d_2 >0 \; \text{ or } \; g\ge 0 \; \text{ if } \; d_2=0, 
\ee
is the gravitational acceleration, $\sigma >0$ the coefficient of the surface tension, and $\CG(h)$ the Dirichlet-to-Neuman operator on $\CS_h$ (with the slip boundary condition in \eqref{E:Harmonic} if $h_0<\infty$) weighted by the surface area 
\be \label{E:CG}
\CG(h) \Phi = \p_{x_d} \phi - \nabla_{x'} h \cdot \nabla_{x'} \phi. 
\ee

There have been plenty of local well-posedness results of water waves, irrotational or rotational, with or without surface tension or gravity, {\it etc.}
in the literature. See, for example, \cite{Wu97, Wu99, Na74, Yo83, Cr85, BG98, La05, Sc05, CS07, AM05, ZZ08, AM09, SZ08a, MZ09, SZ11, ABZ11}. The readers are referred to \cite{La13} for more comments on the references. 

It is advantageous sometimes to study water waves in a traveling frame with a background velocity vector $\Vc = (c_1, \ldots, c_{d-1}) \in \R^{d-1}$, 
and the Zakharov system becomes 
\begin{subequations}  \label{E:Zakharov-tf} 
\begin{align} 
& h_t  =\Vc \cdot \nabla_{x'} h + \CG( h ) \Phi, \label{E:Z-tf-1} \\
&\Phi_t=\Vc\cdot \nabla_{x'}  \Phi  -\frac {|\nabla_{x'}\Phi|^2}2 + \frac {(\CG(h)\Phi+ \nabla_{x'} h \cdot \nabla_{x'} \Phi)^2}{2 (1+ |\nabla_{x'} h|^2)}  - g h + \sigma \nabla_{x'} \cdot \big( \frac {\nabla_{x'}  h }{\sqrt{1+ |\nabla_{x'}  h |^2}} \big) \label{E:Z-tf-2} 
\end{align}
\end{subequations}
Due to \eqref{E:DCG-1} and Remark \ref{R:Z-tf-2} below, \eqref{E:Zakharov-tf} has a Hamiltonian formulation 
\be \label{E:HamF}
\p_t \begin{pmatrix} h \\ \Phi \end{pmatrix} = J \BD \BH(h, \Phi) \triangleq F(h, \Phi), \quad J = \begin{pmatrix} 0 & 1 \\ -1 & 0  \end{pmatrix},
\ee
with 
\be \label{E:Ham-CG} 
\BH(h, \Phi) = \int_{\CU } \frac 12 \Phi \CG(h) \Phi + \frac 12 g h^2 + \sigma (\sqrt{1+ |\nabla_{x'}  h |^2} -1) +  \Phi \Vc \cdot \nabla_{x'} h \, dx'.
\ee

Equilibria $(h_*, \Phi_*)$ of \eqref{E:Zakharov-tf} satisfying $h_*> -h_0$ are irrotational steady traveling water waves, which can be proved to be $C^\infty$. There is a huge literature on their existence and properties. A good survey can be found in \cite{HHSTWW22} and see also, e.~g.~\cite{CN00}. 

In Subsection \ref{SS:CWW-set-up}, we will set up the function spaces for the capillary gravity water waves problem and state the main results in Theorem \ref{T:CWW-LInMa-main}  on the local stable and unstable manifolds followed by some comments on the nonlinear instability. Some analysis on the Dirichlet-Neumann operator $\CG(h)$ to be used in the rest of the paper are recalled or presented in Subsection \ref{SS:CWW-pre}.
In Subsection \ref{SS:CWW-proof} we finish the proof of Theorem \ref{T:CWW-LInMa-main} based on Theorems \ref{T:Ham-UM} and \ref{T:UM-smoothness} and Remark \ref{R:Ham-SM}, where the local well-posedness and the smooth dependence on the initial data also obtained as a byproduct based on Theorems \ref{T:NLPDE-LWP} and \ref{T:LWP-smoothness}. 
Finally  in Subsection \ref{SS:2F}  the fluid interface problem is considered. The analysis will only be outlined as  it is largely parallel to the fluid-vacuum case after an appropriate framework is set up.

\subsection{Set-up and main results} \label{SS:CWW-set-up} 

The choice of the function spaces depends on $d_2$ and $h_0$ in the set-up \eqref{E:fluidD-0} and \eqref{E:fluidD-2} of the water wave problem. Let 
\be \label{E:space-1}
Z_1^r = H^{r+1} \; \text{ if } \; d_2 >0, 
\; \text{ and } \; 
Z_1^r = \dot H^{r+1} \sim \Big\{h \in H^{1+r} \mid \int_{\CU} h dx' =0 \Big\} \; \text{ if  } \; d_2=0,
\ee
and for $R, \ep>0$,  
\be \label{E:space-1.5}
\CO_r(R, \ep) = \{ h \in Z_1^r \mid |h|_{Z_1^r} < R, \, h_0 + \inf h > \ep\}. 
\ee
It is standard to compute the operator $\CG(0)$ of the flat domain 
\be \label{E:CG0}
\CG(0) = \begin{cases} 
|\nabla_{x'}| \tanh (h_0 |\nabla_{x'}|), & \text{ if } \; h_0 \in (0, +\infty), \\
|\nabla_{x'}|, & \text{ if } \; h_0 =+\infty, 
\end{cases} 
\ee
where $|\nabla_{x'}|$ 
can be expressed in terms of Fourier multipliers.
Let 
\[
|f|_{Z_2^r} = |(1-\Delta_{x'})^{\frac r2}\CG(0)^{\frac 12} f|_{L^2}, 
\]
which is positive (unless $d_2=0$ in \eqref{E:fluidD-0} and $f =const$) and induced by the symmetric bilinear form $\langle (1-\Delta_{x'})^{r}\CG(0)f_1,  f_2 \rangle$. 
Let the real Hilbert space $Z_2^r$ be the completion of $C_0^\infty (\CU, \R)$ under $|\cdot|_{Z_2^r}$  
and 
\be \label{E:space-2} 
X^r = Z_1^{\frac 32r} \times Z_2^{\frac 32r}, \quad 
X \triangleq X^0= Z_1 \times Z_2, \quad Z_1= Z_2^0, \quad Z_2 = Z_2^0.   
\ee

\begin{remark} \label{R:spaces}
As usual the completion is defined as the space of the equivalence classes of Cauchy sequences under the metrics $|\cdot|_{Z_2^r}$. In the case of $d_2 \le 2$ in \eqref{E:fluidD-0}, there exist such sequences in $C_0^\infty$ whose pointwise limit is $1$ everywhere but their $|\cdot|_{Z_2^r}$ norms converge to $0$. Hence $f= const$ is equivalent to $f=0$ in $Z_2^r$ if $d_2 \le 2$. 
Since the right sides of \eqref{E:Zakharov-tf} do not change when adjusting $\Phi$ by a constant, with sufficient regularity the right sides of \eqref{E:Zakharov-tf} defines a mapping from $X^r$ to $X^{r-1}$.
It is also easy to see 
\be \label{E:space-3}
\text{ if } \;   h_0<\infty:  \Phi \in Z_2^r \; \text{ iff } \; \nabla_{x'} \Phi \in H^{r-\frac 12}; \quad \text{if } \;   h_0=\infty:  \Phi \in Z_2^r \; \text{ iff } \; |\nabla_{x'}|^{\frac 12} \Phi \in H^{r}. 
\ee
\end{remark} 

For $h \in \CO_{r_0} (R, \ep)$ and $\Phi \in Z_2^{r_0}$, let  
\be \label{E:CA-CL}
\CL(h, \Phi) = \BD^2 \BH (h, \Phi), \quad \CA(h, \Phi) = J \CL(h, \Phi). 
\ee
Let $m^- (\CL(h, \Phi))$ be the Morse index of $\CL(h, \Phi)$, i.~e., 
\be \label{E:Morse-I} \begin{split}
m^- (\CL(h, \Phi)) = \max \{ \dim Y \mid & \text{subspace }  Y \subset X,  \text{ s.~t. }  \\
& \langle \CL(h, \Phi) (\eta, \Psi), (\eta, \Psi) \rangle < 0, \ \forall (\eta, \Psi) \in Y \setminus \{0\} \}.   
\end{split} \ee
The following proposition ensures the exponential trichotomy of the linear flow $e^{t\CA(h, \Phi)}$. 

\begin{proposition} \label{P:CWW-ET-1} 
Suppose $\frac 32 n > \frac {d+1}2$, $(h, \Phi) \in X^n$, and $\inf h + h_0>0$. 
In addition, assume either $d_2=0$ or \eqref{E:coercivity-5} is satisfied at $(h, \Vc)$, then the strongly $C^0$ group 
$e^{t \CA(h, \Phi)} \in \BBL(X^r)$ is well-defined for all $t \in \R$ and $r \in [0, n]$. Moreover, there exist 
unique closed subspaces $X_\pm, X_0 \subset X$ invariant under $e^{t \CA(h, \Phi)}$ such that 
\[
X = X_+ \oplus X_0\oplus X_-, \quad X_\pm \subset X^n, 
\quad 0< \dim X_+ = \dim X_- \le m^- (\CL(h, \Phi)) < \infty,
\]
\[
X_\pm = span \{ (\eta, \Psi) \in X \mid \exists j \in \N, \, \lambda \in \C \text{ such that } \pm\Re \lambda >0, \, (\lambda - \CA(h, \Phi))^j  (\eta, \Psi) =0\},
\]
\[
\sigma (\CA(h, \Phi)|_{X_0}) \subset i\R, \;\ \sup_{t\in \R} (1 + |t|^{k_0})^{-1} |e^{t\CA(h, \Phi)}|_{\BBL(X^r)} < \infty,  \;\ k_0= 1+ 2 \big(m^-(\CL(h, \Phi)) - \dim X_+\big).
\]
\end{proposition}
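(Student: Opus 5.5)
The plan is to reduce the statement to Proposition \ref{P:LHam} with $Y=X$, $J$ the symplectic matrix in \eqref{E:HamF}, and $L=\CL(h,\Phi)=\BD^2\BH(h,\Phi)$. This requires checking \eqref{E:LHam-1} and \eqref{E:LHam-2}, and then upgrading the conclusions from $X$ to $X^r$. Symmetry $L^*=L$ is automatic for a Hessian, and $J^*=-J$ is clear. The real work is the finite Morse index structure \eqref{E:LHam-2}.

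\textbf{Step 1: quadratic form and coercivity.} I would compute $\langle L(\eta,\Psi),(\eta,\Psi)\rangle$ explicitly, using the shape derivative formula for $\CG(h)$ (\eqref{E:DCG-1}, stated in Subsection \ref{SS:CWW-pre}). It should take the form
\[
\langle \CG(h)(\Psi-B\eta),\Psi-B\eta\rangle + \sigma\int \frac{|\nabla\eta|^2-(\nabla h\cdot\nabla\eta)^2/(1+|\nabla h|^2)}{(1+|\nabla h|^2)^{1/2}} + \text{l.o.t.}(\eta,\eta) + \text{terms in }\Vc,
\]
where $B$ is the vertical velocity on the surface.
\begin{itemize}
\item The surface-tension part dominates $|\nabla \eta|_{L^2}^2$.
\item $\langle\CG(h)\cdot,\cdot\rangle$ is equivalent to $|\cdot|_{Z_2}^2$, since $h\in C^{1,\alpha}$ by $\frac32 n>\frac{d+1}2$.
\end{itemize}
The lower-order terms are compact relative to the $Z_1\times Z_2$ norm when $d_2=0$ (compact horizontal domain). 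This gives the splitting $\wt Y_-\oplus\ker L\oplus\wt Y_+$, with finite-dimensional $\wt Y_-$ and $\ker L$, by a standard spectral argument for a compact perturbation of a coercive form. When $d_2>0$, compactness fails, and I would use \eqref{E:coercivity-5} at $(h,\Vc)$, which presumably guarantees uniform positivity modulo finitely many directions. The cross term $\Phi\,\Vc\cdot\nabla\eta$ is absorbed through the $g$ and $\sigma$ terms exactly as that assumption encodes.

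\textbf{Step 2: apply Proposition \ref{P:LHam}.} This yields the group $e^{tJL}$ on $X$ and the decomposition $Y_+\oplus Y_1\oplus Y_2\oplus Y_3\oplus Y_-$. I would set $X_\pm=Y_\pm$ and $X_0=Y_1\oplus Y_2\oplus Y_3$.
\begin{itemize}
\item Statements (1), (5), (6) give finite dimensionality, $\dim X_+=\dim X_-\le m^-$, the spectral characterization of $X_\pm$ as generalized eigenspaces, and $\sigma(\CA|_{X_0})\subset i\R$.
\item Uniqueness follows from the generalized eigenspace description.
\item $X_\pm\subset X^n$ comes from $X_\pm\subset Dom((JL)^r)$ for all $r$, combined with elliptic regularity of $\CA$.
\item Nonemptiness ($\dim X_+>0$) I interpret as part of the spectral-instability setting. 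If it is not automatic, I would state it as following from \eqref{E:coercivity-5} or Proposition \ref{P:LHam}(6) in the instability case.
\end{itemize}

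\textbf{Step 3: polynomial bound on $X_0$.} The bound with $k_0=1+2(m^--\dim X_+)$ comes from the upper triangular form on $Y_1\oplus Y_2\oplus Y_3$. $A_2$ is $L_2$-anti-selfadjoint, so $e^{tA_2}$ is bounded. $Y_1$ and $Y_3$ are finite-dimensional with purely imaginary spectrum, and their Jordan chain lengths are controlled by the negative directions of $L$ restricted to the center part, whose count is $m^--\dim X_+$. Variation of constants then gives growth $|t|^{k_0}$; this is the bound from \cite{LZ22}.

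\textbf{Step 4: extension to $X^r$, the main obstacle.} To pass from $X$ to $X^r$ for $r\le n$, I would commute $e^{t\CA}$ with $(\lambda-\CA)^{-r}$. Since $X^r$ is defined via $Z_1^{\frac32 r}\times Z_2^{\frac32 r}$ rather than $Dom(\CA^r)$, I need $(\lambda-\CA(h,\Phi))^{-1}\in\BBL(X^{r-1},X^r)$ for $r\le n$. This is an elliptic estimate for the linearized capillary operator with $h\in H^{\frac32 n+1}$, and relies on the paradifferential or elliptic estimates for $\CG(h)$ from Subsection \ref{SS:CWW-pre} with limited regularity of $h$. I would first verify it at $r=1$ via the principal symbol $\sigma|\nabla|^3$-type structure, then induct using commutator estimates.
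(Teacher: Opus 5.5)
Your plan matches the paper's proof for $d_2=0$ and in Steps 2--4: obtain \eqref{E:LHam-1}--\eqref{E:LHam-2} (Lemma \ref{L:CWW-CL}), apply Proposition \ref{P:LHam} with $X_\pm=Y_\pm$ and $X_0=Y_1\oplus Y_2\oplus Y_3$, then pass to $X^r$ via $Dom(\CA(h,\Phi)^r)=X^r$ for $r\le n$ (Lemma \ref{L:DomCA-1}). The paper proves the last point by bootstrapping the ellipticity of $\BD\kappa(h)$ against Proposition \ref{P:DN-2}, which is essentially your Step 4. The gap is Step 1 when $d_2>0$.

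There you say compactness fails and that \eqref{E:coercivity-5} ``presumably'' gives positivity modulo finitely many directions. It cannot do that alone. \eqref{E:coercivity-5} constrains only the $\eta$-form $\CL_{01}(h,0)$ of \eqref{E:CL0-1.5}, whose momentum term carries the flat operator $\CG(0)^{-1}$. It says nothing about two other pieces:
(i) the $\Phi$-dependent part $\CL_1(h,\Phi)=\CL(h,\Phi)-\CL_0(h)$ from \eqref{E:D2H-1}--\eqref{E:D2CG}, which is not small since $\Phi$ is arbitrary;
(ii) the error $(\Vc\cdot\nabla_{x'})\big(\CG(h)^{-1}-\CG(0)^{-1}\big)(\Vc\cdot\nabla_{x'})$, which appears once you complete the square with $\CG(h)$ as in \eqref{E:CL0-1.4}--\eqref{E:CL0-1.6}.
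If these really were non-compact on unbounded $\CU$, your argument would not close.

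The missing idea is that (i) and (ii) are still compact in $\BBL(X,X^*)$ when $d_2>0$. The reason is not compactness of the domain but decaying coefficients combined with a loss of regularity relative to $X$.

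For (i), the coefficients are built from $\nabla_{x'}\Phi$ and $\CG(h)\Phi\in H^{r_0-\frac12}$, times bounded rational functions of $\nabla_{x'}h$. They act on $(\eta,\Psi)$ only through $\nabla_{x'}\Psi$, $\CG(h)\Psi\in H^{-\frac12}$ and $(\BD\CG(h)\eta)\Phi\in L^2$. These quantities are also harmless at zero frequency, where $Z_2$ is only $\dot H^{\frac12}$-like.

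For (ii), split with $P_l=\chi_{[0,s]}(|\nabla|)$ and $P_h=I-P_l$. The pieces involving $P_h$ are $O(|\Vc|^2\langle s\rangle^{-1})$ by Proposition \ref{P:DN-2}. The piece $P_l^*(\cdot)P_l$ is compact by \eqref{E:DCG-1} and the decay of $h\in H^{r_0+1}$.

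Only after these two compactness facts do \eqref{E:coercivity-5} and the positivity of $\CL_{02}(h)$ yield \eqref{E:LHam-2}.

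Separately, do not try to derive $\dim X_+>0$ from \eqref{E:coercivity-5} or Proposition \ref{P:LHam}(6). It is false in general: at $(h,\Phi)=(0,0)$ with $\Vc=0$, $\CL$ is positive definite and $X_\pm=\{0\}$. The paper itself remarks shortly after the statement that $X_\pm=\{0\}$ iff the state is spectrally stable. So your reading is the right one: the strict inequality belongs to the unstable setting, and $0\le\dim X_+$ holds in general.
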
 

The proof of the proposition given in Subsection \ref{SS:CWW-proof} is based on the analysis of the linearizations $\CA$ and $\CL$ 
and Proposition \ref{P:LHam}. The latter is due to Theorems 2.1 and 2.2 in \cite{LZ22} which
require  $m^-(\CL) < \infty$. 
In the case of $d_2 =0$ where $\CU$ is compact, $\CL$ has only discrete eigenvalues and it is easy to prove $m^-(\CL) < \infty$ by a compactness argument. If $d_2 >0$, then $\CL$  has continuous spectra. Condition \eqref{E:coercivity-5}, which implies that the principle part of $\CL$ is uniformly positive on $X$, ensures $m^-(\CL) < \infty$. 

Let 
$(h_*, \Phi_*)$ be an  
equilibrium of \eqref{E:Zakharov-tf} 
with the background velocity $\Vc_*$.  Denote  
\be \label{E:CA-CL-*}
\CA_*\triangleq  J \CL_*, \quad \CL_*=\BD^2 \BH (h_*, \Phi_*),
\ee
Apparently their corresponding $X_\pm$ and $X_0$ are the unstable, stable, and center subspaces of the linearized water wave problem at $(h_*, \Phi_*)$. 
Clearly $X_\pm = \{0\}$ iff $(h_*, \Phi_*)$ is spectrally stable. In the unstable case, let $\Pi_{\pm, 0}$ denote the projections associated to the above decomposition and we often write 
\be \label{E:proj}
u= u_+ + u_0 + u_-, \quad u_{\pm, 0} = \Pi_{\pm, 0} u. 
\ee 
The  following is the main theorem of this section based on Theorems \ref{T:Ham-UM}, \ref{T:NLPDE-UM}, \ref{T:UM-smoothness} and Remarks \ref{R:SM}, \ref{R:smoothness-SM}, and \ref{R:Ham-SM}. 

\begin{theorem} \label{T:CWW-LInMa-main}
Suppose $(h_*, \Phi_*)$ is spectrally unstable on $X$, namely, $\sigma (\CA_*) \not \subset i\R$, and either $d_2=0$ or \eqref{E:coercivity-5} is satisfied at $(h_*, \Phi_*)$. For any  $n_0 \in \N$ and $\lambda_0 \in \R$ satisfying
\be \label{E:para-1}
3(n_0-1)/2 > (d+6)/2; \quad  
\lambda_0 \in \big(0, \lambda_+), \; \lambda_+=\min \{\Re \lambda \mid \lambda \in \sigma (\CA_*|_{X_+})\} >0, 
\ee
there exist  $\delta, M^* >0$ and  $q^\pm: X_\pm (\delta) \to (X_\mp \oplus X_0) \cap X^{n_0}$ such that  the following hold with $W^+ \triangleq graph(q^+)$.
\begin{enumerate}
\item $q^\pm \in C^\infty (X_\pm (\delta), X^{n})$ for any $n \ge 0$. Moreover, $q^\pm(0)=0$ and $\BD q^\pm (0)=0$. 
\item For any  $u_{0+} \in X_+ (\delta)$, there exists a solution $(h_*, \Phi_*) + u(t)$ to \eqref{E:Zakharov-tf} with   
\[
u(t) \in C^0 \big((-\infty, 0], X^{n_0} \big) \cap C^1 \big((-\infty, 0], X^{n_0-1}\big),  
\]
unique in the category  
\be \label{E:temp-91}
u_+ (0) = u_{0+}, \quad |u(t)|_{X^{n_0}} \le 2 M^* \delta e^{\lambda_0 t}, \; \forall t \le 0.
\ee
Moreover $u(t)$ satisfies 
\be \label{E:UM-3-1}
u(0) \in W^+, 
\quad |u(t)|_{X^{n_0}} \le M^* |u_{0+}|_{X_+}e^{\lambda_0 t}, \; \forall t \le 0,
\ee
\be \label{E:UM-3.1-1} 
\sup_{t\le 0}  |u(t)|_{X^n} e^{- \lambda t} < \infty, \; \text{ if } \ n,  \lambda \ \text{ also satisfy \eqref{E:para-1}}. 
\ee
\item The above $u(t)$ defined by $u_{0+}$ satisfies that, there exists $T>0$ such that, for any $t\in (-\infty, T)$ satisfying $u_+(t) \in X_+(\delta)$, it holds $u(t) \in W^+$. 
\item Suppose $n$ and $\lambda>0$ satisfy \eqref{E:para-1} and $(h_*, \Phi_*) + u(t)$, $t\le 0$, is a solution to \eqref{E:Zakharov-tf} such that $\sup_{t\le 0} |u(t)|_{X^{n}} e^{- \lambda t} <\infty$, then there exists $t_0\le 0$ such that $u(t) \in W^+$
for all $t\le t_0$.  
\end{enumerate}
Similar properties are satisfied by $q^-$ for $t \in [0, \infty)$. 
\end{theorem}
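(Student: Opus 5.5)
The plan is to obtain Theorem \ref{T:CWW-LInMa-main} as an application of Theorem \ref{T:Ham-UM} (with Remark \ref{R:Ham-SM} for $q^-$), so that the conclusions follow from Theorems \ref{T:NLPDE-UM} and \ref{T:UM-smoothness}. The work is therefore a verification of the hypotheses of Theorem \ref{T:Ham-UM} for the translated system $u_t = F((h_*,\Phi_*)+u)$ on the scale $X^r = Z_1^{\frac32 r}\times Z_2^{\frac32 r}$ of \eqref{E:space-2}, with $J$ as in \eqref{E:HamF} and $\BH$ as in \eqref{E:Ham-CG}. Since $(h_*,\Phi_*)$ is $C^\infty$, the translation is harmless.

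\textbf{Step 1: the linear structure.} I would start from $L=\CL_*=\BD^2\BH(h_*,\Phi_*)$. The leading part of $\CL_*$ is $\sigma$ times the Hessian of the area functional in $h$ plus $\CG(h_*)$ in $\Phi$, with cross terms coming from shape derivatives of $\CG$. Under \eqref{E:coercivity-5}, or by compactness when $d_2=0$, this leading part is uniformly positive on $X$ up to a compact perturbation. That yields the splitting \eqref{E:LHam-2} with finite Morse index and finite-dimensional kernel. Proposition \ref{P:LHam} then applies and gives the decomposition $Y_\pm,Y_1,Y_2,Y_3$. Spectral instability gives $d_+=d_->0$, which is also the content of Proposition \ref{P:CWW-ET-1}.

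\textbf{Step 2: regularity of $F$ and $\CL$, the main obstacle.} Hypothesis (3) of Theorem \ref{T:Ham-UM} requires
\[
\CL\in C^1(\CO,\BBL(X,X^*)),\qquad \BD F\in C^1(\CO,\BBL(X^r,X^{r-1}))\cap C^1(\CO\cap X^n,\BBL(X^n,X^{n-1})),
\]
on a neighborhood $\CO\subset X^{n-1}$, with $\inf h+h_0>0$. The difficulty lies in the nonlocal operator $\CG(h)$. I would use the standard shape-derivative formula for $\BD_h\CG(h)$, together with tame estimates of the form $|\CG(h)\Phi|_{H^{s-1}}\le C(|h|_{H^{s_0}})\,|\Phi|_{Z_2^{s-1/2}}+C|h|_{H^s}|\Phi|_{Z_2^{s_0}}$. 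I would also need analyticity or smoothness of $h\mapsto\CG(h)\in\BBL(Z_2^{s},H^{s-1})$ for $h$ in $H^{s_0}$ with $s_0>\frac{d+1}2+$ margin. These can be established by flattening the domain and applying elliptic estimates.

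The condition $3(n_0-1)/2>(d+6)/2$ is meant to leave enough Sobolev room for the second derivatives $\BD^2\CG$ and for the curvature term to be $C^1$ in the $X^{n-1}$ topology while losing exactly one $X$-level, which corresponds to $3/2$ derivatives on $h$. The precise count is delicate: $\CA$ maps $h\in H^{s+1}$ to $\Phi$-component curvature in $H^{s-1}$, which matches the $X$ scaling $Z_2^{s}\sim H^{s+1/2}$. I expect the bookkeeping of commutators of $\CG(h)$ with $(1-\Delta)^{3r/2}$ to be the hardest part. These estimates are also what is needed to check (B.1)--(B.5) of the local well-posedness appendix, whose conclusion I would obtain along the way.

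\textbf{Step 3: conclusion.} With $\CO$ chosen small in $X^{n-1}$, Theorem \ref{T:Ham-UM} gives $q^+$ satisfying Theorems \ref{T:NLPDE-UM} and \ref{T:UM-smoothness} for every admissible $n$ and $\lambda_0\in(0,\lambda_+)$, where I take $\omega_-\in(0,\lambda_0)$. Two further points need to be settled.

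\emph{Independence of $n$.} Uniqueness (statement (5) of Theorem \ref{T:NLPDE-UM}) shows that the manifolds obtained for different $n$ coincide near $0$. Since $X_+$ is finite-dimensional and contained in every $X^r$, this gives $q^+\in C^{m,1}(X_+(\delta),X^{n-m-1})$ for all $n$ and $m$, hence $C^\infty$ into every $X^n$.

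\emph{Tangency and the remaining statements.} $\BD q^+(0)=0$ follows from the estimate $|\BD q^+(u_{0+})|\le C|u_{0+}|$. Statements (2)--(4) then translate directly from Theorem \ref{T:NLPDE-UM}(1)--(3),(5). Finally, $q^-$ follows by applying Remark \ref{R:Ham-SM}.
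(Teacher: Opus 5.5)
Your Steps 1--2 match how the paper verifies the hypotheses of Theorem \ref{T:Ham-UM} (via Lemmas \ref{L:CWW-CL}, \ref{L:DomCA-1} and Proposition \ref{P:CWW-ET-1}). The step ``Independence of $n$'', however, has a genuine gap.

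For each level $n$, Theorems \ref{T:NLPDE-UM} and \ref{T:UM-smoothness} give a map $\wt q^+_n$ that is $C^{m,1}$ into $X^{n-m-1}$ only on a ball $X_+(\wt\delta_n)$. Its radius depends on the level-$n$ constants and has no lower bound uniform in $n$. Knowing that $\mathrm{graph}(\wt q^+_n)$ and $W^+$ coincide near $0$ therefore gives smoothness of $q^+$ into $X^n$ only on $X_+(\wt\delta_n)$. Statement (1) asks for it on the fixed ball $X_+(\delta)$ chosen from $(n_0,\lambda_0)$. For the same reason you get neither $q^+(X_+(\delta))\subset X^n$ nor \eqref{E:UM-3.1-1} for $n>n_0$. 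The latter is not a literal translation of Theorem \ref{T:NLPDE-UM}(1), which concerns only the level at which $q^+$ was built. Statement (4) with $n<n_0$ is likewise not literally Theorem \ref{T:NLPDE-UM}(5), but it does follow from your near-$0$ coincidence with the manifold built at level $\min\{n,n_0\}$.

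The missing idea is to transport information along the flow. Given $u_{0+}\in X_+(\delta)$, run the solution on $W^+$ back to some $t_1\ll-1$, where $u(t_1)$ lies in the region on which $W^+$ coincides with $\mathrm{graph}(\wt q^+_n)\subset X^n$.

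First, the continuation of higher regularity, Proposition \ref{P:continuation}, carries $X^n$-regularity forward from $t_1$ to $0$. This gives $W^+\subset X^n$ and the bound in \eqref{E:UM-3.1-1} on $[t_1,0]$. This is exactly where the hypothesis $3(n_0-1)/2>(d+6)/2$, i.e.\ $\frac32 n_0>\frac{d+9}2$, enters, through Lemma \ref{L:D2F}. It is not a threshold for verifying the hypotheses of Theorem \ref{T:Ham-UM}, as you suggest.

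Second, near $u_{0+}$ one represents
\[
q^+=(I-\Pi_+)\,\wt u(-t_1,\cdot)\circ\big(\Pi_+\wt u(-t_1,\cdot)\big)^{-1},
\]
where $\wt u(-t_1,v_+)$ is the solution at time $-t_1$ issuing from $(h_*,\Phi_*)+v_++\wt q^+_n(v_+)$. The argument then runs as follows.
\begin{enumerate}
\item Composing $\wt q^+_n$ with the $C^{m,1}$ solution map of Theorem \ref{T:CWW-LWP-1}, this map is $C^{m,1}$ into $X^{n-2(m+1)}$.
\item For $v_+$ near $u_+(t_1)$, the uniqueness category \eqref{E:temp-91} gives $\wt u(-t_1,v_+)\in W^+$.
\item $W^+$ is already a $C^1$ graph over $X_+$ by Theorem \ref{T:UM-smoothness} at level $n_0$, and $\BD\wt u(-t_1,v_+)$ is injective because the flow is invertible. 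Hence $\BD\Pi_+\wt u(-t_1,v_+)$ is injective.
\item Since $\dim X_+<\infty$, the inverse function theorem makes $\Pi_+\wt u(-t_1,\cdot)$ a local $C^{m,1}$ diffeomorphism of $X_+$.
\end{enumerate}
Letting $n$ and $m$ range over all admissible values then gives $q^+\in C^\infty(X_+(\delta),X^n)$ for every $n$, on the fixed ball.
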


The graphs $W^{u, s} = (h_*, \Phi_*) +  W^\pm$ are referred to as the local unstable (u/+) and stable (s/-) manifolds of $(h_*, \Phi_*)$. Statement (3) indicates the local invariance of $W^{u, s}$ under \eqref{E:Zakharov-tf}. Inequalities \eqref{E:UM-3-1} and \eqref{E:UM-3.1-1} give the exponential decay of solutions in $W^{u,s}$. Qualitatively, $W^{u,s}$ are the sets of small solutions which decay to $0$ at some exponential rate $\pm \lambda \in (0, \lambda_+)$. Here the parameters $n_0$ and $\lambda_0$ help to fix $\delta$ and $M^*$, but they are not essential since a.) statement (1) yields $W^{u, s} \subset X^n$ for any $n \ge 0$, as well as their tangency to $X_\pm$ at $(h_*, \Phi_*)$; and b.) along with the local well-posedness (Theorem \ref{T:CWW-LWP-1}) and the continuation of higher regularity (Proposition \ref{P:continuation}), Theorem \ref{T:CWW-LInMa-main} implies $W^{u,s}$ are independent of $n_0$ and $\lambda_0$. 

One notices that some statements ($q^\pm \in C^\infty (X_\pm (\delta), X^n)$, $\forall n$, {\it etc.}) in Theorem \ref{T:CWW-LInMa-main} are stronger than those in Theorem \ref{T:NLPDE-UM}. They are due to $\CH \in C^\infty(X^n, \R)$ for any $n$. Hence the same statements in Theorem \ref{T:CWW-LInMa-main} also hold for the Hamiltonian PDEs discussed in Subsection \ref{SS:examples-Ham} provided those energy density $E$ are $C^\infty$ in $u$ and $\nabla u$. 

A corollary of Theorem \ref{T:CWW-LInMa-main} is that the spectral instability of traveling water waves with surface tension implies the nonlinear instability, while it has much stronger implications. 

$\bullet$ {\it Unstable manifolds of transversally unstable solitary capillary gravity water waves.}
When $\CU=R$ and $g, h_0 \in (0, \infty)$, small amplitude 1-dim solitary traveling waves were obtained in \cite{AK89}. 
It is proved to be conditionally orbitally stable based an energy method \cite{Mi02}. When the solitary wave is viewed as a traveling wave over $\CU=\R \times (\R / l \Z)$ where $\Vc = (c, 0)$ with $c\ne 0$, it could be spectrally and nonlinearly unstable subject to perturbations of certain wave length $l$ in the transversal direction \cite{RT11}. See also \cite{Br01, GHS01, PS04} {\it etc.} for other results on transversal instability of traveling solitary water waves. 
A direct corollary of Theorem \ref{T:CWW-LInMa-main} is the existence of the (possibly multi-dimensional) unstable manifolds $W^+$ of those transversally unstable traveling solitary capillary gravity  water waves such as those in \cite{RT11} ($\dim W^+=1$ in that case). See more discussions at the end of Subsection \ref{SS:CWW-proof}. 

$\bullet$ {\it Unstable manifolds of unstable Stokes waves with surface tension.} 
In the case of $d_2=0$ in \eqref{E:fluidD-0}, by Theorem \ref{T:CWW-LInMa-main}, the spectral instability of spatially periodic traveling waves (Stokes waves)  yields the existence of unstable manifolds and thus the nonlinear instability. In the literature, the spectral instability of periodic traveling water waves has mostly been studied detailedly and obtained for gravity waves under long wave perturbations often referred to as Benjamin-Feir or modulational instability, \cite{BM95, BMV22, NS23, HY23} {\it etc.} See also \cite{AN13, DT14} for numerical studies in the case of periodic capillary gravity water waves.

Finally, similar results on the local stable and unstable manifolds of  the fluid interface problem will be given in Subsection \ref{SS:2F}.

\subsection{Preliminary analysis} \label{SS:CWW-pre} 

In this section we 
give some technical results of the Dirichlet-Neumann operator $\CG(h)$ to be used in the rest of the paper. Firstly 
\be \label{E:CG-sym}
\int_{\CU } \Phi_1 \CG(h) \Phi_2 dx' = \int_{\Omega_h} \nabla \phi_1 \cdot \nabla \phi_2 dx, 
\ee
where 
$\phi_1$ and $\phi_2$ 
are defined as in \eqref{E:Harmonic} and \eqref{E:HarTrace-1}. 
Clearly $\CG$ is translation invariant, namely 
\be \label{E:TransInv}
\CG(h(\cdot + x_0')) \Phi(\cdot + x_0') = (\CG(h) \Phi) (\cdot + x_0'), \quad \forall x_0' \in \R^{d-1}. 
\ee
Differentiating in $x_0'$ yields the commutator formula,
for any $j=1, \ldots, d-1$, 
\be \label{E:CG-commutator}
[\p_{x_j}, \CG(h)] \Phi= \p_{x_j} (\CG(h) \Phi) - \CG(h) \Phi_{x_j} = (\BD \CG(h) h_{x_j}) \Phi. 
\ee
There are various ways to express the shape derivative $\BD \CG(h)$ with respect to $h$, see, e.~g.~\cite{La13, SZ08a}. From those formulas, one may verify 
\be \label{E:DCG-1} \begin{split}
&\langle  \BD  \CG(h)  (\eta) \Phi_1,  \Phi_2 \rangle \\
= & \int_\CU
\eta  \Big( \nabla_{x'} \Phi_1 \cdot \nabla_{x'} \Phi_2 - \frac {(\CG(h) \Phi_1 + \nabla_{x'} h \cdot \nabla_{x'} \Phi_1) (\CG(h) \Phi_2 + \nabla_{x'} h \cdot \nabla_{x'} \Phi_2)} {1+ |\nabla_{x'} h|^2} \Big) dx'. 
\end{split} \ee
One notices that this formula tolerates the possible singularity of $\Phi_{1,2} \in Z_2^r$ in lower wave numbers when $d_2>0$. 
 
\begin{remark} \label{R:Z-tf-2}
From \eqref{E:DCG-1}, we can rewrite \eqref{E:Z-tf-2} as\footnote{Formally $\langle \Phi, \BD\CG(h) (\cdot) \Phi \rangle \in Z_1^*$,
i.~e.~a linear functional on $Z_1$,
so $\langle \Phi, \BD\CG(h) (\cdot) \Phi \rangle^* \in Z_1$ denotes the function obtained  via the $L^2$ dualilty.}  
\be \label{E:Z-tf-2a}
\p_t \Phi = \Vc \cdot \nabla \Phi -\frac 12\langle \Phi, \BD\CG(h) (\cdot) \Phi \rangle^* - g h + \sigma \nabla_{x'} \cdot \Big( \frac {\nabla_{x'}  h }{\sqrt{1+ |\nabla_{x'}  h |^2}} \Big),
\ee
which leads to the Hamiltonian formulation \eqref{E:HamF}. 
\end{remark} 


\begin{proposition} \label{P:DN-0} 
Assume $r_0 > \frac {d-1}2$, then the follow properties  $\CG$ hold for any $R, \ep>0$. 
\begin{enumerate} 
\item For any 
$r \in [0, r_0+\frac 12]$, $\CG: \CO_{r_0} (R, \ep) \to \BBL(Z_2^r, Z_1^{r-\frac 32} )$ is analytic in $h$. Moreover, its $m$-th order derivative also satisfies, for any $r' \in [0, r_0]$,  
\[
\BD^m \CG \in C^0 \big(\CO_{r_0} (R, \ep),  \BBL\big((\otimes_{j=1}^m Z_1^{r_0}) \otimes Z_2^{r}, Z_1^{r-\frac 32} \big) \cap\BBL\big( (Z_1^{r'-\frac 12} \otimes (\otimes_{j=2}^m Z_1^{r_0})) \otimes Z_2^{r_0+\frac 12}, Z_1^{r'-\frac 32} \big)\big).
\]
\item For any 
$h \in \CO_{r_0} (R, \ep)$, $\CG(h) \in \BBL(Z_2, Z_2^*)$ is a uniformly positive isomorphism. 
\end{enumerate} 
Moreover, there exists $C>0$ depending only on $r_0$, $r$, $r'$, $m$, $R$, and $\ep$, such that the norms of the above operators are bounded by $C$ for all $h \in \CO_{r_0} (R, \ep)$.  
\end{proposition}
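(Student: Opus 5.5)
The plan is to flatten the fluid domain and reduce every claim to elliptic estimates for a variable-coefficient operator on a fixed strip (or half space), and then read off analyticity from the fact that the coefficients depend polynomially/analytically on $h$. Fix $h\in\CO_{r_0}(R,\ep)$ and use a smoothing diffeomorphism $\Theta_h:\Omega_0\to\Omega_h$, $\Omega_0=\CU\times(-h_0,0)$ (or $\CU\times(-\infty,0)$ when $h_0=\infty$), for instance $x_d = y_d + \rho(y_d)\, (e^{y_d|\nabla_{x'}|}h)(y')$ with a cutoff $\rho$ vanishing near the bottom. Because $r_0>\frac{d-1}2$, $h\in H^{r_0+1}\subset W^{1,\infty}$, and the harmonic extension gains half a derivative, so $\nabla\Theta_h-I$ lies in $H^{r_0+\frac12}(\Omega_0)$. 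In particular it is small in $L^\infty$ after modifying the cutoff scale, and the lower bound $h_0+\inf h>\ep$ gives a uniform bound on $|\det\nabla\Theta_h|^{-1}$. Pulling back \eqref{E:Harmonic}, $\tilde\phi=\phi\circ\Theta_h$ solves $\nabla\cdot(P(h)\nabla\tilde\phi)=0$ with $\tilde\phi|_{y_d=0}=\Phi$ and a Neumann condition at the bottom. Here $P(h)=(\det\nabla\Theta_h)(\nabla\Theta_h)^{-1}(\nabla\Theta_h)^{-T}$ is uniformly elliptic, and $\CG(h)\Phi=e_d\cdot P(h)\nabla\tilde\phi|_{y_d=0}$.

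Statement (2) follows from the variational identity \eqref{E:CG-sym}. The quantity $\int_{\Omega_h}|\nabla\phi|^2$ is the minimal Dirichlet energy over extensions with trace $\Phi$. Comparing through $\Theta_h$ with the flat domain, and using uniform ellipticity of $P(h)$, gives
\[
C^{-1}\langle\CG(0)\Phi,\Phi\rangle\le\langle\CG(h)\Phi,\Phi\rangle\le C\langle\CG(0)\Phi,\Phi\rangle .
\]
Since $|\Phi|_{Z_2}^2=\langle\CG(0)\Phi,\Phi\rangle$, Lax--Milgram then gives a uniformly positive isomorphism $Z_2\to Z_2^*$, with constants depending only on $R,\ep$.

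For the mapping property in (1), I would use tangential elliptic regularity on the flattened problem. Commuting $(1-\Delta_{y'})^{s/2}$ with the equation and treating the commutators with the coefficients $P(h)\in H^{r_0+\frac12}$ by Kato--Ponce and product estimates yields
\[
|\nabla\tilde\phi|_{H^{r-\frac12}(\Omega_0)}\lesssim|\Phi|_{Z_2^r},\qquad 0\le r\le r_0+\tfrac12 .
\]
Normal derivatives are recovered from the equation. Taking the trace then gives $\CG(h)\in\BBL(Z_2^r,Z_1^{r-\frac32})$, the loss of $\frac32$ coming from the index convention of $Z_1$. The top range $r=r_0+\frac12$ is exactly where the coefficient regularity is critical, and this is what dictates the cap on $r$.

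Analyticity comes next. The map $h\mapsto P(h)$ is analytic from $\CO_{r_0}$ into $H^{r_0+\frac12}$, since it is built from a rational function of $\nabla\Theta_h$ whose denominator is bounded below. The solution operator $P\mapsto(-\nabla\cdot P\nabla)^{-1}$ is analytic by a Neumann series around each point. Composing, $\CG$ is analytic. Differentiating the equation $m$ times shows that $\BD^m\CG(h)(\eta_1,\dots,\eta_m)\Phi$ solves the same elliptic problem with sources that are multilinear in $\nabla\Theta_{\eta_j}$ times lower derivatives. This gives the first mapping property with each $\eta_j\in Z_1^{r_0}$.

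The second mapping property places $\eta_1$ in the rougher space $Z_1^{r'-\frac12}$ while $\Phi\in Z_2^{r_0+\frac12}$. This is where I expect the main obstacle to lie, since we must shift regularity from $\eta_1$ onto $\Phi$. I would use the explicit shape-derivative formula \eqref{E:DCG-1}, or equivalently Lannes' formula $\BD\CG(h)\eta\,\Phi=-\CG(h)(\eta w)-\nabla_{x'}\cdot(\eta V)$, where $w,V$ are the vertical and horizontal velocity traces. These traces lie in $H^{r_0}\subset L^\infty$ when $\Phi\in Z_2^{r_0+\frac12}$. Products $\eta w$ with $\eta\in H^{r'+\frac12}$ and $w\in H^{r_0}$ stay in $H^{r'+\frac12}$ for $r'\le r_0$. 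Applying $\CG(h)$ from the first part, or taking a divergence, then lands in $Z_1^{r'-\frac32}$. Higher derivatives are handled by differentiating this formula in the remaining directions $\eta_2,\dots,\eta_m\in Z_1^{r_0}$.

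Uniformity of all constants over $\CO_{r_0}(R,\ep)$ follows because every bound above depends only on $|h|_{Z_1^{r_0}}\le R$ and $\ep$. The delicate low-frequency issue when $d_2>0$, where $Z_2$ is homogeneous, is handled by working throughout with $\nabla\tilde\phi$ rather than $\tilde\phi$.
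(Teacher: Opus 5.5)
Your overall route is the argument from Lannes' book that the paper simply cites: a regularizing diffeomorphism onto a flat strip, comparison of Dirichlet energies with the flat domain plus Lax--Milgram for (2), elliptic regularity with analytically dependent coefficients for (1), and the shape-derivative formula for rough $\eta_1$. Your treatment of (2) is fine. Two steps in (1), however, fail as written.

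\textbf{The interior estimate is off by half a derivative.} By \eqref{E:space-3}, $|\Phi|_{Z_2^r}$ controls $\nabla_{x'}\Phi$ in $H^{r-\frac12}$, so $\Phi$ carries $r+\frac12$ derivatives. The correct bound is therefore $|\nabla\tilde\phi|_{H^{r}(\Omega_0)}\lesssim|\Phi|_{Z_2^r}$, or Lannes' tangential version with $r$ horizontal derivatives of $\nabla\tilde\phi$ in $L^2$.

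With your $H^{r-\frac12}(\Omega_0)$, the trace of $e_d\cdot P(h)\nabla\tilde\phi$ lies only in $H^{r-1}$, half a derivative short of $Z_1^{r-\frac32}=H^{r-\frac12}$. At $r=0$ your bound is even weaker than the energy identity.

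Once the index is corrected, the endpoint $r=r_0+\frac12$ is exactly where $\nabla\tilde\phi$ and $P(h)-I$ both sit in $H^{r_0+\frac12}$. This is the critical case your remark points to. There the standard Kato--Ponce commutator bound needs $\nabla P(h)\in L^\infty$, which fails since $\nabla_{x'}^2h$ need not be bounded. You therefore need a refined commutator or paraproduct estimate, as in Lannes.

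Relatedly, $\nabla\Theta_h-I$ is not small in $L^\infty$ when $|h|_{Z_1^{r_0}}$ is large. The cutoff scale, together with $h_0+\inf h>\ep$, only guarantees $\det\nabla\Theta_h\ge c(R,\ep)>0$. That is all uniform ellipticity of $P(h)$ requires.

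\textbf{The product step for rough $\eta_1$ fails for $r'\in(r_0-\frac12,r_0]$.} The traces $w,V$ are built from $\nabla_{x'}h$, $\nabla_{x'}\Phi$ and $\CG(h)\Phi$, which in general are no better than $H^{r_0}$. Hence $\eta w$ and $\eta V$ lie only in $H^{\min\{r'+\frac12,\,r_0\}}$. Consequently $\CG(h)(\eta w)$ and $\nabla_{x'}\cdot(\eta V)$ are separately only in $H^{r_0-1}$, short of the target $H^{r'-\frac12}$.

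Their sum does have the claimed regularity, but only through a top-order cancellation. Morally $\nabla_{x'}\cdot V\approx-\CG(h)w$, so the sum behaves like $-[\CG(h),\eta]w-V\cdot\nabla_{x'}\eta$.

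The cheapest repair is interpolation. For fixed $h,\Phi,\eta_2,\ldots,\eta_m$, the map $\eta_1\mapsto\BD^m\CG(h)(\eta_1,\ldots,\eta_m)\Phi$ is linear.
\begin{itemize}
\item Your product argument is valid at $r'=0$, because $r_0>\frac{d-1}2\ge\frac12$. This gives $Z_1^{-\frac12}\to Z_1^{-\frac32}$.
\item Your first mapping property at $r=r_0+\frac12$, with $\eta_1\in Z_1^{r_0}$, gives $Z_1^{r_0}\to Z_1^{r_0-1}$.
\end{itemize}
Complex interpolation between these two endpoints yields $Z_1^{r'-\frac12}\to Z_1^{r'-\frac32}$ for every $r'\in[0,r_0+\frac12]$. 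The constants depend only on $r_0$, $m$, $R$ and $\ep$.
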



\begin{remark} 
Since $\BD^m \CG(h) (h_1, \ldots, h_m) \Phi$ is symmetric in $h_1, \ldots, h_m$ in statement (2), any $h_j$, $1\le j \le m$, can be the $Z_1^{r'-\frac 12}$ component.
By duality, for $h \in \CO_{r_0} (R, \ep)$, $\CG(h)$ can be extended to spaces with negative indicies. 
\end{remark}

In the case of $\CU =\R^{d-1}$, this proposition is proved in Chapter 3 and Appendix A of \cite{La13}. In particular, 
statement (1) follows from Theorem 3.21, Proposition 3.28, and Proposition 3.51 in \cite{La13}. 
The boundedness of $\CG(h)$ and $\CG(h)^{-1}$ in statement (2) are proved in Proposition 3.12, as well as the argument in Subsection 3.7.3 in \cite{La13}. 
For the general case of  the domain $\CU$ given in \eqref{E:fluidD-0} and \eqref{E:fluidD-2}, the same arguments (through a coordinate change of the maximal regularity to flatten the domain $\Omega_h$) can be easily adapted. See also \cite{CN00, SZ08a}. 

In oder to apply Theorems \ref{T:NLPDE-LWP} and \ref{T:Ham-UM}, where $\BD^2 \CH(h, \Phi) \in C^1 (X^{n-1}, \BBL(X, X^*))$ is assumed and $Z_2^*$ is more restrictive than $H^{-\frac 12}$, to obtain the well-posedness and local invariant manifolds of  \eqref{E:Zakharov-tf}, we have to obtain the smoothness of $\CG(h) \in \BBL(Z_2, Z_2^*)$ in $h \in \CO_{r_0} (R, \ep)$ and some related estimates. These will be done in the rest of this subsection.  

\begin{lemma} \label{L:DN-1}
For any $r_0 > \frac {d-1}2$, $r \in [0, r_0+\frac 12]$, and $R, \ep>0$, $(1-\Delta_{x'})^{\frac r2} \CG: \CO_{r_0} (R, \ep) \to  \BBL(Z_2^r, Z_2^*)$ is analytic and for any $m\in \N$,
\begin{align*}
& (1-\Delta_{x'})^{\frac r2} \BD^m \CG \in C^0 \big(\CO_{r_0} (R, \ep), \BBL\big((\otimes_{j=1}^m Z_1^{r_0}) \otimes Z_2^r, Z_2^* \big)\big), \\
& (1-\Delta_{x'})^{\frac {r'}2} \BD^m \CG \in C^0 \big(\CO_{r_0} (R, \ep), \BBL\big( (Z_1^{r'-\frac 12} \otimes (\otimes_{j=2}^m Z_1^{r_0})) \otimes Z_2^{r_0+\frac 12}, Z_2^* \big) \big), \quad \forall r' \in [0, r_0], 
\end{align*}
and their norms are bounded by some $C>0$ determined by $m, r_0, r, r', R$, and $\ep$. 
\end{lemma}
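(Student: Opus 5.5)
We reduce the statement to Proposition \ref{P:DN-0}(1) combined with a low-frequency duality argument. The spaces differ only at low wave numbers: $Z_1^{-\frac 32}$ versus $Z_2^*$. The norm of $Z_2^*$ is dual to $|f|_{Z_2} = |\CG(0)^{\frac12} f|_{L^2}$, so $|\gamma|_{Z_2^*} = |\CG(0)^{-\frac 12}\gamma|_{L^2}$. This is finite only if $\gamma$ vanishes suitably at low frequencies, e.g.\ $\gamma$ has zero mean when $d_2=0$, and $\gamma \in \dot H^{-\frac12}$-type near $\xi=0$ when $d_2>0$. At high frequencies $Z_2^*$ agrees with $H^{-\frac 12}$, and $(1-\Delta_{x'})^{\frac r2}$ maps $Z_1^{r-\frac32}=H^{r-\frac12}$ into $H^{-\frac12}$. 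Proposition \ref{P:DN-0}(1) therefore already gives the high-frequency part of all the bounds, together with analyticity into $\BBL(Z_2^r, H^{-\frac12})$.

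Step 1: high/low splitting. Take a smooth Fourier cutoff $P_{\le 1}$ and write $(1-\Delta_{x'})^{\frac r2}\CG(h)\Phi = P_{>1}(\cdots) + P_{\le1}(\cdots)$. The $P_{>1}$ piece is controlled in $Z_2^*$ by its $H^{-\frac12}$ norm. By Proposition \ref{P:DN-0}(1) this piece is analytic in $h$ with the stated bounds, including the estimates on $\BD^m\CG$ in both the $(\otimes Z_1^{r_0})\otimes Z_2^r$ and $Z_1^{r'-\frac12}\otimes\cdots\otimes Z_2^{r_0+\frac12}$ forms.

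Step 2: low frequencies via weak formulations. For the $P_{\le1}$ piece, $(1-\Delta_{x'})^{\frac r2}$ is harmless, so it suffices to bound $\langle \BD^m\CG(h)(h_1,\dots,h_m)\Phi, \Psi\rangle$ by $|\Psi|_{Z_2}$ for test functions $\Psi\in Z_2$ (with $P_{\le 1}\Psi$, which lies in any $Z_2^s$).
\begin{itemize}
\item For $m=0$, use the symmetry \eqref{E:CG-sym}: $\langle \CG(h)\Phi,\Psi\rangle = \int_{\Omega_h}\nabla\phi\cdot\nabla\psi$. Bound it by $|\nabla \phi|_{L^2}|\nabla\psi|_{L^2}$, and use the equivalence $|\nabla \psi|_{L^2(\Omega_h)}\sim |\Psi|_{Z_2}$ from Proposition \ref{P:DN-0}(2).
\item For $m=1$, formula \eqref{E:DCG-1} expresses $\langle\BD\CG(h)(\eta)\Phi,\Psi\rangle$ as $\int \eta\, (\text{quadratic in } \nabla\Phi, \CG(h)\Phi, \nabla\Psi,\CG(h)\Psi)$. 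Here $\Psi$ enters only through $\nabla_{x'}\Psi$ and $\CG(h)\Psi$, both controlled at low frequency by $|\Psi|_{Z_2}$ since $\CG(h)\Psi = \CG(0)\Psi + (\CG(h)-\CG(0))\Psi$. With $\eta\in Z_1^{r_0}\subset L^\infty$ (or $\eta\in H^{r'-\frac12}$ paired with $\Phi$ in the higher space) and $\Phi\in Z_2^r$, this gives the bound by $|\Psi|_{Z_2}$.
\item For higher $m$, differentiate \eqref{E:DCG-1} further in $h$. Each derivative falls on $\CG(h)$ or on the $h$-dependent coefficients, which produces $\BD^k\CG(h)\Psi$ terms. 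These are controlled by induction and by the $Z_2^r\to Z_1^{r-\frac32}$ bounds of Proposition \ref{P:DN-0}(1) applied to $P_{\le1}\Psi$.
\end{itemize}

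Step 3: analyticity. The low-frequency terms are built from analytic maps by composition and multiplication, so they are analytic. Alternatively, analyticity follows from the uniform bounds on all $\BD^m$ with $m!$-growth, which is inherited from the Cauchy estimates in \cite{La13}.

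The main obstacle is Step 2 for $d_2>0$ with $h_0=\infty$. There $Z_2$ allows $\Phi$ to be singular at $\xi=0$ (only $|\nabla|^{\frac12}\Phi\in L^2$), and $\CG(h)\Psi$ in $L^2$-type norms must be handled by $\CG(0)^{\frac12}$ weights. I would first treat $\CG(h)\Psi$ through the identity $\langle\CG(h)\Psi,\chi\rangle=\int_{\Omega_h}\nabla\psi\cdot\nabla\chi$ to avoid taking traces of low-frequency pieces. The tolerance of \eqref{E:DCG-1} for low-frequency singularities, noted after that formula, is what makes this work.
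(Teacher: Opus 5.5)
Your proposal is correct and follows essentially the paper's proof. You split off the high frequencies, which Proposition \ref{P:DN-0}(1) handles because $Z_2^*\sim H^{-\frac12}$ there. You then treat the low-frequency part by testing against $P_{\le 1}\Psi$ in the shape-derivative formula \eqref{E:DCG-1}, where the test function enters only through $\nabla_{x'}P_{\le 1}\Psi$ and $\CG(h)P_{\le 1}\Psi$, both smooth and controlled by $|\Psi|_{Z_2}$.

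Two small differences. The paper obtains analyticity of the undifferentiated $\CG$ directly from the analyticity of the low-frequency part of $\BD\CG$, which is cleaner than your $m!$-growth alternative. Also, the obstacle you flag in your last paragraph is not there: $P_{\le 1}\Psi\in Z_2^{r_0+\frac12}$ with norm $\lesssim|\Psi|_{Z_2}$, so Proposition \ref{P:DN-0}(1) applies to $\CG(h)P_{\le 1}\Psi$ directly.
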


Compared to Proposition \ref{P:DN-0}, it is a slight improvement in near zero Fourier modes. 

\begin{proof}
Denote the cut-off operators in Fourier modes 
\be \label{E:temp-92} 
\CP_{l} \Phi = \chi_{[0, 1]} (|\nabla_{x'}|) \Phi, 
\quad \CP_{h} = I - \CP_{l}, 
\ee
where $\chi_{[0, 1]} (\tau)$ is the characteristic function of the interval $[0, 1]$. Clearly they satisfy 
\be \label{E:temp-93}
\CP_{l} \in \BBL(Z_2^{s'}, Z_2^s), \; \forall s', s \in \R,  \quad \CP_{h}  \in \BBL(Z_2, H^{\frac 12}) \subset \BBL(Z_2, Z_2), \quad \nabla_{x'} \CP_{l} = \CP_{l} \nabla_{x'}. 
\ee
From Proposition \ref{P:DN-0}(1), $\CP_h^* (1-\Delta_{x'})^{\frac r2}  \CG:  \CO_{r_0} (R, \ep) \to \BBL(Z_2^r, Z_2^*)$ is analytic and satisfies the desired estimates. 
It remains to handle $\CP_l^* (1-\Delta_{x'})^{\frac r2} \CG$.  

Given $h \in \CO_{r_0} (R, \ep)$, $\eta \in Z_1^{r_0}$, and $\Phi_1\in Z_2^r$, $\Phi_2 \in Z_2$, let $\Phi_{2, l} = \CP_l \Phi_2 \in Z_2^s$ for any $s \in \R$. By \eqref{E:DCG-1}, we have 
\begin{align*}
\langle (1- & \Delta_{x'})^{\frac r2}  \BD  \CG(h)  (\eta) \Phi_1,  \Phi_{2, l} \rangle = \int_\CU
\eta  \Big( \nabla_{x'} \Phi_1 \cdot \nabla_{x'} (1-\Delta_{x'})^{\frac r2} \Phi_{2, l} \\
& - \frac {(\CG(h) \Phi_1 + \nabla_{x'} h \cdot \nabla_{x'} \Phi_1) } {1+ |\nabla_{x'} h|^2}  \big( \CG(h)(1-\Delta_{x'})^{\frac r2} \Phi_{2, l} + \nabla_{x'} h \cdot  (1-\Delta_{x'})^{\frac r2} \nabla_{x'}\Phi_{2,l}\big) \Big) dx'. 
\end{align*}
According to Proposition \ref{P:DN-0}(1), \eqref{E:space-3}, and \eqref{E:temp-93}, it holds that $\nabla_{x'} \Phi_1, \CG(h) \Phi_1 \in H^{r-\frac 12}$, $\nabla_{x'} \Phi_{2,l}, \CG(h) (1-\Delta_{x'})^{\frac r2} \Phi_{2, l} \in H^{s}$ for any $s \in \R$, where $\CG(h) \Phi_1$ and $\CG(h)(1-\Delta_{x'})^{\frac r2} \Phi_{2, l}$ are analytic in $h$. Therefore, we obtain the analyticity of
\[
\CP_l^* (1- \Delta_{x'})^{\frac r2}  \BD  \CG(\cdot)  : \CO_{r_0} (R, \ep) \to \BBL\big( Z_1^{r_0} \otimes Z_2^r, Z_2^* \big)
\]
which along with the above analyticity of $\CP_h^* (1-\Delta_{x'})^{\frac r2}  \CG$ yields the analyticity of $(1-\Delta_{x'})^{\frac r2}  \CG$. 

The  estimates on 
\[
(1-\Delta_{x'})^{\frac {r'}2} \BD^m \CG \in \BBL\big( (Z_1^{r'-\frac 12} \otimes (\otimes_{j=2}^m Z_1^{r_0})) \otimes Z_2^{r_0+\frac 12}, Z_2^* \big) \big) 
\]
can be obtained similarly. 
\end{proof}

Instead of the commutator estimate in Proposition 3.32 in \cite{La13}, which emphasizes the dependence on the shallow water scaling, we shall use the following estimate which may not be optimal, but is a direct corollary of the above lemma and \eqref{E:CG-commutator}. 

\begin{corollary} \label{C:DN-1}
For any $r_0 > \frac {d+1}2$, and $R, \ep>0$, the following hold. 
\begin{enumerate} 
\item For any $r \in [0, r_0-\frac 12]$, $h \in \CO_{r_0} (R, \ep)$, $n \in [0, r+1] \cap \N$, and $1\le \alpha_1, \ldots, \alpha_n \le d-1$,  
\[
|(1-\Delta)^{\frac {r-n+1}2} [\p_{x_{\alpha_1}} \ldots \p_{x_{\alpha_n}}, \CG(h)]|_{\BBL(Z_2^r, Z_2^*)} \le C.
\]
\item For any $h \in \CO_{r_0}(R, \ep)$, $n \in [0, r_0+1] \cap \N$, and $1\le \alpha_1, \ldots, \alpha_n \le d-1$, 
\[
|(1-\Delta)^{\frac {r_0-n}2 + \frac 14} [\p_{x_{\alpha_1}} \ldots \p_{x_{\alpha_n}}, \CG(h)]|_{\BBL(Z_2^{r_0}, Z_2^*)} \le C.
\]
\end{enumerate}
Here 
$C>0$ is determined by $r_0, r, n, R$, and $\ep$.
\end{corollary}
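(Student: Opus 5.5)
The plan is to expand the commutator by a Leibniz-type formula and then apply Lemma \ref{L:DN-1} to each resulting term. Iterating the commutator identity \eqref{E:CG-commutator}, or more directly differentiating the translation invariance \eqref{E:TransInv} $n$ times in $x_0'$, gives
\[
\p_{x_{\alpha_1}}\cdots\p_{x_{\alpha_n}} \big(\CG(h)\Phi\big) = \sum \big(\BD^m \CG(h)(\p^{\beta_1} h, \ldots, \p^{\beta_m} h)\big) \p^{\gamma}\Phi .
\]
The sum runs over all partitions of the index set $\{\alpha_1,\ldots,\alpha_n\}$ into $m\ge 0$ nonempty blocks $\beta_1,\ldots,\beta_m$ and a possibly empty remainder $\gamma$, with combinatorial coefficients. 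The commutator $[\p_{x_{\alpha_1}}\cdots\p_{x_{\alpha_n}}, \CG(h)]\Phi$ is exactly this sum without the term $m=0$. So in every term there is at least one derivative on $h$, we have $|\gamma| \le n-1$, and $\sum_j|\beta_j| + |\gamma| = n$. It therefore suffices to bound each term.

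For statement (1) I will place $\p^\gamma\Phi$ in $Z_2^{r-|\gamma|}$, which is allowed since $\p_{x_j}$ maps $Z_2^{s}\to Z_2^{s-1}$ as a Fourier multiplier. If $|\gamma|\le r$, then $r-|\gamma|\ge 0$, and we apply the first estimate of Lemma \ref{L:DN-1} with $r$ replaced by $r-|\gamma|$. This needs the regularity $\p^{\beta_j}h\in Z_1^{r_0'}$ for some $r_0'>\frac{d-1}2$. Since $h\in Z_1^{r_0}$, we get $\p^{\beta_j}h\in Z_1^{r_0-|\beta_j|}$, which works when $r_0-|\beta_j| > \frac{d-1}{2}$.

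Large $|\beta_j|$ is where the second estimate of Lemma \ref{L:DN-1} enters. It allows one $h$-slot, say $\p^{\beta_1}h$, to lie in the rough space $Z_1^{r'-\frac12}$ with $r'\in[0,r_0]$, while $\Phi$ lies in $Z_2^{r_0+\frac12}$ and the other slots lie in $Z_1^{r_0}$. The total derivative count is balanced by the output weight $(1-\Delta)^{\frac{r-n+1}2}$. Taking $r' = \frac12 + r_0 - |\beta_1|$ (suitably adjusted) shows that the loss of $n$ derivatives distributed among the factors is compensated by the gain of $n-1$ in the output, one derivative being available from the $Z_1^{r_0}$ versus $Z_2$ offset.

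The assumption $r_0>\frac{d+1}2$, rather than $\frac{d-1}2$, gives the room to run Lemma \ref{L:DN-1} with $r_0-1$ in place of $r_0$. This handles terms where one derivative falls on $h$ and $h$ must still be controlled in a space above the threshold. The bookkeeping I would organize by induction on $n$ using \eqref{E:CG-commutator}:
\[
[\p^{n}, \CG] = \p\,[\p^{n-1},\CG] + [\p,\CG]\,\p^{n-1}, \qquad [\p,\CG]\Phi = \big(\BD\CG(h)h_{x}\big)\Phi,
\]
and iterate, differentiating $\BD\CG(h)h_x$ once more in $h$ at each step.

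Statement (2) is the endpoint case $r=r_0$ with an extra $\frac14$ gain in the output weight. I expect to obtain it by interpolating between the two estimates of Lemma \ref{L:DN-1}, so that $\Phi\in Z_2^{r_0}$ together with $h$-factors in $Z_1^{r_0-|\beta|}$ yields the output weight $\frac{r_0-n}2+\frac14$. The main obstacle is the derivative bookkeeping at these endpoints, namely the terms where all $n$ derivatives land on a single $h$ or where $|\gamma|=n-1$. There one must check that each index stays in the ranges $r\in[0,r_0'+\frac12]$ and $r'\in[0,r_0']$ allowed by Lemma \ref{L:DN-1}. Where an index falls slightly outside these ranges, I would first try splitting $\Phi$ with $\CP_l,\CP_h$ as in the proof of Lemma \ref{L:DN-1}, since the low-frequency part is harmless by \eqref{E:temp-93}.
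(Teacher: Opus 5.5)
There is a genuine gap: the term-by-term Leibniz route does not close with Lemma \ref{L:DN-1} as stated, and the remedies you list do not repair it. That lemma controls $\BD^m\CG(h)(\eta_1,\ldots,\eta_m)\Phi$ in only two configurations. Either all $\eta_j\in Z_1^{s_0}$ with $s_0>\frac{d-1}2$, or one $\eta_1\in Z_1^{r'-\frac12}$ with $r'\ge 0$, at the price of $\Phi\in Z_2^{s_0+\frac12}$. The hypothesis $r_0>\frac{d+1}2$ pays for exactly one derivative on $h$, since $r_0-1>\frac{d-1}2$. So your terms with all $|\beta_j|=1$ are fine, but terms with some $|\beta_j|\ge 2$ are not.

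In (2), take the term $\big(\BD\CG(h)\,\p_{x_{\alpha_1}}\cdots\p_{x_{\alpha_n}}h\big)\Phi$ with $r_0+\frac12<n\le r_0+1$, e.g.\ $d=2$, $r_0=2.2$, $n=3$. Its $h$-factor lies only in $Z_1^{r_0-n}\not\subset Z_1^{-\frac12}$, and your choice $r'=r_0-n+\frac12$ is negative. No interpolation between the two estimates can reach it, because both require the rough slot to lie in $Z_1^{-\frac12}$ at worst.

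In (1), the second estimate needs $\p^\gamma\Phi$ of regularity above $Z_2^{d/2}$, which is unavailable for small $r$. For $d=2$, $r_0=2$, $r=1$, $n=2$, consider $(\BD\CG(h)h_{xx})\Phi$ with $\Phi\in Z_2^1$. Here $h_{xx}\in Z_1^0$ lies in no $Z_1^{s_0}$ with $s_0>\frac12$, and $\Phi\notin Z_2^{s_0+\frac12}$ for any $s_0>\frac12$. So neither estimate applies, and nothing in your plan handles this term. Splitting $\Phi$ by $\CP_l,\CP_h$ is beside the point, since these deficits are at high frequencies.

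The missing idea is never to let more than one derivative fall on $h$. Your displayed recursion already achieves this, provided you do not then ``differentiate $\BD\CG(h)h_x$ once more in $h$''.

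For $n=1$, \eqref{E:CG-commutator} gives $[\p_{x_\alpha},\CG(h)]\Phi=(\BD\CG(h)h_{x_\alpha})\Phi$ with $h_{x_\alpha}\in Z_1^{r_0-1}$. Statement (1) is then the first estimate of Lemma \ref{L:DN-1} with parameter $r_0-1$, whose admissible range is exactly $r\in[0,r_0-\frac12]$. Statement (2) is the second estimate with parameter $r_0-\frac12$ and $r'=r_0-\frac12$. Then $Z_2^{(r_0-\frac12)+\frac12}=Z_2^{r_0}$ and $Z_1^{r'-\frac12}=Z_1^{r_0-1}$, and the weight $\frac{r_0}2-\frac14=\frac{r_0-1}2+\frac14$ is the source of the $\frac14$. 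No interpolation is involved.

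For $n\ge 2$, the paper uses $[\p^n,\CG]=\sum_{j=1}^n\p^{n-j}[\p,\CG]\p^{j-1}$. Place $\p^{j-1}\Phi\in Z_2^{r-j+1}$, where $0\le r-j+1\le r_0-\frac12$, and apply the $n=1$ bound. Let the remaining $\p^{n-j}$ act on the output rather than on $h$. It is paid for by lowering the weight from $\frac{r-j+1}2$ to $\frac{r-n+1}2$, since $(1-\Delta)^{-\frac12}\p_{x_\alpha}$ is a bounded Fourier multiplier on $Z_2^*$; negative final weights cause no trouble. For (2), the $j=1$ term uses the $n=1$ case of (2). The terms with $j\ge 2$ use (1) at level $r_0-j+1\le r_0-1$, which gives an even better weight.
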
 

\begin{proof} 
For $n=1$, the corollary follows directly from \eqref{E:CG-commutator} and the two types of estimates on $\BD \CG$ in Lemma \ref{L:DN-1}. For $n>1$, using the estimate of the case $n=1$ and 
\[
[\p^n, \CG] = \p^{n-1} [\p, \CG] + \p^{n-2} [\p, \CG] \p + \ldots +  [\p, \CG] \p^{n-1}.
\]
we obtain the desired bounds. 
\end {proof}

The following lemma is a (very rough) refinement of Lemma \ref{L:DN-1} in high regularity cases. 

\begin{lemma} \label{L:DN-3} 
Let $m \in \N \cup \{0\}$, $r_0 > \frac {d-1}2$, $k \in \N$, and $r, R, \ep>0$ satisfy $k+r_0 \ge r\ge k$,  
then  there exists $C>0$ such that for any $h \in \CO_{r_0}(R, \ep) \cap Z_1^{r-\frac 12}$, $\Phi \in Z_2^r$, and $\eta_j \in Z_1^{r-\frac 12}$, $j=1, \ldots, m$, it holds 
\begin{align*}
|(1-\Delta)^{\frac r2} &\BD^m \CG(h) (\eta_1, \ldots, \eta_m) \Phi|_{Z_2^*} \le C \big(1+ |h|_{Z_1^{r_0 + \frac {k}2}}^{k-1}\big)  \Big(|\Phi|_{Z_2^{r}}  \prod_{l=1, \ldots, m} |\eta_l|_{Z_1^{r_0+ \frac k2}} \\
& + |h|_{Z_1^{r-\frac 12}}  |\Phi|_{Z_2^{r_0 + \frac {k+1}2}}  \prod_{l=1, \ldots, m} |\eta_l|_{Z_1^{r_0+ \frac k2}} + \sum_{j=1}^m |\Phi|_{Z_2^{r_0 + \frac {k+1}2}} |\eta_j|_{Z_1^{r-\frac 12}} \prod_{\substack{l=1, \ldots, m \\  l\ne j}} |\eta_l|_{Z_1^{r_0+ \frac k2}} \Big).
\end{align*}
\end{lemma}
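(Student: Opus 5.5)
We argue by induction on $k$, with $r\in[k,k+r_0]$, moving $k$ derivatives past $\BD^m\CG(h)$ one at a time via the commutator identity \eqref{E:CG-commutator} (and its differentiated versions in $h$). Write $(1-\Delta)^{\frac r2}=(1-\Delta)^{\frac{r-k}2}(1-\Delta)^{\frac k2}$. Since $(1-\Delta)^{\frac k2}$ is equivalent, modulo lower order terms, to a sum of $\p^\alpha$ with $|\alpha|\le k$, it suffices to bound $(1-\Delta)^{\frac{r-k}2}\p^\alpha \BD^m\CG(h)(\eta_1,\ldots,\eta_m)\Phi$ in $Z_2^*$. Here $r-k\in[0,r_0]$, which is exactly the range covered by Lemma \ref{L:DN-1}.

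The first step is to differentiate \eqref{E:TransInv} $m$ times in $h$. Translation invariance of $\BD^m\CG$ gives
\[
\p_{x_j}\big(\BD^m\CG(h)(\eta_1,\ldots,\eta_m)\Phi\big)=\BD^{m+1}\CG(h)(h_{x_j},\eta_1,\ldots,\eta_m)\Phi+\sum_{l}\BD^m\CG(h)(\ldots,\p_{x_j}\eta_l,\ldots)\Phi+\BD^m\CG(h)(\eta_1,\ldots,\eta_m)\p_{x_j}\Phi .
\]
Iterating this $|\alpha|\le k$ times, we obtain a Leibniz-type sum of terms
\[
\BD^{m+p}\CG(h)\big(\p^{\beta_1}h,\ldots,\p^{\beta_p}h,\p^{\gamma_1}\eta_1,\ldots,\p^{\gamma_m}\eta_m\big)\p^{\gamma_0}\Phi ,
\]
where each $|\beta_i|\ge1$ and $\sum|\beta_i|+\sum|\gamma_l|+|\gamma_0|=|\alpha|$.

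The second step is to estimate each such term with Lemma \ref{L:DN-1}, applied with exponent $r-k$ or $r'=r-k$. There are two cases. If every argument carries at most $k/2$ derivatives, we use the first bound of Lemma \ref{L:DN-1}. The functions $\p^{\beta}h$ and $\p^{\gamma}\eta$ with $|\beta|,|\gamma|\le k/2$ then lie in $Z_1^{r_0}$, controlled by $|h|_{Z_1^{r_0+\frac k2}}$ and $|\eta|_{Z_1^{r_0+\frac k2}}$. The factor $\p^{\gamma_0}\Phi$ lies in $Z_2^{r-|\gamma_0|}$, and it is bounded by $|\Phi|_{Z_2^r}$ when it carries the most derivatives, or by $|\Phi|_{Z_2^{r_0+\frac{k+1}2}}$ otherwise (interpolating as needed). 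If instead a single argument carries more than $k/2$ derivatives, at most one can do so. That argument is $\p^{\beta}h$, $\p^{\gamma}\eta_j$, or $\p^{\gamma_0}\Phi$, and we place it in the high-regularity slot. For an $h$ or $\eta_j$ argument we use the second bound of Lemma \ref{L:DN-1}: that argument goes into $Z_1^{r'-\frac12}$ with $r'=r-k$ plus its derivative count, which is controlled by $|h|_{Z_1^{r-\frac12}}$ or $|\eta_j|_{Z_1^{r-\frac12}}$, while $\Phi$ goes into $Z_2^{r_0+\frac12}$ after absorbing at most $k/2$ derivatives, which gives the $|\Phi|_{Z_2^{r_0+\frac{k+1}2}}$ factor. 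The remaining low-derivative $h$ factors, of which there are at most $k-1$ because at least one derivative goes elsewhere, produce the prefactor $1+|h|_{Z_1^{r_0+\frac k2}}^{k-1}$. Summing all terms gives exactly the three groups in the claimed inequality.

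I expect the main obstacle to be the bookkeeping in the second step. Lemma \ref{L:DN-1} only allows the outer weight $(1-\Delta)^{\frac{r'}2}$ with $r'\le r_0$, and a derivative count can be a non-integer fraction of $r$. I would handle these by splitting $r=k+\theta$ with $\theta\in[0,r_0]$ and placing the fractional $\theta$ weight as the outer $(1-\Delta)^{\frac{\theta}2}$ in Lemma \ref{L:DN-1}. The low Fourier modes of $\Phi\in Z_2$ need no separate treatment, since Lemma \ref{L:DN-1} already handles them through the $\CP_l$, $\CP_h$ splitting. The lower-order terms coming from replacing $(1-\Delta)^{\frac k2}$ by $\p^\alpha$ are the same expressions with fewer derivatives, so they are dominated by the same right-hand side.
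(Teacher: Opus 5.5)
Your route is the paper's: differentiate the translation invariance of $\BD^m\CG$ to get the Leibniz expansion, then apply Lemma \ref{L:DN-1} with outer weight $(1-\Delta)^{\frac{r-k}2}$, putting the most-differentiated factor in the least regular slot. The gap is in your first case, and it comes from splitting by whether some argument carries more than $k/2$ derivatives. For $k\ge 2$, consider the term $\BD^{m+k}\CG(h)(\p_{\alpha_1}h,\ldots,\p_{\alpha_k}h,\eta_1,\ldots,\eta_m)\Phi$, in which each of the $k$ copies of $h$ is differentiated exactly once. It falls in your first case, and putting every $h$-factor in $Z_1^{r_0}$ gives $|h|_{Z_1^{r_0+\frac k2}}^{k}|\Phi|_{Z_2^{r-k}}\prod_l|\eta_l|_{Z_1^{r_0}}$. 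That is $k$, not $k-1$, copies of $|h|_{Z_1^{r_0+\frac k2}}$, with no $Z_1^{r-\frac12}$ factor. When $r<r_0+\frac{k+1}2$ you cannot trade a $Z_1^{r_0+\frac k2}$ norm for $|h|_{Z_1^{r-\frac12}}$, and the bound is not dominated by the right side. For example, take $d=2$, $\CU=\R$, $r_0=1$, $k=r=2$, $m=0$, $h$ concentrated at frequency $N$ with $|h|_{Z_1^{1}}\sim R$, and $\Phi$ fixed at low frequency. Your bound is then of size $N^2R^2$, while the right side is of size $N^{3/2}R^2$. Your fallback of bounding $\Phi$ by $|\Phi|_{Z_2^{r_0+\frac{k+1}2}}$ in this case does not help either: none of the three groups contains that factor without a companion $Z_1^{r-\frac12}$ norm. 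All other first-case configurations have at most $k-1$ factors of $h$ and are fine with $|\Phi|_{Z_2^r}$.

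The fix is to split as the paper does, according to which argument carries the maximal count $k_*$, rather than according to the $k/2$ threshold.
\begin{itemize}
\item If $k_*$ falls on an $h$ (or on an $\eta_j$), use the second bound of Lemma \ref{L:DN-1} with $r'=r-k$ and that factor in the $Z_1^{r-k-\frac12}$ slot, even when $k_*\le k/2$. This costs $|h|_{Z_1^{r-k-\frac12+k_*}}\le |h|_{Z_1^{r-\frac12}}$ since $k_*\le k$. At most $k-1$ factors of $h$ remain in $Z_1^{r_0}$, and $\p^{\tilde k}\Phi$ with $\tilde k\le k/2$ goes into $Z_2^{r_0+\frac12}$.
\item If $k_*$ falls on $\Phi$, then $\tilde k\ge 1$, so there are at most $k-1$ factors of $h$, and the first bound gives $|\Phi|_{Z_2^{r}}$.
\end{itemize}
With this change your argument coincides with the paper's proof.
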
 

This inequality is useful when $r \ge r_0 + \frac {k+1}2$ and thus the highest order norms appear linearly on the right side. 

\begin{proof} 
The translation invariance \eqref{E:TransInv} also holds for $\BD^m \CG$ 
\[
\BD^m \CG\big(h(\cdot + x_0')\big) \big(\eta_1(\cdot + x_0'), \ldots, \eta_m(\cdot + x_0')\big) \Phi(\cdot + x_0') = \big(\CG(h) (\eta_1, \ldots, \eta_m) \Phi\big) (\cdot + x_0'). 
\]
Differentiating it repeatedly we obtain
\begin{align*}
&\p^k  \BD^m \CG(h)  (\eta_1, \ldots, \eta_m) \Phi =
\sum_{K \in \Lambda} a_K \BD^{m+l} \CG(h) \big(\p^{k_1} \eta_1, \ldots, \p^{k_m} \eta_m, \p^{k_{m+1}} h, \ldots, \p^{k_{m +l}} h \big) \p^{\tilde k} \Phi,
\end{align*}
where $a_K$'s are some positive integers and 
\[
\Lambda = \{ K= (k_1, \ldots, k_{m+l}, \tilde k) \mid k_1, \ldots, k_m, \tilde k\ge 0, \, k_{m+1}, \ldots, k_{m+l} \ge 1, \, k_1 + \ldots +k_{m+l} + \tilde k=k\}. 
\]
In each $K$, let $k_*$ denotes the maximal index and thus all others are at most $\frac k2$. 
Hence, according to Lemma \ref{L:DN-1}, we can obtain the desired estimate by bounding the term 
\[
|(1-\Delta)^{\frac {r-k}2} \BD^{m+l} \CG(h) \big(\p^{k_1} \eta_1, \ldots, \p^{k_m} \eta_m, \p^{k_{m+1}} h, \ldots, \p^{k_{m +l}} h \big) \p^{\tilde k} \Phi|_{Z_2^*}
\] 
by one of the following, depending on whether $\p^{k_*}$ happens to be applied to $\Phi$, $h$, or $\eta_j$,  
\[
C\big(1+ |h|_{Z_1^{r_0 + \frac {k}2}}^{k-1}\big)|\Phi|_{Z_2^{r}}  \prod_{l=1, \ldots, m} |\eta_l|_{Z_1^{r_0+ \frac k2}}, \quad C\big(1+ |h|_{Z_1^{r_0 + \frac {k}2}}^{k-1}\big) |h|_{Z_1^{r-\frac 12}} |\Phi|_{Z_2^{r_0 + \frac {k+1}2}}  \prod_{l=1, \ldots, m} |\eta_l|_{Z_1^{r_0+ \frac k2}}, 
\]
\[ 
C\big(1+ |h|_{Z_1^{r_0 + \frac {k}2}}^{k-1}\big)|\Phi|_{Z_2^{r_0 + \frac {k+1}2}} |\eta_j|_{Z_1^{r-\frac 12}} \prod_{\substack{l=1, \ldots, m \\  l\ne j}} |\eta_l|_{Z_1^{r_0+ \frac k2}}.
\]
Here the assumption $r-k \le r_0$ allows us to treat the term with $\p^{k_*}$ as the least regular term 
when applying 
Lemma \ref{L:DN-1}. 
\end{proof} 

The following proposition indicates that $\CG(0)$ can be replaced by $\CG(h)$ in defining $Z_2^r$. 

\begin{proposition} \label{P:DN-2} 
For any $r_0 > \frac {d+1}2$, $r \in [0, r_0+\frac 12]$, $R, \ep>0$, and $h \in \CO_{r_0}(R, \ep)$,  $(1-\Delta_{x'})^{\frac r2} \CG(h) \in  \BBL(Z_2^r, Z_2^*)$ is isomorphic and 
\be \label{E:temp-94}
\big|\CG(h)^{-1} (1-\Delta_{x'})^{-\frac r2} \big|_{\BBL(Z_2^*, Z_2^r)} \le C,
\ee
for $C>0$ determined by $r_0, r, R$, and $\ep$.
\end{proposition}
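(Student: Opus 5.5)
We prove Proposition \ref{P:DN-2} by induction on $r$, using elliptic regularity for $\CG(h)$ in the form of a coercive energy identity. The commutator bounds of Corollary \ref{C:DN-1} move derivatives across $\CG(h)$.

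\emph{Base case $r=0$.} By Proposition \ref{P:DN-0}(2), $\CG(h)\in\BBL(Z_2,Z_2^*)$ is a uniformly positive isomorphism, with bounds uniform on $\CO_{r_0}(R,\ep)$. This gives \eqref{E:temp-94} with $r=0$.

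\emph{Boundedness for general $r$.} Lemma \ref{L:DN-1} already gives $(1-\Delta_{x'})^{\frac r2}\CG(h)\in \BBL(Z_2^r,Z_2^*)$ with a uniform bound. It therefore remains to prove injectivity, surjectivity and the a priori estimate \eqref{E:temp-94}.

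\emph{A priori estimate.} Fix $f\in Z_2^*$ and let $\Phi=\CG(h)^{-1}(1-\Delta_{x'})^{-\frac r2}f$, which lies in $Z_2$ by the base case. We want $|\Phi|_{Z_2^r}\le C|f|_{Z_2^*}$. It suffices to treat $r\in[0,1]$ first and then iterate in steps of size at most one, splitting $r=n+s$ with $n\in\N$ and $s\in[0,1)$.

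Apply $\p^\alpha$, $|\alpha|=n$, to the equation $\CG(h)\Phi=(1-\Delta_{x'})^{-\frac r2} f$:
\[
\CG(h)\p^\alpha\Phi=\p^\alpha(1-\Delta_{x'})^{-\frac r2}f-[\p^\alpha,\CG(h)]\Phi .
\]
Next apply $(1-\Delta_{x'})^{\frac s2}$ and invert with the $Z_2^*\to Z_2$ bound from the base case. This requires $(1-\Delta_{x'})^{\frac s2}[\p^\alpha,\CG(h)]\Phi\in Z_2^*$, with norm controlled by $|\Phi|_{Z_2^{r-1}}$, or in the top case by $|\Phi|_{Z_2^{r-1/2}}$.
\begin{itemize}
\item For $r\le r_0-\frac12$, Corollary \ref{C:DN-1}(1), applied with regularity index $r-1$, gives the bound $C|\Phi|_{Z_2^{r-1}}$. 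This holds provided $(r-1)-n+1\ge s$, i.e.\ $r-n\ge s$, which is true.
\item For $r\in(r_0-\frac12,r_0+\frac12]$, use Corollary \ref{C:DN-1}(2), or Lemma \ref{L:DN-1} with the $Z_1^{r'-\frac12}$ slot placed on $h$. This gains $\frac14$ more than (1) and yields control by $|\Phi|_{Z_2^{r_0}}$.
\end{itemize}
The fractional part $(1-\Delta_{x'})^{\frac s2}$ is handled with its commutator with $\CG(h)$, which is again controlled via Lemma \ref{L:DN-1}; alternatively, interpolate between integer levels.

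One subtlety concerns low Fourier modes. Since $Z_2^r$ differs from $H^{r+\frac12}$ only near zero frequency, split $\Phi=\CP_l\Phi+\CP_h\Phi$ as in \eqref{E:temp-92}. By \eqref{E:temp-93}, $\CP_l\Phi$ has every $Z_2^s$ norm bounded by $|\Phi|_{Z_2}$, so only the high-frequency part needs the derivative argument. There $|\cdot|_{Z_2^r}\sim |\nabla^n\cdot|_{Z_2^s}$ plus lower-order terms.

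Induction then gives $|\Phi|_{Z_2^r}\le C(|f|_{Z_2^*}+|\Phi|_{Z_2^{r-1}})\le C|f|_{Z_2^*}$.

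\emph{Existence.} The estimate must be justified for $\Phi$ not yet known to lie in $Z_2^r$, which is the main obstacle. I would regularize with difference quotients in $x'$, or with mollifiers $J_\varepsilon$, in place of $\p^\alpha$. Translation invariance \eqref{E:TransInv} makes the commutators of difference quotients with $\CG(h)$ expressible through $\BD\CG$ exactly as in \eqref{E:CG-commutator}, with the same uniform bounds. The uniform estimates then pass to the limit and show $\Phi\in Z_2^r$.

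Injectivity follows from the base case, since $Z_2^r\subset Z_2$ and $(1-\Delta_{x'})^{\frac r2}$ is injective. Hence $(1-\Delta_{x'})^{\frac r2}\CG(h)$ is an isomorphism satisfying \eqref{E:temp-94}.
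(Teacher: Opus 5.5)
Your proposal is essentially the paper's proof. Both take the solution $\Phi\in Z_2$ from Proposition~\ref{P:DN-0}(2), differentiate the equation, move derivatives across $\CG(h)$ with Corollary~\ref{C:DN-1}, and climb in $r$ by unit steps, interpolating for fractional $r$. The paper applies a single $\p_{x_j}$ per step and inverts with the already established estimate at level $r-1$, whereas you apply $\p^\alpha$ of full order $n$ at once.

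Two bookkeeping corrections are needed.

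First, Corollary~\ref{C:DN-1}(1) used at regularity index $r-1$ only needs $r-1\le r_0-\frac12$. It therefore already covers every $r\in[1,r_0+\frac12]$, so your second case is unnecessary. As written, that case is even circular for $r\in(r_0-\frac12,r_0]$, because there it bounds $|\Phi|_{Z_2^r}$ by the stronger norm $|\Phi|_{Z_2^{r_0}}$.

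Second, after applying $(1-\Delta_{x'})^{\frac s2}$ you must invert with the level-$s$ estimate, not the base-case $Z_2^*\to Z_2$ bound. The interpolation alternative you mention is the one that works with the stated lemmas, since Lemma~\ref{L:DN-1} does not directly bound commutators of $\CG(h)$ with fractional multipliers. Interpolation is exactly what the paper uses.

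Your difference-quotient argument shows that $\Phi$ genuinely lies in $Z_2^r$, rather than only satisfying an a priori bound. The paper passes over this point by differentiating $\Phi\in Z_2$ directly, so your regularization is a useful addition rather than a deviation.
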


\begin{proof}
According to Proposition \ref{P:DN-0}(2),  $\CG(h) \in \BBL(Z_2, Z_2^*)$ is isomorphic which along with Lemma \ref{L:DN-1} implies $(1-\Delta_{x'})^{\frac r2} \CG(h) \in  \BBL(Z_2^r, Z_2^*)$ is injective for any $r \in [0, r_0+\frac 12]$. Moreover, for any $f \in Z_2^*$, there exists a unique $\Phi \in Z_2$ such that 
\be \label{E:temp-95} 
\CG(h) \Phi = (1-\Delta_{x'})^{-\frac r2} f \in (1-\Delta_{x'})^{-\frac r2} Z_2^* \subset Z_2^*. 
\ee
First let $r=1$. From Corollary \ref{C:DN-1} we have, for any $j =1, \ldots, d-1$, 
\[
|\CG(h) \Phi_{x_j}|_{Z_2^*} \le |\p_{x_j} (1-\Delta_{x'})^{-\frac 12} f |_{Z_2^*} + |[\CG(h), \p_{x_j}] \Phi|_{Z_2^*} \le |f|_{Z_2^*} + C |\Phi|_{Z_2} \le C |f|_{Z_2^*}, 
\]
which implies $|\Phi_{x_j}|_{Z_2} \le C |f|_{Z_2^*}$. Hence $\CG(h)^{-1} (1-\Delta_{x'})^{-\frac 12} \in \BBL(Z_2^*, Z_2^1)$ and we can write
\[
\CG(h)^{-1} \in \BBL\big( (1-\Delta_{x'})^{-\frac 12} Z_2^*, Z_2^1\big) \cap \BBL\big( Z_2^*, Z_2^0\big) . 
\]
Inequality \eqref{E:temp-94} for $r \in [0, 1]$ follows from interpolation. For $r\in [1,2]$, 
Corollary \ref{C:DN-1} and 
\[
(1-\Delta_{x'})^{\frac {r-1}2} \CG(h) \Phi_{x_j} = \p_{x_j} (1-\Delta_{x'})^{-\frac 12} f + (1-\Delta_{x'})^{\frac {r-1}2} [\CG(h), \p_{x_j}] \Phi, 
\]
along with $r-1\in [0, 1]$, imply 
\[
|\Phi_{x_j}|_{Z_2^{r-1}} \le C |(1-\Delta_{x'})^{\frac {r-1}2} \CG(h) \Phi_{x_j} |_{Z_2^*} \le C |f|_{Z_2^*} + C |\Phi|_{Z_2^{r-1}} \le C |f|_{Z_2^*} + C |(1-\Delta_{x'})^{-\frac 12} f|_{Z_2^*}  
\]
and we obtain \eqref{E:temp-94} for $r \in [0, 2]$. Finally \eqref{E:temp-94} for $r\in [0, r_0+\frac 12]$ follows inductively. 
\end{proof}

\subsection{Proof of Theorem \ref{T:CWW-LInMa-main}} \label{SS:CWW-proof} 

The proof is based on Theorem \ref{T:Ham-UM}. As a byproduct, 
we also demonstrate how Theorems \ref{T:NLPDE-LWP} and \ref{T:LWP-smoothness} apply to yield the local well-posedness of \eqref{E:Zakharov-tf} and the smoothness in the initial data. The main steps are the analysis of $\BD^2 \BH$ and $J \BD^2\BH$. 

Recall the space $X^r$ defined in \eqref{E:space-2} and the operators $\CL =\BD^2\BH$ and $\CA = J\CL$ in \eqref{E:CA-CL} and \eqref{E:HamF}. 
Dropping the terms involving $\BD \CG$ and $\BD^2 \CG$, the principle part of $\CL(h, \Phi)$ is  
\be \label{E:CL0-1} \begin{split}
\big\langle \CL_0 (h) & (\eta_1, \Psi_1), (\eta_2, \Psi_2) \big \rangle = \int_\CU g\eta_1\eta_2 + \frac \sigma{\sqrt{1+ |\nabla_{x'}  h |^2}} \Big( \nabla_{x'} \eta_1 \cdot \nabla_{x'} \eta_2 \\
& - \frac {(\nabla_{x'} h \cdot \nabla_{x'} \eta_1)(\nabla_{x'} h \cdot \nabla_{x'} \eta_2) }{1+ |\nabla_{x'} h|^2}  \Big)  + \Psi_1 \CG(h) \Psi_2 +  \Psi_1\Vc \cdot \nabla\eta_2  + \Psi_2\Vc \cdot \nabla\eta_1  dx'. 
\end{split} \ee
By completing the square in the last three terms as in \cite{Mi02}, it can be rewritten as 
\be \label{E:CL0-1.4}
\big\langle \CL_0 (h) (\eta_1, \Psi_1), (\eta_2, \Psi_2) \big \rangle = \langle \CL_{01} (h, h) \eta_1, \eta_2 \rangle + \big\langle \CL_{02} (h) (\eta_1, \Psi_1), (\eta_2, \Psi_2) \big \rangle,
\ee
where 
\be \label{E:CL0-1.5} \begin{split}
\langle \CL_{01} (h_1, h_2) & \eta_1, \eta_2 \rangle = \int_\CU - (\Vc \cdot \nabla_{x'} \eta_1) \CG(h_2)^{-1}   (\Vc \cdot \nabla_{x'} \eta_2) + g\eta_1\eta_2 \\
&+ \frac \sigma{\sqrt{1+ |\nabla_{x'}  h_1 |^2}} \Big( \nabla_{x'} \eta_1 \cdot \nabla_{x'} \eta_2  - \frac {(\nabla_{x'} h_1 \cdot \nabla_{x'} \eta_1)(\nabla_{x'} h_1 \cdot \nabla_{x'} \eta_2) }{1+ |\nabla_{x'} h_1|^2}  \Big)  dx',
\end{split} \ee
\be \label{E:CL0-1.6}
\big\langle \CL_{02} (h) (\eta_1, \Psi_1), (\eta_2, \Psi_2) \big \rangle = \int \big( \Psi_1 + \CG(h)^{-1} (\Vc \cdot \nabla_{x'} \eta_1)\big) \CG(h)  \big( \Psi_2 + \CG(h)^{-1} (\Vc \cdot \nabla_{x'} \eta_2)\big) dx'.
\ee
As $\CL_{02}$ is already positive on $Z_2$, 
to use $\CL_0(h)$ as a control quantity we shall assume 
\be \label{E:coercivity-5} \begin{split}
&\exists \delta>0 \  \text{ and a closed subspace }\  \wt Z_+ \subset Z_1 \; \text{ s.~t.~} \ codim \wt Z_+< \infty \ \text{ and } \\ 
&\langle \CL_{01}(h, 0) \eta,  \eta \rangle \ge \delta |\eta|_{Z_1}^2, \; \forall \eta \in \wt Z_+. 
\end{split} \ee

\begin{lemma} \label{L:CWW-CL} 
For any $r_0 > \frac {d}2$, $R, \ep>0$, the following hold for $J$ and $\CL$. 
\begin{enumerate} 
\item $J: X^* \supset Dom(J) \to X $ as defined in \eqref{E:HamF} satisfies $J^*=-J$ and $\CL: \CO_{r_0} (R, \ep) \times Z_2^{r_0} \to \BBL (X, X^*)$ is analytic and the norms depend only on $r_0, \Vc, \sigma, g, R$, and $\ep$.
\item If either $d_2=0$ in \eqref{E:fluidD-0} or $(h, \Vc) \in \CO_{r_0} (R, \ep) \times \R^{d-1} $ satisfies \eqref{E:coercivity-5},  
then for any $\Phi \in Z_2^{r_0}$, there exist $\delta>0$ and a closed subspace $\wt X_+ \subset X$ such that 
\[
codim\wt X_+ < \infty \ \text{ and } \ \langle \CL(h, \Phi) (\eta, \Psi), (\eta, \Psi) \rangle \ge \delta |(\eta, \Psi)|_X^2, \quad \forall (\eta, \Psi) \in \wt X_+. 
\]
\end{enumerate}
\end{lemma}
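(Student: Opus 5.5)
For statement (1), I will first write $\CL(h,\Phi)=\BD^2\BH(h,\Phi)$ explicitly from \eqref{E:Ham-CG}. The block $\langle \CL (\eta_1,\Psi_1),(\eta_2,\Psi_2)\rangle$ consists of three parts. The first is the principal part $\CL_0(h)$ in \eqref{E:CL0-1}. The second is the mixed terms $\langle \BD\CG(h)(\eta_1)\Phi,\Psi_2\rangle+\langle \BD\CG(h)(\eta_2)\Phi,\Psi_1\rangle$. The third is the $hh$ term $\frac12\langle \BD^2\CG(h)(\eta_1,\eta_2)\Phi,\Phi\rangle$. The surface tension term $\sigma(\sqrt{1+|\nabla h|^2}-1)$ is an analytic function of $\nabla h$. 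Since $r_0>\frac d2$, we have $\nabla h\in H^{r_0}\subset L^\infty$, so its Hessian defines an analytic map into the bounded forms on $Z_1\times Z_1$. The $\Vc$ terms and $g\eta_1\eta_2$ do not depend on $(h,\Phi)$.

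For the terms containing $\CG$, I will use Lemma \ref{L:DN-1} with $r=0$. It gives analyticity of $h\mapsto\CG(h)\in\BBL(Z_2,Z_2^*)$, which handles the $\Psi\Psi$ block. For the mixed block, I will use the second estimate in Lemma \ref{L:DN-1} with $m=1$, $r'=\frac12$, and with $\Phi\in Z_2^{r_0}$. One has to check that $r_0+\frac12$ regularity of $\Phi$ is not needed there. If it is, I will instead use formula \eqref{E:DCG-1} directly. In that formula $\nabla\Phi$ and $\CG(h)\Phi$ lie in $H^{r_0-\frac12}\subset L^\infty$ because $r_0-\frac12>\frac{d-1}2$. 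Then $\eta_1\in Z_1\subset L^2$ and $\Psi_2\in Z_2$ (through $\nabla\Psi_2,\CG(h)\Psi_2\in H^{-\frac12}$) pair boundedly, after moving half a derivative with a product estimate. The same formula also shows analyticity in $h$ through that of $\CG(h)\Phi$. For the $\BD^2\CG$ term I will differentiate \eqref{E:DCG-1} once more in $h$. This gives a bilinear form in $(\eta_1,\eta_2)$ whose coefficients are products of $\nabla\Phi$, $\CG(h)\Phi$ and $\BD\CG(h)(\eta_j)\Phi$. This is the delicate point: $\BD\CG(h)(\eta)\Phi$ for $\eta\in Z_1=H^1$ must be controlled in $H^{-\frac12}$ or better, so that its pairing with $\eta_2\in H^1$ is bounded. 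I will get this from Proposition \ref{P:DN-0}(1) with $r'=1$ and $m=1$, which maps $Z_1^{\frac12}\otimes Z_2^{r_0+\frac12}$ into $Z_1^{-\frac12}$. This costs half a derivative on $\Phi$. I expect this is the main obstacle, and I will try to avoid it by symmetrising: put half a derivative on each $\eta_j$ and use duality. The identity $J^*=-J$ is immediate from the definition.

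For statement (2), the plan is to show that $\CL(h,\Phi)-\CL_0(h)$ is compact on $X$ up to a small remainder, and that $\CL_0(h)$ is positive on a finite-codimensional subspace. By \eqref{E:CL0-1.4}, $\CL_{02}(h)$ is nonnegative. The map $(\eta,\Psi)\mapsto(\eta,\Psi+\CG(h)^{-1}\Vc\cdot\nabla\eta)$ is an isomorphism of $X$ by Proposition \ref{P:DN-2}, so coercivity of $\CL_0(h)$ reduces to coercivity of $\CL_{01}(h,h)$ on $Z_1$.

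\textbf{Case $d_2=0$.} Here $\CU$ is compact. The $\sigma$-term of $\CL_{01}$ controls $|\nabla\eta|_{L^2}^2$, while the $\Vc$-term and the lower order terms are compact relative to it. Hence $\CL_{01}$ is Fredholm and bounded from below, and positivity on a finite-codimensional subspace follows by the spectral theorem.

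\textbf{Case $d_2>0$.} I will compare $\CL_{01}(h,h)$ with $\CL_{01}(h,0)$, which is coercive on $\wt Z_+$ by \eqref{E:coercivity-5}. The pieces of the difference $\CL_{01}(h,h)-\CL_{01}(0,0)$ that depend on $h$ involve coefficients that are functions of $\nabla h$ or differences $\CG(h)^{-1}-\CG(0)^{-1}$, and these decay at spatial infinity. Each such piece can therefore be split as a small piece plus a compact one, using a cutoff and Rellich. The same splitting applies to the $\BD\CG$ and $\BD^2\CG$ terms, since $\Phi$ has decaying derivatives. A form that is coercive on a finite-codimensional subspace, perturbed by a small piece plus a compact piece, remains coercive on a smaller subspace that is still of finite codimension. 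This gives $\wt X_+$.
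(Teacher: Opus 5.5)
Your decomposition matches the paper's. You write $\CL=\CL_0+\CL_1$, compute $\CL_1$ through \eqref{E:DCG-1} and \eqref{E:D2CG}, split via \eqref{E:CL0-1.4}, and reduce to \eqref{E:coercivity-5}. The obstacle you flag in (1) is not real. Since $r_0>\frac d2$, you can apply Proposition \ref{P:DN-0} and Lemma \ref{L:DN-1} with $r_0-\frac12>\frac{d-1}2$ in place of $r_0$, so that $\Phi\in Z_2^{r_0}=Z_2^{(r_0-\frac12)+\frac12}$. Taking $r'=\frac12$ then gives, analytically in $h$ and for $\eta\in Z_1$,
\begin{itemize}
\item $(1-\Delta_{x'})^{\frac14}\BD\CG(h)(\eta)\Phi\in Z_2^*$, and
\item $\BD\CG(h)(\eta)\Phi\in Z_1^{-1}=L^2$.
\end{itemize}
That is all the mixed terms and \eqref{E:D2CG} need, so no symmetrisation is required. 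Your compactness claim for $\CL_1$ via decay is also fine. After \eqref{E:DCG-1} and \eqref{E:D2CG}, every term has the form $\int\eta\,(\cdots)$, with $\eta$ multiplied by a decaying $f_j\in H^{r_0-\frac12}$. These are paired against $\nabla_{x'}\Psi,\CG(h)\Psi\in H^{-\frac12}$ or $(\BD\CG(h)\eta)\Phi\in L^2$, so there is regularity to spare.

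The genuine gap is in the case $d_2>0$, and it has two parts.

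First, as written you put the $\nabla h$-dependent part of the surface-tension form into the perturbation, since you take the difference with $\CL_{01}(0,0)$. That piece is top order, of the type $\int a(x')\,\nabla_{x'}\eta\cdot\nabla_{x'}\eta$ with decaying $a$. It is neither small nor compact on $Z_1$: test it on $k^{-1}\chi(x')\sin(kx_1)$ with $\chi$ supported where $a\neq0$. Rellich gives nothing without a gain of derivatives. This piece must stay inside $\CL_{01}(h,0)$, whose coercivity is exactly the hypothesis. The only perturbation is then $\wt\CL_{01}(h)=\CL_{01}(h,h)-\CL_{01}(h,0)$, which is built from $(\Vc\cdot\nabla_{x'})(\CG(h)^{-1}-\CG(0)^{-1})(\Vc\cdot\nabla_{x'})$.

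Second, for this nonlocal operator, ``coefficients decay at infinity, so cut off and apply Rellich'' is not an argument. The paper writes $\CG(h)^{-1}-\CG(0)^{-1}=-\int_0^1\CG(\tau h)^{-1}(\BD\CG(\tau h)h)\CG(\tau h)^{-1}d\tau$ and applies \eqref{E:DCG-1}. This makes $h$ a pointwise multiplier against quadratic expressions in $\nabla_{x'}\CG(\tau h)^{-1}(\Vc\cdot\nabla_{x'})\eta$ and $(\CG(\tau h)+\tau\nabla_{x'}h\cdot\nabla_{x'})\CG(\tau h)^{-1}(\Vc\cdot\nabla_{x'})\eta$. For $\eta\in Z_1$ these lie only in $L^2$, so decay of $h$ gives compactness only on $Z_1^1$, where they lie in $H^1$.

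The missing step is a Fourier splitting $P_l=\chi_{[0,s]}(|\nabla_{x'}|)$, $P_h=I-P_l$:
\begin{itemize}
\item $P_l^*\wt\CL_{01}(h)P_l$ is compact on $Z_1$, because the $Z_1$ and $Z_1^1$ norms are equivalent on the range of $P_l$;
\item the blocks containing $P_h$ are $O(|\Vc|^2\langle s\rangle^{-1})$ in $\BBL(Z_1,Z_1^*)$ by Proposition \ref{P:DN-2}, since $\wt\CL_{01}(h)$ is of order one.
\end{itemize}
With this, $\wt\CL_{01}(h)$ is compact and your finite-codimension perturbation argument closes.
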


\begin{proof} 
From the definitions, it is straight forward to verify the density of $Dom(J) = (Z_1^* \cap Z_2) \times (Z_2^* \cap Z_1) \subset X^*= Z_1^* \times Z_2^*$ and $J^*=-J$. 
Lemma \ref{L:DN-1} implies that $\CL_0 (h) \in \BBL(Z_2, Z_2^*)$ is analytic in $h$. 
From \eqref{E:Ham-CG}, one may compute,  
for any $(\eta_1, \Psi_1), (\eta_2, \Psi_2) \in X =Z_1 \times Z_2$, 
\be \label{E:D2H-1} \begin{split}
& \big \langle \CL_1(h, \Phi) (\eta_1,  \Psi_1), (\eta_2, \Psi_2) \big \rangle  \triangleq  \big \langle \big( \CL(h, \Phi) - \CL_0(h)\big) (\eta_1, \Psi_1), (\eta_2, \Psi_2) \big \rangle  \\
= & \int  \frac 12 \Phi \BD^2 \CG(h) (\eta_1, \eta_2) \Phi + \Psi_2 \BD \CG(h) (\eta_1) \Phi  + \Psi_1 \BD \CG(h) (\eta_2) \Phi   dx'. 
\end{split} \ee
The term involving $\BD^2 \CG$ is the main complication and can be calculated using \eqref{E:DCG-1} as 
\be \label{E:D2CG} \begin{split} 
\int \Phi \BD^2 \CG(h) (\eta_1, \eta_2) & \Phi dx' =  2 \int   \frac {\CG(h) \Phi + \nabla_{x'} h \cdot \nabla_{x'} \Phi } {1+ |\nabla_{x'} h|^2} \Big( - \BD\CG(h) (\eta_1) \Phi \\
&- \nabla_{x'} \eta_1 \cdot \nabla_{x'} \Phi 
+ \frac {(\CG(h) \Phi + \nabla_{x'} h \cdot \nabla_{x'} \Phi) \nabla_{x'} h \cdot \nabla_{x'} \eta_1} {1+ |\nabla_{x'} h|^2}  \Big) \eta_2 dx'. 
\end{split} \ee
Using Lemma \ref{L:DN-1} to handle all the terms in $\CL_1$ involving $\CG(h)$ and $\BD \CG(h)$ we obtain the analyticity of $\CL: \CO_{r_0} (R, \ep) \times Z_2^{r_0} \to \BBL(X, X^*)$ and the desired bounds on its derivatives. 

To prove statement (2), for any $(\eta, \Psi) \in X$, from \eqref{E:D2H-1}, \eqref{E:DCG-1}, and \eqref{E:D2CG}, we first rewrite  
\begin{align*}
\big \langle \CL_1(h, \Phi) (\eta,  \Psi), (\eta, \Psi) \big \rangle = \int \eta \Big( \nabla_{x'} \Psi \cdot f_1 + f_2 \CG(h) \Psi  + \nabla_{x'} \eta \cdot f_3 + f_4  (\BD \CG(h) \eta ) \Phi \Big) dx',    
\end{align*}
where $f_j$, $j=1, 2, 3, 4$, are linear or quadratic combinations of $\CG(h) \Phi$ and $\nabla_{x'} \Phi$ with coefficients in the forms of rational functions of $\nabla_{x'} h$. Due to \eqref{E:space-3} and Proposition \ref{P:DN-0}(1), we have i.) $\CG(h) \Phi, \nabla_{x'} \Phi \in H^{r_0-\frac 12}$ and thus $f_j \in H^{r_0-\frac 12}$ which decay as $|x'| \to \infty$; ii.) $\nabla_{x'} \Psi, \CG(h) \Psi \in H^{-\frac 12}$, and $(\BD \CG(h) \eta) \Phi \in L^2$, so they behave well near zero Fourier frequencies; and iii.) aperently $\CL_1$ requires less regularity on $(\eta, \Psi)$ then $X$. Hence from a standard argument, $\CL_1(h, \Phi)  \in\BBL(X, X^*)$ is compact. 

The Cauchy-Schwarz inequality applied to the curvature term yields 
\be \label{E:temp-96} 
\big\langle \CL_0 (h) (\eta, \Psi), (\eta, \Psi) \big \rangle \ge  \int  g\eta^2 +  \frac { \sigma|\nabla_{x'} \eta|^2}{(1+ |\nabla_{x'} h|^2)^{\frac 32}} + \Psi \CG(h) \Psi + 2 \Psi \Vc \cdot \nabla\eta   dx'.
\ee
According to Lemma \ref{L:DN-1} and the definition of $X$, $\CL_0(h, \Phi) \in\BBL(X, X^*)$ is uniformly positive subject to the lower order bounded perturbation of the momentum $2 \langle \eta,  \Vc \cdot \nabla \Psi \rangle$. The latter is a compact perturbation if $d_2=0$. In this case, the whole $\CL(h, \Phi) \in \BBL(X, X^*)$ is a compact perturbation to a uniformly positive symmetry operator and thus statement (2) holds. 

In the case of $d_2>0$, again we split $\CL_{01}$ by considering 
\[
\wt \CL_{01} (h) \triangleq \CL_{01}(h, h) - \CL_{01}(h, 0) = (\Vc \cdot \nabla_{x'}) \big(\CG(h)^{-1} - \CG(0)^{-1}\big) (\Vc \cdot \nabla_{x'}). 
\]
For any $\wt \eta \in Z_1^1$, \eqref{E:DCG-1} implies 
\begin{align*}
& \langle \wt \CL_{01} (h) \wt \eta, \wt \eta\rangle = \int_0^1 \big( \CG(\tau h)^{-1} (\Vc \cdot \nabla_{x'})\wt \eta\big) (\BD\CG(\tau h) h) \big(\CG(\tau h)^{-1} (\Vc \cdot \nabla_{x'})\wt \eta\big) d\tau \\
= & \int_0^1\int h \Big( \big| \nabla_{x'} \CG(\tau h)^{-1} (\Vc \cdot \nabla_{x'})\wt \eta\big|^2  - \frac {\big| (\CG(\tau h) + \tau \nabla_{x'}h \cdot \nabla_{x'})\CG(\tau h)^{-1} (\Vc \cdot \nabla_{x'})\wt \eta\big|^2}{1+ \tau^2|\nabla_{x'} h|^2}\Big) dx' d\tau.   
\end{align*}
From Propositions \ref{P:DN-0} and \ref{P:DN-2}, 
\[
 \nabla_{x'}\CG(\tau h)^{-1} (\Vc \cdot \nabla_{x'}), \  (\CG(\tau h) + \tau \nabla_{x'}h \cdot \nabla_{x'})\CG(\tau h)^{-1} (\Vc \cdot \nabla_{x'}) \in \BBL(Z_1^1, H^1).
\] 
Moreover $h \in Z_1^{r_0} \subset H^{r_0+1}$ and thus it decays as $x' \to \infty$. Therefore $\wt \CL_{01} \in \BBL(Z_1^1, (Z_1^1)^*)$ is also compact. 
For any $s \ge 1$, let 
\[
P_l = \chi_{[0, s]} (|\nabla|), \quad P_h = I- P_l,
\]
be the cut-off operators in the Fourier variables, then we obtain the compactness of $P_l^*  \wt \CL_{01} (h) P_l \in \BBL(Z_1, Z_1^*)$. 
From Proposition \ref{P:DN-2} we have for any $\eta_1, \eta_2 \in Z_1$, 
\begin{align*}
|\langle \wt \CL_{01} (h) \eta_1, P_h \eta_2\rangle| \le & |(1-\Delta)^{-\frac 14} (\Vc \cdot \nabla_{x'}) P_h \eta_2|_{Z_2^*} | (1-\Delta)^{\frac 14} (\CG(h)^{-1} - \CG(0)^{-1}) (\Vc \cdot \nabla_{x'})\eta_1|_{Z_2} \\
\le &C |\Vc| \langle s\rangle^{-1} |\eta_2|_{Z_1} |(1-\Delta)^{\frac 14} (\Vc \cdot \nabla_{x'})\eta_1|_{Z_2^*} \le C |\Vc|^2 \langle s\rangle^{-1} |\eta_1|_{Z_1} |\eta_2|_{Z_1},
\end{align*}
which implies 
\[
|P_h^* \wt \CL_{01} (h) P_h|_{\BBL(Z_1, Z_1^*)} + |P_l^* \wt \CL_{01} (h)P_h|_{\BBL(Z_1, Z_1^*)} + | P_h^*\wt \CL_{01} (h) P_l|_{\BBL(Z_1, Z_1^*)} \le C |\Vc|^2 \langle s\rangle^{-1}. 
\]
By taking $s \gg 1$, $I -P_l^* \wt \CL_{01} (h) P_l$ can be arbitrarily small. Along with the compactness of $P_l^* \wt \CL_{01} (h) P_l$, we obtain the compactness of  $\wt \CL_{01}(h) \in \BBL(Z_1, Z_1^*)$. 
The positivity of $\CL_{02}(h, \Phi)$ follows from the positivity of $\CG(h)$  on $Z_2$ and $\CG(h)^{-1} \nabla_{x'} \in \BBL(Z_1, Z_2^{\frac 12})$ due to Proposition \ref{P:DN-2}. Summarizing the above analysis, $\CL(h, \Phi) \in \BBL(X, X^*)$ is the sum of compact operators and an  operator uniformly positive  except in  finitely many directions, so statement (2) holds.   
\end{proof} 

In the above proof we separated the low and high Fourier modes mainly due to the form of the formula \eqref{E:DCG-1} where $\Phi_{1,2} \in \dot H^1$, instead of $Z_2$, is required. 


\begin{remark} \label{R:Vc-1}
From \eqref{E:temp-96}, $\CL_0 (h, \Phi)$ is uniformly positive on $X$ if $\Vc=0$. Even though $\Vc$ contributes only lower order derivative terms, it may create infinitely many negative directions if 
the domain $\CU$ is unbounded, even at $h=0$. This is why \eqref{E:coercivity-5} is required in the lemma. This assumption does not affect the local well-posedness as one can always let $\Vc=0$ by removing the moving frame. However, it does have an impact on the spectra of the linearizations at at traveling waves  and even the existence of traveling waves (see e.~g.~\cite{AK89, Mi02, RT11}). 
\end{remark}

\begin{remark} \label{R:Vc-2}
Since $\CL_{01}(h_1, h_2) \in \BBL(X, X^*)$ is analytic in $h_1, h_2$, \eqref{E:coercivity-5} is satisfied by $|h|_{Z_1^{r_0}} \ll1 $ if $\CL_{01}(0)$ satisfies \eqref{E:coercivity-5}. As the Fourier modes are decoupled in $\CL_{01}(0)$, its Fourier multipliers can be computed explicitly. A classical sufficient condition for $\CL_{01}(0)$ to satisfy \eqref{E:coercivity-5} 
\be \label{E:CL0-4}
\BBF<1 (\Longleftrightarrow gh_0 > |\Vc|^2) \; \text{ and } \; \BBB \le 3 \BBF^{-2} (\Longleftrightarrow 3\sigma \ge |\Vc|^2 h_0)
\ee
is given in terms of two commonly used dimensionless physical constants (see e.~g.~\cite{AK89, Mi02, RT11}), the Froud and Bond number  
\[
\BBF= |\Vc|/\sqrt{gh_0}, \quad \BBB= gh_0^2/\sigma. 
\]
\end{remark} 

In the following we consider the right side $F(h, \Phi)= J\BD \BH(h, \Phi)$ of system \eqref{E:Zakharov-tf} and 
the domains of the powers of $\CA= \BD F =J \BD^2 \BH$.  

\begin{lemma} \label{L:DomCA-1}
Assume $n \in \N$ and $(h, \Phi) \in X^{n}$ satisfy $\frac 32 n > \frac {d+1}2$ and $h_0 + \inf h >0$, there exists 
then $Dom(\CA(h, \Phi)^r) = X^r$ for $r =1,2, \ldots, n$. Moreover, for any $R, \ep>0$, $m \in \N$, there exists $C>0$ determined by $d, g, \sigma, h_0, \Vc, n, m, R, \ep$ such that  
\be \label{E:CA-1}
|\CA|_{C^m (\CO_{\frac 32n} (R, \ep) \times Z_2^{\frac 32n} (R), \BBL(X^r, X^{r-1}))} \le C, \quad 1\le r \le n. 
\ee
\end{lemma}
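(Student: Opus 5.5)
Write $\CA(h,\Phi)=J\CL(h,\Phi)$ and split it, following \eqref{E:CL0-1}--\eqref{E:D2H-1}, into the principal part $\CA_0(h)=J\CL_0(h)$ and the remainder $J\CL_1(h,\Phi)$. Explicitly, for $(\eta,\Psi)\in X$,
\[
\CA(h,\Phi)\begin{pmatrix}\eta\\ \Psi\end{pmatrix}=\begin{pmatrix}\Vc\cdot\nabla_{x'}\eta+\CG(h)\Psi+(\BD\CG(h)\eta)\Phi\\ \Vc\cdot\nabla_{x'}\Psi-g\eta+\sigma\nabla_{x'}\cdot\big(\mathcal{K}(\nabla_{x'}h)\nabla_{x'}\eta\big)-\langle\Phi,\BD\CG(h)(\cdot)\Psi\rangle^*-\tfrac12\langle\Phi,\BD^2\CG(h)(\eta,\cdot)\Phi\rangle^*\end{pmatrix},
\]
where $\mathcal{K}$ is the smooth, uniformly elliptic matrix coming from the curvature term. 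The orders are consistent with $X^r=Z_1^{\frac32 r}\times Z_2^{\frac32 r}$. The operator $\CG(h)$ maps $Z_2^{s}$ to $Z_1^{s-\frac32}$ by Proposition \ref{P:DN-0}, gaining the right amount. The curvature operator maps $Z_1^{s}=H^{s+1}$ to $H^{s-1}$, and we need to check that $H^{s-1}$ lies in $Z_2^{s-\frac32}$ modulo low frequencies. By \eqref{E:space-3}, $Z_2^{s-\frac32}$ is essentially $\dot H^{s-1}$ at high frequencies, so this holds. Thus each entry is bounded $X^r\to X^{r-1}$.

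\textbf{Step 1: the estimate \eqref{E:CA-1}.} I will prove \eqref{E:CA-1} term by term. The $\CG(h)\Psi$ entry and its $h$-derivatives are handled by Proposition \ref{P:DN-0}(1) with $r$ replaced by $\frac32 r\le \frac32 n$, combined with Lemma \ref{L:DN-3} when $\frac32 r$ exceeds $r_0+\frac12$, using $r_0=\frac32 n-\frac12>\frac d2$. The $\BD\CG$ and $\BD^2\CG$ terms carry a factor $\Phi\in Z_2^{\frac32n}$. I will estimate them with the second (low-regularity-slot) bound in Proposition \ref{P:DN-0}(1), Lemma \ref{L:DN-1} and Lemma \ref{L:DN-3}, placing $\eta$ in the rough slot $Z_1^{r'-\frac12}$. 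Formula \eqref{E:D2CG} lets me rewrite the $\BD^2\CG$ term as products of $\CG(h)\Phi$ and $\nabla\Phi\in H^{\frac32n-\frac12}$ with $\nabla\eta$ and $\BD\CG(h)(\eta)\Phi$. These are then controlled by Sobolev product estimates, using the algebra property $H^{\frac32n-\frac12}$ with $\frac32n-\frac12>\frac d2$. The curvature term is a smooth composition. Analyticity of $\CG$ in $h$ then gives $C^m$ bounds uniformly on $\CO_{\frac32n}(R,\ep)\times Z_2^{\frac32n}(R)$.

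\textbf{Step 2: $Dom(\CA^r)=X^r$.} It suffices to show that $\lambda-\CA(h,\Phi):X^r\to X^{r-1}$ is an isomorphism for some large $\lambda$ and every $1\le r\le n$; then $Dom(\CA^r)=X^r$ follows by induction. I will argue as follows.
\begin{enumerate}
\item Energy estimate in $X$: from \eqref{E:dissipativity-6} with $\CL$ and $\CA=J\CL$, together with the coercivity of Lemma \ref{L:CWW-CL} (a G\aa rding-type estimate, $\CL$ positive up to finitely many directions, i.e.\ up to lower-order terms), one gets an estimate for $\lambda-\CA$ on $X$. Surjectivity then comes from a continuity argument in the parameter $\tau\in[0,1]$, deforming $(h,\Phi)$ to $(\tau h,\tau\Phi)$. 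At $\tau=0$ the operator is a Fourier multiplier and is computable explicitly.
\item Elliptic regularity upward: given $(\lambda-\CA)u=f\in X^{r-1}$ with $u\in X^{r-1}$, apply tangential derivatives $\p_{x'}^\alpha$. The commutators $[\p^\alpha,\CG(h)]$ are controlled by Corollary \ref{C:DN-1}, and the commutators with the curvature and $\BD\CG$ terms are of lower order. Apply the $X$-level estimate to $\p^\alpha u$ and gain $\frac32$ derivative per step, using Proposition \ref{P:DN-2}, which says $\CG(h)^{-1}$ gains a full derivative from $Z_2^*$. This yields $u\in X^r$.
\end{enumerate}

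\textbf{Main obstacle.} The hard step is (2) at the top level $r=n$. There the coefficients $h,\Phi$ have only $X^n$ regularity, so commutators like $[\p^{\frac32 n},\CG(h)]\Psi$ put all derivatives on $h$. I plan to use Lemma \ref{L:DN-3}, whose right side is linear in the highest norm of $h$. The product $|h|_{Z_1^{r-\frac12}}|\Psi|_{Z_2^{r_0+\frac{k+1}2}}$ must then be bounded using $u\in X^{n-1}$, and this needs $\frac32(n-1)$ to be at least about $r_0+\frac{k+1}2$. If the index bookkeeping fails at the endpoint, I would instead use a fractional step: obtain $X^{n-\frac13}$ first, and then the last $\frac12$ derivative via Lemma \ref{L:DN-1}'s second bound.
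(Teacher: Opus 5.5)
Your Step 1 is consistent with the paper, which obtains \eqref{E:CA-1} directly from Proposition \ref{P:DN-0}. Step 2, however, has a genuine gap, because the mechanism you propose for gaining regularity cannot work. The $X$-level estimate from the energy identity is a zeroth-order resolvent bound $|u|_X\lesssim|(\lambda-\CA)u|_X$. Applying it to $\p^\alpha u$ through $(\lambda-\CA)\p^\alpha u=\p^\alpha f+[\p^\alpha,\CA]u$ requires $\p^\alpha f\in X$. So you spend on $f$ exactly the derivatives you hope to gain on $u$, and from $f\in X^{r-1}$ you never get beyond $u\in X^{r-1}$.

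The $\frac32$-derivative gain must come from the ellipticity of the principal entries of $\CA$ in \eqref{E:CA1}--\eqref{E:CA2}. Your plan invokes only half of this, namely Proposition \ref{P:DN-2} for $\Psi$. The gain on $\eta$, which has to come first, requires the uniform ellipticity of the curvature operator $\BD\kappa(h)$ in the second component, and your Step 2 never uses it.

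Step 2(1) is both unavailable and unnecessary. The coercivity in Lemma \ref{L:CWW-CL}(2) needs $d_2=0$ or \eqref{E:coercivity-5}, and neither is assumed here; without them $\CL$ can have infinitely many negative directions (Remark \ref{R:Vc-1}). In addition, bijectivity of $\lambda-\CA:X^1\to X$ would not by itself identify the maximal domain $\{u\in X:\CA u\in X\}$. You would still need injectivity on that larger set, which is again a regularity statement.

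What is needed, and what the paper does, is a direct alternating bootstrap with no $\lambda$, energy estimate, continuity method, or commutators. Suppose $(\eta,\Psi)\in X^{r-1}$ and $(\zeta,\Theta)=\CA(h,\Phi)(\eta,\Psi)\in X^{r-1}$.
\begin{enumerate}
\item In the second component, every term other than $-\sigma\BD\kappa(h)\eta$ is at most first order in $\eta$ and $\Psi$. Hence $\sigma\BD\kappa(h)\eta\in H^{\frac32 r-2}$, and ellipticity gives $\eta\in Z_1^{\frac32 r-1}$.
\item The first component then gives $\CG(h)\Psi=\zeta-\Vc\cdot\nabla_{x'}\eta-(\BD\CG(h)\eta)\Phi\in H^{\frac32 r-1}$. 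Proposition \ref{P:DN-2} yields $\Psi\in Z_2^{\frac32 r-\frac12}$.
\item One more round of the same two steps gives $(\eta,\Psi)\in X^r$.
\end{enumerate}
Combined with the boundedness of $\CA:X^r\to X^{r-1}$, this gives $Dom(\CA^r)=X^r$ by induction on $r$.

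The top-level obstacle you anticipate does not arise. Proposition \ref{P:DN-2} applies with $r_0=\frac32 n$, and its range $s\le r_0+\frac12$ covers $s=\frac32 r\le\frac32 n$. The hypothesis $\frac32 n>\frac{d+1}2$ is exactly what that proposition requires.
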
 

\begin{proof} 
From \eqref{E:Zakharov-tf}, one may compute 
\[
\CA(h, \Phi) (\eta, \Psi) = \big(\CA_1 (h, \Phi) (\eta, \Psi), \CA_2 (h, \Phi) (\eta, \Psi) \big), 
\]
where 
\be \label{E:CA1}
\CA_1 (h, \Phi) (\eta, \Psi) =  \Vc \cdot \nabla_{x'} \eta + \CG(h) \Psi +  ( \BD\CG(h) \eta) \Phi,
\ee
\be \label{E:CA2} \begin{split}
\CA_2 (h, \Phi) (\eta, \Psi) = &  \Vc \cdot \nabla_{x'} \Psi-g \eta -  \sigma \BD \kappa(h) \eta - \nabla_{x'} \Phi \cdot \nabla_{x'} \Psi +  \frac {\CG(h)\Phi+ \nabla_{x'} h \cdot \nabla_{x'} \Phi}{1+ |\nabla_{x'} h|^2} \\
& \times \Big( (\BD\CG(h)\eta) \Phi + \CG(h) \Psi  + \nabla_{x'} h \cdot \nabla_{x'} \Psi + \nabla_{x'} \eta \cdot \nabla_{x'} \Phi \\
&- \frac {\CG(h)\Phi+ \nabla_{x'} h \cdot \nabla_{x'} \Phi}{1+ |\nabla_{x'} h|^2}{\nabla_{x'} h \cdot \nabla_{x'} \eta} \Big),
\end{split} \ee
\[
\BD \kappa (h) \eta =  - \nabla_{x'} \cdot \big( \frac { (1+ |\nabla_{x'} h|^2) \nabla_{x'} \eta - (\nabla_{x'} h \cdot \nabla_{x'} \eta) \nabla_{x'} h}{(1+ |\nabla_{x'} h|^2)^{\frac 32}} \big).
\]

Suppose $n \ge r \ge 1$ and 
\be \label{E:temp-97}
(\eta, \Psi),  \, (\zeta, \Theta) = \CA(h, \Phi) (\eta, \Psi)  \in X^{r-1} = Z_1^{\frac 32 (r - 1)} \times Z_2^{\frac 32 (r -1)},
\ee
From Proposition \ref{P:DN-0}, and \eqref{E:space-1}, \eqref{E:space-2}, \eqref{E:space-3}, and the above expressions, 
\[
\sigma \BD \kappa (h) \eta = -\Theta + \big( \CA_2 (h, \Phi) (\eta, \Psi) + \sigma \BD \kappa (h) \eta \big) \in H^{\frac 32r -2}. 
\]
As shown in the proof of \eqref{E:temp-96}, $\BD \kappa(h)$ is uniformly elliptic. So $\eta \in Z_1^{\frac 32r -1}$ and it implies 
\[
\CG(h) \Psi = \zeta - \Vc \cdot \nabla_{x'} \eta -  ( \BD\CG(h) \eta) \Phi \in Z_2^* \cap H^{\frac 32 r-1} = (1-\Delta_{x'})^{\frac 12 (\frac 12-\frac 32 r) }Z_2^*.
\]
Hence $\Psi \in Z_2^{\frac 32r -\frac 12}$ thanks to Proposition \ref{P:DN-2}.
Repeating this argument and using the improved regularity of $(\eta, \Psi)$,  we obtain
$\eta \in Z_1^{\frac 32r}$ and then $\Psi \in Z_2^{\frac 32r}$.  

The above analysis for $r=1$ implies that $Dom(\CA(h, \Phi)) = X^1$. Inductively we obtain $Dom(\CA(h, \Phi)^r) = X^r$ for $r =1,2, \ldots, n$.

Finally inequality \eqref{E:CA-1} is a direct corollary of Proposition \ref{P:DN-0}. 
\end{proof}

With the above preparations, we are ready to prove Proposition \ref{P:CWW-ET-1} of the {\bf exponential trichotomy} of the linearized water wave system \eqref{E:Zakharov-tf}. 

\begin{proof}[Proof of Proposition \ref{P:CWW-ET-1}]
Lemma \ref{L:CWW-CL} 
implies that \eqref{E:LHam-1} and \eqref{E:LHam-2} are satisfied by $J$ and $\CL(h, \Phi)$ on $X$.  Hence Proposition \ref{P:LHam} yields a decomposition $X$ into the direct sum of the closed subspaces $Y_1, Y_2, Y_3$, and $Y_\pm$. Let $X_\pm = Y_\pm$ and $X_0 = Y_1 \oplus Y_2 \oplus Y_3$. All the desired properties in Proposition \ref{P:CWW-ET-1} are easily verified using Proposition \ref{P:LHam} and the above Lemma \ref{L:DomCA-1}. Finally, even though the choices of subspaces $Y_\pm, Y_{1,2,3}$ may not be unique,  $X_\pm$ and $X_0$ are unique since they are the spectral subspaces corresponding to the subsets of $\sigma(\CA(h, \Phi))$ with $\Re \lambda >0$, $<0$, and $=0$, respectively. 
\end{proof} 

The next lemma will be used in the proof of the continuation of higher regularity. 

\begin{lemma} \label{L:D2F}
Suppose $\frac 32 n \ge \frac {d+9}2$ and $R, \ep>0$, then there exists $C>0$ such that $u=(h, \Phi) \in \CO_{\frac 32 (n-1)} (R, \ep) \times Z_2^{\frac 32(n-1)} (R) \subset X^{n-1}$, $v_j=(\eta_j, \Psi_j)$, $j=1,2$, 
\[
|\BD^2 F(u) (v_1, v_2)|_{X^{n-1}} \le C \big(|v_1|_{X^n} |v_2|_{X^{n-1}} + |v_1|_{X^{n-1}} |v_2|_{X^n} + |u|_{X^n} |v_1|_{X^{n-1}} |v_2|_{X^{n-1}}  \big). 
\]
\end{lemma}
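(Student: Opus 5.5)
The plan is to write $\BD^2 F(u)(v_1,v_2)=\BD\CA(u)(v_1)v_2$ explicitly by differentiating the expressions \eqref{E:CA1}--\eqref{E:CA2} in $(h,\Phi)$. This gives a finite sum of multilinear terms of the schematic forms
\[
(\BD^{m}\CG(h)(\eta_1,\ldots)) \Phi,\quad \BD^{m}\CG(h)(\ldots)\Psi_j,\quad \BD^2\kappa(h)(\eta_1,\eta_2),
\]
together with products $R(\nabla h)\, a_1 a_2 a_3$, where $R$ is rational in $\nabla_{x'}h$ and each $a_i$ is one of $\CG(h)\Phi$, $\nabla\Phi$, $\nabla\eta_j$, $\nabla\Psi_j$, $\CG(h)\Psi_j$, $(\BD\CG(h)\eta_j)\Phi$, $(\BD^2\CG(h)(\eta_1,\eta_2))\Phi$. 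Each term is bilinear in $(v_1,v_2)$. The target is the $Z_1^{\frac32(n-1)}$ norm of the first component and the $Z_2^{\frac32(n-1)}$ norm of the second. For the second component, \eqref{E:space-3} reduces this to an $H^{\frac32(n-1)-\frac12}$ bound on its gradient, or on $|\nabla|^{1/2}$ of it; I will use the $H^{s}$ control with $s=\frac32 n-2$ or $\frac32 n-\frac32$.

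\textbf{Step 1 (pure products).} Set $s_0=\frac32(n-1)-\frac12$. Every $a_i$ built from $u$ lies in $H^{s_0}$ with norm $\le C$ by Proposition \ref{P:DN-0}, and every $a_i$ built from $v_j$ lies in $H^{s_0}$ (at the $X^{n-1}$ level) or one derivative higher (at the $X^n$ level). The hypothesis $\frac32 n\ge\frac{d+9}2$ gives $s_0-\frac32>\frac{d-1}2$, so $H^{s_0-\frac32}$ is an algebra with room to spare. I will use the standard tame (Moser) product estimate $|fg|_{H^s}\le C(|f|_{H^s}|g|_{L^\infty}+|f|_{L^\infty}|g|_{H^s})$. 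In each product, exactly one factor carries the top number of derivatives, so the bound is linear in the top norm. This produces the three types on the right side: top norm on $v_1$, top norm on $v_2$, or top norm on $u$ with both $v_j$ in $X^{n-1}$. Losing $\frac32$ derivatives from the curvature term $\BD^2\kappa$ (second order) and from $\CG$ (first order) is exactly compensated by measuring $v_j$ in $X^n$ against a target in $X^{n-1}$.

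\textbf{Step 2 (Dirichlet--Neumann terms).} Terms such as $(\BD^2\CG(h)(\eta_1,\eta_2))\Phi$, $(\BD\CG(h)\eta_1)\Psi_2$ and $(\BD\CG(h)\eta_j)\Phi$ are handled by Lemma \ref{L:DN-3}. I take $r=\frac32(n-1)$ up to $\pm\frac12$, $r_0$ just above $\frac{d-1}2$, and $k=\lfloor r\rfloor$ or nearby, checking $k\le r\le k+r_0$. The lemma puts the highest norm on exactly one of $\Phi$, $h$, $\eta_j$, with all other factors in lower norms $Z_1^{r_0+\frac k2}$, $Z_2^{r_0+\frac{k+1}2}$. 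These lower norms must be dominated by $X^{n-1}$ norms, which requires $r_0+\frac k2+\frac12\lesssim\frac32(n-1)$. With $k\approx\frac32 n$ this becomes $\frac{d-1}2+\frac34 n+1\le\frac32 n-\frac32$, i.e.\ $\frac34 n\ge\frac d2+2$, and this is implied by $\frac32 n\ge\frac{d+9}2$. The factor $(1+|h|^{k-1})$ is bounded by $C(R)$. To pass from the $Z_2^*$-valued bounds to membership in $Z_1$ or $Z_2$ at order $\frac32(n-1)$, I will use the duality/low-frequency cut-off argument from Lemma \ref{L:DN-1}.

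\textbf{Main obstacle.} The hard part is the index bookkeeping in Step 2: choosing $k,r,r_0$ so that each Dirichlet--Neumann term is simultaneously bilinear and tame, with $h$ at top order appearing only together with $|v_1|_{X^{n-1}}|v_2|_{X^{n-1}}$. The low-frequency issue of $Z_2$ versus $H^{1/2}$ also needs care, and I would handle it as in Lemma \ref{L:DN-1} by splitting $\CP_l+\CP_h$. Everything else is routine.
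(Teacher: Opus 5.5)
Your overall route is the paper's. You expand $\BD^2F(u)(v_1,v_2)=\BD\CA(u)(v_1)v_2$ from \eqref{E:CA1}--\eqref{E:CA2}, estimate $\BD^2\kappa$ and the rational products by tame product estimates in $H^{\frac32 n-1}$ (which controls $Z_2^{\frac32(n-1)}$), and estimate every Dirichlet--Neumann factor at top order by Lemma \ref{L:DN-3}. Step 1 is fine, except that $X^n$ sits $\frac32$ derivatives above $X^{n-1}$, not one. The problem is the step you yourself call the crux. The parameters you propose for Lemma \ref{L:DN-3} do not work, and this is exactly where the hypothesis $\frac32 n\ge\frac{d+9}2$ enters.

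First, the value of $r$ is wrong. The norm $|(1-\Delta)^{\frac r2}g|_{Z_2^*}$ only controls $H^{r-\frac12}$, and the first component must lie in $Z_1^{\frac32(n-1)}=H^{\frac32 n-\frac12}$, so you need $r=\frac32 n$. The paper uses this $r$ throughout, including for the top-order norms $|(1-\Delta)^{\frac34 n}\CG(h)\Psi_1|_{Z_2^*}$ etc.\ in the second component. Your $r=\frac32(n-1)\pm\frac12$ falls short of this. It is also incompatible with your later $k\approx\frac32 n$, since the lemma needs $k\le r$.

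Second, and more importantly, $k$ must be chosen as small as possible, not as $\lfloor r\rfloor$. The auxiliary norms $Z_1^{r_0+\frac k2}$ and $Z_2^{r_0+\frac{k+1}2}$ in Lemma \ref{L:DN-3} grow with $k$. This includes the norm in the prefactor $1+|h|_{Z_1^{r_0+k/2}}^{k-1}$, which you want bounded by $C(R)$. So $k$ should be the smallest value allowed by $r\le k+r_0$. The paper takes $r_0=\frac d2$ and $k\in\N\cap\{\frac{3n-d}2,\frac{3n-d+1}2\}$. Then $r_0+\frac{k+1}2\le\frac{3n+d+3}4\le\frac32(n-1)$, and the last inequality is exactly $3n\ge d+9$.

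With $k\approx\frac32 n$, the condition you derive is $\frac34 n\ge\frac d2+2$, i.e.\ $3n\ge 2d+8$. This is \emph{not} implied by the hypothesis when $d\ge 2$; for example, $d=3$, $n=4$ satisfies the hypothesis but not your condition. In that case the low-order factors are not controlled by $X^{n-1}$ norms, and you do not get the stated bilinear bound.

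With the minimal $k$ the argument closes. No $\CP_l/\CP_h$ splitting is needed either, because $|g|_{Z_1^{\frac32(n-1)}}\le C|(1-\Delta)^{\frac34 n}g|_{Z_2^*}$ and $|g|_{Z_2^{\frac32(n-1)}}\le C|g|_{H^{\frac32 n-1}}$ already take care of low frequencies.
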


\begin{proof} 
This type of estimates (even some stronger ones) would be rather standard if only algebraic or rational combinations are involved, while the nonlocal $\CG$ causes the main complication here. We use \eqref{E:CA1} and \eqref{E:CA2} to compute $\BD^2 F(u) = \BD \CA(u)$. Separating the terms   in $\BD \CA (u)(v_1, v_2)$ involving $\BD^m \CG$ applied to $v_j$, we can write 
\[
\BD \CA_1 (u) (v_1, v_2) = \BD \CG(h) (\eta_2) \Psi_1 + \BD \CG(h) (\eta_1) \Psi_2 + \BD^2 \CG(h) (\eta_1, \eta_2) \Phi,
\] 
\begin{align*} 
\BD \CA_2 (u) (v_1, v_2)  = & - \sigma \BD^2 \kappa(h) (\eta_1, \eta_2) + f_0(u, v_1, v_2) + f_1 (u, v_2) \CG(h) \Psi_1 + f_2 (u, v_1) \CG(h) \Psi_2 \\
& + f_3 (u, v_2) \BD \CG (h) (\eta_1) \Phi +  f_4 (u, v_1) \BD \CG (h) (\eta_2) \Phi + f_5 (u) \BD \CG (h) (\eta_1) \Psi_2  \\
& +  f_6 (u) \BD \CG (h) (\eta_2) \Psi_1 +  f_7 (u) \BD^2 \CG (h) (\eta_1, \eta_2) \Phi + f_8(u) (\CG(h) \Psi_1)  (\CG(h) \Psi_2) \\
&+ f_9(u) (\CG(h) \Psi_1)  (\BD \CG(h) (\eta_2) \Phi)  + f_{10} (u) (\CG(h) \Psi_2)  (\BD \CG(h) (\eta_1) \Phi)  \\
&+ f_{11} (u) (\BD \CG(h) (\eta_1) \Phi) (\BD \CG(h) (\eta_2) \Phi).
\end{align*}
Here $f_j$, $0\le j\le 4$, are linear or bilinear in $v_1$ and $v_2$ involving $\nabla_{x'} \eta_j$ and $\nabla_{x'} \Psi_j$ with coefficients given by rational combinations of $\nabla_{x'} h$, $\nabla_{x'} \Phi$, and $\CG(h)\Phi$, while $f_j(u)$, $5\le j \le 11$, are rational combinations of $\nabla_{x'} h$, $\nabla_{x'} \Phi$, and $\CG(h)\Phi$. 

To estimate the terms in $\BD \CA_1$, take 
\be\label{E:temp-72.6}
r=3n/2, \quad r_0 = d/2, \quad k = \N \cap \{(3n-d)/2, \, (3n-d+1)/2\}. 
\ee
Since our assumptions yields $r_0+ \frac {k+1}2 \le \frac 32 (n-1)$, applying Lemma \ref{L:DN-3} we obtain 
\begin{align*}
|\BD \CA_1 (u) (v_1, v_2)|_{Z_1^{\frac 32(n-1)}} \le & C | (1-\Delta)^{\frac r2} \BD \CA_1 (u) (v_1, v_2)|_{Z_2^*} \\
\le & C \big(|v_1|_{X^n} |v_2|_{X^{n-1}} + |v_1|_{X^{n-1}} |v_2|_{X^n} + |u|_{X^n} |v_1|_{X^{n-1}} |v_2|_{X^{n-1}}  \big).
\end{align*}


As $\BD^2 \kappa(h) (\eta_1, \eta_2)$ and $f_0(u, v_1, v_2)$ are rational combinations of pointwise derivatives of $u$, $v_1$, and $v_2$, it is straight forward to verify that their $H^{\frac 32n - 1} $ norms, which control the $Z_2^{\frac 32(n-1)}$ norms,  
satisfy the desired estimate by standard inequalities on the products of Sobolev functions when $\frac 32 (n-1) > \frac d2$. 

From the same argument, 
we also have for $n'=n, n-1$,
\be \label{E:temp-73} \begin{split} 
& |f_j (u, v)|_{H^{\frac 32 n'-1}} \le C\big(|v|_{X^{n'-\frac 13}} + |u|_{X^{n'-\frac 13}} |v|_{X^{n'-\frac 43}}\big), \quad 1\le j\le 4; \\
& |f_l (u)|_{H^{\frac 32 n'-1}} \le C (1+ |u|_{X^{n'-\frac 13}}), \quad 5\le l\le 11. 
\end{split} \ee
The estimates of the terms in $\BD \CA_2 (u)$ based on \eqref{E:temp-73}, Proposition \ref{P:DN-0},  and Lemma  \ref{L:DN-3} are similar and we shall illustrate only a few of them. Firstly,  
\begin{align*}
& |f_1 (u, v_2)  \CG(h) \Psi_1|_{Z_2^{\frac 32 (n-1)}} \le  C |f_1 (u, v_2) \CG(h) \Psi_1|_{H^{\frac 32 n-1}}\\ 
\le & C\big( |f_1 (u, v_2) |_{H^{\frac 32 n-1}} |\CG(h) \Psi_1|_{H^{\frac 32 n-\frac 52}} + |f_1 (u, v_2) |_{H^{\frac 32 n-\frac 52}} |\CG(h) \Psi_1|_{H^{\frac 32 n-1}} \big). 
\end{align*}
Applying \eqref{E:temp-73} to the $f_1$ terms, Proposition \ref{P:DN-0} to the above first $\CG(h) \Psi_1$, and then Lemma \ref{L:DN-3} to the second with parameters in \eqref{E:temp-72.6}, we obtain 
\begin{align*} 
& |f_1 (u, v_2)  \CG(h) \Psi_1|_{Z_2^{\frac 32 (n-1)}} \\
\le &  C\big( ( |v_2 |_{X^{n}} + |u|_{X^n} |v_2|_{X^{n-1}})  | \Psi_1|_{Z_2^{\frac 32 n-2}} + |v_2 |_{X^{ n- 1}} |(1-\Delta)^{\frac 34n}\CG(h) \Psi_1|_{Z_2^*} \big) \\
\le & C\big( ( |v_2 |_{X^{n}} + |u|_{X^n} |v_2|_{X^{n-1}}) | \Psi_1|_{Z_2^{\frac 32 n-2}} + |v_2 |_{X^{ n- 1}} (| \Psi_1|_{Z_2^{\frac 32 n}} +|h|_{Z_1^{\frac 32n}}  | \Psi_1|_{Z_2^{\frac 32 (n-1)}} ) \big).
\end{align*}
This yields the desired estimates. The terms involving $f_j$, $j=2,3,4$, can be treated similarly. 
Much as the above, \eqref{E:temp-73} implies 
\begin{align*}
& |f_7 (u) \BD^2 \CG(h)(\eta_1, \eta_2) \Phi|_{Z_2^{\frac 32 (n-1)}} \\
\le & C\big( |f_7 (u) |_{H^{\frac 32 n-1}} |\BD^2 \CG(h) (\eta_1, \eta_2) \Phi|_{H^{\frac 32 n-\frac 52}}  +|f_7 (u) |_{H^{\frac 32 n-\frac 52}} |\BD^2 \CG(h) (\eta_1, \eta_2) \Phi|_{H^{\frac 32 n-1}} \big) \\
\le & C\big( (1 + |u|_{X^n}) |\BD^2 \CG(h) (\eta_1, \eta_2) \Phi|_{H^{\frac 32 n-\frac 52}}  + |(1-\Delta)^{\frac 34n}\BD^2 \CG(h) (\eta_1, \eta_2) \Phi|_{Z_2^*}\big). 
\end{align*}
Controlling the first $\BD^2 \CG$ term by Proposition \ref{P:DN-0} and in the second term by Lemma \ref{L:DN-3}  with parameters in \eqref{E:temp-72.6}, we obtain the desired estimates. The terms involving $f_j$, $j=5,6,7$, can be handled  in a similar fashion. Finally we have from \eqref{E:temp-73} and  Proposition \ref{P:DN-0}
\begin{align*}
& | f_9(u) (\CG(h) \Psi_1)  (\BD \CG(h) (\eta_2) \Phi)|_{Z_2^{\frac 32 (n-1)}} \\ 
\le & C\big(  (1 + |u|_{X^n}) |\CG(h) \Psi_1 |_{H^{\frac 32 n-\frac 52}} |\BD \CG(h) (\eta_2) \Phi|_{H^{\frac 32 n-\frac 52}} + |\CG(h) \Psi_1 |_{H^{\frac 32 n-1}} |\BD \CG(h) (\eta_2) \Phi|_{H^{\frac 32 n-\frac 52}} \\
&+ |\CG(h) \Psi_1 |_{H^{\frac 32 n-\frac 52}} |\BD \CG(h) (\eta_2) \Phi|_{H^{\frac 32 n- 1}} \big) \\
\le & C\big( (1 + |u|_{X^n}) |v_1|_{X^{n-1}} |v_2|_{X^{n-1}}  + |(1-\Delta)^{\frac 34n}\CG(h) \Psi_1 |_{Z_2^*} |v_2|_{X^{n-1}}  \\
&+|v_1|_{X^{n-1}}|(1-\Delta)^{\frac 34n}\BD \CG(h) (\eta_2) \Phi|_{Z_2^*} \big), 
\end{align*}
which along with Lemma \ref{L:DN-3} yields the desired estimates. The terms involving $f_j$, $j=8, 10, 11$, can also be estimated similar. 
\end{proof}

$\bullet$ The above analysis allows us to obtain the {\bf local well-posedness} of the capillary water wave system \eqref{E:Zakharov-tf} including the smooth dependence of solutions on the initial data. 

\begin{theorem} \label{T:CWW-LWP-1} 
Assume \eqref{E:fluidD-0}, \eqref{E:fluidD-2}, and \eqref{E:gravity}. For any integers $n \ge 2$ and $m \ge 1$ satisfying $\frac 32 (n-m) > \frac {d+1}2$, $u_*= (h_*, \Phi_*) \in X^n$ such that $\ep_0=  (\inf h + h_*)>0$, and $R_1 > |u_*|_{X^n}$, there exist $\ep, T, M_0, C>0$ determined by $d, n, m, g, \sigma, h_0, \ep_0, R_1$ such that for any initial value $u_0=(h_0, \Phi_0) \in X^{n-1} (u_*, \ep) \cap X^n (u_*, R_1)$, \eqref{E:Zakharov-tf} has a unique solution $u(t) = (h(t), \Phi(t)) \in X^{n-1} (u_*, 2\ep) \cap X^n (M_0)$, $t \in [-T, T]$, which satisfies $u \in C_t^0 X^n \cap C_t^1 X^{n-1}$. 
Moreover, for any $t \in [-T, T]$, the solution map $u(t, \cdot) \in C^{m,1} (X^{n-1}(u_*, \ep) \cap X^n (R_1), X^{n-1-m})$. For any $u_0 \in X^{n-1}(u_*, \ep) \cap X^n (R_1)$, the $m$-linear operator $\BD_{u_0}^m u(t, u_0) \in \BBL( \otimes_{j=1}^m X^{n-1}, X^{n-m})$ is strongly $C^0$ in $t$, satisfies 
\be \label{E:temp-98}
|\BD_{u_0}^m u(t, u_0)|_{\BBL( \otimes_{j=1}^m X^{n-1}, X^{n-m})} \le 
\begin{cases} C, & \text{ if } \ m=1,\\
Ct,  & \text{ if } \ m>1, 
\end{cases} \ee
and for any $u_{01}, u_{02} \in X^{n-1}(u_*, \ep) \cap X^n (R_1)$, 
\[
|\BD_{u_0}^m u(t, u_{01}) - \BD_{u_0}^m u(t, u_{02})|_{\BBL( \otimes_{j=1}^m X^{n-1}, X^{n-1-m})} \le Ct |u_{01} - u_{02}|_{X^{n-1}}. 
\]
\end{theorem}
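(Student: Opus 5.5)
The plan is to obtain Theorem \ref{T:CWW-LWP-1} from the abstract results Theorems \ref{T:NLPDE-LWP} and \ref{T:LWP-smoothness} in Appendix \ref{SS:NLPDE-LWP}. This means verifying assumptions (B.1)--(B.5) for $F = J\BD\BH$ on a neighborhood $\CO = X^{n-1}(u_*, \ep)$, with $X^r = Z_1^{\frac 32 r}\times Z_2^{\frac 32 r}$. Since the system is Hamiltonian with $J^*=-J$, the natural route is Proposition \ref{P:Ham-LWP}. Its input is the pair $L = \CL(u_*) = \BD^2\BH(u_*)$ and $J$, and these must satisfy \eqref{E:LHam-1}--\eqref{E:LHam-2} on $X$.

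\textbf{Step 1 (linear structure).} By Lemma \ref{L:CWW-CL}(1), $J^* = -J$ and $\CL$ is analytic from $\CO_{r_0}(R,\ep)\times Z_2^{r_0}$ to $\BBL(X, X^*)$ for $r_0 > d/2$. I will use that the well-posedness does not require a moving frame, so I may take $\Vc = 0$ (Remark \ref{R:Vc-1}). Then \eqref{E:temp-96} shows that $\CL_0$ is uniformly positive, and $\CL_1$ is compact. Consequently \eqref{E:LHam-2} holds at $u_*$ with finite Morse index, which is Lemma \ref{L:CWW-CL}(2); the case $\Vc\ne 0$ then follows by a Galilean change of frame.

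Proposition \ref{P:Ham-LWP}(1) then supplies $\lambda_0$ together with the resolvent bound. Lemma \ref{L:DomCA-1} identifies $Dom(\CA(u)^r) = X^r$ for $\frac 32 n > \frac{d+1}{2}$, and through \eqref{E:CA-1} it gives the $C^m$ bounds on $\CA$ from $X^r$ to $X^{r-1}$. This yields (B.1). With (B.1) in hand, Proposition \ref{P:Ham-LWP}(2) delivers (B.2)--(B.3) on a small $X^{n-1}$-ball about $u_*$, using the dissipativity identity \eqref{E:dissipativity-6}.

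\textbf{Step 2 (the tame estimate, the main obstacle).} Assumption (B.4) requires a tame bound on $\BD^2 F$ of the form \eqref{E:tameE-1}, and this is where the nonlocal $\CG(h)$ enters. I will supply it via Lemma \ref{L:D2F}, feeding that estimate into Lemma \ref{L:CB-1}. Lemma \ref{L:D2F} is stated only for $\frac 32 n \ge \frac{d+9}2$, whereas the theorem assumes only $\frac 32(n-m)>\frac{d+1}2$, so the gap between the two regularity thresholds has to be closed.

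My approach to the gap rests on Remark \ref{R:LWP-R-2}: \eqref{E:tameE-1} is needed only when $F(u_*)\ne 0$. I would therefore replace the tame estimate by a direct argument. The key tools are the commutator bounds of Corollary \ref{C:DN-1} and the refined estimate Lemma \ref{L:DN-3}, which treats the highest-order norm linearly; with these I would control $F(u_0) - F(u_*)$ in $X^{n-1}$ by the $X^{n-1}$-smallness of $u_0-u_*$ times a bound depending on $R_1$. If that fails, I will accept the stronger regularity as the working range and recover lower $n$ by continuation of regularity.

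\textbf{Step 3 (smoothness).} For (B.5), the higher derivatives $\BD^m F = \BD^{m-1}\CA$ must map into $X^{r-m}$ with the required bounds. This follows from the analyticity in Proposition \ref{P:DN-0}(1) and Lemma \ref{L:DN-1}, together with Remark \ref{R:smoothness-2}, using $\frac 32(n-m) > \frac{d+1}2$ so that every factor stays in an algebra. Theorem \ref{T:LWP-smoothness} then gives the $C^{m,1}$ dependence of the solution map, the bounds \eqref{E:temp-98}, and the Lipschitz estimates on $\BD^m_{u_0}u$. The $O(t)$ bound for $m>1$ comes from $\BD^m_{u_0} u(0) = 0$. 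Finally, the time-reversibility of the Hamiltonian group yields the solution on $[-T, T]$.
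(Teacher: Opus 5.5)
Apart from the workaround in Step 2, your plan is essentially the paper's proof. The paper reduces to $\Vc=0$ and gets (B.1) and (B.5) from Lemma \ref{L:DomCA-1} and Remark \ref{R:smoothness-2}. It gets (B.2)--(B.3) from Lemma \ref{L:CWW-CL} and the Hamiltonian structure; there it takes the resolvent at $u_*$ from Proposition \ref{P:CWW-ET-1} rather than Proposition \ref{P:Ham-LWP}, but both rest on Proposition \ref{P:LHam}. It obtains (B.4) from Lemma \ref{L:CB-1} fed by Lemma \ref{L:D2F}. You are right that Lemma \ref{L:D2F} requires $\frac32 n\ge\frac{d+9}2$. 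The hypothesis $\frac32(n-m)>\frac{d+1}2$ guarantees this for $m\ge 3$, but not in general for $m\in\{1,2\}$, and the paper's proof cites Lemma \ref{L:D2F} without addressing this. The genuine gap is that your proposed way around the threshold does not work.

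First, Remark \ref{R:LWP-R-2} cannot replace (B.4). It needs $F(u_*)\in X^n$, which fails for a general $u_*\in X^n$ since $F$ only maps $X^n$ into $X^{n-1}$. Even then it gives well-posedness only for data in a \emph{small} $X^n$-ball about $u_*$. The theorem instead allows any $R_1>|u_*|_{X^n}$, with data close to $u_*$ only in $X^{n-1}$, and (B.4) exists precisely for this situation. In the proof of Theorem \ref{T:NLPDE-LWP}, (H.4) needs $f(v)=\omega_*\big((\BA(v)-\omega_*)\CB^{-1}(v)-v\big)$ bounded in $X^{n-1}$ for every $v$ that is $X^{n-2}$-close to $v_*$ but only bounded, by a large constant, in $X^{n-1}$. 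That is an elliptic-regularity bound for $\CB^{-1}$ into $X^n$ on large $X^{n-1}$-balls, and Lemma \ref{L:CB-1} proves it by an iteration that uses \eqref{E:tameE-1} on $\CO_n(R)$ for large $R$.

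Second, the estimate you propose to prove instead is false and aimed at the wrong quantity. $F$ loses a full level of the scale (e.g.\ through the capillarity term $\sigma\kappa(h)$), so $|F(u_0)-F(u_*)|_{X^{n-1}}$ cannot be bounded by $C(R_1)|u_0-u_*|_{X^{n-1}}$; only the $X^{n-2}$ norm can. In any case it is a forward bound on the initial data, not the inverse bound along the evolution that (H.4) requires.

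Third, Proposition \ref{P:continuation} propagates higher regularity of a solution that already exists, and it itself assumes $\frac32 n\ge\frac{d+9}2$. It cannot produce solutions in a lower-regularity class.

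So, as written, your argument (like the paper's) proves the theorem only when $\frac32 n\ge\frac{d+9}2$. To cover the remaining cases with $m\in\{1,2\}$ you would need a genuine proof of \eqref{E:tameE-1} at the lower regularity. One route is to sharpen how Lemma \ref{L:DN-3} is applied in the proof of Lemma \ref{L:D2F}, since that is where the threshold $\frac{d+9}2$ enters.
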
 


\begin{proof} 
As \eqref{E:Zakharov-tf} is exactly \eqref{E:Zakharov-1} in a moving frame, while the spatial differentiation causes less regularity then the temporal differentiation in these equations with surface tention, it suffices to prove the theorem on the latter. The theorem for \eqref{E:Zakharov-1} would follow readily from Theorems \ref{T:NLPDE-LWP} and \ref{T:LWP-smoothness}, which require assumptions (B.1--B.5). 

Among these assumptions, (B.1) and (B.5) are concerned with the boundness and smoothness of $\CA$, which are satisfied (with $r_0=n-1$ in (B.5)) on any bounded open subset of 
\be \label{E:temp-52.1}
\CO_{\frac 32 (n-m)} \big(\ep_0/2, +\infty \big) \times Z_2^{\frac 32(n-m)} \subset X^{n-m}
\ee
due to Remark \ref{R:smoothness-2} and the above Lemma \ref{L:DomCA-1}. 

In assumption (B.3), the boundedness and smoothness and $\CL$ is ensured by Lemma \ref{L:CWW-CL} on any bounded subset of \eqref{E:temp-52.1}; the dominance \eqref{E:coercivity-2} of $\CL(u)$ on $X$ is satisfied for all $u \in X^{n-1} (u_*, \ep_1)$ for some small $\ep_1 >0$ due to Lemma \ref{L:CWW-CL}(2); and the dissipativity \eqref{E:dissipativity-2.5} of $\CA$ with respect to $\CL$ is due to the Hamiltonian structure. 

Regarding the non-degeneracy assumption (B.2), firstly Proposition \ref{P:CWW-ET-1} implies that for some $\omega_*>0$, $(\omega_* - \CA(u_*))^{-1} \in \BBL( X^{r-1}, X^{r})$, $1\le r\le n$. Due to the smoothness of $\CA$ given in Lemma \ref{L:DomCA-1}, $(\omega_* - \CA)^{-1} \in C^1 (X^{n-m} (u_*, \ep_2), \BBL( X^{r-1}, X^{r}))$, $1\le r\le n-m$, for some $\ep_2 \in (0, \ep_1]$. Again due to Lemma \ref{L:DomCA-1}, $X^r = Dom(\CA(u)^r)$ for all $u \in X^{n-m} (u_*, \ep_2) \cap X^l$, $n-m \le l \le n$, and $1\le r\le l$. Therefore it is straight forward to show, for any $R>0$, 
\be \label{E:temp-52.2}
|(\omega_* - \CA)^{-1}|_{C^1 (X^{n-m} (u_*, \ep_2) \cap X^l (R), \BBL( X^{r-1}, X^{r}))} < \infty, \quad n-m \le l \le n, \;  1\le r\le l. 
\ee
It implies (B.2) holds for $\CO = X^{n-m} (u_*, \ep_2) \cap X^{n-1} (R_1)$ which is open in $X^{n-1}$. 

Assumption (B.4) follows readily from Lemma \ref{L:CB-1} and the above Lemma \ref{L:D2F}. 
\end{proof} 

The following proposition gives a continuation of  higher regularity of solutions which will be used in the proof of the theorem on the stable/unstable manifolds. 

\begin{proposition} \label{P:continuation} 
Let $l, n \in \N$ and a solution $u \in C_t^0 X^n \cap C_t^1 X^{n-1}$ to \eqref{E:Zakharov-tf} on $[-T, T]$ satisfy $\frac 32 n \ge \frac {d+9}2$ and $u(0) \in X^l$, $l > n$, then $u \in C_t^0 X^{l} \cap C_t^1 X^{l-1}$ on $[-T, T]$. 
\end{proposition}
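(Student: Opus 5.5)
We argue by induction on $l$: it suffices to show that if $u\in C_t^0X^{l-1}\cap C_t^1X^{l-2}$ on $[-T,T]$ with $l-1\ge n$ and $u(0)\in X^l$, then $u\in C_t^0X^l\cap C_t^1X^{l-1}$ on $[-T,T]$. The point is that the local existence time given by Theorem \ref{T:CWW-LWP-1} at level $X^l$ depends on the $X^l$ size of the data, so a priori the higher-regularity solution might blow up before time $T$. We rule this out with an energy estimate at level $X^l$ whose growth is only linear (tame) in $|u|_{X^l}$, with coefficients controlled by the already known bound $\sup_{[-T,T]}|u|_{X^{l-1}}$.

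\emph{Energy estimate.} We differentiate the equation: $w=u_t=F(u)$ satisfies the linearized system $w_t=\CA(u)w$. More generally, the quantities $w_j=\CA(u)^{j}$-type combinations, or simply $v=(\omega_*-\CA(u))^{l-1}u_t$ measured in $X$, are controlled using Lemma \ref{L:DomCA-1} ($Dom(\CA(u)^r)=X^r$ with uniform bounds), so $|u|_{X^l}$ is equivalent to $|u|_X+|(\omega_*-\CA(u))^{l-1}F(u)|_X$ modulo lower-order terms. The evolution of such a quantity is governed by $\CA(u)$ plus commutator terms involving $\BD\CA(u)u_t=\BD^2F(u)(u_t,\cdot)$. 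We then use the energy form $\CL(u)$: by the Hamiltonian structure, $\langle\CL(u)v,\CA(u)v\rangle=0$, and $\partial_t\CL(u)$ is bounded by $C|u_t|_{X^{n-1}}$ via Lemma \ref{L:CWW-CL}. The coercivity of $\CL(u)$ up to finitely many directions (Lemma \ref{L:CWW-CL}(2)) is handled by adding a large multiple of $|v|_{X^{-1}}^2$, which is a lower-order quantity, as in (B.3).

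\emph{Tame commutators.} The key step, and the main obstacle, is showing that the commutator terms satisfy bounds like $C(|u|_{X^{l-1}})\,(1+|u|_{X^l})$, that is, with the top norm appearing linearly. For this we use Lemma \ref{L:D2F}, which is stated at level $n$ but whose proof, based on Lemma \ref{L:DN-3} with the parameters in \eqref{E:temp-72.6} adjusted to $r=\tfrac32 l$, works for any level $l$ with $\tfrac32 l\ge\tfrac{d+9}2$. The hypothesis $\tfrac32n\ge\tfrac{d+9}2$ guarantees that $r_0+\tfrac{k+1}2\le\tfrac32(l-1)$, so every factor except one is measured in $X^{l-1}$. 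This yields
\[
\tfrac{d}{dt}E_l(t)\le C\big(\sup|u|_{X^{l-1}}\big)\,(1+E_l(t)),
\]
and Gronwall gives a bound on $|u(t)|_{X^l}$ on $[-T,T]$.

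\emph{Continuation.} Finally we conclude by a continuation argument. Theorem \ref{T:CWW-LWP-1} applied at level $l$ (taking $u_*=u(t_1)$, so that the smallness $X^{l-1}(u_*,\ep)$ is trivially met) produces an $X^l$ solution on a time interval whose length depends only on the $X^l$ bound, which is uniform by the estimate above. By uniqueness at level $n$, this solution coincides with $u$, so finitely many steps cover $[-T,T]$. The membership $C_t^1X^{l-1}$ then follows from $u_t=F(u)$ together with Lemma \ref{L:DomCA-1}. To justify the energy computation rigorously, we would first carry it out for mollified data, or equivalently note that it is performed on the local $X^l$ solution supplied by Theorem \ref{T:CWW-LWP-1}, where all quantities are finite.
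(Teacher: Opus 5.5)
Your route is correct in outline but differs from the paper's. The paper builds no higher-order energy and uses no blow-up alternative. For each step $n\to n+1$ it sets $v=u_{tt}$, which solves the \emph{linear} problem $v_t=\CA(u)v+\BD^2F(u)(u_t,u_t)$. Writing $u_t=(\omega-\CA(u))^{-1}(\omega u_t-v)$ and using Lemma \ref{L:D2F}, the forcing becomes linear in $v$ in $X^{n-1}$, with coefficient controlled by the known $|u_t|_{X^{n-1}}$. Since $u\in C_t^0X^n$, the evolution operator of $\CA(u(t))$ on $X^{n-1}$ exists on all of $[-T,T]$. Duhamel then gives $u_{tt}\in C_t^0X^{n-1}$, hence $u_t\in C_t^0X^n$, on the whole interval at once. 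Spatial regularity is read off from the translation-invariance identity $\CA(u)\p u=\p F(u)$, which lifts $u$ from $X^n$ to $X^{n+2/3}$ and then to $X^{n+1}$.

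Your quantity $(\omega_*-\CA(u))^{l-1}u_t\in X$ carries the same information as $u_{tt}\in X^{l-2}$, and the only tame input is the same. The top commutator is $\BD^2F(u)(u_t,u_t)$ in $X^{l-2}$, i.e.\ Lemma \ref{L:D2F} at level $l-1$ rather than $l$. This slip is harmless, since the lemma is stated for every admissible level, and the lower commutators only need (B.1)-type bounds. What the paper's version buys is that everything is linear on the given interval. It therefore needs neither the uniformity of the existence time in Theorem \ref{T:CWW-LWP-1} at level $l$ nor a continuation argument. Yours is the more familiar energy, Gronwall and blow-up scheme.

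Three steps in your write-up need repair.

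\begin{enumerate}
\item The equivalence $|u|_{X^l}\sim|u|_X+|(\omega_*-\CA(u))^{l-1}F(u)|_X$ modulo lower-order terms does not follow from Lemma \ref{L:DomCA-1}. That lemma only gives $|(\omega_*-\CA(u))^{l-1}w|_X\sim|w|_{X^{l-1}}$ for the linear operator. Passing from a bound on $|F(u)|_{X^{l-1}}$ to a bound on $|u|_{X^l}$ is a nonlinear elliptic estimate. You need it, because your continuation step requires an $X^l$ bound on $u(t_1)$. It is supplied precisely by the paper's identity: $(\omega_*-\CA(u))\p u=\omega_*\p u-\p F(u)\in X^{l-2}$ with $u\in X^{l-1}$ gives $u\in X^{l-1/3}$, and a second pass gives $u\in X^l$ with bounds.
\item The energy identity cannot be justified by noting that all quantities are finite on the $X^l$ solution. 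There $v$ is only in $X$, so $\langle\CL(u)v,\CA(u)v\rangle$ is undefined. Run the estimate through the evolution operator of Proposition \ref{P:linear} and Duhamel, as the paper does, or regularize the data.
\item Lemma \ref{L:CWW-CL}(2) requires $d_2=0$ or \eqref{E:coercivity-5}, which the proposition does not assume. First remove the moving frame, as in the proof of Theorem \ref{T:CWW-LWP-1}; this is a translation and preserves every $X^l$. Then $\Vc=0$ and $\CL$ is coercive modulo lower order.
\end{enumerate}
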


\begin{proof}
From Theorem \ref{T:CWW-LWP-1}, there exists $T_1 >0$ such that $u \in C_t^0 X^{l} \cap C_t^1 X^{l-1}$ on $[-T_1, T_1]$. So we only need to obtain the estimates of $|u(t)|_{X^l}$ and $|u_t (t)|_{X^{l-1}}$. 

Due to the continuity of $u(t)\in X^n$ in $t$, there exist $R, \ep>0$ such that $u(t) \in \CO_{\frac 32n} (R, \ep) \times Z_2^{\frac 32n} (R)$ for all $ t\in [-T, T]$. Moreover Proposition \ref{P:CWW-ET-1} and the continuity of $\CA(u)$ in $u$ (Lemma \ref{L:DomCA-1}) imply there exists $\omega >0$ such that 
\[
\bar A(t)^{-1} \triangleq (\omega - \CA(u(t)))^{-1} \in \BBL(X^{r-1}, X^r), \quad 1\le r \le n, \; t \in [-T, T].
\] 
Let $v=u_{tt}$ and it satisfies 
\be \label{E:temp-71}
v= \CA(u) F(u), \quad v_t = \CA(u) v + \BD^2 F(u) (u_t, u_t), \quad u_t = \bar A(t)^{-1} ( \omega u_t - v). 
\ee
One the one hand, using Lemmas \ref{L:CWW-CL} and \ref{L:DomCA-1}, as in the proof Theorem \ref{T:NLPDE-LWP}, one may prove that $w_t = \CA(u(t))w$ generates a solution operator $U(t, t_0) \in \BBL(X^r)$, $0\le r\le n-1$, strongly continuous in $(t, t_0) \in [-T, T]^2$, where $t_0$ is the initial time. On the other hand, Lemma \ref{L:D2F} implies that 
\[
\big|\BD^2 F(u(t))\big(u_t(t), u_t(t) \big)\big|_{X^{n-1}} \le C |u_t (t)|_{X^{n-1}} |u_t(t)|_{X^n} \le C |u_t (t)|_{X^{n-1}} | \omega u_t (t)- v(t)|_{X^{n-1}}. 
\]
Therefore from the variation of parameter formula 
\[
v(t) = U(t, 0) v(0) + \int_0^t U(t, \tau) \BD^2 F(u(\tau)) \big(u_t(\tau), u_t(\tau) \big) d\tau, \;\; v (0) = \CA(u(0)) F(u(0)) \in X^{l-2}, 
\]
we immediately obtain 
\[
u_{tt} = v \in C_t^0 X^{n-1} \implies F(u)= u_t = \bar A^{-1} (\omega u_t - v) \in C_t^0 X^n, \quad  t\in [-T, T].
\] 
The translation invariance of $F$, i.~e.~$F(u(\cdot + x_0'))=(F(u))(\cdot + x_0')$, implies 
\[
\CA(u) \p u = \p F(u) \in C_t^0 X^{n-\frac 23} \subset C_t^0 X^{n-1}. 
\]
Since $u \in C_t^0X^n$, along with Lemma \ref{L:DomCA-1} it yields $\p u \in C_t^0 X^n$ and thus $u \in C_t^0 X^{n+\frac 23}$. Repeating this argument, we obtain $u \in C_t^0X^{n+1}$. 

The proposition follows from the above argument inductively. 
\end{proof}

$\bullet$ {\bf Invariant manifolds.} Finally, we are ready to prove  Theorem \ref{T:CWW-LInMa-main}.  

\begin{proof}[Proof of  Theorem \ref{T:CWW-LInMa-main}]
We shall only focus on the local unstable manifold of the spectrally unstable equilibrium $(h_*, \Phi_*)$ as the derivation and properties of the stable manifold are similar (see Remarks \ref{R:smoothness-SM}, \ref{R:SM}, and \ref{R:Ham-SM}). 
Let  
\be \label{E:para-2} \begin{split} 
& \N \ni n > 1+ (d+1)/3, \quad 
\ep_0 = \inf (h_* + h_0) >0, \quad R_0 = 2 |h_*|_{H^{\frac 32n+1}}+1, \\
&  \CO= \CO_{\frac 32(n-1)} (R_0, \ep_0/2) \times Z_2^{\frac 32(n-1)} (R_0) \subset X^{n-1}, \quad 0 < \omega_- < \lambda < \omega_+ < \lambda_+. 
\end{split} \ee
The assumptions on $u_*\triangleq (h_*, \Phi_*)$ in Theorem \ref{T:CWW-LInMa-main} along with Lemmas \ref{L:CWW-CL} and \ref{L:DomCA-1} 
verify all the hypothesis of Theorem \ref{T:Ham-UM}. Therefore, for 
any $n \in \N$, $\lambda$, and $\omega_\pm$ satisfying \eqref{E:para-2}, there exist $\delta, M^*, C>0$ and $q^+: X_+ (\delta) \to X_- \oplus (X_0 \cap X^n)$ with the properties in Theorem \ref{T:NLPDE-UM}.  

Fix $n=n_0$ and $\lambda_0$ as given in Theorem \ref{T:CWW-LInMa-main} and let $q^+$ be determined by $n_0$, $\omega_- = \frac 12 \lambda_0$, and $\omega_+= \frac 12 (\lambda_0 + \lambda_+)$. Except for \eqref{E:UM-3.1-1}, Theorem \ref{T:CWW-LInMa-main}(2) is identical to Theorem \ref{T:NLPDE-UM}(1). Due to the local well-posedness of \eqref{E:Zakharov-tf}, Theorem \ref{T:CWW-LInMa-main}(3) follow from Theorem \ref{T:NLPDE-UM}(2--3). We shall prove that $q^+$ satisfies Theorem \ref{T:CWW-LInMa-main}(4), \eqref{E:UM-3.1-1},  and then Theorem \ref{T:CWW-LInMa-main}(1). 

The proof of Theorem \ref{T:CWW-LInMa-main}(4) is similar to that of Theorem \ref{T:NLPDE-UM}(5) except for that $n$ may also be different from $n_0$. Suppose $n$ and $\lambda$ also satisfy \eqref{E:para-1}. Let $\wt n= \min \{n, n_0\}$ and $\wt \lambda= \min\{\lambda, \lambda_0\}$. Choose $\wt \omega_\pm \in (0, \lambda_+)$ such that $\lambda_0, \lambda \in (\wt \omega_-, \wt \omega_+)$. 
From the same above analysis, there also exist $\wt \delta \in (0, \delta]$, $\wt M^*>0$, and $\wt q^+: X_+ \to X_- \oplus (X_0 \cap X^{\wt n})$ given by Theorem \ref{T:NLPDE-UM} for $(\wt n, \wt \lambda)$, which also satisfies Theorem \ref{T:CWW-LInMa-main}(1--2) except for \eqref{E:UM-3.1-1}. Let 
$(h_*, \Phi_*) +u(t)$, $t\le 0$, be a solution to \eqref{E:Zakharov-tf} such that $\sup_{t\le 0} |u(t)|_{X^{n}} e^{- \lambda t} <\infty$. From the choices of $\wt n$ and $\wt \lambda$, we have $\sup_{t\le 0} |u(t)|_{X^{\wt n}} e^{- \wt \lambda t} <\infty$ and thus Theorem \ref{T:NLPDE-UM}(5) implies $(I-\Pi_+) u(t) = \wt q_+(u_+(t))$ for all $t\ll -1$. Let $t_1 \ll -1$ and $(h_*, \Phi_*) + v(t)$ be the solution to \eqref{E:Zakharov-tf} with initial value $v(0) = u_+ (t_1) + q^+ (u_+ (t_1))$. Due to the definition of $q^+$ which satisfies \eqref{E:UM-3-1}, by taking $t_1 \ll -1$, we have 
\[
|v(t)|_{X^{\wt n}} \le |v(t)|_{X^{n_0}} \le M^* |u_+(t_1)|_{X_+} e^{\lambda_0 t} \le M^* |u_+(t_1)|_{X_+} e^{\wt \lambda t} \le 2 \wt M^* \wt \delta e^{\wt \lambda t}, \quad \forall t\le 0.
\]
Therefore the uniqueness property in Theorem \ref{T:CWW-LInMa-main}(1) satisfied by $\wt q^+$ implies $q^+ (u_+ (t_1)) = \wt q^+ (u_+ (t_1))$ and $u(t+ t_1) = v(t)$ for all $t \le 0$. Since $v(0) \in W^+$, the invariance property Theorem \ref{T:CWW-LInMa-main}(2) of $q^+$ implies $u(t) \in W^+$ 
for all $ t\ll -1$ and Theorem \ref{T:CWW-LInMa-main}(4) is proved. 

To prove \eqref{E:UM-3.1-1}, let $\wt q^+: X_+(\wt \delta) \to X_- \oplus (X_0 \cap X^n)$ be the mapping given by Theorem \ref{T:NLPDE-UM} for $(n, \lambda)$. For any $u_{0+} \in X_+(\delta)$, let $(h_*, \Phi_*) +u(t)$ be the solution with $u(0) = u_{0+} +  q^+( u_+(0))$. Since $u(t)$ satisfies \eqref{E:UM-3-1} with parameter$(n_0, \lambda_0)$, Theorem \ref{T:CWW-LInMa-main}(4) satisfied by $\wt q^+$ and parameter $(n_0, \lambda_0)$ implies $(I-\Pi_+) u(t) = \wt q^+( u_+(t))$ for $t \ll -1$. 
Therefore \eqref{E:UM-3.1-1} follows from \eqref{E:UM-3-1} satisfied by $\wt q^+$. 

To prove Theorem \ref{T:CWW-LInMa-main}(1), for any $n > n_0$, let $\wt q^+: X_+(\wt \delta) \to X_- \oplus (X_0 \cap X^n)$ be the mapping given by Theorem \ref{T:NLPDE-UM} for $(n, \lambda=\frac {\lambda_++\lambda_0}2)$ and $\wt W^+ = graph (\wt q^+)$. 
We first show the regularity $q^+(X_+(\delta)) \subset X^n$. For any $u_{0+} \in X_+(\delta)$, let $(h_*, \Phi_*) +u(t)$ be the solution with $u(0)= u_{0+} +  q^+( u_+(0)) \in W^+$, then \eqref{E:UM-3-1}  satisfied by $q^+$ and Theorem \ref{T:CWW-LInMa-main}(4) satisfied by $\wt q^+$ imply that for all $t_1 \ll -1$ such that $u(t_1) \in \wt W^+  \subset X^n$. 
Due to the assumption on $n_0$, applying Proposition \ref{P:continuation} to initial value $(h_*, \Phi_*) +u(t_1)$, we obtain $u(0) \in X^n$. 

We continue to show $q^+ \in C^\infty(X_+(\delta))$. From Remark \ref{R:smoothness-2} and Lemma \ref{L:DomCA-1}, assumption (B.5) is satisfied for all $n \ge n_0$, $r_0= n-1$, and $2\le m \le n- n_0 +2$.  Theorem \ref{T:UM-smoothness} implies  
\[
 |q^+|_{ C^{2, 1} (X_+(\delta),  X^{n_0-3})} \le C, 
 \quad q^+(0)=0, \quad \BD q^+(0) =0,
 \]
and thus $W^+ \subset  X^{n_0-3}$ is a $C^{2,1}$ manifold diffeomorphic and close to $X_+(\delta)$. 
For any $\N \ni m \le  \frac {n-n_0}2-2$, 
similarly Theorem \ref{T:UM-smoothness} also implies the above defined 
\[
\wt q^+ \in C^{m, 1} (X_+(\wt \delta), X^{n-m-1}), \quad \wt q^+(0)=0, \quad \BD \wt q^+(0) =0.
\] 
 Let $(h_*, \Phi_*) +\wt u(t, v_{+})$, $v_{+} \in X_+(\wt \delta)$,  denote the solution with initial value $(h_*, \Phi_*) +v_{+} + \wt q^+(v_{+})$. 
 From Theorem \ref{T:CWW-LWP-1}, the solution map of \eqref{E:Zakharov-tf} is $C^{m, 1}$ from $X^{n-m-1}$ to $X^{n-2(m+1)} \subset X^{n_0+2}$. Moreover, since the solution map is invertible (by the solution map in negative time), their linearizations are always injective. 
Therefore  
 \[
\wt u (-t_1, \cdot)  \in C^{m, 1} (X_+(\wt \delta), X^{n- 2(m+1)}), \quad  \wt u(-t_1, u_+(t_1)) = u(0) = u_{0+} + q^+(u_+(0)),  
\] 
and $\BD \wt u (-t_1, v_{+}) \in \BBL(X_+, X^{n-2(m+1)})$ is  injective. 
This implies $\Pi_+ \wt u(-t_1, \cdot)$ is $C^{m, 1}$ mapping from a neighborhood of $u_+ (t_1) \in X_+$ to a neighborhood of $u_{0+} \in X_+$.
Since $u(t)$ satisfies \eqref{E:UM-3-1}, by the continuity of $\wt u(-t_1, \cdot)$ and the solution map of \eqref{E:Zakharov-tf} and the choice of $\lambda > \lambda_0$, solutions $(h_*, \Phi_*) +\wt u(t, v_{+})$ to \eqref{E:Zakharov-tf} with 
$v_+$ close to $u_{+}(t_1)$ satisfy \eqref{E:temp-91}. From  Theorem \ref{T:CWW-LInMa-main}(2), we obtain $\wt u(-t_1, v_+) \in W^+ = graph(q^+)$. 
Hence the injectivity of $\BD \wt u (-t_1, v_{+})$ implies  the injectivity of $\BD \Pi_+ \wt u (-t_1, v_{+}) \in \BBL(X_+)$. As $\dim X_+< \infty$, $\Pi_+ \wt u (-t_1, \cdot)$ is a $C^{m, 1}$ local diffeomorphism near $u_{+} (t_1)$. Therefore 
\[
q^+ = (I- \Pi_+) \wt u (-t_1, \cdot) \circ \big( \Pi_+ \wt u (-t_1, \cdot) \big)^{-1}  : X_+ \to X^{n-2(m+1)} 
\]
is $C^{m, 1}$ in a neighborhood of $u_{0+}$. 
Since $n >n_0$, $\N \ni m \le \frac {n-n_0}2-2$,  and $u_{0+}\in X_+(\delta)$ can be chosen arbitrarily, we obtain $q^+ \in C^\infty (X_+(\delta), X^n)$ for any $n \in \N$. The proof of Theorem \ref{T:CWW-LInMa-main} is complete. 
\end{proof} 

$\bullet$ {\it  Unstable manifolds of transversally unstable solitary capillary gravity water waves.} Consider \eqref{E:Zakharov-tf} on $\CU = \R \times (\R/l\Z)$ with $\Vc = (c, 0)$. 
Small line solitons $\big(h^\ep (x) = \ep^2 \wt h(\ep, \ep x), \Phi^\ep (x) = \ep \wt \Phi(\ep, \ep x)\big)$  were found in \cite{AK89} under assumptions 
\be \label{E:AK89}
gh_0/c^2 -1 = \ep^2 \ll 1, \quad  0 < \sigma/(g h_0^2) - 1/3 = O(1), 
\ee
where $\wt h(\ep, x)$ and $\wt \Phi(\ep, x)$ are smooth in both $\ep$ and $x \in \R$ limiting to the KdV soliton as $\ep \to 0+$. Under perturbations depending on both $x \in \R$ and the transversal variable $y \in \R/l\Z$, the spectral and nonlinear instability was obtained  for certain $l>0$ in \cite{RT11}, where essential spectra of $\CL (h^\ep, \Phi^\ep)= \BD^2 \CH(h^\ep, \Phi^\ep)$ was analyzed (ee also, e.~g.~\cite{Mi02}). In fact, using \eqref{E:AK89}, it is straight forward to obtain $\CL_{01}(0) \ge \frac {\ep^2}C$ over the domain $\CU$. In the difference $\CL_{01}(h, 0) - \CL_{01}(0)$, everything is multipled by $\p_{x} h^\ep (x') = \ep^3 \p_{x} \wt h (\ep x)$ whose $Z_1^{r_0}$ norm of the order $\ep^{\frac 52}$. Hence \eqref{E:coercivity-5} is verified and Theorem \ref{T:CWW-LInMa-main} applies to yield (possibly multi-dimensional) $C^\infty$  unstable manifolds of the spectrally unstable line solitons as steady states of \eqref{E:Zakharov-tf} over the domain $\CU$.

\subsection{Fluid interface problem} \label{SS:2F} 

Consider two irrotational incompressible inviscid fluids  occupying $d$-dim domains 
\be \label{E:fluidD-3}
\Omega_h^- = \{ -h_- < x_d < h(x') \mid x' \in \CU\}, \; \Omega_h^+ = \{ h_+ > x_d > h(x') \mid x' \in \CU\}, \; h_\pm \in (0, \infty],
\ee
separated by an interface $\CS_h$ given by the graph of $h(x')$ satisfying 
\[
\inf_\CU h + h_-, \; h_+ - \sup_\CU h >0,  
\]
over a  horizontal domain $\CU$ as in \eqref{E:fluidD-0}. Again $h_\pm$, if finite, are the typical depth of the upper and lower fluid. Assume the fluids in $\Omega_h^\pm$ have densities $\rho_\pm$, pressures $p_\pm$, and the velocity field $v_\pm : \Omega_h^\pm \to \R^d$ with constant horizontal background velocities $\nu_\pm \in \R^{d-1}$ which count for the conserved horizontal momenta. 

In the irrotational case, there exist potentials $\phi_\pm: \Omega_h^\pm \to \R$ satisfying the Laplace equation \eqref{E:Harmonic} in $\Omega_t^\pm$ with the slip boundary condition along $x_d = \pm h_\pm$ if $h_\pm \in (0, \infty)$, such that  
\[
v_\pm = \nu_\pm + \nabla \phi^\pm. 
\]
The kinematic condition, which includes the consistency of the normal components of $v_\pm$,
\be \label{E:BC-2F-K}
h_t = (-\nabla_{x'} h, 1) \cdot v_\pm  \triangleq \mp \CG_\pm(h) \Phi^\pm - \nu_\pm \cdot \nabla_{x'} h,  
\ee
and the dynamic boundary conditions 
\be \label{E:BC-2F-D}
(p_- - p_+)|_{\CS_t} = \sigma \kappa = - \sigma \nabla_{x'} \cdot \Big( \frac {\nabla_{x'}  h }{\sqrt{1+ |\nabla_{x'}  h |^2}} \Big), 
\ee
are assumed along $\CS_h$, where $\Phi_\pm$ are the traces of $\phi_\pm$ defined in \eqref{E:HarTrace-1}. 
Here ``$\mp$'' sign in the definition of the  weighted Dirichlet-Neumann operators  ensures $\CG_\pm(h)$ are outward normal derivatives of $\phi^\pm$ along $\p \Omega_h^\pm= \CS_h$. 
As in the fluid-vacuum problem, $\int_\CU \Phi_\pm dx'=0$ is adopted if $d_2=0$ in the definition \eqref{E:fluidD-0} of $\CU$. It is included in \eqref{E:BC-2F-K} 
\be \label{E:temp-53}
\CG_+(h) \Phi^+ + \CG_-(h) \Phi^- + (\nu_+ -\nu_-) \cdot \nabla_{x'} h=0. 
\ee
Hence $\phi^\pm$, and $\Phi^\pm$ as well, are not independent. 
The Euler equation \eqref{Euler} implies 
\[
\nabla \big( \rho_\pm (\phi_t^\pm + \nu_\pm \cdot \nabla \phi^\pm + \tfrac 12 |\nabla \phi^\pm|^2 + g x_d) + p_\pm\big) =0, 
\]
which in turn yield the Bernoulli equation along $\CS_t$, 
\be \label{E:2F-Ber-1} 
\rho_- \big( \phi_t^- + \nu_- \cdot \nabla \phi^- + \tfrac 12 |\nabla \phi^-|^2 \big) - \rho_+ \big( \phi_t^+ +  \nu_+ \cdot \nabla \phi^+ + \tfrac 12 |\nabla \phi^+|^2 \big)   + g (\rho_- -\rho_+) h + \sigma \kappa =const.
\ee
The irrotational interface problem is equivalent to \eqref{E:BC-2F-K} and \eqref{E:2F-Ber-1}. 

\begin{remark} \label{R:background-V}
In the fluid-vacuum interface problem, by considering $h(t, x-t \nu_1)$ and $v(t, x- t \nu_1) + \nu_2$, $\nu_1, \nu_2 \in \R^{d-1}$, from the Euler equation \eqref{Euler} and boundary conditions one 
obtains \eqref{E:Zakharov-tf}  with a moving velocity $\nu_1 - \nu_2$. 
With the background velocities in the fluid interface problem, there is no need to add the moving frame into the problem. 
\end{remark}

\subsubsection{Hamiltonian formulation} \label{SSS:2F-Ham}
We use the canonical variables given in \cite{BB97} (see also \cite{CG00}) 
\[
(h, \Phi), \; \text{ where } \; \Phi= \rho_- \Phi^- - \rho_+ \Phi^+. 
\] 
One observes that for any given $h$, $\Phi$ and $(\Phi^+, \Phi^-)$ has a one-to-one correspondence due to \eqref{E:temp-53} and the defintion of $\Phi$. 
The above Bernoulli equation \eqref{E:2F-Ber-1} is equivalent to  
\be \label{E:2F-Ber-2} \begin{split}
\Phi_t - \big( \rho_- \phi_{x_d}^- -  \rho_+ & \phi_{x_d}^+ \big) h_t  + \rho_- \big( \nu_- \cdot \nabla \phi^- + \tfrac 12 |\nabla \phi^-|^2 \big) \\
& - \rho_+ \big(  \nu_+ \cdot \nabla \phi^+ + \tfrac 12 |\nabla \phi^+|^2 \big)   + g (\rho_- -\rho_+) h + \sigma \kappa =const. 
\end{split} \ee
To verify briefly the Hamiltonian structure, we start with  the kinetic energy 
\[
\CK= \sum_\pm \int_{\Omega_h^\pm} \frac 12 \rho_\pm ( |v_\pm|^2 - |\nu_\pm|^2) dx = \sum_\pm \int_{\Omega_h^\pm} \frac 12 \rho_\pm ( |\nabla \phi^\pm|^2 + 2 \nu_\pm \cdot \nabla \phi^\pm) dx. 
\]
To derive the variation of the kinetic energy, suppose $h$ and $\Phi$, and thus $\Phi^\pm$ and $\phi^\pm$ as well, depend on an external parameter $\alpha$, while \eqref{E:temp-53} holds for all $\alpha$. As in \cite{BB97}, one may calculate 
\begin{align*} 
\CK_\alpha = & \sum_\pm \Big( \int_{\Omega_h^\pm} \rho_\pm ( \nabla \phi^\pm +  \nu_\pm ) \cdot \nabla \phi_\alpha^\pm dx \mp \int_{\CU} \frac 12 \rho_\pm \big( |\nabla \phi^\pm|^2 +  2\nu_\pm \cdot \nabla \phi^\pm\big) h_\alpha dx' \Big)\\
=& \sum_\pm \int_{\CU} \rho_\pm \big( \CG_\pm(h) \Phi^\pm \pm  \nu_\pm \cdot \nabla_{x'} h \big) \phi_\alpha^\pm \mp \frac 12 \rho_\pm \big( |\nabla \phi^\pm|^2 +  2\nu_\pm \cdot \nabla \phi^\pm\big) h_\alpha dx' \\
=& \sum_\pm \int_{\CU} \rho_\pm \big( \CG_\pm(h) \Phi^\pm \pm \nu_\pm \cdot \nabla_{x'} h \big) (\Phi_\alpha^\pm - \phi_{x_d}^\pm h_\alpha ) \mp \frac 12 \rho_\pm \big( |\nabla \phi^\pm|^2 +  2\nu_\pm \cdot \nabla \phi^\pm\big) h_\alpha dx'.
\end{align*}
Using \eqref{E:temp-53}, we obtain 
\begin{align*} 
\CK_\alpha = \int_{\CU} \big( \CG_-(h) \Phi^- -  \nu_- \cdot \nabla_{x'} h \big) \Phi_\alpha - \sum_\pm  \rho_\pm \Big( &\big( \CG_\pm(h) \Phi^\pm \pm  \nu_\pm \cdot \nabla_{x'} h \big)  \phi_{x_d}^\pm \\
&\pm  \big( \frac 12 |\nabla \phi^\pm|^2 +  \nu_\pm \cdot \nabla \phi^\pm\big) \Big)h_\alpha dx'. 
\end{align*}
This equality yields the variation of $\CK$ with respect to $\Phi$ and $h$. 
Up to a constant, the gravitational potential energy on $\Omega_h^\pm$ is given by 
\[
\mp \int_{\CU}  \int_{0}^{h(x')} g \rho_\pm x_d dx_d dx' = \mp \frac 12 \int_{\CU} g \rho_\pm h^2 dx'.
\]
From these calculations along with the variation of the potential energy of the surface area, it is straight forward to show that the Hamiltonian flow generated by $\BH$ is indeed the irrotational fluid interface problem given in \eqref{E:BC-2F-K} and \eqref{E:2F-Ber-2}.

To write the Hamiltonian more explicitly as a nonlinear functional of $\Phi$ and $h$, much as above we convert $\CK$ into an integral on the surface via the divergence theorem
\begin{align*}
\CK
= & \sum_\pm \int_{\CU} \frac 12 \rho_\pm  \big( \CG_\pm (h) \Phi^\pm \pm 2 \nu_\pm \cdot \nabla_{x'} h \big) \Phi^\pm  dx' = 
\sum_\pm \int_{\CU} \frac 12 \rho_\pm ( \mp h_t \pm \nu_\pm \cdot  \nabla_{x'} h ) \Phi^\pm dx' \\
=& \sum_\pm \int_{\CU} \frac 12 \rho_\pm ( h_t - \nu_\pm\cdot \nabla_{x'} h ) \CG_\pm (h)^{-1} (h_t + \nu_\pm \cdot \nabla_{x'} h ) dx' \\
= & \frac 12   \int_{\CU}h_t \wt \CG(h)^{-1} h_t - \sum_\pm \rho_\pm (\nu_\pm \cdot \nabla_{x'} h )  \CG_\pm (h)^{-1} (\nu_\pm \cdot  \nabla_{x'} h)  dx',
\end{align*} 
where \eqref{E:BC-2F-K} was used and the modified Dirichlet-Neumann operator $\wt \CG(h)$ is given by 
\be \label{E:wtCG}
\wt \CG^{-1} =\rho_+ \CG_+^{-1} +  \rho_- \CG_-^{-1}, 
\quad \wt \CG = \CG_+ (\rho_+ \CG_- + \rho_- \CG_+)^{-1} \CG_-. 
\ee
One may also compute 
\be \label{E:temp-54}
\wt \CG(h)^{-1} h_t 
= \Phi - \Sigma_\pm  \big(\rho_\pm  \nu_\pm \cdot \CG_\pm (h)^{-1}\nabla_{x'} h\big), 
\ee
and thus the Hamiltonian of the fluid interface problem takes the form 
\be \label{E:Ham-2F} \begin{split} 
\BH(h, \Phi) = & \int_{\CU} \frac 12  \Big(\Phi - \sum_\pm \rho_\pm  \nu_\pm \cdot \CG_\pm (h)^{-1}\nabla_{x'} h \Big)
\wt \CG(h) \Big(\Phi - \sum_\pm \rho_\pm  \nu_\pm \cdot \CG_\pm (h)^{-1}\nabla_{x'} h \Big)\\
& 
- \frac 12 \sum_\pm \rho_\pm (\nu_\pm \cdot \nabla_{x'} h )  \CG_\pm (h)^{-1} (\nu_\pm \cdot  \nabla_{x'} h) + \frac 12 g (\rho_--\rho_+) h^2 \\
&+ \sigma (\sqrt{1+|\nabla_{x'} h|^2} -1) dx'.
\end{split} \ee


\subsubsection{Function spaces and preliminary analysis} \label{SSS:wtCG} 

Let $Z_1^r$ be the same function space  defined in \eqref{E:space-1} of $h(x')$. 
For the momentum component, let 
\[
|f|_{Z_2^r} = |(1-\Delta_{x'})^{\frac r2} \wt \CG(0)^{\frac 12} f|_{L^2}. 
\]
It is easy to compute 
\[
\wt \CG(0) = \Big( \frac {\rho_+}{|\nabla_{x'}| \tanh (h_+ |\nabla_{x'}|)} + \frac {\rho_-}{|\nabla_{x'}| \tanh (h_- |\nabla_{x'}|)}\Big)^{-1},
\]
where it is understood $\tanh(h_\pm |\nabla_{x'}|) = 1$ if $h_\pm =+\infty$. 
So $|\cdot|_{Z_2^r}$ is positive (unless $d_2=0$ in \eqref{E:fluidD-0} and $f =const$) and induced by the symmetric bilinear form $\langle (1-\Delta_{x'})^{r}\wt \CG(0)f_1,  f_2 \rangle$. 
Define the real Hilbert space $Z_2^r$ to be the completion of $C_0^\infty (\CU, \R)$ under $|\cdot|_{Z_2^r}$  
and 
\be \label{E:space-5} 
X^r = Z_1^{\frac 32r} \times Z_2^{\frac 32r}, \quad 
X \triangleq X^0= Z_1 \times Z_2, \quad Z_1= Z_2^0, \quad Z_2 = Z_2^0,   
\ee
which could be quotient spaces as discussed in Remark \ref{R:spaces}. As in the fluid-vacuum case, 
\be \label{E:space-6} \begin{split} 
& \text{ if } \;   h_+ \ \text{ or } \ h_-<\infty:  \Phi \in Z_2^r \; \text{ iff } \; \nabla_{x'} \Phi \in H^{r-\frac 12}, \\
& \text{ if } \;   h_+ =  h_- =\infty:  \Phi \in Z_2^r \; \text{ iff } \; |\nabla_{x'}|^{\frac 12} \Phi \in H^{r}. 
\end{split} \ee

We first discuss those properties of  given in Section \ref{SS:CWW-pre}. We shall need the following identities satisfied by operators $W, W_1, W_2, \ldots $ depending on $h$
\begin{align}  
& [W_1, W_2^{-1}] = W_2^{-1} [W_2, W_1] W_2^{-1}, \quad \BD (W (h)^{-1}) \eta = - W (h)^{-1} (\BD W(h) \eta ) W (h)^{-1}, \label{E:temp-75}\\
& \BD^2 (W(h)^{-1}) (\eta, \eta) = - W (h)^{-1} \BD^2 W(h) (\eta, \eta ) W (h)^{-1} \notag \\
& \qquad \qquad  \qquad \qquad \quad + 2W (h)^{-1} (\BD W(h) \eta ) W (h)^{-1}  (\BD W(h) \eta ) W (h)^{-1}, \ldots  \label{E:temp-76}.
\end{align}

Without loss of generality, we may assume $h_+ \le h_- \le \infty$. While $\CG_+(0)$ defines the same spaces $Z_2^r$ and $\CG_+(h)$ satisfies all the properties in Section \ref{SS:CWW-pre}, $\CG_-(0)$ might define another scale of spaces $Z_{2-}^r \subset Z_2^r$ and $\CG_-(h)$ satisfies the same properties on $Z_{2-}^r$. In particular, $Z_{2-}^r = Z_2^r$ iff $h_+= h_- =\infty$ or $h_+, h_- \in (0, \infty)$. Clearly $Z_{2}^*  \subset Z_{2-}^*$ and Proposition \ref{P:DN-2} yields the analyticity of 
\be \label{E:temp-74}
\CG_-(h)^{-1}(1-\Delta)^{-\frac r2}  \in \BBL(Z_{2-}^*, Z_{2-}^r) \subset \BBL(Z_2^*, Z_2^r), \quad  
\CG_\pm (h)^{-1} \nabla_{x'} \in \BBL(Z_1^r, Z_2^{r+\frac 12}),
\ee
in $h$. It implies $\rho_+ \CG_+^{-1}(h) +  \rho_- \CG_-^{-1}(h) \in \BBL(Z_2^*, Z_2)$, symmetric and bounded below by $\rho_+ \CG_+^{-1}(h)$. Therefore $\wt \CG(h) \in \BBL(Z_2, Z_2^*)$ is well-defined, isomorphic, positive, and analytic in $h$. Moreover $\wt \CG(h)^{-1}(1-\Delta)^{-\frac r2} \in \BBL(Z_2^*, Z_2^r)$ is also analytic in $h$. 

From Corollary \ref{C:DN-1} and \eqref{E:temp-75}, one proves that  $\CG_\pm(h)^{-1}$, and thus $\wt \CG(h)^{-1}$ as well, satisfy the corresponding commutator estimates, 
\[
[\p_{x_j}, \CG_-(h)^{-1} ]\in \BBL(Z_{2-}^*, Z_{2-}), \quad [\p_{x_j}, \CG_+(h)^{-1} ], \, [\p_{x_j}, \wt \CG(h)^{-1}] \in \BBL(Z_{2}^*, Z_{2}), 
\]
which are also analytic in $h$. Consequently, 
\[
\p_{x_j}\wt \CG(h)  = \wt \CG(h) \p_{x_j} + \wt \CG(h) [\wt \CG(h)^{-1}, \p_{x_j}] \wt \CG(h) \in \BBL(Z_2^1, Z_2^*). 
\]
Along with interpolation, inductively we obtain that $\wt \CG$ satisfies Lemma \ref{L:DN-1}. Together with the first relation in \eqref{E:temp-74}, it implies that Proposition \ref{P:DN-2} is also satisfied by $\wt \CG(h)$. 

Finally, since $\CG_\pm(h)$ satisfy Lemma \ref{L:DN-3}, from \eqref{E:temp-75} and \eqref{E:temp-76}, $\CG_\pm(h)^{-1}$, and thus $\wt \CG(h)^{-1}$ as well, satisfy corresponds estimates on their variations. Again applying \eqref{E:temp-75} and \eqref{E:temp-76} we obtain Lemma \ref{L:DN-3} satisfied by $\wt \CG(h)$.

In summary, $\wt \CG(h)$ satisfies all the properties in Section \ref{SS:CWW-pre}.

\subsubsection{Local well-posedness and stable and invariant manifolds} \label{SSS:2F-LWP-LInMa}

The local well-posedness of the fluid interface problems with surface tension has been established in, e.~g.~\cite{BHL93, AM03, CSS08, SZ08b, SZ11}. In the following we lay out the framework for Theorems \ref{T:Ham-UM}, \ref{T:NLPDE-LWP}, \ref{T:LWP-smoothness}, \ref{T:NLPDE-UM}, \ref{T:UM-smoothness} to apply to yield both the well-posedness and invariant manifolds. 
To analyze 
\[
F  = J \BD \BH, \quad \CL= \BD^2 \BH, \quad \CA= J \BD^2 \BH,
\]
as in \eqref{E:CL0-1} and \eqref{E:D2H-1} we still split $\CL(h, \Phi)$ into 
\begin{align*}
\langle \CL(h, \Phi) (\eta_1, \Psi_1),  (\eta_2, \Psi_2) \rangle = &\langle \CL_{01}(h) \eta_1,  \eta_2 \rangle + \langle \CL_{02}(h) (\eta_1, \Psi_1),  (\eta_2, \Psi_2) \rangle \\
&+ \langle \CL_1(h, \Phi) (\eta_1, \Psi_1),  (\eta_2, \Psi_2) \rangle,
\end{align*}
\begin{align*}
\langle \CL_{02}(h) (\eta_1, \Psi_1),  (\eta_2, \Psi_2) =\int_{\CU} \frac 12  \Big(\Psi_1 & - \sum_\pm \rho_\pm  \nu_\pm \cdot \CG_\pm (h)^{-1}\nabla_{x'} \eta_1 \Big) \\
& \times  \wt \CG(h) \Big(\Psi_1 - \sum_\pm \rho_\pm  \nu_\pm \cdot \CG_\pm (h)^{-1}\nabla_{x'} \eta_2 \Big) dx'
\end{align*}
\begin{align*}
\langle \CL_{01}(h) \eta_1,  \eta_2 \rangle & = \int_{\CU} - \sum_\pm \rho_\pm (\nu_\pm \cdot \nabla_{x'} \eta_1 )  \CG_\pm (0)^{-1} (\nu_\pm \cdot  \nabla_{x'} \eta_2) +  g (\rho_--\rho_+) \eta_1\eta_2 \\
&+\frac \sigma{\sqrt{1+ |\nabla_{x'}  h_1 |^2}} \Big( \nabla_{x'} \eta_1 \cdot \nabla_{x'} \eta_2  - \frac {(\nabla_{x'} h_1 \cdot \nabla_{x'} \eta_1)(\nabla_{x'} h_1 \cdot \nabla_{x'} \eta_2) }{1+ |\nabla_{x'} h_1|^2}  \Big)    dx'.
\end{align*}
Much as in the proof of Lemma \ref{L:CWW-CL}, the remainder $\CL_1(h, \Phi)$ turns out to be compact perturbation and $\CL_{02}(h)$ is positive. The leading part of $\CL_{01}(h)$ from the surface tension and possibly gravity is positive on $Z_1$. In the case of $d_2>0$, we shall assume the finite Morse index assumption 
\be \label{E:coercivity-2F} \begin{split}
&\exists \delta>0 \  \text{ and a closed subspace }\  \wt Z_+ \subset Z_1 \; \text{ s.~t.~} \ codim \wt Z_+< \infty \ \text{ and } \\ 
&\langle \CL_{01}(h) \eta,  \eta \rangle \ge \delta |\eta|_{Z_1}^2, \; \forall \eta \in \wt Z_+. 
\end{split} \ee

\begin{lemma} \label{L:2F-a}
For any $(h, \Phi) \in X^{n}$ satisfying $n \in \N$, $\frac 32 n > \frac {d+1}2$, and $h_- + \inf h,  h_+ - \sup h>0$, the following hold. 
\begin{enumerate} 
\item There exists $\lambda_0 >0$ such that $ (\lambda - \CA(h, \Phi) )^{-1} \in \BBL(X^{r-1}, X^{r})$ for any $1\le r \le n$ and $\lambda> \lambda_0$. 
\item If $d_2 =0$ in \eqref{E:fluidD-0} or \eqref{E:coercivity-2F} is satisfied, then the above $\CL(h, \Phi)$ satisfies all properties in Lemma \ref{L:CWW-CL}.
\end{enumerate}  
\end{lemma}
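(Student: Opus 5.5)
The plan is to reduce Lemma \ref{L:2F-a} to the fluid--vacuum case by exploiting the summary at the end of Subsection \ref{SSS:wtCG}. There it is shown that $\wt \CG(h)$ satisfies Proposition \ref{P:DN-0}, Lemmas \ref{L:DN-1} and \ref{L:DN-3}, Corollary \ref{C:DN-1} and Proposition \ref{P:DN-2}. The same holds for the auxiliary operators $\CG_\pm(h)^{-1}\nabla_{x'} \in \BBL(Z_1^r, Z_2^{r+\frac 12})$ from \eqref{E:temp-74}, with analytic dependence on $h$.

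First I would write $\CA = J\CL$ explicitly from \eqref{E:Ham-2F}, in the spirit of \eqref{E:CA1}--\eqref{E:CA2}.
\begin{itemize}
\item The $h$-component is $\wt\CG(h)\Psi$ plus terms of the form $\wt\CG(h)\,\rho_\pm\nu_\pm\cdot\CG_\pm(h)^{-1}\nabla_{x'}\eta$, plus terms with $\BD\wt\CG$ and $\BD\CG_\pm^{-1}$ applied to $\eta$ and acting on $\Phi$. Apart from $\wt\CG(h)\Psi$, these are of order one in $\eta$.
\item The $\Phi$-component is $-\sigma\BD\kappa(h)\eta$ plus lower-order terms in $\eta$ and first-order terms in $\Psi$.
\end{itemize}
Taking the constants from Lemma \ref{L:DN-1} and the analogues of \eqref{E:temp-75}--\eqref{E:temp-76} for $\BD(\CG_\pm^{-1})$, this gives $\CA(h,\Phi)\in\BBL(X^r,X^{r-1})$ for $1\le r\le n$, exactly as in \eqref{E:CA-1}.

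For statement (1), I would rerun the elliptic bootstrap of Lemma \ref{L:DomCA-1}.
\begin{itemize}
\item Suppose $(\lambda-\CA)(\eta,\Psi) = (\zeta,\Theta)\in X^{r-1}$. The uniform ellipticity of $\BD\kappa(h)$ gives $\eta\in Z_1^{\frac32 r-1}$.
\item Then $\wt\CG(h)\Psi$ is controlled, and Proposition \ref{P:DN-2} for $\wt\CG$ gives $\Psi\in Z_2^{\frac32 r-\frac12}$.
\item Iterating yields $(\eta,\Psi)\in X^r$, so that $Dom(\CA^r)=X^r$.
\end{itemize}
Invertibility of $\lambda-\CA$ on $X$ for large $\lambda$ comes from the Hamiltonian structure. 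Once statement (2) holds, $\CL$ is uniformly positive modulo a finite-rank/compact part (Lemma \ref{L:CWW-CL}(2)), and Proposition \ref{P:LHam} (via Proposition \ref{P:Ham-LWP}(1)) gives $(\lambda-\CA)^{-1}\in\BBL(X,X^1)$ for $\lambda>\lambda_0$. The regularity bootstrap then lifts this to $\BBL(X^{r-1},X^r)$. So I would actually prove (2) first and deduce (1) afterwards.

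For statement (2), I would follow the proof of Lemma \ref{L:CWW-CL}.
\begin{itemize}
\item \textbf{Symmetry and analyticity.} $J^*=-J$ is unchanged. Analyticity of $\CL:\CO_{r_0}\times Z_2^{r_0}\to\BBL(X,X^*)$ follows from the analyticity of $\wt\CG$, of $\CG_\pm^{-1}\nabla_{x'}$, and of their derivatives, using \eqref{E:temp-75}--\eqref{E:temp-76}.
\item \textbf{Positivity of $\CL_{02}(h)$.} This is immediate because $\wt\CG(h)\in\BBL(Z_2,Z_2^*)$ is positive and isomorphic, and $\CG_\pm(h)^{-1}\nabla_{x'}\in\BBL(Z_1,Z_2^{\frac12})$.
\item \textbf{The curvature part of $\CL_{01}$.} The surface-tension term is uniformly positive on $Z_1$ by the Cauchy--Schwarz argument in \eqref{E:temp-96}.
\item \textbf{The $\nu$-terms of $\CL_{01}$.} When $d_2=0$ they are compact, since $\CU$ is compact and they are of lower order. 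When $d_2>0$, assumption \eqref{E:coercivity-2F} handles them.
\item \textbf{The replacement of $\CG_\pm(h)^{-1}$ by $\CG_\pm(0)^{-1}$.} This difference is treated exactly as $\wt\CL_{01}$ in Lemma \ref{L:CWW-CL}: a low/high Fourier splitting with $P_l=\chi_{[0,s]}(|\nabla|)$ and $s\gg1$, combined with the decay of $h\in Z_1^{r_0}$, shows it is compact.
\item \textbf{The remainder $\CL_1(h,\Phi)$.} It contains $\BD\wt\CG$, $\BD^2\wt\CG$ and the derivatives of $\CG_\pm^{-1}$ acting on $\Phi$. Because $\Phi\in Z_2^{r_0}$ has coefficients decaying at infinity, it is compact by the same argument as in Lemma \ref{L:CWW-CL}.
\end{itemize}

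The main obstacle is the compactness of $\CL_1$. For the two-fluid problem there is no explicit shape-derivative formula analogous to \eqref{E:DCG-1} for $\wt\CG$. I would therefore expand $\BD\wt\CG$ and $\BD^2\wt\CG$ through \eqref{E:wtCG} and \eqref{E:temp-75}--\eqref{E:temp-76} into compositions of $\CG_\pm^{-1}$ with $\BD\CG_\pm$. Formula \eqref{E:DCG-1} then applies to each $\BD\CG_\pm$ factor, giving products of $\eta$ with functions in $H^{r_0-\frac12}$, and the low-frequency issues are handled with the $\CP_l/\CP_h$ cutoffs of \eqref{E:temp-92}. A further point of care is that the space $Z_{2-}$ may differ from $Z_2$. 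This should be harmless because of the inclusions in \eqref{E:temp-74}.
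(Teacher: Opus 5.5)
Your treatment of statement (2) and of the regularity bootstrap $Dom(\CA^r)=X^r$ matches what the paper intends: rerun Lemmas \ref{L:CWW-CL} and \ref{L:DomCA-1} with the properties of $\wt\CG$ from Subsection \ref{SSS:wtCG}. The gap is in statement (1). You obtain $(\lambda-\CA)^{-1}\in\BBL(X,X^1)$ from Proposition \ref{P:Ham-LWP}(1), which requires $\CL(h,\Phi)$ to satisfy the finite Morse index condition \eqref{E:LHam-2}, so you deduce (1) from (2). Your argument therefore covers (1) only when $d_2=0$ or \eqref{E:coercivity-2F} holds.

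But (1) is asserted without that hypothesis (see Remark \ref{R:2F}(a)), and the paper uses it in that generality, e.g.\ for local well-posedness of the interface problem when $d_2>0$. Consider a flat interface subject to Kelvin--Helmholtz or Rayleigh--Taylor instability over an unbounded $\CU$. There $\CL_{01}$ is negative on a whole band of Fourier modes, so $\CL$ has infinite Morse index and Proposition \ref{P:LHam} is unavailable. The identity $\langle \CL w,\CA w\rangle=0$ alone gives no resolvent bound, so your argument produces no $\lambda_0$ in that case.

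The missing idea, which the paper uses in Remark \ref{R:2F}(a), is to modify $\CL$ while changing $\CA$ only by a bounded operator. Replace $\BH$ by $\wt\BH=\BH+a|h|_{L^2}^2$ with $a\gg1$.
\begin{enumerate}
\item The possibly negative parts of $\CL_{01}$ are $g(\rho_--\rho_+)\eta^2$ and the $\nu_\pm$-terms, whose Fourier symbols are $O(1+|\xi|)$. These are absorbed by $a|\eta|_{L^2}^2$ together with half of the curvature term $\sigma(1+|\nabla_{x'}h|^2)^{-3/2}|\nabla_{x'}\eta|^2$.
\item Hence $\BD^2\wt\BH(h,\Phi)$ is uniformly positive up to the compact pieces you already identified, and it satisfies \eqref{E:LHam-1}--\eqref{E:LHam-2} whatever $d_2$ is.
\item Your argument (Proposition \ref{P:Ham-LWP}(1) plus the bootstrap) then gives $(\lambda-\wt\CA)^{-1}\in\BBL(X^{r-1},X^r)$ for $\wt\CA=J\BD^2\wt\BH$ and $\lambda$ large.
\item The difference $\CA-\wt\CA:(\eta,\Psi)\mapsto(0,2a\eta)$ is bounded on every $X^r$, because $Z_1^s\hookrightarrow Z_2^s$ (the symbol of $\wt\CG(0)$ is at most $|\xi|/(\rho_++\rho_-)$).
\item Since $\wt\CA$ generates a $C^0$ group on $X^{r-1}=Dom(\wt\CA^{r-1})$, we have $|(\lambda-\wt\CA)^{-1}|_{\BBL(X^{r-1})}=O(\lambda^{-1})$. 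A Neumann series for $(\lambda-\CA)^{-1}=(\lambda-\wt\CA)^{-1}\big(I+(\wt\CA-\CA)(\lambda-\wt\CA)^{-1}\big)^{-1}$ then yields (1) for $\CA$, with a larger $\lambda_0$ and no assumption on $d_2$ or \eqref{E:coercivity-2F}.
\end{enumerate}
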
 

\begin{remark} \label{R:2F}
a.) The above statement (1) does not require \eqref{E:coercivity-2F}. In fact, the same proof as in Section \ref{SS:CWW-proof} yields that statement (1) holds for $\wt \CA = J \BD^2 \wt \BH = J \BD^2 ( \BH + a |h|_{L^2}^2)$. Hence it also holds for $\CA$ which is a bounded perturbation of $\wt \CA$. \\
b.) The two terms in $\CL_{01}(h)$ with possible negative contributions are i.) $g(\rho_- - \rho_+)$ which represents the Rayleigh-Taylor instability if the heavier fluid on top of the lighter fluid, and ii.) the first term involving $\nu_\pm$. The latter is removed if $\nu_+=\nu_-$ and  the system is put in a moving frame, otherwise it corresponds to the Kelvin-Helmholtz instability. 
As in Remark \ref{R:Vc-2}, \eqref{E:coercivity-2F} is satisfied by $|h|_{Z_1^{r_0}} \ll1 $ if it holds for $\CL_{01}(0)$. One can compute   
\begin{align*}
\langle \CL_{01}(0) (\eta, \eta) \rangle = \int_\CU \Big( \sigma |\xi|^2 + g (\rho_- - \rho_+) - \sum_\pm \frac {\rho_\pm (\nu_\pm \cdot \xi )^2}{ |\xi| \tanh (h_\pm |\xi|)} \Big) |\hat \eta(\xi)|^2 dx'. 
\end{align*}
Therefore we obtain 
\be \label{E:KH-1}
\CL_{01} (0) \ge \min_{\tau \in [0, \infty)} \Big(  \sigma \tau^2 + g (\rho_- - \rho_+) - \sum_\pm \frac {\rho_\pm |\nu_\pm|^2 \tau}{ \tanh (h_\pm \tau)}\Big). 
\ee
In particular, if $h_\pm =\infty$, we have  
\be \label{E:KH-2}
\CL_{01} (0) \ge g (\rho_- - \rho_+) -  \big( \rho_+ |\nu_+|^2+ \rho_- |\nu_-|^2 \big)^2 /(4 \sigma). 
\ee
This calculation is consistent with the analysis of the classical Kelvin-Helmholtz instability, see, for example, \cite{DR04, BSWZ16}. 
\end{remark} 

With the above preparation we can obtain the following  theorems through exactly the same proofs. 

\vspace{0.08in} \noindent $\bullet$ {\bf Local well-posedness.} 
The same results as in Theorem \ref{T:CWW-LWP-1} and Proposition \ref{P:continuation} hold for solutions $(h, \Phi)$ of the fluid interface problem. Note that \eqref{E:coercivity-2F} is not required for the these results even if $d_2>0$. In fact, instead of going through Proposition \ref{P:LHam} and Proposition \ref{P:CWW-ET-1}, it is easier to verify assumption (B.3) (and thus (B.1)--(B.5)) directly using Lemma \ref{L:2F-a}(2). See also the remark on the well-posedness in the example of nonlinear wave type equations. 

\vspace{0.08in} \noindent $\bullet$ {\bf Local invariant manifolds.} 
The same results as in Proposition \ref{P:CWW-ET-1} and Theorem \ref{T:CWW-LInMa-main}  hold for  the fluid interface problem.  

Among various spectrally instability of equilibria, the most thoroughly understood is the trivial equilibrium, unlike the capillary gravity waves, which could be unstable. 

$\bullet$ {\it Unstable manifolds due to the Kelvin-Helmholtz or Rayleigh-Taylor  instability.} 
Clearly $(h_*=0, \Phi_*=0)$ is an equilibrium where the fluid flows under the background velocities separated by the flat interface. With surface tension,  it is subject to a.) the Rayleigh-Taylor  instability for long waves (small $|\xi|$)  if $\rho_- < \rho_+$ and b.) the Kelvin-Helmholtz instability for intermediate wave length if $|\nu_+ - \nu_-|$ are large relative to $g(\rho_- -\rho_+)$ and $\sigma$. See \cite{DR04} and the calculations \ref{E:KH-1} and \eqref{E:KH-2} in Remark \ref{R:2F}. If $d_2=0$ in \eqref{E:fluidD-0} (the fluids are periodic horizontally), then the above theorem yields the existence of smooth local unstable manifolds. 

$\bullet$ {\it Unstable manifolds of periodic or solitary waves of fluid interfaces.} 
A good survey on the existence of such steady interfacial waves can be found in \cite{HHSTWW22}, where local and global bifurcation method plays an important role. Some discussions on their stability can be found, e.~g.~, in \cite{CW22}. Again spectral instability in the periodic cases or in the case of $d_2>0$ along with  leads to the existence of smooth local unstable manifolds, and thus nonlinear instability. Very often \eqref{E:coercivity-2F} is satisfied automatically if the steady wave is constructed from a bifurcation approach.

\appendix

\section{A preliminary linear problem and the local well-posedness} \label{S:pre-LWP}

In Appendix \ref{SS:Linear}, following the framework as in \cite{Ka73, Ka75} and Chapter 5 of \cite{Pazy83}, we consider a basic non-autonomous linear evolution system and obtain detailed estimates convenient for the paper.  As a byproduct 
of the analysis, we give the local well-posedness of a model quasilinear PDEs in Appendix \ref{SS:QLPDE-LWP} and then that of a class of more nonlinear PDEs in Appendix \ref{SS:NLPDE-LWP}, where certain smooth dependence on the initial data (with some expected loss of regularity) is also given in Appendix \ref{SSS:smoothness-NLPDE}. The method of the proofs is not entirely new, essentially an abstract formulation of proofs performed to many concrete nonlinear PDEs, but it is tailored into a form which paves the road for the construction of invariant manifolds in Section \ref{S:LInMa}. Some concrete PDE systems including some nonlinear evolution PDEs based on certain energies, such as the quasilinear heat equation, quasilinear Schr\"odinger equation, quasilinear wave equations, the MMT equation, {\it etc.}, are discussed in Section \ref{S:examples}.

\subsection{A preliminary linear system} \label{SS:Linear} 

For $T_0< T_1$, consider a linear equation
\be \label{E:linear-1} 
v_t = A(t) v, \quad t\in [T_0, T_1].
\ee
We assume that there exist 
\[
n_0 \in \N, \; L: [T_0, T_1] \to \BBL(X, X^*), \; Q \in W^{1, 1} \big([T_0, T_1], \BBL(X^{1}, X)\big), \; C_L, C_Q, \ge 1,  \; \omega, C_1 \in \R, 
\]
such that the following hold.
\begin{enumerate} 
\item [(L.1)] For any $t \in [T_0, T_1]$, $L(t)^* = L(t)$, 
\be \label{E:coercivity-1}
C_L^{-1} |v|_{X} \le |v|_{L(t)}  \le C_L |v|_{X}, \; \text{ where } \; |v|_{L(t)} \triangleq  \langle L(t) v, v \rangle^{\frac 12}, 
\ee
and $\sup_{T_0\le t_0 \le t_1 \le T_1} l(t_0, t_1) < \infty$, where  
\begin{align*}
l(t_0, t_1) \triangleq \sup \Big \{ \prod_{j=1}^{j_0} \frac {|v_j|_{L(s_j)}}{|v_{j}|_{L(s_{j-1})}} :  j_0 \in \N, \  t_0 = s_0  \le s_1 & \le \ldots \le s_{j_0} = t_1, \\
& v_1, \ldots, v_{j_0} \in X \setminus\{0\} \Big\}.
\end{align*} 
\item [(L.2)] For any $t \in [T_0, T_1]$, the domain $Dom(A(t))$ of $A(t)$ contains $X^1$, $A(t): Dom(A(t))\to X$ is the closure of $A(t)|_{X^1}$, $\lambda - A(t): Dom (A(t)) \to X$ is surjective for some $\lambda > \omega$, and $A$ also satisfies 
\be  \label{E:dissipativity-1}
A \in C^0 \big([T_0, T_1], \BBL(X^{1}, X^0)\big), 
\quad \langle L(t) v, A (t)v \rangle \le \omega \langle L(t) v, v \rangle, \quad \forall v \in X_1. 
\ee
\item [(L.3)] For any $1\le n \le n_0$, $Q \in W^{1,1} \big([T_0, T_1], \BBL(X^{n}, X^{n- 1})\big)$, $Q(t) \in \BBL(X^{n}, X^{n- 1})$ is an isomorphism for any $t \in [T_0, T_1]$, and 
\be \label{E:S-1}
|Q|_{C_t^0 \BBL(X^{n}, X^{n-1})}, \ |Q^{-1}|_{C_t^0\BBL(X^{n-1}, X^{n})} \le C_Q. 
\ee
Moreover, the commutator $[Q, A]\in C^0 \big( [T_0, T_1], \BBL(X^{n}, X^{n-1})\big)$. 
\end{enumerate} 

Let us start with some general comments on the assumptions. Equation \eqref{E:linear-1} is often the linearization of a nonlinear PDE \eqref{E:NLPDE-0} along a solution $u(t)$. When the principal part of the nonlinear PDE 
is a gradient or Hamiltonian flow, the space $X$ is often taken as the energy space. 
The symmetric bounded operator $L(t)\in \BBL(X, X^*)$ is usually based on the principal part of the Hessian of the energy and defines an equivalent inner product on $X$. The finiteness of the auxiliary function $l(t_0, t_1)$ measures the total variation of the norm defined by $L(t)$ and it is easy to prove that, for $T_0 \le t_0 \le t_1 \le t_2 \le T_1$, 
\be \label{E:temp-1}
l(t_0, t_1) l(t_1, t_2)=l(t_0, t_2) \ge C_L^{-2},
\ee
and, if $L \in W^{1,1} \big([T_0, T_1], \BBL(X, X^*)\big)$, then 
\be \label{E:l-1}
l(t_0, t_1) \le e^{\frac 12 \int_{t_0}^{t_1} \sup_{v\ne 0} \frac {\langle L'(\tau)v, v\rangle}{\langle L(\tau)v, v\rangle} d\tau} \le e^{\frac 12 C_L^2 \int_{t_0}^{t_1} |L'(\tau)| d\tau} = e^{\frac 12 C_L^2 |L'|_{L^1 ([t_0, t_1], \BBL(X, X^*))}}.  
\ee

Assumptions (L.2) include the dissipatitivity of $A(t)$ with respect to the equivalent inner product given by $L(t)$, which basically means that \eqref{E:linear-1} satisfies the energy estimate with the energy given by $L(t)$. Along with the surjectivity of $\lambda - A(t)$, the Lumer-Phillips Theorem (see e.~g.~Theorem 4.3 in Chapter 1 of \cite{Pazy83}) implies that, for any $t \in [T_0, T_1]$, $e^{\tau A(t)}$ is a well-posed semigroup of bounded linear operators satisfying the estimate 
\be \label{E:semiG-1} 
|e^{\tau A(t)} v|_{L(t)} \le e^{\omega \tau} |v|_{L(t)}, \quad \forall \tau \ge 0, \; v \in X. 
\ee

The operator $Q(t)$ in (L.3) is a differential operator for which the commutator $[Q, A]$ does not cost additional regularity. In many cases,  
$Q(t)$ can be taken closely related to the principle part of $\lambda - A(t)$. However, the assumptions do not exclude the possibility that $Q(t)$ is a higher order differential operator than $A(t)$. 
We shall apply $Q(t)$ to \eqref{E:linear-1}  to obtain higher order estimates of the solutions. In fact, from 
\be \label{E:comm-1}
Q^n A =A Q^n + \sum_{j=1}^{n} Q^{j-1} [Q, A] Q^{n-j}, 
\ee
we obtain inductively, for any integer $1\le n \le n_0$, $A \in C^0 \big([T_0, T_1],  \BBL(X^{n}, X^{n-1})\big)$ and, for any $t \in [T_0, T_1]$, 
\be \label{E:A-bdd} 
|A(t)|_{\BBL(X^{n}, X^{n-1})} \le C_Q^{2(n-1)} |A(t)|_{\BBL(X^1, X^0)} +  C_Q^{2n-3} \sum_{j=1}^{n-1} | [Q, A] (t) |_{\BBL(X^{j}, X^{j-1})}. 
\ee 

The main statement of this subsection is the following 
linear well-posedness and estimates. 

\begin{proposition} \label{P:linear} 
Assume (L.1) -- (L.3) are satisfied with $n_0\ge 1$, then there exists a unique 
\be \label{E:EvoOp-1}
U(t, t_0) \in \BBL(X), \quad (t, t_0) \in \Delta_{T_0, T_1} \triangleq \{(t, t_0) : T_0 \le t_0 \le t \le T_1 \}, 
\ee
such that 
\be \label{E:EvoOp-2}
U(t, t_0) \in \BBL(X^n), \quad U(\cdot, \cdot)v \in C^0(\Delta_{T_0, T_1}, X^n), \quad \forall v\in X^n, \; 0 \le n \le n_0,
\ee
\be \label{E:EvoOp-3}
U(t_0, t_0)=I,   \quad U(t, t_1) U(t_1, t_0) = U(t, t_0),
\ee
\be \label{E:EvoOp-4}
\p_t U(t, t_0)v = A(t) U(t, t_0) v, \;\; \p_{t_0} U(t, t_0) v = - U(t, t_0) A(t_0) v, \quad \forall v \in X^1.
\ee
Moreover $U(t, t_0)$ satisfies the estimate for any $v \in X^n$, $0\le n\le n_0$, 
\be \label{E:EvoOp-5} \begin{split}
|Q(t)^n U(t, & t_0) v |_{L(t)} \\
& \le  l(t, t_0) e^{\omega (t-t_0) + C_L^2 \sum_{j=1}^n C_Q^{2j-1} | Q' + [Q,  A]|_{L^1 ([t_0, t], \BBL(X^j, X^{j-1}))}} |Q(t_0)^n v|_{L(t_0)}.
\end{split} \ee
\end{proposition}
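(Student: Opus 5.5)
The plan follows Kato's construction of evolution operators for hyperbolic systems (as in Chapter 5 of \cite{Pazy83}), tracking constants through the energy forms $L(t)$ and the isomorphisms $Q(t)$.

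\emph{Step 1: the energy estimate at level $n=0$.} Take a partition $T_0=s_0<s_1<\dots<s_N=T_1$. On $[s_{j-1},s_j)$ freeze $A(s_{j-1})$ and define the approximate propagator $U_N(t,t_0)$ as the product of the semigroups $e^{(t-s)A(s_{j-1})}$. These semigroups exist by Lumer--Phillips, using (L.2). By \eqref{E:semiG-1}, each factor satisfies $|e^{\tau A(s)}v|_{L(s)}\le e^{\omega\tau}|v|_{L(s)}$. Passing from the $L(s_{j-1})$-norm to the $L(s_j)$-norm costs exactly the ratio appearing in the definition of $l$. Telescoping therefore gives
\[
|U_N(t,t_0)v|_{L(t)}\le l(t_0,t)e^{\omega(t-t_0)}|v|_{L(t_0)},
\]
uniformly in the partition. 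This is the stability condition in Kato's sense.

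\emph{Step 2: higher levels via $Q$.} Conjugate by $Q$. Formally $w=Q(t)^nU v$ satisfies
\[
w_t=A w+\sum_{j}Q^{j-1}\bigl(Q'+[Q,A]\bigr)Q^{n-j}Q^{-n}w,
\]
using \eqref{E:comm-1}. The sum is a perturbation bounded in $\BBL(X)$ with norm $\le \sum_j C_Q^{2j-1}|Q'+[Q,A]|_{\BBL(X^j,X^{j-1})}$, and this norm is $L^1$ in time by (L.3).

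The bound should be done inductively in $n$. Treat $Q^{-1}(Q'+[Q,A])$ as a bounded perturbation $B(t)$ of $A(t)$ on the level below. Stability of $A+B$ in the $L(t)$-norms then gives the exponential factor $e^{C_L^2\int|B|}$, where $C_L^2$ converts between $|\cdot|_{L}$ and $|\cdot|_X$. Iterating the one-step estimate produces \eqref{E:EvoOp-5}. To keep this rigorous, I would establish it first for the approximants $U_N$, where $Q$ is applied piecewise, and then pass to the limit.

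\emph{Step 3: convergence and the identities.} Show that $U_N(t,t_0)v$ is Cauchy in $X$ for $v\in X^1$. Write $U_N-U_M=\int U_N(t,s)\bigl(A_N(s)-A_M(s)\bigr)U_M(s,t_0)\,ds$. Here $U_M$ is bounded in $X^1$ by Step 2, and $A\in C^0(\BBL(X^1,X))$ gives $A_N-A_M\to0$ uniformly. Density of $X^1$ and the uniform bounds extend convergence to $X$, which defines $U$.

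The remaining properties follow in order. Strong continuity in $X^n$ and uniform bounds in $X^n$ follow by weak compactness together with lower semicontinuity of norms; continuity in the higher norm comes from the uniform estimate plus interpolation/density. The cocycle relation \eqref{E:EvoOp-3} passes to the limit from $U_N$. The derivative formulas \eqref{E:EvoOp-4} come from the integral identities on $X^1$. Uniqueness follows from the standard argument that $s\mapsto U(t,s)\tilde U(s,t_0)v$ is constant, for $v\in X^1$.

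\emph{Main obstacle.} The delicate point is Step 2 at the approximate level. $Q(t)$ does not commute with the frozen semigroups, and $A(t)$ is only continuous rather than $W^{1,1}$. The most faithful route is Kato's: use that $Q(t)A(t)Q(t)^{-1}=A(t)+B(t)$ with $B$ bounded, which makes $X^n$ admissible for $A$ (and yields $Q^nU_N$ estimates on each piece). Then handle the $t$-dependence of $Q$ through the $W^{1,1}$ bound on $Q$, via $Q(s_j)Q(s_{j-1})^{-1}=I+O(\int|Q'|)$. Making the constants come out exactly as $C_Q^{2j-1}$ and $C_L^2$ will require careful bookkeeping, but no new idea.
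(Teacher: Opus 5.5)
Your Step 1 is the paper's stability check, and the framework is Kato's. The paper, however, does not rebuild the product approximants. It invokes Theorem I of \cite{Ka73} (Theorem 4.6 in Chapter 5 of \cite{Pazy83}) for $n=0,1$, and it treats higher levels through the Volterra equation \eqref{E:higher-O-1}.

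The genuine gap in your plan is the claim in Step 3 that strong continuity of $U(\cdot,\cdot)v$ in $X^n$ follows from weak compactness, lower semicontinuity and ``interpolation/density''.
\begin{itemize}
\item Uniform $X^n$ bounds plus convergence in $X$ give only boundedness and \emph{weak} continuity in $X^n$.
\item No interpolation structure is assumed on the scale $X^n$. Even if it were, interpolating between continuity in a lower space and boundedness in $X^n$ gives continuity strictly below $X^n$, never in $X^n$ itself.
\item The density argument needs $v\in X^{n+1}$ with uniform $X^{n+1}$ bounds, and (L.3) provides none when $n=n_0$.
\end{itemize}
So at the top level your argument yields no strong continuity. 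That level is exactly the one used later, e.g.\ for $v\in C^0_tX^k$ in Theorem \ref{T:QLPDE-LWP}. When $n_0=1$, it is also what is needed to turn the integral identity into the strong derivative $\p_tU(t,t_0)v=A(t)U(t,t_0)v$.

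The missing idea is the conjugation identity $Q(t)^nU(t,t_0)Q(t_0)^{-n}=U^n(t,t_0)$. Here $U^n$ is built directly on $X$ as the strongly continuous solution of \eqref{E:higher-O-1}, with $U^0=U$. The paper obtains the identity for $n=1$ by computing $\p_{t_0}\bigl(U(t,t_0)Q(t_0)^{-1}\bigr)$ ((4.14)--(4.15) in Chapter 5 of \cite{Pazy83}). For larger $n$ it uses induction and uniqueness of solutions of \eqref{E:higher-O-1}. Since $t\mapsto Q(t)^{-n}\in\BBL(X,X^n)$ is norm continuous, $U(t,t_0)v=Q(t)^{-n}U^n(t,t_0)Q(t_0)^nv$ is then continuous in $X^n$.

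The same representation is what produces \eqref{E:EvoOp-5} in the stated form. Your frozen-$Q$ bookkeeping has jumps $Q(s_j)^nQ(s_{j-1})^{-n}=I+O\bigl(\int|Q'|\bigr)$ and the bounded perturbation $Q^nAQ^{-n}-A$ inside each piece. It yields the exponent $C_L^2\sum_jC_Q^{2j-1}\int\bigl(|Q'|+|[Q,A]|\bigr)$, which is weaker than the stated $|Q'+[Q,A]|_{L^1}$. In the paper, $Q^{n-1}(Q'+[Q,A])Q^{-n}$ enters \eqref{E:higher-O-1} as a single operator of $\BBL(X)$-norm at most $C_Q^{2n-1}|Q'+[Q,A]|_{\BBL(X^n,X^{n-1})}$. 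Passing to $L(\tau)$-norms costs $C_L^2$, and Gronwall together with the additivity of $f_n(t,t_0)$ closes the induction.
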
 

\begin{proof} 
We first show that $A(t)$ is a stable family of generators of $C^0$ semigroups on $X$ as in Definition 2.1 of Chapter 5 in \cite{Pazy83} (see also \cite{Ka73}). The dissipativity assumption (L.2) of $A(t)$ with respect to the equivalent inner product $\langle L(t) \cdot, \cdot\rangle$ implies the spectrum $\sigma (A(t)) \supset (\omega, + \infty)$ and that the semigroup $e^{\tau A(t)}$ satisfies \eqref{E:semiG-1} uniformly in $t \in [T_0, T_1]$.  
For any $j \in \N$, $T_0 \le t_1 \le t_2 <\ldots \le t_{j}\le T_1$, $\tau_1, \ldots \tau_{j}\ge 0$, and $v \in X$, we obtain from \eqref{E:coercivity-1} and \eqref{E:semiG-1},
\be \label{E:Pazy-1} \begin{split}
& | e^{\tau_{j}A(t_{j})} \ldots e^{\tau_1 A(t_1)} v|_{L(t_{j})} 
\le  e^{\omega \tau_j} | e^{\tau_{j-1} A(t_{j-1})} \ldots e^{\tau_1 A(t_1)} v|_{L(t_j)} \\
\le & l(t_{j-1}, t_j) e^{\omega \tau_j} | e^{\tau_{j-1} A(t_{j-1})} \ldots e^{\tau_1 A(t_1)} v|_{t_{j-1}} \le  \ldots \le  l(t_1, t_j) e^{ \omega (\tau_1+ \ldots + \tau_j)} | v|_{L(t_1)}. 
\end{split} \ee
Therefore 
\[
| e^{\tau_{j}A(t_{j})} \ldots e^{\tau_1 A(t_1)}|_{\BBL(X)}  \le C_L^2 l(t_1, t_j) e^{ \omega (\tau_1+ \ldots + \tau_j)},
\]
and it proves the stability according  to Definition 2.1  and Theorem 2.2 in Chapter 5 of \cite{Pazy83}. 

Since, for any $t \in [T_0, T_1]$,  
\[
|Q(t) A(t) Q(t)^{-1} - A(t)|_{\BBL(X)} = |[Q(t), A(t)] Q(t)^{-1} |_{\BBL(X)} \le C_Q \sup_{t\in [T_0, T_1]} |[Q, A]|_{\BBL(X^1, X)},  
\]
Theorem I in \cite{Ka73} (as well as Theorem 4.6 in Chapter 5 of \cite{Pazy83}) yields a unique $U(t, t_0) \in \BBL(X)$ satisfying  \eqref{E:EvoOp-1}, \eqref{E:EvoOp-2} for $n=0, 1$, \eqref{E:EvoOp-3}, \eqref{E:EvoOp-4}, and the same estimate as in \eqref{E:Pazy-1} 
\be \label{E:temp-2} 
|U(t, t_0)v|_{L(t)} \le l(t_0, t) e^{ \omega (t-t_0)} |v|_{L(t_0)}, \ \forall v\in X. 
\ee 

Next we shall prove \eqref{E:EvoOp-2} 
and \eqref{E:EvoOp-5} for general $n$. 
Consider the integral equation 
\be \label{E:higher-O-1}
U^n(t, t_0) = U^{n -1} (t, t_0) + \int_{t_0}^t U^{n}(t, \tau) \Big(Q^{n-1}  \big(Q' + [Q,  A]\big) Q^{-n} \Big)\Big|_{\tau} U^{n-1} (\tau, t_0) d\tau, 
\ee
where $1\le n \le n_0$ and $U^0 = U$ is understood. Through a standard iteration procedure using assumption (L.3), inductively one may prove that this integral equation has a unique solution $U^n(t, t_0) \in \BBL(X)$ which is 
 strongly $C^0$ in $\Delta_{T_0, T_1}$ 
and $U^n(t_0, t_0) =I$. Moreover, we prove that they satisfy the estimate, for any $v \in X$, 
\be \label{E:temp-3}
|U^n(t, t_0)v|_{L(t)} \le l(t_0, t) e^{\omega (t-t_0) + C_L^2 \sum_{j=1}^n C_Q^{2j-1} | Q' + [Q,  A]|_{L^1 ([t_0, t], \BBL(X^j, X^{j-1}))}} |v|_{L(t_0)}.  
\ee
In fact, let 
\[
f_n(t, t_0) =  \log l(t_0, t) + \omega (t-t_0) +C_L^2 \sum_{j=1}^n C_Q^{2j-1} | Q' + [Q,  A]|_{L^1 ([t_0, t], \BBL(X^j, X^{j-1}))}, 
\]
then \eqref{E:temp-3} is equivalent to 
\be \label{E:temp-4}
e^{-f_{n} (t, t_0)} |U^n(t, t_0)|_{\BBL((X, |\cdot|_{L(t_0)}), (X, |\cdot|_{L(t)}))} \le 1.
\ee
We also notice from \eqref{E:temp-1}  
\[
f_n(t_2, t_1) + f_n (t_1, t_0)= f_n (t_2, t_0), \quad \forall T_0 \le t_0\le t_1\le t_2 \le T_1. 
\]
For $n=0$, the desired estimate \eqref{E:temp-4} is exactly \eqref{E:temp-2}. For $n \ge 1$, we obtain from \eqref{E:higher-O-1}  
\begin{align*}
e^{-f_{n-1} (t, t_0)} & |U^n(t, t_0) |_{\BBL((X, |\cdot|_{L(t_0)}), (X, |\cdot|_{L(t)}))}  \le e^{-f_{n-1} (t, t_0)}|U^{n-1}(t, t_0)|_{\BBL((X, |\cdot|_{L(t_0)}), (X, |\cdot|_{L(t)}))}  \\
& + \int_{t_0}^t C_L^2 C_Q^{2n-1} | Q'(\tau) + [Q,  A](\tau)|_{\BBL(X^n, X^{n-1})} e^{-f_{n-1} (t, \tau)}   \\
& \times |U^{n}(t, \tau)|_{\BBL((X, |\cdot|_{L(\tau)}), (X, |\cdot|_{L(t)}))} e^{-f_{n-1} (\tau, t_0)} |U^{n-1}(\tau, t_0)|_{\BBL((X, |\cdot|_{L(t_0)}), (X, |\cdot|_{L(\tau)}))} d\tau.  
\end{align*}
Therefore \eqref{E:temp-4} follows  inductively from the Gronwall inequality applied to $e^{-f_{n-1} (t, t_0)} |U^n(t, t_0) |_{\BBL(X)}$. 

From (4.14)--(4.15) in Chapter 5 \cite{Pazy83}, or by directly computing $\p_{t_0} (U(t, t_0) Q(t_0)^{-1})$ and using the second equality in \eqref{E:EvoOp-4}, 
we have 
\[
Q(t) U (t, t_0) Q(t_0)^{-1} = U^1(t, t_0) = U (t, t_0) + \int_{t_0}^t U^1 (t, \tau) \big( \big(Q' + [Q,  A]\big) Q^{-1} \big)\big|_{\tau} U (\tau, t_0) d\tau.
\]
Inductively, suppose, for some $1\le n < n_0$ it holds 
\be \label{E:temp-5}
 Q(t)^{n'} U (t, t_0) Q(t_0)^{-n'}=U^{n'} (t, t_0), \quad \forall 0\le n' \le n. 
\ee 
Applying $Q(t)^{-1}$\eqref{E:higher-O-1} for $n+1$ to  $Q(t_0)v$ where $v \in X^1$, we obtain 
\begin{align*}
& Q(t)^{-1} U^{n+1}  (t, t_0) Q(t_0) v= Q(t)^{-1} U^n (t, t_0) Q(t_0)v \\
&\qquad \qquad \qquad + \int_{t_0}^t Q(t)^{-1} U^{n+1}(t, \tau) \Big(Q^{n}  \big(Q' + [Q,  A]\big) Q^{-n-1} \Big)\Big|_{\tau} U^{n} (\tau, t_0) Q(t_0)v d\tau \\
=& U^{n-1} (t, t_0) v + \int_{t_0}^t Q(t)^{-1} U^{n+1}(t, \tau) Q(\tau) \Big(Q^{n-1}  \big(Q' + [Q,  A]\big) Q^{-n} \Big)\Big|_{\tau}  U^{n-1} (\tau, t_0) vd\tau. 
\end{align*}
By the uniqueness of solutions to \eqref{E:higher-O-1}, we have   
\[
Q(t)^{-1} U^{n+1} (t, t_0) Q(t_0) v - U^n (t, t_0) v =0, \quad \forall v \in X^1,
\]
which implies  
\[
Q(t)^{n+1} U(t, t_0) Q(t_0)^{-n-1} = Q(t) U^{n} (t, t_0) Q(t_0)^{-1} = U^{n+1} (t, t_0) \in \BBL(X).
\]
Hence \eqref{E:temp-5} holds for all $0\le n\le n_0$ and thus \eqref{E:EvoOp-2} follows. The estimate \eqref{E:EvoOp-5} is obtained from \eqref{E:temp-3}. 
\end{proof}

\subsection{Local well-posedness of a model quasilinear PDEs} \label{SS:QLPDE-LWP}

Consider  
\be \label{E:QLPDE-0} 
v_t = \BA(v) v  + f(v), \quad v(0) = v_0. 
\ee
Here $A(v)$ is a leading order linear term and $f(v)$ is a lower order nonlinearity. 
We assume that there exist 
\[
k \ge 1, \; \omega \in \R,  \; C_{L}, C_Q \ge 1,   \; C_0, \delta_0, C_{f, 0}, C_{f, 1}>0, \; v_* \in X^k, \; R_0 > C_L^2C_Q^{2k} |v_*|_{X^k}, 
\]
\[
\BL\in C^{1} \big(X^{k-1}(v_*, \delta_0), \BBL(X, X^*)\big), \; \BQ \in C^1 \big(X^{k-1}(v_*, \delta_0), \BBL(X^{1}, X)\big)
\]
such that the following are satisfied.  
\begin{itemize} 

\item [(H.1)] For any $v \in X^{k-1}(v_*, \delta_0)$ and for any $w \in X$
\[
\BL (v) = \BL (v)^*, \quad      
C_{L}^{-1} |w|_{X} \le  |w|_{\BL (v)} \le C_{L} |w|_{X}, \quad |\BD\BL |_{C^0 (X^{k-1}(v_*, \delta_0), \BBL (X^{k-1} \otimes X, X^*))} \le C_0,
\]
where $|w|_{\BL (v)} = \sqrt{\langle \BL (v) w, w \rangle}$ as in \eqref{E:coercivity-1}. 

\item [(H.2)] For any $v \in  X^{k-1}(v_*, \delta_0)$, $Dom (\BA(v))$ of $\BA(v)$ contains $X^1$, $\BA(v): Dom(\BA(v)) \to X$ is the closure of $\BA(v)|_{X^1}$, and $\lambda - \BA(v): Dom (\BA(v)) \to X$ is surjective for some $\lambda > \omega$. Moreover $\BA$ also satisfies, for $1\le r \le k$,    
\be \label{E:BA-1} 
\BA \in C^{0, 1} \big(X^{k-1}(v_*, \delta_0), \BBL(X^{r}, X^{r-1})\big), \quad 
|\BA|_{ C^{0,1}(X^{k-1}(v_*, \delta_0), \BBL(X^{r}, X^{r-1}))} \le C_0,  
\ee
and 
\be  \label{E:dissipativity-2}
\langle \BL (v)w, \BA (v)w \rangle \le \omega \langle \BL (v)w, w \rangle, \quad \forall w \in X^1. 
\ee

\item [(H.3)] For any  $v \in X^{k-1}(v_*, \delta_0)$, $\BQ(v)  \in \BBL(X^r, X^{r-1})$ is an isomorphism for any $1\le r\le k$ and, on the domain $X^{k-1}(v_*, \delta_0)$, $\BQ$ also satisfies 
\be \label{E:S-2} \begin{split}
&|\BQ|_{C^0  \BBL(X^{r}, X^{r-1})}, \ |\BQ^{-1}|_{C^0\BBL(X^{r-1}, X^{r})} \le C_Q, \\
& |\BD \BQ|_{C^0  \BBL(X^{k-1} \otimes X^{r}, X^{r-1})}, \ |\BD \BQ^{-1}|_{C^0 \BBL(X^{k-1} \otimes X^{r-1}, X^{r})}, \ |[\BQ, \BA]|_{C^0 \BBL(X^{r}, X^{r-1})}  \le C_0.
\end{split}\ee

\item [(H.4)] 
Assume $f(v)$ satisfies 
\[
|f|_{C^0 (X^{k-1} (v_*, \delta_0) \cap X^k (R_0), X^{k})} \le C_{f, 0}, 
\]
\[
|f(v_1) - f(v_2)|_{X^{k-1}} \le C_{f, 1} |v_1-v_2|_{X^{k-1}},  \quad \forall v_1, v_2 \in X^{k-1} (v_*, \delta_0) \cap X^k (R_0). 
\]
\end{itemize}

\begin{remark} \label{R:temp-1}
We notice that $\BA(v)$ and $\BQ(v)$ are defined for $v \in X^{k-1}$, but act on $X^k$. This happens when they are derived from differentiating/quasilinearizing the original nonlinear PDE, see Appendix \ref{SS:NLPDE-LWP}. Such assumption is also consistent with assumption (7.4) in \cite{Ka75}. 

Assumptions (H.1) and (H.3) imply  $|\BQ(v)^r w|_{\BL(v)}$, $\forall v\in X^{k-1} (v_*, \delta_0)$, is an equivalent metric of $w \in X^r$ for $0\le r \le k$ with 
\be \label{E:temp-7}
C_L^{-1} C_Q^{-r}  |w|_{X^r} \le |\BQ(v)^r w|_{\BL(v)} \le C_L C_Q^{r}  |w|_{X^r}.  
\ee
The mixed usage of both $|w|_{X^r}$ and $|\BQ(v)^r w|_{\BL(v)}$ leads to the assumption $R_0> C_L^2 C_Q^{2k} |v_*|_{X^k}$.   
\end{remark}

The main statement of this subsection is the local well-posedness of \eqref{E:QLPDE-0} under the above conditions, in the spirit of Theorem 6 in \cite{Ka75}. It is not in the optimal form, but sufficient to be applied in Appendix \ref{SS:NLPDE-LWP}. Certain smooth dependence on the initial data will be given in Appendix \ref{SSS:smoothness-NLPDE} for a more general class nonlinear PDEs \eqref{E:NLPDE-0}. 

\begin{theorem} \label{T:QLPDE-LWP}
Assume (H.1)--(H.4), then for any $\delta\in (0, \delta_0)$ and $R \in (|v_*|_{X^k}, C_L^{-2} C_Q^{-2k} R_0)$, there exist constants $T>0$ determined by $\delta$, $R$, and the constants in (H.1--H.4) 
such that, for any initial value $v_0 \in X^{k-1}(v_*, \delta) \cap X^k (R)$, \eqref{E:QLPDE-0} has a unique solution $v(t) \in X^{k-1}(v_*, \delta_0) \cap X^k (R_0)$, $t \in [0, T]$, so that $v \in C_t^0 X^k \cap C_t^1 X^{k-1}$.
Moreover, let $v_1(t)$ and $v_2(t)$ be solutions with initial values $v_{01}$ and $v_{02}$, respectively, then 
\[
|v_2 - v_1|_{C_t^0 X^{k-1}} \le 4 C_L^2C_Q^{2(k-1)} |v_{02}- v_{01}|_{X^{k-1}}.
\]
\end{theorem}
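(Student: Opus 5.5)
The plan is a Picard-type iteration in the spirit of Kato: we get uniform bounds in the high norm $X^k$ and a contraction in the low norm $X^{k-1}$. For a given curve $w(\cdot)$ we freeze coefficients and solve the linear problem with Proposition \ref{P:linear}. Define the set
\[
\Gamma = \{ w \in C^0([0,T], X^{k-1}) \mid w(t) \in X^{k-1}(v_*, \delta_1) \cap X^k(R_1),\ |w'|_{C_t^0 X^{k-1}} \le M_1 \},
\]
where $\delta_1 \in (\delta, \delta_0)$ and $R_1 \in (C_L^2 C_Q^{2k} R, R_0)$. For $w \in \Gamma$ set $A(t) = \BA(w(t))$, $L(t) = \BL(w(t))$ and $Q(t) = \BQ(w(t))$, and check (L.1)--(L.3) from (H.1)--(H.3). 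The Lipschitz-in-time property of $w$ in $X^{k-1}$, together with $\BL, \BQ \in C^1$, gives $L, Q \in W^{1,1}$ with $|L'|, |Q'| \le C_0 M_1$. Then \eqref{E:l-1} gives $l(t_0,t) \le e^{C_L^2 C_0 M_1 t/2}$. We let $U_w(t,t_0)$ be the resulting evolution operator and define the map
\[
\CT(w)(t) = U_w(t,0) v_0 + \int_0^t U_w(t,\tau) f(w(\tau))\, d\tau.
\]

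The first step is to show that $\CT$ maps $\Gamma$ into itself. Using \eqref{E:EvoOp-5} with $n = k$, the equivalence \eqref{E:temp-7} and (H.4), we get
\[
|\CT(w)(t)|_{X^k} \le C_L^2 C_Q^{2k} e^{Ct} \big( |v_0|_{X^k} + t\, C_{f,0} \big),
\]
with $C$ depending on $M_1$ and the constants in (H.1)--(H.4). This is below $R_1$ for $T$ small because $R < C_L^{-2} C_Q^{-2k} R_0$; this is exactly why $R$ is restricted in the statement. The same estimate with $n = k-1$, applied to $\CT(w)(t) - v_*$, gives
\[
|\CT(w)(t) - v_*|_{X^{k-1}} \le |U_w(t,0)v_0 - v_0|_{X^{k-1}} + |v_0 - v_*|_{X^{k-1}} + Ct.
\]
The first term is small for $t$ small, using the time derivative and the $X^k$ bound of $U_w v_0$, so it is $\le Ct\, R_1$. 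Hence the distance stays below $\delta_1$. For the derivative we use $\CT(w)' = \BA(w)\CT(w) + f(w)$, which gives $|\CT(w)'|_{X^{k-1}} \le C_0 R_1 + C_{f,0} =: M_1$. So $M_1$ is fixed first and then $T$ is chosen.

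The second step is the contraction. For $w_1, w_2 \in \Gamma$, the difference $z = \CT(w_2) - \CT(w_1)$ satisfies
\[
z' = \BA(w_1) z + \big(\BA(w_2) - \BA(w_1)\big)\CT(w_2) + f(w_2) - f(w_1).
\]
By the Lipschitz bound \eqref{E:BA-1} with $r = k$ and the $X^k$ bound on $\CT(w_2)$, the forcing is bounded in $X^{k-1}$ by $(C_0 R_1 + C_{f,1}) |w_2 - w_1|_{X^{k-1}}$. Duhamel's formula with $U_{w_1}$ estimated in $X^{k-1}$ then gives a Lipschitz constant $\le Ct < 1/2$ in $C_t^0 X^{k-1}$. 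Since $\Gamma$ is closed in $C_t^0 X^{k-1}$ (bounded sets of $X^k$ are weakly compact and the norm is weakly lower semicontinuous, and the Lipschitz bound passes to limits), the contraction principle gives a fixed point $v$. Uniqueness in the stated class follows from the same difference estimate, after checking that any solution in that class lies in $\Gamma$ on a short interval.

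The main obstacle is regularity: showing $v \in C_t^0 X^k$ rather than only weak continuity with an $L^\infty_t X^k$ bound. Here I would follow Kato's argument. Since $v = \CT(v)$, the solution is $v(t) = U_v(t,0)v_0 + \int_0^t U_v(t,\tau) f(v(\tau))\, d\tau$. Now $U_v$ is strongly continuous on $X^k$ by \eqref{E:EvoOp-2} with $n_0 = k$, and $f(v(\cdot)) \in L^\infty X^k$. Continuity of the integral term in $X^k$ then needs the strong continuity of $U_v$ combined with dominated convergence, which works once $f(v(\tau))$ is Bochner measurable in $X^k$. If that measurability is doubtful, I would instead use $Q^k$-weighted energy continuity, as in the paper's appendix. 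Having $v \in C^1_t X^{k-1}$ then follows from \eqref{E:EvoOp-4}.

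Finally, the Lipschitz estimate comes from the difference of two fixed points with initial data $v_{01}, v_{02}$:
\[
|v_2 - v_1|_{C_t^0 X^{k-1}} \le C_L^2 C_Q^{2(k-1)} e^{Ct} |v_{02} - v_{01}|_{X^{k-1}} + \tfrac12 |v_2 - v_1|_{C_t^0 X^{k-1}}.
\]
Absorbing the last term and taking $T$ small enough that $e^{CT} \le 2$ yields the stated constant $4 C_L^2 C_Q^{2(k-1)}$.
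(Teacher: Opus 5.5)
Your proposal is correct and follows essentially the paper's route: you freeze coefficients along curves that are bounded in $X^k$ and Lipschitz in $X^{k-1}$, generate $U(t,t_0)$ by Proposition \ref{P:linear}, show the Duhamel map $\CT$ is a self-map and a $\tfrac12$-contraction in $C_t^0X^{k-1}$, pass to the limit by weak compactness in $X^k$, read off continuity in $X^k$ from the Duhamel formula with strongly continuous $U$, and absorb the $\tfrac12$ term to obtain the constant $4C_L^2C_Q^{2(k-1)}$. Your extra points only flesh out steps the paper states in one line: Bochner measurability of $f(v(\cdot))$ in $X^k$, and uniqueness in the full class $X^{k-1}(v_*,\delta_0)\cap X^k(R_0)$; the latter is cleanest done by Gronwall on the difference equation, using $U_v$ over all of $[0,T]$, rather than by a short-interval restart.
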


The theorem will be proved by a fixed point argument of a transformation $\CT (v_0, v(\cdot))$ of functions $v(\cdot) \in Y$ with initial value $v_0$ 
as a parameter where  
\[\begin{split}
Y = \{ v (\cdot)  \in L^\infty ([0, T],  X^{k}) \cap & W^{1, \infty} ([0, T], X^{k-1}) :  |v -v_* |_{L_t^\infty X^{k-1}} \le (\delta_0 +\delta)/2, \\ 
&|v'|_{L_t^\infty X^{k-1}} \le M_1, \ |v(t)|_{L_t^\infty X^k} \le  (C_L^{2} C_Q^{2k} R+R_0)/2
\}. 
\end{split}\]
The constants $T, M_1>0$ will be determined later. 

For $v \in Y$, let  
\be \label{E:temp-7.5}
A (t) = \BA (v(t)), \quad L (t) = \BL (v(t)), \quad Q(t) = \BQ(v(t)). 
\ee
Clearly, $L \in W_t^{1, \infty} \BBL(X, X^*)$, $A \in C_t^{0, 1}\BBL(X^{r}, X^{r-1})$, and $Q \in W_t^{1, \infty}\BBL(X^{r}, X^{r-1})$, for $1\le r \le k$. Moreover, for  a.~e.~$t\in [0, T]$, 
\[
|L'(t) |_{\BBL(X, X^*)}, \ |Q'(t) |_{\BBL(X^{r}, X^{r-1})} \le C_0 |v' (t)|_{X^{k-1}}. 
\]
From \eqref{E:l-1}, assumptions (L.1)--(L.3) given in Appendix \ref{SS:Linear} are satisfied by $L(t)$, $A(t)$, and $Q(t)$ with $n_0=k$ and 
\be \label{E:l-2}
l(t_0, t_1) \le e^{\frac 12 C_0 C_L^2 \int_{t_0}^{t_1} |v'(\tau)|_{X^{k-1}} d\tau}.  
\ee
Let  $U (t, t_0) \in \BBL(X^r)$, $ 0 \le t_0 \le t \le T$ and $0\le r\le k$, be the strongly $C^0$ evolution operator on $X^r$ generated by $A (t)$ due to Proposition \ref{P:linear}. Using \eqref{E:S-2} and the definition of $Y$, its estimate becomes  
\be \label{E:U-bdd-1} \begin{split}
& |Q(t)^r U (t, t_0) v |_{L(t)} \\
\le & e^{\omega (t-t_0) + C_L^2 \int_{t_0}^t  \frac 12 C_0 |v'|_{X^{k-1}} + \sum_{j=1}^r C_Q^{2j-1} (C_0|v'|_{X^{k-1}} + |[Q, A]|_{\BBL(X^{j}, X^{j-1})}) d\tau} |Q(t_0)^r v|_{L(t_0)} \\
\le & e^{(\omega  + C_0 C_L^2 (\frac 12 M_1 + r C_Q^{2r-1}(1+M_1)))  (t-t_0)} |Q(t_0)^r v|_{L(t_0)} \triangleq e^{C_1 (t-t_0)} |Q(t_0)^r v|_{L(t_0)},
\end{split} \ee
for any $v \in X^r$. Hence we also obtain from \eqref{E:temp-7} 
\be \label{E:U-bdd-2} \begin{split}
& |U (t, t_0) |_{\BBL(X^r)} \le C_L^2 C_Q^{2r} e^{C_1 (t-t_0)}. 
\end{split} \ee

Clearly $v \in Y$ is a solution to \eqref{E:QLPDE-0} with $v(0) = v_0$ iff $\CT(v_{0}, v) = v$ where  
\be \label{E:CT-1}
\CT(v_0, v)(t) =\wt v (t) = U (t, 0) v_{0} + \int_{0}^t U (t, \tau) f (v(\tau)) d\tau, \;\;  t\in [0, T],
\ee
which is the solution to 
\be \label{E:QLPDE-iter-1}
\wt v_t = A(v) \wt v + f(v), \quad \wt v(0)=v_0. 
\ee





\begin{lemma} \label{L:Contraction-QLPDE-1}
Let 
\[
M_1= C_0 C_L^2 C_Q^{2k} (2R +1)+C_{f, 0} + C_{f, 1} \delta_0. 
\]
There exists $T>0$ determined by $\delta$, $R$, and the constants in (H.1--H.4) 
such that $\CT: \big(X^{k-1}(v_*, \delta) \cap X^k (R) ) \big) \times Y \to Y$ and satisfies 
\[
|\CT(v_{01}, v_2) - \CT (v_{02}, v_1)|_{L_t^\infty X^{k-1}} \le 2 C_L^2C_Q^{2(k-1)} |v_{02}- v_{01}|_{X^{k-1}} + (1/2) |v_2 - v_1|_{L_t^\infty X^{k-1}}. 
\]
\end{lemma}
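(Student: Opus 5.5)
The plan is to argue exactly as in Lemma \ref{L:LP-contraction}, with the weighted-in-time estimates replaced by short-time bounds on $[0,T]$. Fix $v_0 \in X^{k-1}(v_*,\delta)\cap X^k(R)$ and $v\in Y$, and use the evolution operator $U(t,t_0)$ generated by $A(t)=\BA(v(t))$ from Proposition \ref{P:linear}. The bounds \eqref{E:U-bdd-1}--\eqref{E:U-bdd-2} hold with $C_1$ depending on $M_1$ and the constants in (H.1)--(H.4), and $M_1$ has already been fixed. Set $\wt v=\CT(v_0,v)$. We must show three things: $\wt v$ satisfies the three defining bounds of $Y$, and the contraction estimate holds.

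\textbf{Step 1: the $X^k$ bound.} Here we use the $\BQ$-weighted norm \eqref{E:temp-7} rather than \eqref{E:U-bdd-2}, so that the sharp constant $C_L^2C_Q^{2k}$ appears only once. By \eqref{E:U-bdd-1} applied to \eqref{E:CT-1},
\[
|\wt v(t)|_{X^k}\le C_LC_Q^k\big(e^{C_1t}C_LC_Q^k|v_0|_{X^k}+te^{C_1t}C_LC_Q^kC_{f,0}\big)\le e^{C_1 t}C_L^2C_Q^{2k}(R+tC_{f,0}).
\]
Since $C_L^2C_Q^{2k}R<R_0$, this is at most $(C_L^2C_Q^{2k}R+R_0)/2$ once $T$ is small.

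\textbf{Step 2: the time derivative.} From \eqref{E:QLPDE-iter-1} and \eqref{E:BA-1} we get $|\wt v'|_{X^{k-1}}\le C_0|\wt v|_{X^k}+|f(v)|_{X^{k-1}}$. Bound $|\BA(v)|_{\BBL(X^k,X^{k-1})}$ by $C_0$ using (H.2), and bound $|f(v)|$ by $C_{f,0}$, or alternatively by $|f(v_*)|+C_{f,1}\delta_0$. With Step 1 this gives $\le M_1$ for $T$ small; the choice $M_1=C_0C_L^2C_Q^{2k}(2R+1)+C_{f,0}+C_{f,1}\delta_0$ has exactly this slack.

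\textbf{Step 3: the $X^{k-1}$ distance to $v_*$.} Write $\wt v(t)-v_*=(\wt v(t)-v_0)+(v_0-v_*)$ and use $|\wt v(t)-v_0|_{X^{k-1}}\le tM_1$. This is below $(\delta_0-\delta)/2$ for $T$ small.

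\textbf{Step 4: the contraction.} Let $\wt v_j=\CT(v_{0j},v_j)$ and let $U_1$ be the evolution operator generated by $v_1$. The difference $\wt w=\wt v_2-\wt v_1$ solves
\[
\wt w'=\BA(v_1)\wt w+(\BA(v_2)-\BA(v_1))\wt v_2+f(v_2)-f(v_1),\qquad \wt w(0)=v_{02}-v_{01}.
\]
By Duhamel with $U_1$ in $X^{k-1}$, using \eqref{E:U-bdd-1} with $r=k-1$ together with \eqref{E:temp-7}, the Lipschitz bound \eqref{E:BA-1} with $r=k$ (so $|(\BA(v_2)-\BA(v_1))\wt v_2|_{X^{k-1}}\le C_0|v_2-v_1|_{X^{k-1}}|\wt v_2|_{X^k}$), and (H.4), we obtain
\[
|\wt w|_{L^\infty_tX^{k-1}}\le C_L^2C_Q^{2(k-1)}e^{C_1T}|v_{02}-v_{01}|_{X^{k-1}}+TC_L^2C_Q^{2(k-1)}e^{C_1T}\big(C_0R_0+C_{f,1}\big)|v_2-v_1|_{L^\infty_tX^{k-1}}.
\]
Choosing $T$ so that $e^{C_1T}\le2$ and the second coefficient is $\le1/2$ gives the claim.

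\textbf{Main obstacle.} The delicate step is the Duhamel/variation-of-constants identity in Step 4. The forcing $(\BA(v_2)-\BA(v_1))\wt v_2$ lies only in $X^{k-1}$, so the representation formula must be justified at that regularity. I would verify it via \eqref{E:EvoOp-4}: differentiate $\tau\mapsto U_1(t,\tau)\wt w(\tau)$ for data in $X^1$, which is valid since $\wt v_j\in C^1_tX^{k-1}\cap C^0_tX^k$ and $k\ge1$, and conclude by density. Everything else is bookkeeping of constants, with $T$ chosen as the minimum of the four smallness requirements.
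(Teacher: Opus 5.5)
Your proposal is correct and is essentially the paper's proof: the same $X^k$ and time-derivative bounds via \eqref{E:U-bdd-1}--\eqref{E:U-bdd-2} (you also spell out the $X^{k-1}$-distance condition, which the paper leaves implicit), and the same Duhamel formula for $\wt w$ with $U_1$, except that you absorb the initial-data term $U_1(t,0)(v_{02}-v_{01})$ into that formula instead of adding it afterwards by the triangle inequality. One cosmetic fix: since $\omega$, and hence $C_1$, may be negative, write $e^{|C_1|T}$ (or $1+e^{C_1T}$, as the paper does) in place of $e^{C_1T}$ when bounding your time integrals.
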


\begin{proof} 
For $v_0 \in X^{k-1}(v_*, \delta) \cap X^k (R)$ and $v\in Y$, let $\wt v =\CT(v_0, v)$, then one may estimate using \eqref{E:CT-1}, \eqref{E:U-bdd-2},  and assumption (H.4), for $t \in [0, T]$, 
\begin{align*} 
| \wt v (t) |_{X^k} \le & C_L^2 C_Q^{2k} e^{C_1 t}|v_0 |_{X^k} + C_L^2 C_Q^{2k}  C_{f, 0} \int_0^t e^{C_1 (t-\tau)}
d\tau 
\le C_L^2 C_Q^{2k} \big( e^{C_1 t} R + 
C_{f, 0} t (1+ e^{C_1 t}) \big),
\end{align*}
where $C_1$ was defined in \eqref{E:U-bdd-1}.
Moreover, from \eqref{E:QLPDE-iter-1}, 
\begin{align*}
|\p_t \wt v (t) |_{X^{k-1}} = & |A (t) \wt v (t) + f (v(t))|_{X^{k-1}} \le C_0 |\wt v (t) |_{X^k}
+ |f(v_*)|_{X^{k-1}} + C_{f, 1} \delta_0.
\end{align*}
Since $R < C_L^{-2} C_Q^{-2k} R_0$ and $\delta < \delta_0$, by the choice of $M_1$, there exists $T>0$ such that $\wt v \in Y$. 


To obtain the Lipschitz estimate of  $\CT$ in $v$, let $v_j \in Y$, $j=1,2$, and $U_j(t, t_0)$ be the evolution operators generated by $v_j$, respectively. Let  
\[
\wt w = \wt v_2 - \wt v_1 \triangleq \CT(v_0, v_2) - \CT(v_0, v_1), 
\]
which, due to \eqref{E:QLPDE-iter-1}, satisfies 
\[
\p_t \wt w = \BA ( v_1) \wt w + \big(\BA(v_2) - \BA(v_1) \big) \wt v_2 + f (v_2) - f (v_1), \quad \wt w(0)=0, 
\]
and thus 
\be \label{E:temp-8}
\wt w (t) =  \int_0^t U_{1} (t, \tau) \Big( \big(\BA(v_2(\tau)) - \BA(v_1(\tau)) \big)  \wt v_2(\tau)  + f (v_2(\tau) ) - f (v_1(\tau) ) \Big) d\tau.
\ee
Therefore \eqref{E:U-bdd-2}, \eqref{E:temp-7}, the definitions of $M_1$ and $C_1$, and the fact $\wt v_2 \in Y$ imply 
\begin{align*} 
|\wt w (t)|_{X^{k-1}} \le & 
\int_{0}^t |U_1 (t, \tau)|_{\BBL(X^{k-1})} \Big( C_0 |\wt v_{2} (\tau)|_{X^{k}}  + C_{f, 1} \Big) |v_2 (\tau) - v_1(\tau)|_{X^{k-1}}d\tau  \\
\le & C_L^2 C_Q^{2(k-1)}   
\int_0^t  e^{C_1 
(t-\tau)}   \Big(  M_1 + C_{f,1}\Big) |v_2 (\tau) - v_1(\tau)|_{X^{k-1}}  d\tau \\
\le & C_L^2 C_Q^{2(k-1)} (1+e^{C_1 T} ) (M_1 + C_{f, 1}) T |v_2  - v_1|_{L_t^\infty X^{k-1}}. 
\end{align*}
The  Lipschitz estimate of $\CT$ follows by taking $T>0$ small and also using \eqref{E:U-bdd-2}. 
\end{proof} 

We are ready to prove Theorem \ref{T:QLPDE-LWP} of the local well-posedness of  \eqref{E:QLPDE-0}.

\begin{proof} [Proof of Theorem \ref{T:QLPDE-LWP}] 
For any $\delta\in (0, \delta_0)$ and $R \in (0, C_L^{-2} C_Q^{-2k} R_0)$, choose 
$M_1, T>0$ according to Lemma \ref{L:Contraction-QLPDE-1}.
For any $v_0 \in X^{k-1}(v_*, \delta) \cap X^k (R)$,  let $v^{(n)} = \big(\CT(v_0, \cdot)\big)^{(n)} v^{(0)} \in Y$ be the iteration sequence starting with any initial $v^{(0)} \in Y$. Lemma \ref{L:Contraction-QLPDE-1} implies that $(v^{(n)})$ is a Cauchy sequence in $C^0 ([0, T], X^{k-1})$. Hence there exists $v \in C^0 ([0, T], X^{k-1})$ such that $v^{(n)} \to v$ in $C^0 ([0, T], X^{k-1})$. Moreover, since $v^{(n)}$ is bounded in $L^\infty ([0, T], X^{k}) = \big( L^1 ([0, T], (X^{k})^*) \big)^*$ and $v_t^{(n)}$ bounded in $L^\infty ([0, T], X^{k-1}) = \big( L^1 ([0, T], (X^{k-1})^*) \big)^*$, there exist subsequences of $( v^{(n)})$ weak-* convergent in $L^\infty ([0, T], X^{k})$ with limits in $Y$. Hence all of these subsequences have to  converge to the above strong limit $v$ and thus we obtain $v \in Y$ and $\CT(v_0, v) =v$. The fact $v$ being a fixed point of $\CT(v_0, \cdot)$ and the definition \eqref{E:CT-1} along with \eqref{E:QLPDE-iter-1} imply that $v \in X^k$ and $v_t \in X^{k-1}$ are continuous in $t$. The uniqueness of the fixed point and its Lipschitz dependence on $v$ are obvious from the contraction estimate of $\CT$ in $v$. 
\end{proof}

\subsection{Local well-posedness of a general class nonlinear PDEs} \label{SS:NLPDE-LWP}

Consider 
\be \label{E:NLPDE-0} 
u_t = F(u), \quad u(0)=u_0.
\ee
Conceptually we shall differentiate this equation in $t$ and study the resulted quasilinear equation in the form of \eqref{E:QLPDE-0} of the new unknown $v= u_t = F(u)$. However, $F(u)$ may not be invertible. Instead the quasilinearization will by realized by the local diffeomorphism 
\be \label{E:CB-1}
\CB(u) = F(u) - \omega_* u,
\ee
for some $\omega_* \in \R$. 

We assume that there exist 
\[
\omega_*, a \in \R, \;  C_\CL, C_* \ge 1, \; 2 \le n \in \N,  \text{ an open } \; \CO \subset X^{n-1},  
\text{ and } \; \CL \in C^1 \big(\CO, \BBL(X, X^*)\big), 
\]
such that the following hold, where 
\[
\CO_n \triangleq \CO \cap X^n \; \text{ and } \;  \CO_n (R) \triangleq \{ u\in \CO_n \mid |u|_{X^n} \le R\}
\]
are equipped with the $|\cdot|_{X^n}$ topology.  
\begin{itemize} 
\item [(B.1)]$F \in C^2 (\CO, X^{n-2}) \cap C^2 (\CO_n, X^{n-1})$ satisfies, for any $1\le r \le n-1$ and $R> 0$,
\[
\CA\triangleq \BD F \in  C^{1} (\CO, \BBL (X^r, X^{r-1})), \quad |\CA|_{C^{1} (\CO, \BBL (X^r, X^{r-1}))}
\le C_*, 
\]  
\[
|\CA|_{C^{0} (\CO_n(R), \BBL (X^n, X^{n-1}))} < \infty, \; \forall R>0.
\]

\item [(B.2)] For any $u \in  \CO$ 
and $1\le r\le n-1$, 
it holds that $\omega_* - \CA(u) \in \BBL(X^r, X^{r-1})$ is isomorphic and for any $1\le r\le n-1$ and $R >0$,  
\[
|\omega_* - \CA|_{C^1 (\CO, \BBL(X^{r}, X^{r-1}))}, \, |(\omega_* - \CA)^{-1}|_{C^1 (\CO, \BBL(X^{r-1}, X^{r}))} \le C_*. 
\]
Moreover there exists $\lambda >  C_\CL^2 \big(\omega_* + |a| C_* ( 1 + \omega_* C_*)   \big)$ such that $(\lambda - \CA(u))( X_1) = X$. 

\item [(B.3)] 
For any $u \in \CO$,  $\CL(u) = \CL (u)^*$ and 
\be  \label{E:coercivity-2} 
C_\CL^{-2} |w|_{X}^2 - a |(\omega_*   - \CA(u))^{-1} w|_X^2 \le  \langle \CL (u) w, w \rangle \le C_\CL^2 |w|_{X}^2, \quad \forall w \in X, 
\ee
\[ 
|\BD\CL |_{C^0 (\CO, \BBL (X^{n-1} \otimes X, X^*))}  \le C_*,
\]
\be  \label{E:dissipativity-2.5} 
\langle \CL (u) w , \CA (u)w \rangle \le  \omega_* |w|_X^2, \quad \forall w\in X^1. 
\ee

\item [(B.4)] For any 
$R >  |v_*|_{X^{n-1}}$ where $v_*= \CB(u_*)$, there exists $\delta >0$ such that $\CB^{-1} \in  C^{0} \big(X^{n-2} (v_*, \delta) \cap X^{n-1} (R), \CO_n \big)$ and 
\[
|\CB^{-1}|_{C^{0} (X^{n-2} (v_*, \delta) \cap X^{n-1} (R), X^{n} )} < \infty, 
\] 
where $X^{n-2} (v_*, \delta) \cap X^{n-1} (R)$ is equipped with the $X^{n-1}$ topology. 
\end{itemize}

\begin{remark} \label{R:LWP-R-1}
If \eqref{E:coercivity-2} is satisfied at some $u\in \CO$, then the continuity of $\CA$ and $\CL$ implies that they also hold in a neighborhood of $u$ in $X^{n-1}$ with slightly relaxed constants $C_\CL$ and $a$. This does not apply to the crucial dissipativity  \eqref{E:dissipativity-2.5}.

In the above set-up mainly for fully nonlinear evolutionary PDEs, we did not include a differential operator $\BQ(u)$ as in Appendix \ref{SS:QLPDE-LWP}, but assumed that $\CA(u) - \omega_*$ plays such a role. It may not always be so straight forward to derive the invertibility of $\CA(u) - \omega_*$, see Subsection \ref{SS:examples-Ham} for a general treatment for Hamiltonian PDEs. In certain problems where $Dom(\CA(u)^n)$ do not provide convenient function spaces to work with, e.~g.~the incompressible Euler equation, one may still try to use a separate differential operator $\BQ(u)$. 

From the regularity (B.1) and non-degeneracy (B.2) of $F$, 
the mapping $\CB\in C^{2}  (\CO, X^{n-2}) 
$ is a local diffeomorphism. However, the stronger assumption (B.4), which is not completely local due to the arbitrary $R>0$, is concerned with the regularity of $F$ and is needed to verify condition (H.4), after $\CB$ is applied to quasilinearize  \eqref{E:NLPDE-0}. 
If we assume $F(u_*) \in X^n$, 
then instead of (B.4), 
$(\omega_* - \CA(u_*))^{-1} \in \BBL(X^{n-1}, X^n)$ would be sufficient, see Remark \ref{R:LWP-R-2}. 
Often (B.4) can be verified directly through regularity estimates. 
Lemma \ref{L:CB-1} also provides a sufficient condition 
for (B.4). 
\end{remark} 

The main statement of this subsection is the local well-posdness of \eqref{E:NLPDE-0}. 

\begin{theorem} \label{T:NLPDE-LWP}
Assume (B.1)--(B.4), then, for any $u_* \in \CO_n$ and $R_1 >|u_*|_{X^n}$, there exist $\ep, T, M_0, M_2>0$
such that, for any $u_0 \in X^{n-1}(u_*, \ep) \cap X^n (R_1)$, \eqref{E:NLPDE-0} has a unique solution $u(t) \in X^{n-1}(u_*, 2\ep) \cap X^n (M_0)$, $t \in [0, T]$, so that $u \in C_t^0 X^n \cap C_t^1 X^{n-1}$.
Moreover, let $u_1(t)$ and $u_2(t)$ be solutions with initial values $u_{01}$ and $u_{02}$, respectively, then 
\[
|u_2 - u_1|_{C_t^0 X^{n-1}} \le M_2 |u_{02}- u_{01}|_{X^{n-1}}.
\]
Finally, for any $t \in [0, T]$, the solution map $u(t, \cdot) \in C^{1,1} (X^{n-1}(u_*, \ep) \cap X^n (R_1), X^{n-2})$. 
\end{theorem}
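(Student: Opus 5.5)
The plan is to quasilinearize \eqref{E:NLPDE-0} through the local diffeomorphism $\CB(u)=F(u)-\omega_* u$ of \eqref{E:CB-1}. With $v=\CB(u)$ this produces a system of the form \eqref{E:QLPDE-0} with $k=n-1$, to which Theorem \ref{T:QLPDE-LWP} applies. Differentiating $v=\CB(u)$ in $t$ gives $v_t=(\CA(u)-\omega_*)F(u)$. Substituting $F(u)=v+\omega_* u$ and $u=\CB^{-1}(v)$, this becomes
\[
v_t=\BA(v)v+f(v),\quad \BA(v)=\CA(\CB^{-1}(v)),\quad f(v)=\omega_*\big(\CA(u)-\omega_*\big)u\big|_{u=\CB^{-1}(v)} .
\]
From (B.1)–(B.2), $\BD\CB=\CA-\omega_*$ is an isomorphism $X^r\to X^{r-1}$ with $C^1$ inverse. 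The inverse function theorem then makes $\CB$ a $C^2$ diffeomorphism from a neighborhood of $u_*$ in $X^{n-1}$ onto a neighborhood of $v_*$ in $X^{n-2}$. The analogous statement holds between $X^n$ and $X^{n-1}$, using the $\CO_n$ part of (B.1).

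Next I verify (H.1)–(H.4) for this system, near $v_*$ and with $k=n-1$.
\begin{itemize}
\item (H.3): take $\BQ(v)=\omega_*-\CA(\CB^{-1}(v))$. By (B.2) and the chain rule, $\BQ$ and $\BQ^{-1}$ are $C^1$ with uniform bounds. The commutator $[\BQ,\BA]=[\omega_*-\BA,\BA]$ vanishes identically.
\item (H.2): the bounds follow from (B.1). Surjectivity of $\lambda-\BA$ is assumed in (B.2).
\item (H.1): the coercivity \eqref{E:coercivity-2} has a negative lower-order term, so I plan to modify the metric to $\BL(v)=\CL(u)+b\,(\omega_*-\CA(u))^{-*}(\omega_*-\CA(u))^{-1}$ with $b>|a|$. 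This is $C^1$ in $v$ and uniformly coercive.
\item Dissipativity \eqref{E:dissipativity-2} for $\BL$: the $\CL$ part is \eqref{E:dissipativity-2.5}. The added part contributes $\langle (\omega_*-\CA)^{-1}w,(\omega_*-\CA)^{-1}\CA w\rangle$. Writing $\CA=\omega_*-(\omega_*-\CA)$ expresses this as $\omega_*|(\omega_*-\CA)^{-1}w|^2-\langle(\omega_*-\CA)^{-1}w,w\rangle$, which is bounded by $C|w|_X^2$. Hence $\langle\BL w,\BA w\rangle\le\omega\langle\BL w,w\rangle$ with some $\omega$ depending on $C_\CL,a,C_*$. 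This is consistent with the size condition on $\lambda$ in (B.2), which is exactly what Lumer–Phillips needs for the modified metric.
\item (H.4): $f\in X^{n-1}$ follows since $u=\CB^{-1}(v)$ is bounded in $X^n$ on $X^{n-2}(v_*,\delta)\cap X^{n-1}(R)$ by (B.4), and $\CA(u)$ maps $X^n\to X^{n-1}$ boundedly by (B.1). The Lipschitz bound in $X^{n-2}$ follows from the $C^1$ regularity of $\CA$ on $\CO$ and of $\CB^{-1}$ into $X^{n-1}$.
\end{itemize}
I expect (H.1)–(H.2), the construction of $\BL$ with the right dissipativity constant, to be the main technical obstacle. (H.4) requires (B.4) precisely because the $X^n$ control of $u$ is not local in $X^{n-1}$.

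Given $u_0\in X^{n-1}(u_*,\ep)\cap X^n(R_1)$, set $v_0=\CB(u_0)$, which is close to $v_*$ in $X^{n-2}$ and bounded in $X^{n-1}$. Theorem \ref{T:QLPDE-LWP} yields $v\in C^0_tX^{n-1}\cap C^1_tX^{n-2}$, and $u=\CB^{-1}(v)\in C^0_tX^n$. Then $u_t=(\BD\CB)^{-1}v_t$, and checking $u_t=F(u)$ uses $v=\CB(u)$, so $u\in C^1_tX^{n-1}$. Uniqueness and the $X^{n-1}$ Lipschitz bound transfer through $\CB$, with $\ep$ shrunk so that the solution stays in $X^{n-1}(u_*,2\ep)$. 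Finally, for $C^{1,1}$ dependence in $X^{n-2}$, I linearize: the candidate derivative $U(t)$ solves $U_t=\CA(u(t))U$, which is well posed by Proposition \ref{P:linear} on $X^{n-1}$ and $X^{n-2}$. The remainder $w=\tilde u-u-U\tilde u_0$ satisfies $w_t=\CA(u)w+F_1$ with $|F_1|_{X^{n-3}}\lesssim|\tilde u-u|^2_{X^{n-1}}$. Applying the energy estimate in one lower norm then gives the quadratic bound, hence differentiability. The Lipschitz continuity of the derivative comes from a Duhamel comparison of two linearized flows, again losing one derivative.
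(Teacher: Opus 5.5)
Your proposal follows the paper's proof step by step. It uses the same quasilinearization $v=\CB(u)$ with $k=n-1$, and $\BQ(v)=\pm(\CA(u)-\omega_*)$ with $u=\CB^{-1}(v)$, so that $[\BQ,\BA]=0$. It augments $\CL(u)$ by a multiple of $(\bar A(u)^{-1})^{adj}\bar A(u)^{-1}$ with the identical dissipativity computation, and uses (B.4) to get (H.4). It then applies Theorem \ref{T:QLPDE-LWP}, and for the $C^{1,1}$ claim the $m_0=1$ case of Theorem \ref{T:LWP-smoothness}. A few details need fixing.

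First, $v_t=(\CA(u)-\omega_*)(v+\omega_*u)$ gives $f(v)=\omega_*\big((\CA(u)-\omega_*)u-v\big)$, so you dropped the $-\omega_*v$ term. This is harmless, and with it one gets $\BD f(v)=\omega_*\BD\CA(u)(\BD\CB^{-1}(v)\,\cdot\,)\,u$.

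Second, (B.2) gives invertibility of $\omega_*-\CA(u)$ from $X^r$ to $X^{r-1}$ only for $r\le n-1$. So (B.1)--(B.2) do not make $\CB$ a local diffeomorphism from $X^n$ to $X^{n-1}$; this is exactly why (B.4) is assumed (cf.\ Remark \ref{R:LWP-R-2}). The regularity $u=\CB^{-1}(v)\in C_t^0X^n$ must come from (B.4) alone, which is in fact how you use it.

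Third, take $b=|a|$ (the paper uses $b=a$). Your dissipativity constant is $C_\CL^2\big(\omega_*+bC_*(1+\omega_*C_*)\big)$, which equals the threshold in (B.2) only when $b=|a|$. For $b>|a|$, the $\lambda$ supplied by (B.2) need not exceed it.

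Finally, the remainder in the differentiability step is one derivative better than you state. By (B.1) with $r=n-1$, $|F_1|_{X^{n-2}}\le C_*|\tilde u-u|_{X^{n-1}}^2$. Since $U(t,s)$ is bounded on $X^{n-2}$, this yields $|\tilde u(t)-u(t)-U(t)\tilde u_0|_{X^{n-2}}\le Ct|\tilde u_0|_{X^{n-1}}^2$. With only $F_1\in X^{n-3}$ you would get differentiability into $X^{n-3}$, not the claimed $X^{n-2}$. For $n=2$ you would also leave the scale $X^r$, $r\ge 0$, on which Proposition \ref{P:linear} operates.
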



Further analysis of the smooth dependence of the solutions on the initial data is given in Theorem \ref{T:LWP-smoothness} in Appendix \ref{SSS:smoothness-NLPDE}. 


\begin{proof} 
Let $v_* = \CB(u_*) \in X^{n-1}$. From assumptions (B.1)--(B.2) and the Inverse Function Theorem, there exists $\delta_1>0$ 
such that $\CB^{-1} \in C^{2} \big(X^{n-2} (v_*, \delta_1), \CO\big)$. 
For $v \in X^{n-2} (v_*, \delta_1)$, let 
\be \label{E:temp-10.5} \begin{split} 
& u = \CB^{-1} (v), \quad \BA (v)= \CA(u), \quad \BQ (v) = \bar A(u) \triangleq  \BD \CB(u) = \CA(u) -\omega_*, \\ 
&\BL (v) = \CL(u) + a \big(\bar A(u)^{-1} \big)^{adj}\bar A(u)^{-1}, 
\end{split} \ee 
where  $\big(\bar A(u)^{-1} \big)^{adj} \in \BBL(X)$ is the adjoint operator of $\bar A(u)^{-1} \in \BBL(X)$. 

From assumptions (B.1)--(B.2), for any $1 \le r \le n-1$ and in the domain $X^{n-2} (v_*, \delta_1)$, it is straight forward to verify 
\[
|\BA|_{C^1 \BBL(X^{r}, X^{r-1})} \le 2C_*^2, \quad |\BQ|_{C^0 \BBL(X^{r}, X^{r-1})}, \,
|\BQ^{-1}|_{C^0\BBL(X^{r-1}, X^{r})} \le C_*, 
\]
\[
|\BD \BQ|_{C^0 \BBL(X^{n-2} \otimes X^{r}, X^{r-1})} \le C_*^2, \ |\BD \BQ^{-1}|_{C^0\BBL(X^{n-2} \otimes X^{r-1}, X^{r})}  \le C_*^4.
\]
Since $[\BQ, \BA] \equiv 0$, we obtain that assumptions \eqref{E:BA-1} and (H.3) in Appendix \ref{SS:QLPDE-LWP} are satisfied on $X^{n-2} (v_*, \delta_1)$ with 
\[
k=n-1, \; Dom(\CA(v))= X^1 \subset X, \; C_Q = C_*, \; \text{ and } \; C_0 = 2C_*^4.
\] 

From the definition of $\BL$ and (B.3), it is clear $\BL (v) = \BL (v)^*$ and 
\be \label{E:coercivity-3}
C_\CL^{-2} |w|_{X}^2 \le  \langle \BL (v) w, w \rangle \le C_\CL^2 |w|_{X}^2 + a |\bar A(\CB^{-1} (v))^{-1} w|_X^2, \;\; \forall v \in X^{n-2} (v_*, \delta_1), \; w \in X. 
\ee
Moreover, for any 
$v_1\in X^{n-2}$, one may compute
\begin{align*}
\BD \BL (v) v_1 = & \BD \CL (u) (\BD \CB^{-1}(v) v_1) - a (\bar A(u)^{-1} )^{adj} \bar A(u)^{-1} \big( \BD \CA(u) (\BD \CB^{-1}(v) v_1) \big) \bar A(u)^{-1} \\
&- a\big( \bar A(u)^{-1} \big( \BD \CA(u) (\BD \CB^{-1}(v) v_1)\big) \bar A(u)^{-1} \big)^{adj} \bar A(u)^{-1}, 
\end{align*}
where  $u = \CB^{-1} (v)$, and thus (B.1)--(B.3) yield 
\[
|\BD \BL (v) v_1|_{\BBL(X, X^*)} \le C_*^2 ( 1 + 2 |a| C_*^3 ) |v_1|_{X^{n-2}}.  
\]
The definition of $\BL$ and assumption (B.3) also imply, for any $v \in X^{n-2} (v_*, \delta_1)$, $w \in X^1$,  
\begin{align*}
\langle \BL (v)w, \BA (v)w \rangle = & \langle \CL (u)w, \CA (u)w \rangle + a \big( \bar A(u)^{-1}w, w + \omega_* \bar A(u)^{-1}w \big)_X \\
\le & \big(\omega_* + |a| C_* ( 1 + \omega_* C_*)   \big) |w|_X^2 \le C_\CL^2 \big(\omega_* + |a| C_* ( 1 + \omega_* C_*)   \big) \langle \BL (v)w, w \rangle.  
\end{align*}
Summarizing the above estimates, we obtain that assumptions (H.1)--(H.3)  in Appendix \ref{SS:QLPDE-LWP} are satisfied with 
\be \label{E:temp-11} \begin{split}
& k=n-1, \quad Dom(\CA(v))= X^1 \subset X, \quad C_Q = C_*, \quad C_L = (C_\CL^2 + |a| C_*^2 )^{\frac 12}, \\ 
&\omega = C_\CL^2 \big(\omega_* + |a| C_* ( 1 + \omega_* C_*)   \big),  \quad C_0 =2 C_*^4 (1 +  |a| C_*). 
\end{split} \ee

Through the transformation $v = \CB(u)$, one may compute that \eqref{E:NLPDE-0} becomes 
\[
v_t = \big( \CA(u) - \omega_* \big)  F(u) = \big( \CA(u) - \omega_* \big) (v + \omega_* u) 
\triangleq   \BA(v) v + f(v), 
\]
where 
\be \label{E:temp-12}
f(v) = \omega_* \big( (\BA(v) - \omega_*) \CB^{-1} (v) - v\big). 
\ee
Namely $v = \CB(u)$ satisfies an evolution equation in the form of \eqref{E:QLPDE-0} where assumptions (H.1)--(H.3)  in Appendix \ref{SS:QLPDE-LWP} have been satisfied. One may calculate, for any $v \in X^{n-2} (v_*, \delta_1)$ and $v_1\in X^{n-2}$, 
\begin{align*}
\BD f (v) v_1  = & \omega_* \big( \bar A(u) \BD \CB^{-1} (v) v_1 + \BD \CA(u) (\BD \CB^{-1} (v) v_1) \CB^{-1} (v)  - v_1\big) \\
=& \omega_*  \BD \CA(u) \big(\BD \CB^{-1} (v) v_1 \big) \CB^{-1} (v) \implies 
\end{align*}
\be \label{E:temp-12.5}
|\BD f|_{C^0 \BBL(X^{n-2})} \le |\omega_*| C_*^2 \big( |u_*|_{X^{n-1}} + C_* \delta_1) \triangleq C_{f, 1}.  
\ee

To finish the proof we shall identify some $\ep, \delta_0>0$ and $R_0 > 3C_L^2 C_Q^{2(n-1)} |v_*|_{X^{n-1}}$ such that 
\[
\CB \big( X^{n-1}(u_*, \ep) \cap X^n (R_1) \big) \subset X^{n-2} (v_*, \delta_0/2) \cap X^{n-1} \big(C_L^{-2} C_Q^{-2(n-1)}R_0/2 \big),
\]
and obtain the boundedness of $|f(v)|_{X^{n-1}}$ on the latter set, where $C_L, C_Q$ are defined in \eqref{E:temp-11}. 
Let $\ep_0 = \min\{ dist_{X^{n-1}} (u_*, \CO^c), \delta_1/C_*\}$, 
then we have $\CB(X^{n-1} (u_*, \ep_0)) \subset  X^{n-2} (v_*, \delta_1)$.  
From 
\begin{align*}
& |\CB |_{C^0 (X^{n-1} (u_*, \ep_0) \cap X^n (2 R_1), X^{n-1})} \le |v_*|_{X^{n-1}} + 2 R_1 |\bar A|_{C^0 (\CO_n(2 R_1), \BBL(X^n, X^{n-1}))}\triangleq R_2,
\end{align*}
we obtain 
\[
\CB\big( X^{n-1} (u_*, \ep_0) \cap X^n (2 R_1) \big) 
\subset X^{n-2} (v_*, \delta_1) \cap X^{n-1}(R_2 ). 
\]
Let $R_0 = 3 C_L^2 C_Q^{2(n-1)} R_2$. From assumption (B.4) as well as (B.1), there exists $\delta_0  \in (0, \delta_1/2]$ such that 
\[
|\CB^{-1}|_{C^0 (X^{n-2} (v_*, \delta_0) \cap X^{n-1} ( R_0 ), X^n )}, \ 
|f|_{C^0 (X^{n-2} (v_*, \delta_0) \cap X^{n-1} (R_0), X^{n-1})} < \infty. 
\]
Hence assumptions (H.1)--(H.4) are verified. 

Finally, let $\ep \in (0, \ep_0/2)$ such that $\CB(X^{n-1} (u_*, \ep )) \subset X^{n-2} (v_*, \delta_0/2)$, then any initial value $u_0 \in X^{n-1}(u_*, \ep) \cap X^n (R_1)$ satisfies $v_0 = \CB (u_0) \in X^{n-2} (v_*, \delta_0/2) \cap X^{n-1} (R_2)$. The local well-posedness and Lipschitz dependence on the initial values in Theorem \ref{T:NLPDE-LWP} follow immediately from Theorem \ref{T:QLPDE-LWP}. The $C^{1,1}$ dependence on the initial values will be given in Theorem \ref{T:LWP-smoothness} along with Remark \ref{R:smoothness-1}.
\end{proof}

\begin{remark} \label{R:LWP-R-2}
If $F(u_*) \in X^n$, we could use the transformation 
\[
\CB(u) = F(u) - F(u_*) - \omega_*(u-u_*) 
\]
instead, such that $v_*= \CB(u_*)=0$ and correspondingly 
\[
f(v) =\BA(v) F(u_*) + \omega_* \big( (\BA(v) - \omega_*) (\CB^{-1} (v) - u_*) - v - F(u_*)\big).  
\]
If we further assume $(\omega_* - \CA(u_*))^{-1} \in \BBL(X^{n-1}, X^n)$ in (B.2), then instead of assuming (B.4), by the Inverse Function Theorem, there exists $\delta_0, R_0>0$ such that assumption (H.1)--(H.4) in Appendix \ref{SS:QLPDE-LWP} are verified. Hence the same local well-posedness results in Theorem \ref{T:NLPDE-LWP} hold on $X^n(u_*, R_1)$ for small $R_1>0$. In fact $F(u_*) \in X^n$ is comparable to (7.5) in \cite{Ka75}. 
\end{remark}

Besides that one may verify   assumption (B.4) directly for many PDEs, to end this part of discussions we give the following lemma which provides a sufficient condition for (B.4). 

\begin{lemma} \label{L:CB-1}
In addition to (B.1)--(B.2), assume $u_* \in \CO_n$, $v_* = \CB(u_*)$, $(\omega_* - \CA(u_*))^{-1} \in \BBL(X^{n-1}, X^{n})$, and for any $R>0$, 
\be \label{E:tameE-1} \begin{split}
C_{\CA,1} (R) \triangleq \sup \Big \{ \frac {|\BD^2 F (u) (w_1, w_2)|_{X^{n-1}}}{ |w_1|_{X^n} |w_2|_{X^{n-1}} +  |w_1|_{X^{n-1}} |w_2|_{X^n}} \, :& \ u \in \CO_n(R), \\
& \ w_1, w_2 \in X^n \setminus \{0\} \Big\} < \infty, 
\end{split} \ee
then for any 
$R_0 \ge  |v_*|_{X^{n-1}}$, there exists $\delta >0$ (given in \eqref{E:temp-10} below) such that $\CB^{-1} \in  C^{2} \big(X^{n-2} (v_*, \delta) \cap X^{n-1} (R_0), \CO_n \big)$ 
and the norms  
\[
|\BD \CB^{-1}|_{C^{0} (X^{n-2} (v_*, \delta) \cap X^{n-1} (R_0), \BBL(X^{n-1}, X^{n} ))} < \infty 
\]
\begin{align*}  
\sup \Big \{ \frac {|\BD^2 \CB^{-1} (v) (v_1, v_2)|_{X^{n}}}{ |v_1|_{X^{n-2}} |v_2|_{X^{n-1}} +  |v_1|_{X^{n-2}} |v_2|_{X^{n-1}}} : v \in X^{n-2} & (v_*, \delta) \cap X^{n-1} (R_0), \\
& v_1, v_2 \in X^{n-1} \setminus \{0\} \Big\}<\infty.
\end{align*}
\end{lemma}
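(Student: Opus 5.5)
We first obtain the regularity of $\CB^{-1}$ on the larger set by proving an a priori estimate for the equation $\CB(u)=v$ that lifts $u$ from $X^{n-1}$ to $X^n$. Everything is local near $u_*$. By (B.1)--(B.2) and the Inverse Function Theorem, $\CB$ restricts to a $C^2$ diffeomorphism from a neighborhood $X^{n-1}(u_*,\ep_1)$ onto $X^{n-2}(v_*,\delta_1)$. The bounds $|\CB^{-1}(v)-u_*|_{X^{n-1}}\le C|v-v_*|_{X^{n-2}}$ and $|\BD\CB^{-1}|\le C_*$ hold in $\BBL(X^{r-1},X^r)$ for $r\le n-1$. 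The only issue is the top-order statement: for $v$ also in $X^{n-1}(R_0)$, we must show $u=\CB^{-1}(v)\in X^n$ with a bound, and that $\BD\CB^{-1}(v)=(\CA(u)-\omega_*)^{-1}$ maps $X^{n-1}\to X^n$ boundedly.

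\textbf{Step 1: invertibility at top order.} We have $(\omega_*-\CA(u_*))^{-1}\in\BBL(X^{n-1},X^n)$. For $u$ near $u_*$ in $X^{n-1}$ and in $X^n$, we would like to write
\[
\omega_*-\CA(u)=(\omega_*-\CA(u_*))\big(I-(\omega_*-\CA(u_*))^{-1}(\CA(u)-\CA(u_*))\big).
\]
However, $\CA(u)-\CA(u_*)$ is small in $\BBL(X^n,X^{n-1})$ only if $u-u_*$ is small in $X^n$, and we only control it in $X^{n-1}$. So we instead use the tame bound (E:tameE-1):
\[
|(\CA(u)-\CA(u_*))w|_{X^{n-1}}\le C_{\CA,1}(R)\big(|u-u_*|_{X^{n-1}}|w|_{X^n}+|u-u_*|_{X^n}|w|_{X^{n-1}}\big).
\]
The first term is small when $|u-u_*|_{X^{n-1}}<\delta$. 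The second is lower order in $w$, and we control $|w|_{X^{n-1}}$ by $(\omega_*-\CA(u))^{-1}\in\BBL(X^{n-2},X^{n-1})$ from (B.2). This yields the a priori estimate
\[
|w|_{X^n}\le C\,|(\omega_*-\CA(u))w|_{X^{n-1}}\,(1+|u|_{X^n}),
\]
and with it the isomorphism $X^n\to X^{n-1}$ for $u\in\CO_n$ with $|u-u_*|_{X^{n-1}}$ small. Surjectivity follows from a continuity or Neumann-series argument in the $X^{n-1}$-weak sense.

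\textbf{Step 2: a priori bound on $|u|_{X^n}$.} Write $u-u_*=\int_0^1\BD\CB^{-1}(v_*+\tau(v-v_*))(v-v_*)\,d\tau$. Formally, this gives
\[
|u|_{X^n}\le|u_*|_{X^n}+\int_0^1 C(1+|u_\tau|_{X^n})\,d\tau\;|v-v_*|_{X^{n-1}}.
\]
This is not yet closed, since $|v-v_*|_{X^{n-1}}\le R_0+|v_*|$ is not small. I expect this to be the main obstacle. I would close it with a continuity argument along the segment $u_\tau=\CB^{-1}(v_*+\tau(v-v_*))$, using the Gronwall-type ODE
\[
\tfrac{d}{d\tau}u_\tau=(\BD\CB(u_\tau))^{-1}(v-v_*).
\]
Step 1 gives $\tfrac{d}{d\tau}|u_\tau|_{X^n}\le C(1+|u_\tau|_{X^n})R_0$, hence $|u_\tau|_{X^n}\le (1+|u_*|_{X^n})e^{CR_0}$, which is finite. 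The segment stays in $X^{n-2}(v_*,\delta)$ by convexity. To justify $u_\tau\in X^n$ before the bound is known, I would approximate $v$ by smoother data or use the well-posedness of the linear ODE in $X^n$ with the $X^{n-1}$-continuity of $u_\tau$.

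\textbf{Step 3: derivatives.} With $u$ bounded in $X^n$, Step 1 yields $|\BD\CB^{-1}(v)|_{\BBL(X^{n-1},X^n)}\le C(R_0)$. Differentiating $\BD\CB^{-1}=(\BD\CB\circ\CB^{-1})^{-1}$ once more gives
\[
\BD^2\CB^{-1}(v)(v_1,v_2)=-\BD\CB(u)^{-1}\,\BD^2F(u)\big(\BD\CB^{-1}v_1,\BD\CB^{-1}v_2\big).
\]
We then bound this by (E:tameE-1) together with the $X^{n-2}\to X^{n-1}$ and $X^{n-1}\to X^n$ bounds on $\BD\CB^{-1}$, which gives the stated bilinear estimate. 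Continuity in $v$ in the $X^{n-1}$ topology follows from the same identities. We then choose
\[
\delta=\min\{\delta_1,\ \text{smallness needed in Step 1}\}.
\]
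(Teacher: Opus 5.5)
There is a genuine gap in Step~2, exactly at the point you flag as the main obstacle. The constant $C$ in your Step~1 estimate $|w|_{X^n}\le C(1+|u|_{X^n})|(\omega_*-\CA(u))w|_{X^{n-1}}$ is not uniform. It contains $C_{\CA,1}(R)$, where $R$ must bound the $X^n$ norm along the segment from $u_*$ to $u$. The admissible size of $|u-u_*|_{X^{n-1}}$, and hence of $\delta$, is also of order $1/(b\,C_{\CA,1}(R))$, with $b=|(\omega_*-\CA(u_*))^{-1}|_{\BBL(X^{n-1},X^n)}$. Your continuity argument therefore closes only if some $R$ satisfies $(1+|u_*|_{X^n})e^{C(R)R_0}<R$. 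The hypothesis gives only $C_{\CA,1}(R)<\infty$ for each $R$, with no growth control. If $C_{\CA,1}(R)\gtrsim R$ (as for a cubic nonlinearity), no such $R$ exists once $R_0$ is moderately large, yet the lemma is claimed for every $R_0\ge|v_*|_{X^{n-1}}$. So the step ``$e^{CR_0}$, which is finite'' is circular.

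The loss comes from bounding the lower-order factor $|w|_{X^{n-1}}$ by the $X^{n-1}$ norm of the data, which has size $R_0$. Along your path, $\dot u_\tau=(\CA(u_\tau)-\omega_*)^{-1}(v-v_*)$. So (B.2) gives $|\dot u_\tau|_{X^{n-1}}\le C_*|v-v_*|_{X^{n-2}}<C_*\delta$ and $|u_\tau-u_*|_{X^{n-1}}<C_*\delta$, with $C_*\delta$ kept below $\mathrm{dist}_{X^{n-1}}(u_*,\CO^c)$. Invert with the fixed operator $(\omega_*-\CA(u_*))^{-1}$ and apply the tame bound to $(\CA(u_\tau)-\CA(u_*))\dot u_\tau$. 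This gives
\[
|\dot u_\tau|_{X^n}\le b\,|v-v_*|_{X^{n-1}}+b\,C_{\CA,1}(R)\,C_*\delta\,\big(|\dot u_\tau|_{X^n}+|u_\tau-u_*|_{X^n}\big).
\]
Once $b\,C_{\CA,1}(R)\,C_*\delta\le\frac12$, the Gronwall rate is $1$ and $|u_\tau-u_*|_{X^n}\le 4(e-1)bR_0$. The correct order of choices is therefore to fix $R=|u_*|_{X^n}+4ebR_0+1$ first, depending only on $R_0$, $b$ and $|u_*|_{X^n}$, and only then choose $\delta$. With this reordering your continuation argument works.

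This is exactly the structure of the paper's proof. It runs the chord iteration $\CS(u,v)=u+\bar A(u_*)^{-1}(v-\CB(u))$ on $\overline{X^{n-1}(u_*,\ep)}\cap\overline{X^n(R_1)}$. There $R_1=\max\{9|u_*|_{X^n}+1,\,9(b+1)R_0\}$ is chosen first, and $\ep\le 1/(9bC_{\CA,1}(R_1))$, $\delta=\ep/(3C_*)$ afterwards, so the large tame constant always multiplies an $X^{n-1}$-small quantity.

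The same issue affects your Step~1 surjectivity claim. A plain Neumann series in $X^n$ fails, because $(\omega_*-\CA(u_*))^{-1}(\CA(u)-\CA(u_*))$ is not small in $\BBL(X^n)$. You need a weighted norm $|w|_{X^n}+K|w|_{X^{n-1}}$, which is equivalent to the paper's coupled $X^{n-1}$/$X^n$ contraction.
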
 

Here we notice that the above norm of $\BD^2 \CB^{-1}$ 
is consistent with \eqref{E:tameE-1}. 
Though assumption \eqref{E:tameE-1} is stronger than $F \in C^2 (\CO_n, X^{n-1})$, it is often easily verified by using inequalities like  
\be \label{E:tameE-2}
|u_1u_2|_{W^{n, p} (\R^d)} \lesssim |u_1|_{W^{n, p} (\R^d)} |u_2|_{L^\infty (\R^d)} +  |u_1|_{L^\infty (\R^d)}|u_2|_{W^{n, p} (\R^d)}
\ee
if only products of the unknowns and their  derivatives are involved in $F(u)$. 

\begin{proof} 
Though it is clear from the assumptions that $\CB^{-1}$ is also $C^2$ from a neighborhood of $v_*$ in $X^{n-1}$ to $X^n$, it still remains to show the much stronger statement  that the domain of $\CB^{-1}$ can be extended to a arbitrary ball in $X^{n-1}$. 
For this purpose, we carefully go through the procedure of the proof of the Inverse Function Theorem. 
Let 
\[
\CS (u, v) = u+ \bar A (u_*)^{-1} (v -\CB(u) ), \quad \; (u, v) \in \CO \times X^{n-2},
\]
which satisfies 
\[
\CB(u) = v \Longleftrightarrow \CS(u, v) =u; \quad 
\CS \in C^{2} (\CO \times X^{n-2}, X^{n-1}) \cap C^{2}( \CO_n \times X^{n-1}, X^n). 
\]
Let $\ep_0 = dist(u_*, \CO^c)>0$. 
We shall focus on the subsets
 \[
\CU = \overline{X^{n-1} (u_*, \ep)} \cap \overline{X^{n} (R_1)} \subset \CO_n, \quad \CK = X^{n-2} (v_*, \delta) \cap X^{n-1} (R_0), 
\]
where $R_1 > |u_*|_{X^n}$, $\ep\in (0, \ep_0)$, and $\delta>0$ are constants to be fixed later based on $\ep_0$ and  $R_0 > |v_*|_{X^{n-1}}$.

At any $(u, v) \in \CU \times \CK$ and $u_1 \in X^n$,  
\begin{align*}
\BD_u \CS(u, v) u_1 = & u_1 - \bar A(u_*)^{-1} \bar A(u) u_1 = \bar A(u_*)^{-1} \big(\CA(u_*) - \CA(u) \big) u_1 \\
= & \bar A(u_*)^{-1} \int_0^1 \BD^2 F \big(\tau u_* + (1-\tau) u\big) (u_* -u, u_1) d\tau.  
\end{align*}
Let us denote $b =| (\omega_* - \CA(u_*))^{-1}|_{\BBL(X^{n-1}, X^{n})}$. 
From (B.1), (B.2), and the above assumptions, one can estimate 
\[
|\BD_u \CS(u, v) u_1|_{X^{n-1}} \le C_*^2 |u-u_*|_{X^{n-1}} |u_1|_{X^{n-1}} \le C_*^2 \ep |u_1|_{X^{n-1}}, 
\]
\begin{align*}
|\BD_u \CS(u, v) u_1|_{X^n} \le & b C_{\CA, 1} (R_1) \big(|u-u_*|_{X^{n-1}} |u_1|_{X^n} + |u-u_*|_{X^{n}} |u_1|_{X^{n-1}} \big) \\
\le &  b C_{\CA, 1} (R_1) \big(\ep |u_1|_{X^n} +2R_1 |u_1|_{X^{n-1}} \big).
\end{align*}
Since $\BD_v \CS(u, v) = \bar A(u_*)^{-1}$, for any $(u_1, v_1), (u_2, v_2) \in \CU \times \CK$, we obtain
\begin{align*}
|\CS(u_1, v_1) - \CS(u_2, v_2)|_{X^{n-1}} \le & C_* |v_1-v_2|_{X^{n-2}} + C_*^2 \ep |u_2-u_1|_{X^{n-1}}, 
\\
|\CS(u_1, v_1) - \CS(u_2, v_2)|_{X^{n}} \le & b |v_1-v_2|_{X^{n-1}} \\
&+ b C_{\CA, 1} (R_1) (\ep |u_2-u_1|_{X^{n}} + 2R_1|u_2-u_1|_{X^{n-1}} ).  
\end{align*}
These Lipschitz estimates also imply, for $(u, v) \in \CU \times \CK$, 
\begin{align*}
|\CS(u, v) - u_*|_{X^{n-1}} =& |\CS(u, v) -\CS( u_*, v_*)|_{X^{n-1}} \\
\le & C_* |v-v_*|_{X^{n-2}} + C_*^2 \ep |u-u_*|_{X^{n-1}} \le C_* \delta + C_*^2 \ep^2, \\
|\CS(u, v) - u_*|_{X^{n}} \le & b |v-v_*|_{X^{n-1}}  + b C_{\CA, 1} (R_1) (\ep |u-u_*|_{X^{n}} + 2R_1|u-u_*|_{X^{n-1}} ) \\
\le & b ( 2  R_0 + 4 C_{\CA, 1} (R_1) R_1 \ep). 
\end{align*}

To complete the proof, 
for any $R_0>0$, let 
\be \label{E:temp-10} \begin{split} 
R_1 = \max\{ 9|u_*|_{X^n} +1, \, 9( b +1) R_0\}, \; \ep =  \min\Big\{\frac {\ep_0}2, \, \frac 1{2C_*^2}, \, \frac {1} {9 b C_{\CA, 1} (R_1)} 
\Big\},  \; \delta = \frac {\ep}{3C_*}.  
\end{split} \ee
One may verify $\CS \in C^2 ( \CU \times \CK, \CU)$. Moreover, for any $(u_0, v) \in \CU \times \CK$, the iteration sequence $u_l = \CS(u_{l-1}, v)$, $l\in \N$, satisfies    
\begin{align*}
|u_{l+1} - u_l|_{X^{n-1}} \le & 2^{-l} |u_1-u_0|_{X^{n-1}}, \\
|u_{l+1} - u_l|_{X^{n}} \le & 9^{-l} |u_1-u_0|_{X^{n}} + 2 b C_{\CA, 1} (R_1) R_1 \sum_{j=1}^l 9^{1-j} |u_{l-j+1} - u_{l-j}|_{X^{n-1}} \\
\le & 9^{-l} |u_1-u_0|_{X^{n}} + 2^{2-l} l b C_{\CA, 1} (R_1) R_1 |u_1-u_0|_{X^{n-1}}. 
\end{align*}
Therefore $u_l$ converges to a limit in $X^n$ which is a fixed point in $\CU$ of $\CS(\cdot, v)$. The uniqueness of the fixed point of $\CS(\cdot, v)$ follows from the contraction estimate of $\CS(\cdot, v)$ in the $X^{n-1}$ norm. Hence $\CB^{-1}(v) \in \CU \subset X^n$ for any $v \in \CK$ is obtained. Much as the iteration estimate, it also holds, for any $(u, v) \in \CU \times \CK$ and $u_1 \in X^n$, 
\[
|\big( \BD_u \CS (u, v)\big)^l u_1|_{X_n} \le  9^{-l} |u_1|_{X^{n}} + 2^{2-l} l b C_{\CA, 1} (R_1) R_1 |u_1|_{X^{n-1}}. 
\]
Therefore  $\big(I- \BD_u \CS (u, v) \big)^{-1} \in \BBL(X^n) \cap \BBL(X^{n-1})$ and is uniformly bounded. It is standard to obtain 
\begin{align*}
&\BD \CB^{-1} (v)= (I- \BD_u \CS)^{-1} \bar A(u_*)^{-1}, \\
& \BD^2 \CB^{-1} (v) (v_1, v_2) = -(I- \BD_u \CS)^{-1} \bar A(u_*)^{-1} \BD^2 F (u) \big(\BD \CB^{-1} (v)v_1, \BD \CB^{-1} (v)v_2 \big), 
\end{align*}
where $u = \CB^{-1} (v)$, $\BD_u \CS$ is evaluated at $(u, v)$, and the specific form of $\CS$ is also used. The desired estimates on $\CB^{-1}$ follow immediately. 
\end{proof}

\subsubsection{Smooth dependence on the initial data} \label{SSS:smoothness-NLPDE}

For any $m \in \N$, formally linearizing the solution map $u(t, u_0)$ of \eqref{E:NLPDE-0} with respect to the initial value $u_0$ yields that the symmetric $m$-linear operators $U^m (t, u_0) = \BD_{u_0}^m u(t, u_0)$ satisfies 
\be \label{E:smoothness-1} \begin{cases} 
&\p_t U^m = \CA(u) U^m  + \CF_m (t, u_0) \\
&U^1 (0) = I; \quad U^m (0) =0, \ \text{ if } \ m>1, 
\end{cases} \ee
where  the symmetric $m$-linear operator $\CF_m (t, u_0)
$ is 
in the form of  
\be \label{E:temp-10.1} \begin{split}
& \CF_m (t, u_0) (w, \ldots, w) \\ 
=&\sum_{\substack{1\le m_1, \ldots, m_l < m \\ m_1 + \ldots + m_l =m}} \alpha_{m_1, \ldots, m_l } \BD^{l} F (u(t, u_0)) \big(U^{m_1} (t, u_0) (w, \ldots, w), \ldots, U^{m_l} (t, u_0) (w, \ldots, w) \big)
\end{split} \ee
for some coefficients $\alpha_{m_1, \ldots, m_l }$. Clearly $\CF_1 =0$ and  the following recursive relation holds 
\be \label{E:recursion-1} \begin{split} 
\CF_{m+1} (t, u_0) (w, \ldots) = \BD^2 F(u(t, u_0)) \big( U^1 w, U^m(\ldots) ) + \BD_{u_0} \CF_m(t, u_0) w.
\end{split} \ee
In particular, the above last term includes terms like $\BD_{u_0} (\BD^l F(u(t, u_0)))w$ and $\BD_{u_0} U^{m_j} (t, u_0)w$  
which should be replaced by $\BD^{l+1} F (u(t, u_0)) (U^1(t, u_0), \ldots)$ and $U^{m_j+1} (t, u_0) (w, \ldots)$, respectively. 
According to the Lipschitz dependence of $u(t, u_0)$ on $u_0$ given in Theorem \ref{T:NLPDE-LWP},  one might wish $\BD_{u_0} u \in \BBL(X^{r})$, $0 \le r\le n-1$. However, for $m \ge 2$, the terms like $\BD^2 F(u) (\BD_{u_0}^{m-1} u, \BD_{u_0} u)$ limits the regularity of $\BD_{u_0}^m u$, which thus can be expected to be valued in  $X^{n-m}$ at the best. To prove such a result,  in addition to (B.1--4), we assume 

\begin{enumerate}
\item [(B.5)] There exist $m_0, r_0 \in \N$ such that $1 \le m_0 \le r_0 \le n -1$ and it holds, for any  $r_0 +1 \le r \le n$ and $2\le l \le m \le m_0$,   
\[
\BD^l F \in C^0 \big(\CO, \BBL ( \otimes_{j=1}^l X^{r-m_j},  X^{r-m}) \big), 
\quad | \BD^{l} F|_{C^0 (\CO, \BBL ( \otimes_{j=1}^l X^{r-m_j},  X^{r-m}) )} \le C_*;
\]  
and also for any $1\le l \le m \le m_0$,  
\[
| \BD^{l} F|_{Lip (\CO, \BBL ( \otimes_{j=1}^l X^{r-m_j},  X^{r-m-1}))} \le C_*, \quad \forall \, r_0+1 \le r \le n. 
\]  
Here in both cases, $m_1, \ldots, m_l$ satisfy 
\[
1\le m_j \le m, \; \forall 1\le j \le l, \quad m_1 + \ldots + m_l =m.
\]
\end{enumerate}

\begin{remark} \label{R:smoothness-1} 
The above assumption for $r_0=1$ is already included in (B.1) (as well as in (D.3) in Subsection \ref{SS:NLPDE-LInMa}) if $m_0=1$. 
\end{remark} 

\begin{remark} \label{R:smoothness-2} 
In most of the applications a simpler condition is that there exist $R>0$ and an open $\hat \CO \subset X^{n-m_0}$ such that $F$ is bounded in $C^{m_0, 1} (\hat\CO \cap X^{n'} (R), X^{n'-1})$ for all $n' \in \N \cap [n-m_0, n-1]$. This would yield (B.5) for $r_0=n-1$ and $\CO= \hat \CO \cap X^{n-1}(R)$. 
\end{remark} 

\begin{theorem} \label{T:LWP-smoothness}
Assume (B.1--5), $u_* \in \CO_n$ and $R_1 > |u_*|_{X^n}$, let $\ep, T, C>0$ 
and the solution map $u(t, u_0)$, $t \in [0, T]$, be given by Theorem \ref{T:NLPDE-LWP}, then $u(t, \cdot) \in C^{m, 1}( X^{n-1}(u_*, \ep) \cap X^n (R_1), X^{n-1-m})$ for $1\le m \le m_0$. Moreover, for any $t \in [0, T]$, $1\le m \le m_0$, $r_0 \le r \le n-1$, and $u_0 \in X^{n-1}(u_*, \ep) \cap X^n (R_1)$, the $m$-linear operator $\BD_{u_0}^m u(t, u_0) \in \BBL( \otimes_{j=1}^m X^r, X^{r-m+1})$ is strongly $C^0$ in $t$, satisfies 
\be \label{E:temp-10.2}
|\BD_{u_0}^m u(t, u_0)|_{\BBL( \otimes_{j=1}^m X^r, X^{r-m+1})} \le 
\begin{cases} C, & \text{ if } \ m=1,\\
Ct,  & \text{ if } \ m>1, 
\end{cases} \ee
and for any $u_{01}, u_{02} \in X^{n-1}(u_*, \ep) \cap X^n (R_1)$, 
\[
|\BD_{u_0}^m u(t, u_{01}) - \BD_{u_0}^m u(t, u_{02})|_{\BBL( \otimes_{j=1}^m X^r, X^{r-m})} \le Ct |u_{01} - u_{02}|_{X^{n-1}}. 
\]
\end{theorem}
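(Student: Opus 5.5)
The strategy is the one used in the paper for the smoothness of $W^+$ (Lemmas \ref{L:smoothness-UM-1}--\ref{L:smoothness-UM-2}), but run on the finite interval $[0,T]$, where no exponential weights are needed. We first construct candidate multilinear operators $U^m(t,u_0)$ as solutions of \eqref{E:smoothness-1}. We then prove inductively in $m$ that $U^m=\BD_{u_0}^m u(t,u_0)$, with the stated bounds and Lipschitz estimates.

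\textbf{Step 1: linear evolution operators.} Let $u(t)=u(t,u_0)$ be the solution from Theorem \ref{T:NLPDE-LWP}. By (B.1)--(B.3), $A(t)=\CA(u(t))$, $L(t)=\CL(u(t))+a((\bar A^{-1})^{adj}\bar A^{-1})(u(t))$ and $Q(t)=\omega_*-\CA(u(t))$ satisfy (L.1)--(L.3) with $n_0=n-1$, as in the proof of Theorem \ref{T:NLPDE-LWP}. Since $u_t=F(u)\in C^0_tX^{n-1}$, we have $L,Q\in W^{1,\infty}_t$ and $[Q,A]=0$. Proposition \ref{P:linear} then gives $U(t,t_0)\in\BBL(X^r)$ for $0\le r\le n-1$, with $|U(t,t_0)|_{\BBL(X^r)}\le C$ uniformly for $u_0$ in the given set.

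\textbf{Step 2: construction and bounds for $U^m$.} Set $U^1(t)=U(t,0)$. This gives \eqref{E:temp-10.2} for $m=1$ and every $0\le r\le n-1$, hence also for $r_0\le r\le n-1$.

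For $m\ge2$, define $U^m(t)=\int_0^tU(t,\tau)\CF_m(\tau)\,d\tau$, where $\CF_m$ is given by \eqref{E:temp-10.1}. By (B.5), applied to $\BD^lF(u)$ on $\otimes_j X^{r-m_j+1}$, and the induction hypothesis $U^{m_j}\in\BBL(\otimes X^r,X^{r-m_j+1})$, we get
\[
|\CF_m(\tau)|_{\BBL(\otimes X^r,X^{r-m+1})}\le C .
\]
The index shift here needs care: $r-m_j+1\ge r_0+1-m_j$ is exactly the range in (B.5) after replacing $r$ by $r+1$, and this is where $r\ge r_0$ is used. Integrating over $[0,t]$ gives the bound $Ct$.

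\textbf{Step 3: Lipschitz dependence of $U^m$.} Take two initial data $u_{01}$, $u_{02}$ and subtract the equations. The difference $U_2^m-U_1^m$ satisfies
\[
\p_t(U_2^m-U_1^m)=\CA(u_1)(U_2^m-U_1^m)+(\CA(u_2)-\CA(u_1))U_2^m+\big(\CF_m(u_{02})-\CF_m(u_{01})\big).
\]
The second term costs one derivative. By the Lipschitz part of (B.5) and $|u_2-u_1|_{C_tX^{n-1}}\le M_2|u_{02}-u_{01}|_{X^{n-1}}$, it is bounded in $X^{r-m}$ by $C|u_{02}-u_{01}|_{X^{n-1}}$. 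The $\CF_m$ difference is bounded inductively in the same space. Duhamel's formula on $[0,t]$ then gives the factor $Ct$.

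\textbf{Step 4: identification $U^m=\BD^m_{u_0}u$.} This is the main obstacle. For $m=1$, set $w=\tilde u-u-U^1\tilde u_0$, with $\tilde u=u(t,u_0+\tilde u_0)$. It satisfies $w_t=\CA(u)w+F_1$, where
\[
F_1=\int_0^1\big(\CA(u+s(\tilde u-u))-\CA(u)\big)(\tilde u-u)\,ds .
\]
By (B.1) and Lipschitz dependence, $|F_1|_{X^{n-2}}\le C|\tilde u_0|_{X^{n-1}}^2$. Duhamel's formula in $X^{n-2}$ then gives $|w|_{X^{n-2}}\le Ct|\tilde u_0|^2_{X^{n-1}}$. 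This is Fréchet differentiability into $X^{n-2}$, and combined with the Lipschitz estimate on $U^1$ it gives $C^{1,1}$. This also supplies the claim at the end of Theorem \ref{T:NLPDE-LWP}.

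For the induction step, we compare $U^m(t,u_0+\tilde u_0)-U^m(t,u_0)-U^{m+1}(t,u_0)(\tilde u_0,\cdot)$. It solves a linear equation whose forcing is a Taylor remainder of the terms $\BD^lF(u)$ and $\CF_m$, using the recursion \eqref{E:recursion-1}. The Lipschitz bounds in (B.5) and from Step 3 make this forcing quadratically small in $X^{r-m-1}$. The delicate point is to organise this forcing so that every term is controlled in the space losing exactly one more derivative. Once that is done, Duhamel's formula gives differentiability of $U^m$ into $X^{r-m}$ with derivative $U^{m+1}$. Restricting to $r=n-1$ yields $u(t,\cdot)\in C^{m,1}(\cdot,X^{n-1-m})$. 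Strong continuity in $t$ follows from the integral representations.
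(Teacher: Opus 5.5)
Your proposal is correct and follows the paper's proof essentially step by step. It uses the evolution operators from Proposition \ref{P:linear}, the Duhamel construction of $U^m$ with the index shift $r\mapsto r+1$ in (B.5), the difference equation for the Lipschitz dependence, and the remainder $w$ (and its higher-order analogue) to identify $U^m=\BD_{u_0}^m u$; your induction step for $m\ge 2$ is only sketched, but the paper's argument is equally terse there.
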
 

\begin{proof}
To prove the theorem, we shall first obtain the existence of solutions to \eqref{E:smoothness-1} for $1\le m \le m_0$ in $\BBL( \otimes_{j=1}^m X^r, X^{r-m+1})$ strongly continuous in $t\in [0, T]$, their Lipschitz property in $u_0$, and then prove the differentiability of $u(t, u_0)$ in $u_0$. 

Let $u(t)$, $t \in [0, T]$, be a solution to \eqref{E:NLPDE-0}. 
As in Appendix \ref{SS:QLPDE-LWP}, $A(t) =\CA(u(t))$ defines an evolution operator $U(t, t_0) \in \BBL(X^r)$, $0\le r \le n-1$, strongly continuous in $t$ and $t_0$ and satisfying \eqref{E:U-bdd-1} and \eqref{E:U-bdd-2} where $Q(t)$ and $L(t)$ are given in \eqref{E:temp-7.5} and \eqref{E:temp-10.5}, $C_Q$, $C_L$, and $C_0$ given in \eqref{E:temp-11}, and $M_1 = \big| \big(\CA (u(t)) - \omega_* \big) u_t \big|_{C_t^0 X^{n-2}}$. 

For $m=1$, clearly $U^1 (t) = U(t, 0)$ solves \eqref{E:smoothness-1} and satisfies the desired upper bound \eqref{E:temp-10.2}. For $m \ge 2$, we obtain from the variation of constant formula 
\[
U^m (t) = \int_0^t U(t, \tau) \CF_m (\tau, u_0)
d\tau, 
\]
which is symmetric $m$-linear for each $t \in [0, T]$ and solves \eqref{E:smoothness-1}. The form \eqref{E:temp-10.1} and assumption (B.5) imply, for $r_0 \le r \le n-1$, 
\be \label{E:temp-10.22}
|\CF_m (t, u_0)|_{\BBL(\otimes_{j=1}^m X^r, X^{r-m+1})} \le C  
\sum_{\substack{1\le m_1, \ldots, m_l < m \\ m_1 + \ldots + m_l =m}} \prod_{j=1}^l |U^{m_j} (t, u_0)|_{\BBL(\otimes_{j=1}^{m_j} X^r, X^{r-m_j+1})}. 
\ee
and thus the desired estimate \eqref{E:temp-10.2} on $|U^m|$ follows inductively. 


Next we study the dependence of $U^m$ in the initial value $u_0$. 
Consider solutions $u_j = u(t, u_{0j})$, $j=1,2$ and $t \in [0, T]$, to \eqref{E:NLPDE-0} with initial values $u_{01}, u_{02} \in X^{n-1}(u_*, \ep) \cap X^n (R_1)$. Let the $m$-linear operators $U_j^m = U^m (t, u_{0j})$ be the solutions to \eqref{E:smoothness-1} along $u_j(t)$.
From \eqref{E:smoothness-1}, one may compute 
\begin{align*}
(U_2^m -  U_1^m )_t   = & \CA (u_1) (U_2^m -  U_1^m ) + ( \BD F(u_2) - \BD F (u_1) ) U_2^m + \CF_m(t, u_{02}) - \CF_m (t, u_{01}). 
\end{align*}
Again \eqref{E:temp-10.1}, assumption (B.5), and the estimate on $|U^m|$ obtained above imply for $m \ge 2$
\be \label{E:temp-10.24} \begin{split}
& |\CF_m(t, u_{02}) - \CF_m (t, u_{01}) |_{\BBL(\otimes_{j=1}^m X^r, X^{r-m})}\\
\le & C \Big( |u_2(t) - u_1(t)|_{X^{n-1}} + \sum_{j=1}^{m-1} |U_2^{j} (t) - U_1^{j} (t)|_{\BBL(\otimes_{j=1}^{j} X^r, X^{r-j})} \Big),
\end{split} \ee
for $r_0 \le r \le n-1$. 
Hence the desired Lipschitz estimates follow from the induction using the variation of constant formula and assumption (B.5). 

Finally, for any initial value 
\[
u_0 + \tilde u_0 \in X^{n-1}(u_*, \ep) \cap X^n (R_1),  \quad 0\le |\tilde u_0|_{X^{n-1}} \ll 1, 
\]
recall $U^m (t, u_0+ \tilde u_0)$ denote the solution to \eqref{E:smoothness-1} along the nonlinear solution $\tilde u(t) = u(t, u_0 + \tilde u_0)$, 
with initial value $u_0 + \tilde u_0$, 
while we often skip the initial $u_0$ in $u$ and $U^m$ if $\tilde u_0=0$. 
Consider 
\[
w(t, \tilde u_0) = u(t, u_0 + \tilde u_0) - u(t) - U^1 (t) \tilde u_0 = \tilde u(t) - u(t) - U^1 (t) \tilde u_0. 
\]
Clearly $w(0, \tilde u_0) =0$ and one may compute 
\begin{align*}
w_t  =&  F(\tilde u(t)) - F(u(t)) - \CA(u(t)) U^1 (t) \tilde u_0 \\
= & \CA(u(t)) w + F(\tilde u(t)) - F(u(t)) - \CA(u(t)) \big(\tilde u(t) - u(t) \big)  \\
=& \CA(u(t)) w + \int_0^1 \Big( \BD F \big( u(t) + \tau (  \tilde u(t) - u(t))\big) - \BD F(u(t))\Big) \big(\tilde u(t) - u(t) \big) d\tau,
\end{align*}
and thus 
\[
w(t, \tilde u_0) = \int_0^t U(t, s) \int_0^1 \Big( \CA \big( u(s) + \tau (  \tilde u(s) - u(s))\big) - \CA(u(s))\Big) \big( \tilde u(s) - u(s) \big) d\tau ds. 
\]
From  assumption (B.1) and the Lipschitz dependence of $u \in X^{n-1}$ in $u_0 \in X^{n-1}$ obtained in Theorem \ref{T:NLPDE-LWP}, we have
\be \label{E:temp-10.3}
|\tilde u(t) - u(t) - U^1 (t) \tilde u_0|_{X^{n-2}} \le C \int_0^t \big|  \tilde u(s) - u(s) \big|_{X^{n-1}}^2 
ds \le C t |\tilde u_0|_{X^{n-1}}^2. 
\ee
Therefore $u(t, \cdot) : X^{n-1}(u_*, \ep) \cap X^n (R_1) \to X^{n-2}$  is Fr\'echet differentiable and $\BD_{u_0} u(t, u_0) = U^1 (t)$ for any $u_0$. Along with the above estimates on $U^1$ it completes the proof of the theorem for the case of $m_0=1$. 

The higher order differentiability of $u(t, u_0)$ in $u_0$ is proved inductively with the above case of $m=1$ as the base case. For any $2\le m \le m_0$, assume that it has been proved 
$\BD_{u_0}^{j} u(t, u_0+ \tilde u_0) = U^{j} (t, u_0 + \tilde u_0)$, $1\le j\le m-1$, and 
\be \label{E:temp-10.4}
|\BD_{u_0}^{j-1} u(t, u_0 + \tilde u_0) - \BD_{u_0}^{j-1} u(t, u_0) - U^{j} (t) ( \tilde u_0, \ldots)|_{\BBL(\otimes_{j=1}^{j} X^r, X^{r-j})} \le Ct |\tilde u_0|_{X^{n-1}}^2.  
\ee
Consider the $m$-linear operator 
\begin{align*}
W^{m}(t, \tilde u_0) = & \BD_{u_0}^{m-1} u(t, u_0 + \tilde u_0) - \BD_{u_0}^{m-1} u(t, u_0) - U^{m} (t) ( \tilde u_0, \ldots) \\
= & U^{m-1} (t, u_0+ \tilde u_0) - U^{m-1} (t) - U^{m} (t) ( \tilde u_0, \ldots).  
\end{align*}
Clearly $W^{m} (0, \tilde u_0) =0$ and one may compute 
\begin{align*}
&W^{m}_t =  \CA(\tilde u(t)) U^{m-1} (t, u_0+ \tilde u_0) - \CA(u(t)) U^{m-1} (t) -  \CA(u(t)) U^{m} (t) ( \tilde u_0, \ldots) \\
& \qquad \quad + \CF_{m-1}(t, u_0 + \tilde u_0) 
- \CF_{m-1} (t, u_0) 
- \CF_{m} (t, u_0) (\tilde u_0 , \ldots) 
\\
=& \CA(u(t)) W^{m}  + \big( \BD F(\tilde u(t)) - \BD F(u(t)) \big) \big(U^{m-1} (t, u_0 + \tilde u_0) -  U^{m-1} (t) \big)  \\
& + \big( \BD F(\tilde u(t)) - \BD F(u(t)) - \BD^2 F(u(t)) U^1(t) \tilde u_0 \big) U^{m-1} (t)+ \CF_{m-1} (t, u_0 + \tilde u_0) \\
& - \CF_{m-1} (t, u_0) +  \BD^2 F(u(t)) \big(U^1(t) \tilde u_0,U^{m-1} (t)  \big)  - \CF_{m}(t, u_0) (\tilde u_0 , \ldots).
\end{align*}
Along with \eqref{E:temp-10.1}, \eqref{E:recursion-1}, \eqref{E:temp-10.3}, \eqref{E:temp-10.4}, assumption (B.5), and the above estimates and induction assumptions on $U^j$, it implies 
\[
|W_t^m - \CA(u) W^m|_{\BBL(\otimes_{j=1}^{m-1} X^r, X^{r-m})} \le C  |\tilde u_0|_{X^{n-1}}^2. 
\]
The variation of constant formula implies that the same estimate \eqref{E:temp-10.4} holds for $j=m$ and thus $\BD_{u_0}^m u(t, u_0) = U^m(t, u_0)$. 
\end{proof}

\bibliographystyle{abbrv}
\bibliography{bibliography}

\begin{thebibliography}{10}

\bibitem{ABZ11}
T.~Alazard, N.~Burq, and C.~Zuily.
\newblock On the water-wave equations with surface tension.
\newblock {\em Duke Math. J.}, 158(3):413--499, 2011.

\bibitem{AM03}
D.~M. Ambrose.
\newblock Well-posedness of vortex sheets with surface tension.
\newblock {\em SIAM J. Math. Anal.}, 35(1):211--244, 2003.

\bibitem{AM05}
D.~M. Ambrose and N.~Masmoudi.
\newblock The zero surface tension limit of two-dimensional water waves.
\newblock {\em Comm. Pure Appl. Math.}, 58(10):1287--1315, 2005.

\bibitem{AM09}
D.~M. Ambrose and N.~Masmoudi.
\newblock The zero surface tension limit of three-dimensional water waves.
\newblock {\em Indiana Univ. Math. J.}, 58(2):479--521, 2009.

\bibitem{AK89}
C.~J. Amick and K.~Kirchg\"assner.
\newblock A theory of solitary water-waves in the presence of surface tension.
\newblock {\em Arch. Rational Mech. Anal.}, 105(1):1--49, 1989.

\bibitem{BJ89}
P.~W. Bates and C.~K. R.~T. Jones.
\newblock Invariant manifolds for semilinear partial differential equations.
\newblock In {\em Dynamics reported, {V}ol.\ 2}, volume~2 of {\em Dynam.
  Report. Ser. Dynam. Systems Appl.}, pages 1--38. Wiley, Chichester, 1989.

\bibitem{BHL93}
J.~T. Beale, T.~Y. Hou, and J.~S. Lowengrub.
\newblock On the well-posedness of two fluid interfacial flows with surface
  tension.
\newblock In {\em Singularities in fluids, plasmas and optics ({H}eraklion,
  1992)}, volume 404 of {\em NATO Adv. Sci. Inst. Ser. C: Math. Phys. Sci.},
  pages 11--38. Kluwer Acad. Publ., Dordrecht, 1993.

\bibitem{BB97}
T.~B. Benjamin and T.~J. Bridges.
\newblock Reappraisal of the {K}elvin-{H}elmholtz problem. {I}. {H}amiltonian
  structure.
\newblock {\em J. Fluid Mech.}, 333:301--325, 1997.

\bibitem{BMV22}
M.~Berti, A.~Maspero, and P.~Ventura.
\newblock Full description of {B}enjamin-{F}eir instability of {S}tokes waves
  in deep water.
\newblock {\em Invent. Math.}, 230(2):651--711, 2022.

\bibitem{BG98}
K.~Beyer and M.~G\"unther.
\newblock On the {C}auchy problem for a capillary drop. {I}. {I}rrotational
  motion.
\newblock {\em Math. Methods Appl. Sci.}, 21(12):1149--1183, 1998.

\bibitem{Br01}
T.~J. Bridges.
\newblock Transverse instability of solitary-wave states of the water-wave
  problem.
\newblock {\em J. Fluid Mech.}, 439:255--278, 2001.

\bibitem{BM95}
T.~J. Bridges and A.~Mielke.
\newblock A proof of the {B}enjamin-{F}eir instability.
\newblock {\em Arch. Rational Mech. Anal.}, 133(2):145--198, 1995.

\bibitem{BSWZ16}
O.~B\"uhler, J.~Shatah, S.~Walsh, and C.~Zeng.
\newblock On the wind generation of water waves.
\newblock {\em Arch. Ration. Mech. Anal.}, 222(2):827--878, 2016.

\bibitem{Ca81}
J.~Carr.
\newblock {\em Applications of centre manifold theory}, volume~35 of {\em
  Applied Mathematical Sciences}.
\newblock Springer-Verlag, New York-Berlin, 1981.

\bibitem{CW22}
R.~M. Chen and S.~Walsh.
\newblock Orbital stability of internal waves.
\newblock {\em Comm. Math. Phys.}, 391(3):1091--1141, 2022.

\bibitem{CSS08}
C.-H.~A. Cheng, D.~Coutand, and S.~Shkoller.
\newblock On the motion of vortex sheets with surface tension in
  three-dimensional {E}uler equations with vorticity.
\newblock {\em Comm. Pure Appl. Math.}, 61(12):1715--1752, 2008.

\bibitem{CH82}
S.~N. Chow and J.~K. Hale.
\newblock {\em Methods of bifurcation theory}, volume 251 of {\em Grundlehren
  der Mathematischen Wissenschaften}.
\newblock Springer-Verlag, New York-Berlin, 1982.

\bibitem{CLL91}
S.-N. Chow, X.-B. Lin, and K.~Lu.
\newblock Smooth invariant foliations in infinite-dimensional spaces.
\newblock {\em J. Differential Equations}, 94(2):266--291, 1991.

\bibitem{CL88}
S.-N. Chow and K.~Lu.
\newblock Invariant manifolds for flows in {B}anach spaces.
\newblock {\em J. Differential Equations}, 74(2):285--317, 1988.

\bibitem{CM19}
T.~H. Colding and W.~P. Minicozzi, II.
\newblock Dynamics of closed singularities.
\newblock {\em Ann. Inst. Fourier (Grenoble)}, 69(7):2973--3016, 2019.

\bibitem{CJ04}
M.~Colin and L.~Jeanjean.
\newblock Solutions for a quasilinear {S}chr\"odinger equation: a dual
  approach.
\newblock {\em Nonlinear Anal.}, 56(2):213--226, 2004.

\bibitem{CFNT89}
P.~Constantin, C.~Foias, B.~Nicolaenko, and R.~Temam.
\newblock {\em Integral manifolds and inertial manifolds for dissipative
  partial differential equations}, volume~70 of {\em Applied Mathematical
  Sciences}.
\newblock Springer-Verlag, New York, 1989.

\bibitem{CS07}
D.~Coutand and S.~Shkoller.
\newblock Well-posedness of the free-surface incompressible {E}uler equations
  with or without surface tension.
\newblock {\em J. Amer. Math. Soc.}, 20(3):829--930, 2007.

\bibitem{Cr85}
W.~Craig.
\newblock An existence theory for water waves and the {B}oussinesq and
  {K}orteweg-de {V}ries scaling limits.
\newblock {\em Comm. Partial Differential Equations}, 10(8):787--1003, 1985.

\bibitem{CG00}
W.~Craig and M.~D. Groves.
\newblock Normal forms for wave motion in fluid interfaces.
\newblock {\em Wave Motion}, 31(1):21--41, 2000.

\bibitem{CN00}
W.~Craig and D.~P. Nicholls.
\newblock Travelling two and three dimensional capillary gravity water waves.
\newblock {\em SIAM J. Math. Anal.}, 32(2):323--359, 2000.

\bibitem{DL88}
G.~Da~Prato and A.~Lunardi.
\newblock Stability, instability and center manifold theorem for fully
  nonlinear autonomous parabolic equations in {B}anach space.
\newblock {\em Arch. Rational Mech. Anal.}, 101(2):115--141, 1988.

\bibitem{AN13}
B.~Deconinck and O.~Trichtchenko.
\newblock Stability of periodic gravity waves in the presence of surface
  tension.
\newblock {\em Eur. J. Mech. B Fluids}, 46:97--108, 2014.

\bibitem{DT14}
B.~Deconinck and O.~Trichtchenko.
\newblock Stability of periodic gravity waves in the presence of surface
  tension.
\newblock {\em Eur. J. Mech. B Fluids}, 46:97--108, 2014.

\bibitem{DR04}
P.~G. Drazin and W.~H. Reid.
\newblock {\em Hydrodynamic stability}.
\newblock Cambridge Mathematical Library. Cambridge University Press,
  Cambridge, second edition, 2004.
\newblock With a foreword by John Miles.

\bibitem{DM09}
T.~Duyckaerts and F.~Merle.
\newblock Dynamic of threshold solutions for energy-critical {NLS}.
\newblock {\em Geom. Funct. Anal.}, 18(6):1787--1840, 2009.

\bibitem{DM10}
T.~Duyckaerts and F.~Merle.
\newblock Dynamic of threshold solutions for energy-critical {NLS}.
\newblock {\em Geom. Funct. Anal.}, 18(6):1787--1840, 2009.

\bibitem{Eck04}
K.~Ecker.
\newblock {\em Regularity theory for mean curvature flow}, volume~57 of {\em
  Progress in Nonlinear Differential Equations and their Applications}.
\newblock Birkh\"auser Boston, Inc., Boston, MA, 2004.

\bibitem{EW87}
C.~L. Epstein and M.~I. Weinstein.
\newblock A stable manifold theorem for the curve shortening equation.
\newblock {\em Comm. Pure Appl. Math.}, 40(1):119--139, 1987.

\bibitem{Ger06}
C.~Gerhardt.
\newblock {\em Curvature problems}, volume~39 of {\em Series in Geometry and
  Topology}.
\newblock International Press, Somerville, MA, 2006.

\bibitem{GT01}
D.~Gilbarg and N.~S. Trudinger.
\newblock {\em Elliptic partial differential equations of second order}.
\newblock Classics in Mathematics. Springer-Verlag, Berlin, 2001.
\newblock Reprint of the 1998 edition.

\bibitem{GGSZ25}
O.~M.~L. Gomide, M.~Guardia, T.~M. Seara, and C.~Zeng.
\newblock On small breathers of nonlinear {K}lein-{G}ordon equations via
  exponentially small homoclinic splitting.
\newblock {\em Invent. Math.}, 240(2):661--777, 2025.

\bibitem{GSS87}
M.~Grillakis, J.~Shatah, and W.~Strauss.
\newblock Stability theory of solitary waves in the presence of symmetry. {I}.
\newblock {\em J. Funct. Anal.}, 74(1):160--197, 1987.

\bibitem{GSS90}
M.~Grillakis, J.~Shatah, and W.~Strauss.
\newblock Stability theory of solitary waves in the presence of symmetry. {II}.
\newblock {\em J. Funct. Anal.}, 94(2):308--348, 1990.

\bibitem{GHS01}
M.~D. Groves, M.~Haragus, and S.-M. Sun.
\newblock Transverse instability of gravity-capillary line solitary water
  waves.
\newblock {\em C. R. Acad. Sci. Paris S\'er. I Math.}, 333(5):421--426, 2001.

\bibitem{Hal61}
J.~K. Hale.
\newblock Integral manifolds of perturbed differential systems.
\newblock {\em Ann. of Math. (2)}, 73:496--531, 1961.

\bibitem{HHSTWW22}
S.~V. Haziot, V.~M. Hur, W.~A. Strauss, J.~F. Toland, E.~Wahl\'en, S.~Walsh,
  and M.~H. Wheeler.
\newblock Traveling water waves---the ebb and flow of two centuries.
\newblock {\em Quart. Appl. Math.}, 80(2):317--401, 2022.

\bibitem{He81}
D.~Henry.
\newblock {\em Geometric theory of semilinear parabolic equations}, volume 840
  of {\em Lecture Notes in Mathematics}.
\newblock Springer-Verlag, Berlin-New York, 1981.

\bibitem{HY23}
V.~M. Hur and Z.~Yang.
\newblock Unstable {S}tokes waves.
\newblock {\em Arch. Ration. Mech. Anal.}, 247(4):Paper No. 62, 59, 2023.

\bibitem{IK92}
G.~Iooss and K.~Kirchg\"assner.
\newblock Water waves for small surface tension: an approach via normal form.
\newblock {\em Proc. Roy. Soc. Edinburgh Sect. A}, 122(3-4):267--299, 1992.

\bibitem{Ka73}
T.~Kato.
\newblock Linear evolution equations of ``hyperbolic'' type. {II}.
\newblock {\em J. Math. Soc. Japan}, 25:648--666, 1973.

\bibitem{Ka75}
T.~Kato.
\newblock Quasi-linear equations of evolution, with applications to partial
  differential equations.
\newblock In {\em Spectral theory and differential equations ({P}roc.
  {S}ympos., {D}undee, 1974; dedicated to {K}onrad {J}\"orgens)}, volume Vol.
  448 of {\em Lecture Notes in Math.}, pages 25--70. Springer, Berlin-New York,
  1975.

\bibitem{Ke67}
A.~Kelley.
\newblock The stable, center-stable, center, center-unstable, unstable
  manifolds.
\newblock {\em J. Differential Equations}, 3:546--570, 1967.

\bibitem{Ki88}
K.~Kirchg\"assner.
\newblock Nonlinearly resonant surface waves and homoclinic bifurcation.
\newblock In {\em Advances in applied mechanics, {V}ol.\ 26}, volume~26 of {\em
  Adv. Appl. Mech.}, pages 135--181. Academic Press, Boston, MA, 1988.

\bibitem{La05}
D.~Lannes.
\newblock Well-posedness of the water-waves equations.
\newblock {\em J. Amer. Math. Soc.}, 18(3):605--654, 2005.

\bibitem{La13}
D.~Lannes.
\newblock {\em The water waves problem}, volume 188 of {\em Mathematical
  Surveys and Monographs}.
\newblock American Mathematical Society, Providence, RI, 2013.
\newblock Mathematical analysis and asymptotics.

\bibitem{LMSW96}
Y.~Li, D.~W. McLaughlin, J.~Shatah, and S.~Wiggins.
\newblock Persistent homoclinic orbits for a perturbed nonlinear
  {S}chr\"odinger equation.
\newblock {\em Comm. Pure Appl. Math.}, 49(11):1175--1255, 1996.

\bibitem{LZ13}
Z.~Lin and C.~Zeng.
\newblock Unstable manifolds of {E}uler equations.
\newblock {\em Comm. Pure Appl. Math.}, 66(11):1803--1836, 2013.

\bibitem{LZ22}
Z.~Lin and C.~Zeng.
\newblock Instability, index theorem, and exponential trichotomy for linear
  {H}amiltonian {PDE}s.
\newblock {\em Mem. Amer. Math. Soc.}, 275(1347):v+136, 2022.

\bibitem{LZ22CPAM}
Z.~Lin and C.~Zeng.
\newblock Separable {H}amiltonian {PDE}s and turning point principle for
  stability of gaseous stars.
\newblock {\em Comm. Pure Appl. Math.}, 75(11):2511--2572, 2022.

\bibitem{Mant11}
C.~Mantegazza.
\newblock {\em Lecture notes on mean curvature flow}, volume 290 of {\em
  Progress in Mathematics}.
\newblock Birkh\"auser/Springer Basel AG, Basel, 2011.

\bibitem{Mi88}
A.~Mielke.
\newblock Reduction of quasilinear elliptic equations in cylindrical domains
  with applications.
\newblock {\em Math. Methods Appl. Sci.}, 10(1):51--66, 1988.

\bibitem{Mi02}
A.~Mielke.
\newblock On the energetic stability of solitary water waves.
\newblock {\em R. Soc. Lond. Philos. Trans. Ser. A Math. Phys. Eng. Sci.},
  360(1799):2337--2358, 2002.
\newblock Recent developments in the mathematical theory of water waves
  (Oberwolfach, 2001).

\bibitem{MZ09}
M.~Ming and Z.~Zhang.
\newblock Well-posedness of the water-wave problem with surface tension.
\newblock {\em J. Math. Pures Appl. (9)}, 92(5):429--455, 2009.

\bibitem{Na74}
V.~I. Nalimov.
\newblock The {C}auchy-{P}oisson problem.
\newblock {\em Dinamika Splo\v sn. Sredy}, (18):104--210, 254, 1974.

\bibitem{NS23}
H.~Q. Nguyen and W.~A. Strauss.
\newblock Proof of modulational instability of {S}tokes waves in deep water.
\newblock {\em Comm. Pure Appl. Math.}, 76(5):1035--1084, 2023.

\bibitem{Pazy83}
A.~Pazy.
\newblock {\em Semigroups of linear operators and applications to partial
  differential equations}, volume~44 of {\em Applied Mathematical Sciences}.
\newblock Springer-Verlag, New York, 1983.

\bibitem{PS04}
R.~L. Pego and S.~M. Sun.
\newblock On the transverse linear instability of solitary water waves with
  large surface tension.
\newblock {\em Proc. Roy. Soc. Edinburgh Sect. A}, 134(4):733--752, 2004.

\bibitem{Pl64}
V.~A. Pliss.
\newblock A reduction principle in the theory of stability of motion.
\newblock {\em Izv. Akad. Nauk SSSR Ser. Mat.}, 28:1297--1324, 1964.

\bibitem{PS16}
J.~Pr\"uss and G.~Simonett.
\newblock {\em Moving interfaces and quasilinear parabolic evolution
  equations}, volume 105 of {\em Monographs in Mathematics}.
\newblock Birkh\"auser/Springer, [Cham], 2016.

\bibitem{RT11}
F.~Rousset and N.~Tzvetkov.
\newblock Transverse instability of the line solitary water-waves.
\newblock {\em Invent. Math.}, 184(2):257--388, 2011.

\bibitem{Sc05}
B.~Schweizer.
\newblock On the three-dimensional {E}uler equations with a free boundary
  subject to surface tension.
\newblock {\em Ann. Inst. H. Poincar\'e{} C Anal. Non Lin\'eaire},
  22(6):753--781, 2005.

\bibitem{SZ03}
J.~Shatah and C.~Zeng.
\newblock Orbits homoclinic to centre manifolds of conservative {PDE}s.
\newblock {\em Nonlinearity}, 16(2):591--614, 2003.

\bibitem{SZ08a}
J.~Shatah and C.~Zeng.
\newblock Geometry and a priori estimates for free boundary problems of the
  {E}uler equation.
\newblock {\em Comm. Pure Appl. Math.}, 61(5):698--744, 2008.

\bibitem{SZ08b}
J.~Shatah and C.~Zeng.
\newblock A priori estimates for fluid interface problems.
\newblock {\em Comm. Pure Appl. Math.}, 61(6):848--876, 2008.

\bibitem{SZ11}
J.~Shatah and C.~Zeng.
\newblock Local well-posedness for fluid interface problems.
\newblock {\em Arch. Rational Mech. Anal.}, 199:653–705, 2011.

\bibitem{SX21}
A.~Sun and J.~Xue.
\newblock Initial perturbation of the mean curvature flow for closed limit
  shrinker.
\newblock {\em arXiv:2104.03101}, 2021.

\bibitem{IV92}
A.~Vanderbauwhede and G.~Iooss.
\newblock Center manifold theory in infinite dimensions.
\newblock In {\em Dynamics reported: expositions in dynamical systems},
  volume~1 of {\em Dynam. Report. Expositions Dynam. Systems (N.S.)}, pages
  125--163. Springer, Berlin, 1992.

\bibitem{Wu97}
S.~Wu.
\newblock Well-posedness in sobolev spaces of the full water wave problem in
  2-d.
\newblock {\em Invent. Math.}, 130:39--72, 1997.

\bibitem{Wu99}
S.~Wu.
\newblock Well-posedness in sobolev spaces of the full water wave problem in
  3-d.
\newblock {\em J. AMS.}, 12(2):445--495, 1999.

\bibitem{Yo83}
H.~Yosihara.
\newblock Capillary-gravity waves for an incompressible ideal fluid.
\newblock {\em J. Math. Kyoto Univ.}, 23(4):649--694, 1983.

\bibitem{Ze00}
C.~Zeng.
\newblock Homoclinic orbits for a perturbed nonlinear {S}chr\"odinger equation.
\newblock {\em Comm. Pure Appl. Math.}, 53(10):1222--1283, 2000.

\bibitem{ZZ08}
P.~Zhang and Z.~Zhang.
\newblock On the free boundary problem of three-dimensional incompressible
  {E}uler equations.
\newblock {\em Comm. Pure Appl. Math.}, 61(7):877--940, 2008.

\end{thebibliography}

\end{document}